\tikzset
{
  over/.style={preaction={draw=white,-,line width=6pt}},
}
\newcommand{\ee}{\varepsilon}
\newcommand{\R}{{\mathbb R}}
\newcommand{\Rd}{{\R^d}}
\newcommand{\Edisc}{E^{\Delta}}
\newcommand{\Ediscee}{E^{\Delta}_{\ee}}
\newcommand{\Edisceeinfty}{E^{\Delta, \infty}_{\ee}}
\newcommand{\inversedU}{T_{0,\alpha} \circ (U')^{-1}}
\newcommand{\inversedUee}{(U_{\ee}')^{-1}}
\newcommand{\degree}{{\mathrm{deg}}}
\DeclareMathOperator{\diver}{div}
\DeclareMathOperator{\sign}{sign}
\DeclareMathOperator{\supp}{supp}
\newcommand*\diff{\mathop{}\!\mathrm{d}}
\crefname{subsection}{subsection}{subsections}
\Crefname{subsection}{Subsection}{Subsections}
\newtheorem{theorem}{Theorem}[section]
\newtheorem{proposition}[theorem]{Proposition}
\newtheorem{lemma}[theorem]{Lemma}
\theoremstyle{definition}
\newtheorem{definition}[theorem]{Definition}
\newtheorem{remark}[theorem]{Remark}
\newtheorem*{remark*}{Remark}
\newtheorem*{remarks*}{Remarks}
\numberwithin{equation}{section}
\newlist{enumeratetheorem}{enumerate}{2}
\setlist[enumeratetheorem,1]{
	label=\rm \roman*), 
	ref=Theorem \thetheorem--Item \roman*,
	topsep=0ex
	}
\setlist[enumeratetheorem,2]{label=\rm \alph*), ref=\theenumeratetheoremi.\alph*}
\newlist{enumeratedefinition}{enumerate}{2}
\setlist[enumeratedefinition,1]{
	label=\rm \roman*), 
	ref=Definition \thedefinition--Item \roman*,
	topsep=0ex}
\setlist[enumeratedefinition,2]{label=\rm \alph*), ref=\theenumeratedefinitioni.\alph*}
\newlist{enumeratesteps}{enumerate}{2}
\setlist[enumeratesteps,1]{
  label=\textit{Step \arabic*\protect\thisstep},
  ref=\arabic*,
  align=left, leftmargin=0pt, labelindent=!,
  listparindent=\parindent, labelwidth=0pt, itemindent=!
}
\setlist[enumeratesteps,2]{
    label= \textit{Step \theenumeratestepsi.\alph*\protect\thisstep},
    ref=\theenumeratestepsi.\alph*,
    align=left, leftmargin=0pt, labelindent=\parindent,
  listparindent=\parindent, labelwidth=0pt, itemindent=!
}
\newcommand{\step}[1][]{%
  \if\relax\detokenize{#1}\relax
    \def\thisstep{.}%
  \else
    \def\thisstep{: \protect #1.}%
  \fi
  \item}
\Crefname{enumeratestepsi}{Step}{Steps}
\Crefname{enumeratestepsii}{Step}{Steps}
\newcommand{\dx}{\, dx}
\newcommand{\ds}{\, ds}
\newcommand{\mob}{\mathrm{m}}
\newcommand{\mobee}{\mob_{\ee}}
\newcommand{\mobnd}{\mob^{(1)}}
\newcommand{\mobni}{\mob^{(2)}}
\newcommand{\mobeend}{\mob_{\ee}^{(1)}}
\newcommand{\mobeeni}{\mob_{\ee}^{(2)}}
\newcommand{\PhiPrimpp}{\Psi_{1}}
\newcommand{\PhiPrimnp}{\Psi_{2}}
\newcommand{\InvPhiPrimpp}{\varpi_{1}}
\newcommand{\InvPhiPrimnp}{\varpi_{2}}
\newcommand{\limitpp}{\varkappa_{1}}
\newcommand{\limitnp}{\varkappa_{2}}
\newcommand{\goodSemigroup}{free-energy dissipating semigroup}
\newcommand{\timeLimit}{time-limit operator}
\newcommand{\goodNumericalProblem}{free-energy dissipating numerical scheme}
\newcommand{\goodNumericalProblems}{free-energy dissipating numerical schemes}
\newcommand{\oneconstant}{\lambda}
\newcommand{\numericApproxStep}{J_\ee^\Delta}
\newcommand{\numericStep}{J^\Delta}
\renewcommand*{\@fnsymbol}[1]{\ensuremath{\ifcase#1\or \star \or \dagger\or \ddagger\or
		\mathsection\or \mathparagraph\or \|\or **\or \dagger\dagger
		\or \ddagger\ddagger \else\@ctrerr\fi}}
\title{
Drift-diffusion equations with saturation
}
\author{
    Jos\'e A. Carrillo%
     \thanks{Mathematical Institute, University of Oxford, Oxford OX2 6GG, UK.  \href{mailto:carrillo@maths.ox.ac.uk}{carrillo@maths.ox.ac.uk}} %
    \and 
    Alejandro Fernández-Jiménez
     \thanks{Mathematical Institute, University of Oxford, Oxford OX2 6GG, UK.  \href{mailto:alejandro.fernandezjimenez@maths.ox.ac.uk}{alejandro.fernandezjimenez@maths.ox.ac.uk}} %
    \and 
    David Gómez-Castro%
     \thanks{Dpto.~de Matemáticas, Universidad Autónoma de Madrid \& Instituto de Ciencias Matem\'aticas, Madrid 28049, Spain.  \href{mailto:david.gomezcastro@uam.es}{david.gomezcastro@uam.es}}
}
\begin{document}

\maketitle

\begin{abstract}
    We focus on a family of nonlinear continuity equations for the evolution of a non-negative density $\rho$ with a continuous and compactly supported nonlinear mobility $\mathrm{m}(\rho)$ not necessarily concave. The velocity field is the negative gradient of the variation of a free energy including internal and confinement energy terms. Problems with compactly supported mobility are often called saturation problems since the values of the density are constrained below a maximal value. Taking advantage of a family of approximating problems, we show the existence of $C_0$-semigroups of $L^1$ contractions. We study the $\omega$-limit of the problem, its most relevant properties, and the appearance of free boundaries in the long-time behaviour. This problem has a formal gradient-flow structure, and we discuss the local/global minimisers of the corresponding free energy in the natural topology related to the set of initial data for the $L^\infty$-constrained gradient flow of probability densities. Furthermore, we analyse a structure preserving implicit finite-volume scheme and discuss its convergence and long-time behaviour.

    \smallskip

    \noindent\textbf{Keywords:}
    Saturation, nonlinear parabolic equations, long-time behaviour, $C_0$-semigroup, free boundary, Euler-Lagrange condition, implicit finite-volume scheme.

    \smallskip

    \noindent \textbf{MSC:}
    35K55,  	
    35K65,  	
    35B40,  	
    65M08,  	
    35Q70,  	
    35Q92,  	
    47H20.  	
\end{abstract}

\setcounter{tocdepth}{2}

\section{Introduction}

Aggregation and drift-diffusion equations are frequent in continuous descriptions of density populations since they are natural macroscopic models associated to microscopic particle dynamics, see for instance \cite{Carrillo_Murakawa_Sato_Togashi_Trush19} and the references therein. Some models contain a more general nonlinear mobility, usually called of saturation type, preventing overcrowding. This family of partial differential equations include models of the form
\begin{subequations}
    \label{eq:The problem}
    \begin{equation}
        \frac{\partial \rho}{\partial t} = \diver \left( \mob (\rho) \nabla \left( U' (\rho) + V \right) \right).
    \end{equation}
    Here, we consider $U$ convex and $V$ a given potential regular enough. Furthermore, we work on a bounded domain $\Omega$, where we set the natural no-flux condition
    \begin{equation}
        \mob (\rho)  \nabla (U'(\rho) + V ) \cdot \nu (x) = 0 \qquad \text{for all } t > 0, x \in \partial \Omega\,,
    \end{equation}
\end{subequations}
leading to conservation of the total mass. The case of linear mobility $\mob (\rho) = \rho$ is well understood, see \cite{CCY19, Bailo_Carrillo_GomezCastro24} and the references therein.
For this family of nonlinear parabolic equations \eqref{eq:The problem}, the well-posedness theory, their long-time behaviour, and the main qualitative properties of the solutions, self-similar solutions and their steady states have been fairly well-analysed \cite{ Carrillo_Jungel_Markowich_Toscani_Unterreiter01, Vazquez07, Carrillo_Delgadino_Dolbeault_Frank_Hoffmann19, Carrillo_Hittmeir_Volzone_Yao19, Carrillo_Gomez-Castro_Vazquez22, Carrillo_FJ_Gomez-Castro23}. Moreover, the family of Cauchy problems of the form \eqref{eq:The problem} with linear mobility are $2$-Wasserstein gradient flows \cite{Otto01, Carrillo_McCann_Villani03, Carrillo_McCann_Villani06,Ambrossio_Gigli_Savare08, Santambrogio15,Santambrogio17} of the free-energy functional
\begin{equation}\label{eq:Free eenrgy ee=0 Omega}
    \mathcal{F}[\rho] = \int_{\Omega} U(\rho (x)) \dx + \int_{\Omega} V(x) \rho (x) \dx.
\end{equation}

When $\mob(\rho)$ is a non-linear mobility (not necessarily bounded) there is also an extensive literature.  A suitable notion of generalised Wasserstein distance was introduced in \cite{DolbeaultNazaretSavare2009} by extending the Benamou-Brenier formulation. This approach only produces well-defined distances if $\mob(\rho)$ is concave.
The corresponding Otto calculus
yields that the formal gradient flow of the free energy \eqref{eq:Free-energy} in these non-linear mobility Wasserstein-type distances corresponds to the family of PDEs
\begin{equation*}
    \frac{\partial \rho }{\partial t} = \diver \left( \mob (\rho ) \nabla \frac{\delta \mathcal{F}}{\delta \rho}[\rho] \right).
\end{equation*}
We will take advantage of the free-energy dissipation structure of this formulation.
The result in \cite{DolbeaultNazaretSavare2009} has been extended to cover more cases, including more general families of non-linear mobilities \cite{Lisini_Marigonda10, CarrilloLisiniSavareSlepcev2010, Dolbeault_Nazaret_Savare12,  DiMarino_Portinale_Radici22}.
Several aggregation-diffusion related equations with non-linear mobility have been also analysed by different methods in PDE theory for instance:  Newtonian interaction potentials (i.e., $U=0$) \cite{Carrillo_Gomez-Castro_Vazquez22b, Carrillo_Gomez-Castro_Vazquez22a}, porous medium equations with non-local pressure
\cite{Caffarelli_Vazquez11, Stan_delTeso_Vazquez16, Stan_delTeso_Vazquez19},
Cahn-Hilliard type equations \cite{Bertozzi_Pugh98, Lisini_Matthes_Savare12,Elbar_Skrzeczkowski24}, or interaction systems on graphs \cite{Heinze_Pietschmann_Schmidtchen23}, among others.

Our work focuses on a non-linear mobility $\mob (\rho)$  of \textit{saturation}-type, i.e., the support of the mobility is a finite interval. More precisely, the mobility $\mob (\rho)$ satisfies the following assumptions:
\begin{enumerate}[label=(H$_{\arabic*}$)]\label{hyp:H}
    \item
          \label{hyp:mobility}
          There is some $\alpha \in (0, \infty)$ such that $\mob(0) = \mob(\alpha) = 0$, and $\mob > 0$ in $(0,\alpha)$.
          We assume that $\mob \in C([0,\alpha]) \cap C^1((0,\alpha))$.

    \item
          \label{hyp:admissible data}
          We deal with initial data $
              \rho_0$ in the admissible class of densities $\mathcal A \coloneqq \{ \rho \in L^1(\Omega) : 0 \le \rho \le \alpha \}.$

    \item
          \label{hyp:H_V}
          $V$ is of class $C^2 (\overline \Omega)$.
          Without loss of generality we assume $V \geq 0$.

    \item
          \label{hyp:H_U}
          $U \in W^{1,1} ((0,\alpha)) \cap C^{3} ((0, \alpha ))$ 
          and convex $U'' \geq 0 $.
          We assume that $U$ is not trivial, i.e.,
          \begin{equation}
              \label{eq:s0}
              \begin{aligned}
                   & \text{there exists } s_0 \in (0,\alpha ) \text{ such that } U'' (s_0) >0.
              \end{aligned}
          \end{equation}
\end{enumerate}
We will make a further technical assumption \ref{hyp:Phi}, that is postponed to the next section.
Notice that, unlike in previous literature, we do not assume that $\nabla V \cdot \nu = 0$ on $\partial \Omega$.
Thus, $\rho \equiv 0$ and $\rho \equiv \alpha$ are constants solutions.

These aggregation-diffusion equations with mobility of saturation-type appear naturally in mathematical biology, in order to explain chemotaxis models with prevention of overcrowding \cite{Chalub_Rodrigues06, Hillen_Painter01, Carrillo_Murakawa_Sato_Togashi_Trush19}, in mathematical physics, to describe the relaxation of gas of fermions \cite{Kaniadakis95, Kaniadakis_Quarati93}, in phase segregation \cite{Slepvcev08, Zamponi_Jungel17}, or in thin liquid films \cite{Lisini_Matthes_Savare12, Matthes_McCann_Savare09} among others. Despite the interest in applications of models with mobility of saturation-type, the literature devoted to problem \eqref{eq:The problem} with the saturation-type mobility satisfying assumptions \ref{hyp:mobility}-\ref{hyp:H_U}, up to our knowledge, is scarce. In \cite{Burger_DiFrancesco_Yasmin06, diFrancesco_Rosado08} the authors consider the Keller-Segel model with prevention of overcrowding, which they obtain by choosing $\mob (\rho) = \rho (1-\rho)$. They study the competition between the chemotaxis term with a  saturation effect and a linear diffusion term. More recently, in \cite{DiFrancesco_Fagioli_Radici19}, the authors obtain a rigorous limit from discrete distributions to a family of one-dimensional non-local interaction equations with saturation. Furthermore, this result is extended to a family of one-dimensional aggregation–diffusion equation in \cite{Fagioli_Radici18}. In both cases, the authors only cover the case $\mob(\rho)=\rho \phi(\rho)$ where $\phi$ is decreasing and positive only in a finite interval. In \cite{Fagioli_Tse22}, the authors prove well-posedness of entropy solutions for a wide class of one-dimensional non-local transport equations with a general saturation-type mobility.

To our knowledge, there is no  literature analysing the family of Cauchy problems \eqref{eq:The problem} with saturation-type mobility in higher dimensions. Furthermore, the only work dealing with the long-time behaviour is from the numerical analysis viewpoint by implicit finite-volume schemes introduced in \cite{BCH23}. They show the existence of certain weak stationary solutions with kinks depending on the mass of the solution and their numerical experiments suggest the appearance of kinks in the long-time behaviour for certain initial data. Therefore, the main goal of this manuscript is to provide a unified theory for the Cauchy problems \eqref{eq:The problem} with a general saturation-type mobility satisfying \ref{hyp:mobility}-\ref{hyp:Phi}, including the existence theory, minimisation of the free energy, and their long-time behaviour. Furthermore, we complement our mathematical analysis results with numerical analysis by showing the convergence of suitable implicit finite volume schemes related to \cite{BCH23} and clarifying their long time behaviour.

\subparagraph{Main analytical results}
Our main analytical results concern the
existence of certain solutions to \eqref{eq:The problem} with a general saturation-type mobility satisfying \ref{hyp:mobility}-\ref{hyp:Phi} via approximation arguments and semigroup theory,
the characterization of $L^1$-local minimisers of the associated free-energy functional in the admissible set of bounded integrable densities $\mathcal{A}$ defined in \ref{hyp:admissible data},
and the long-time behaviour of the constructed solutions in view of its gradient flow structure.
Moreover, we study these aspects with the greatest generality on both the saturation-type non-linear mobility $\mob(\rho)$ and the diffusion potential $U(\rho)$ which, in particular, includes the classical porous medium/fast diffusion non-linearities at zero density.

To tackle the existence of certain solutions to \eqref{eq:The problem}, we proceed by stability arguments within the family of problems of the form \eqref{eq:The problem} with a general saturation-type mobility satisfying \ref{hyp:mobility}-\ref{hyp:Phi}. More precisely, we construct suitable approximating problems of the form \eqref{eq:The problem} which admit classical solution while keeping the assumptions \ref{hyp:mobility}-\ref{hyp:Phi}, see \Cref{thm:Properties (Pee)}.
Passing to the limit in these approximating problems, we are able to construct a $C_0$-semigroup, denoted by $\{S_t\}_{t\geq 0}$, of weak solutions defined for any initial datum $\rho_0 \in \mathcal A$, see \Cref{thm:left side of D}. This semigroup $\{S_t\}_{t\geq 0}$, referred as \goodSemigroup{} in the sequel, enjoys mass conservation, comparison principle, $L^1$-contraction, and free-energy dissipation, see \Cref{def:Free-energy dissipating semigroup}. This notion of semigroup allows us next to study the long-time behaviour, leading to the first global-in-time existence result for this family of equations in higher dimensions allowing for free boundaries both at zero density and saturated density value $\alpha$.

The second goal of our analysis is to study the minimisation of the free-energy functional $\mathcal F$ in the class of admissible densities $\mathcal A$.
We obtain the Euler-Lagrange conditions for the $L^1$-local minimisation, see \Cref{thm:Euler-Lagrange}.
When $U$ is strictly convex, we show that the unique local minimiser for a fixed mass is explicit,
\[
    \widehat \rho (x) = \min\Bigg\{ \alpha, \Big(  (U')^{-1} (C_0 - V(x))     \Big)_+   \Bigg\}
\]
where the constant $C_0$ comes from the mass constraint.
Notice that this is a truncation by $\alpha$ of the usual family of minimisers for the linear-mobility case.
The Euler-Lagrange conditions are already well-understood for the linear mobility case, but this seems to be new in the literature for minimisation in the set $\mathcal A$
although related to constrained minimisation problems as in \cite{CT20}.

We next focus on the long-time behaviour of the constructed solutions  showing that there exists a \timeLimit{} $S_\infty: \mathcal{A} \rightarrow \mathcal{A}$, see \Cref{def:timeLimit}, such that for any $\rho_0 \in \mathcal A$, we have asymptotic time convergence of the constructed semigroup $\{S_t\}_{t\geq 0}$, that is
\[
    S_t \rho_0 \to S_\infty \rho_0, \qquad \text{in } L^1 (\Omega) \text{ as } t \to \infty.
\]
We show that $S_\infty$ is still an $L^1$-contraction.
Hence, the $\omega$-limit set $\{S_\infty \rho_0 : \rho_0 \in \mathcal A\}$ is an $L^1$-continuous subset of $\mathcal A$, see \Cref{thm:long-time behaviour}.

We further analyse the structure of the $\omega$-limit set. The classical solutions for the approximating problems have a unique element in the $\omega$-limit set, i.e., the global attractor, corresponding to the unique constant-in-time solution and the unique global (and $L^1$-local) minimiser of the free energy, see \Cref{th:ee continuous limit}. Under certain convexity assumption for the nonlinear diffusion, we can characterize fully the $\omega$-limit set again given by the unique constant-in-time solution and the unique global (and $L^1$-local) minimiser of the free energy, see \Cref{thm:Asymptotic (P0)}. On the other hand, we construct examples of degenerate non-linearities where $S_\infty \rho_0$ is not an $L^1$-local minimiser of $\mathcal F$,  but only  saddle points of the free energy, see \Cref{fig:Mass_saddle_point}.

Finally, we are able to justify mathematically the behaviour numerically observed in \cite{BCH23}: the appearance of kinks in the long-time behaviour and the complicated structure of the $\omega$-limit set when non-linearities are degenerate combined with a saturated-type mobility.

\subparagraph{Numerical analysis}
The design of numerical schemes for aggregation  and drift-diffusion equations is a crucial tool to understand the dynamics of this family of equations. In particular, we need to develop methods that keep the structural properties of the gradient flow of densities:
the non-negativity of the solution, the dissipation property, and a corresponding set of stationary states which capture the long-time asymptotics. Finite-volume methods allow us to obtain schemes with these properties. In \cite{Bessemoulin-ChatardFilbet12}, the authors propose first and second-order-accurate finite-volume schemes treating non-linear diffusion equations as a non-linear continuity equation. Another method is proposed in \cite{CarrilloChertockHuang15} for aggregation-diffusion equations. Moreover, a generalisation for high-order approximations is proposed in \cite{SunCarrilloChi-Wang18}. In \cite{AlmeidaBubbaPerthamePouchol19}, the authors propose several fully discrete, implicit-in-time discretizations for the Keller-Segel model in one dimension. This work is generalised in \cite{BailoCarrilloHu20}, where the authors introduce a fully discrete (in both space and time) implicit finite-volume scheme for the aggregation-diffusion equation with linear mobility $\mob (s) = s$.
Furthermore, as it is shown in \cite{BailoCarrilloMurakawaSchmidtchen20}, this method converges under suitable assumptions on the diffusion functions and potentials involved and assumptions on the boundary conditions.
In \cite{BCH23}, the authors extend this scheme to cover non-linear mobilities of saturation-type.

Here, we focus on a variation of the implicit finite-volume scheme introduced and analysed in \cite{BCH23}, where we study the case $\mob(s) = \mobnd(s) \mobni(s)$
where $\mobnd$ is non-decreasing and $\mobni$ is non-increasing. Our main results show that the proposed implicit finite volume scheme is well-defined, convergent and structure preserving together with a characterisation of the long-time behaviour of the fully discrete scheme. Moreover, we show that the long-time asymptotics of the numerical scheme capture the long-time behaviour of the constructed solutions to \eqref{eq:The problem}. More precisely, we prove well-posedness, free-energy dissipation, mass conservation, a discrete $L^1$ contraction property, and a comparison principle for the already mentioned method and an approximating version of it, see \Cref{thm:scheme well-posedness}. Furthermore, to keep the analogy with the continuous case and the $C_0$-semigroup theory, we show that our method admits a \goodNumericalProblem{}, see \Cref{def:goodNumericalProblem}. We also show that under high regularity of the solution the scheme converges, see \Cref{thm:Disc to continuous}.

We finally conclude by discussing the long-time behaviour of the numerical scheme, and its rate of convergence to the long-time behaviour of the continuous problem \eqref{eq:The problem}. For the approximating problem, we also prove that the long-time behaviour coincides with the unique constant-in-time solution and the global attractor, see \Cref{thm:discrete approx asymptotics}, analogously to the continuous problem. We also analyse the existence of a time-limit operator for the numerical scheme reproducing the theory studied at the continuous level, see \Cref{thm:Asymptotics discrete P0}. Moreover, we also show examples with complicated long-time asymptotics leading to free boundaries, infinitely many steady states with large basin of attraction, and saturation effects leading to “freezing” behaviour, i.e., free boundaries at the saturation level $\alpha$.

\subparagraph{Open problems}
Showing uniqueness of the constructed free-energy dissipating solutions is an interesting open problem. We may expect uniqueness of enhanced notions of solution as entropy solutions (see, e.g., \cite{Carrillo99,Karlsen_Risebro03}), but we do not deal with this question in this work. We only prove convergence of the numerical scheme as the mesh is refined in the case where the solution to the continuous problem is very regular. It would be interesting to have a proof of the convergence of the numerical scheme that does not use information of the continuous solution or reduces the regularity needed. We do not discuss higher regularity of the solutions. The study of $C^\alpha$ regularity is an interesting open problem, specially in the cases with free boundary or freezing behaviour. The problem with $V$ replaced by the aggregation term $V + W*\rho$ is completely open. Our results can be used to prove existence of a semigroup of solutions. However, there is no  $L^1$ contraction or comparison principle for general $W$ and their long-time behaviour is a difficult problem as numerically investigated in \cite{BCH23}.

\subparagraph{Structure of the paper.} In \Cref{sec:Main results}, we introduce the hypotheses,
the notion of solutions and the numerical scheme and present our main Theorems. In \Cref{sec:Analysis of Pee}, we study the existence of weak solutions, proving \Cref{thm:Properties (Pee),thm:left side of D}. In \Cref{sec:Local minimiser}, we analyse local minimisers of the free energy, proving \Cref{thm:Euler-Lagrange}. In \Cref{sec:Long time behaviour}, we focus on the long-time behaviour for both the regularised and the original problem, i.e. \Cref{thm:long-time behaviour}, and, subsequently, in \Cref{sec:Analysis long time}, we deal with its $\omega$-limit, \Cref{th:ee continuous limit,thm:Asymptotic (P0)}. Finally, we devote \Cref{sec:Numerical Analysis} to the numerical analysis of the implicit finite volume scheme, and, in particular, we prove \Cref{thm:scheme well-posedness,thm:Disc to continuous,thm:discrete approx asymptotics,thm:Asymptotics discrete P0}. The main goals of this work are schematically described in the diagrams \eqref{Diagram} and \eqref{Numeric Diagram}, which we present in \Cref{sec:Main results}.

\section{Main Results}\label{sec:Main results}

The aim of this section is to present our main results and the key ideas of their proof. Our focus is the initial value problem
\begin{equation}
    \tag{P}
    \label{eq:the problem Omega}
    \begin{dcases}
        \frac{\partial \rho}{\partial t} = \diver \left(  \mob (\rho) \nabla \left( U' (\rho) + V \right)  \right)  \qquad & \mathrm{in } \, (0, \infty ) \times \Omega,          \\
        \mob (\rho)  \nabla (U'(\rho) + V ) \cdot \nu (x) = 0                                                              & \mathrm{on } \, (0, \infty ) \times \partial \Omega, \\
        \rho (0, x) = \rho_0 (x)                                                                                           & x \in \Omega ,
    \end{dcases}
\end{equation}
where $\Omega \subseteq \Rd$ is a bounded, connected, and smooth domain. We make an additional technical assumption on the non-linearities.

\begin{enumerate}[label=(H$_{\arabic*}$),start=5]
    \begin{subequations}

        \item
        \label{hyp:Phi}
        We assume that the diffusion is continuous, in the sense that
        \begin{equation}
            \label{hyp:Phi' in L1}
            \mob U''  \in L^1(0,\alpha),
        \end{equation}
        and we define
        \[
            \Phi(s) \coloneqq \int_{s_0}^s \mob(\tau) U''(\tau) \diff \tau.
        \]
        Furthermore, we also assume that $\Phi$ is strictly increasing at $0$ and $\alpha$, i.e.,
        \label{hyp:Properties Phi}
        \begin{equation}
            \label{eq:Assumption Existence}
            \Phi(0) < \Phi (s) < \Phi (\alpha) \quad \text{for all } s \in (0, \alpha).
        \end{equation}
        Lastly, we impose a technical regularity condition which will be suitable for compactness estimates
        \begin{align}
            \label{eq:hypothesis Phi/Phi' ee = 0}
            \sup_{s \in [0,\alpha] }\left|\frac{(\Phi (\alpha)-\Phi (s))\Phi(s)}{\Phi'(s)} \right| + \left|\frac{(\Phi (s)-\Phi (0))\Phi(s)}{\Phi'(s)} \right| & < \infty.
        \end{align}
    \end{subequations}
\end{enumerate}

\noindent In order to study the steady states of \eqref{eq:the problem Omega} we will sometimes assume some of the following strict convexity to different degrees
\begin{align}
    \tag{SC$_U$}
    \label{eq:U locally strictly convex}
                            & U''(s)  >0 , \qquad \text{for a.e. } s \in (0,\alpha) . \\
    \tag{USC$_U$}
    \label{eq:U uniformly strictly convex}
    \inf_{s \in (0,\alpha)} & U''(s)  >0 .
\end{align}
For certain statements on numerical schemes, we will assume
\begin{equation}
    \label{eq: U is C1[0,alpha]}
    U \in C^1([0,\alpha]).
\end{equation}

\begin{remarks*}
    Notice that, since $\mob \in C^1((0,\alpha))$ and $U \in C^3((0,\alpha))$, then $\Phi \in C^2((0,\alpha))$.

    These main hypothesis \hyperref[hyp:H]{(H)}   and \eqref{eq:U locally strictly convex}
    are satisfied for the Porous Medium / Fast Diffusion cases, for example, if $\Phi$ is $C^2((0,\alpha))$ and
    \[
        \Phi'(s) = \begin{dcases}
            a(s) s^{m_1-1}             & \text{if } s \in (0,s_1),       \\
            -b(s) (\alpha - s)^{m_2-1} & \text{if } s \in (s_2, \alpha).
        \end{dcases}
    \]
    For $m_1, m_2 > 0$ and $0 < \underline a \le a(s) \le \overline a, 0 < \underline b \le b(s) \le \overline b$.
    The hypothesis \eqref{eq:U uniformly strictly convex} only holds if $m_1, m_2 < 1$.
    The hypothesis \eqref{eq: U is C1[0,alpha]} only holds if $m_1 , m_2 > 1$. With respect to assumption \ref{hyp:Phi}, if $m_1, m_2 > 0$ then $\Phi$ fulfills \ref{hyp:Phi}. However, if either $m_1$ or $m_2$ is strictly smaller than $-1$, i.e. Ultrafast Diffusion, then the assumption \ref{eq:hypothesis Phi/Phi' ee = 0} does not hold. Furthermore, let us remark that the mobility $\mob$ does not need to be concave in order to fulfill \ref{hyp:Phi}. For example, the case $\mob (s) = s^2 (\alpha - s^2)$ and $U(s) = s^m$ with $m > 0$.
\end{remarks*}

Our first aim is to construct a family of approximating problem \eqref{eq:the problem regularised} with a well-posedness theory in the classical sense. We next use these approximating problems \eqref{eq:the problem regularised} to obtain existence of \eqref{eq:the problem Omega} by compactness arguments. We study the long-time behaviour $t \rightarrow \infty$ for both problems  \eqref{eq:the problem regularised} and \eqref{eq:the problem Omega}. Furthermore, we also discuss whether the limits $t \rightarrow \infty$ and $\ee \rightarrow 0$ commute. The following diagram describes the different questions we analyse in the analytical part of this work.
\begin{equation}\label{Diagram}\tag{D$_1$}
    \hspace{2cm}
    \begin{tikzcd}[color=black, column sep = 2cm]
        \rho_t^{(\ee)} \text{ solution to \eqref{eq:the problem regularised}}
        \ar[r,"t \to \infty"]
        \ar[d,"\varepsilon \to 0",shift right=1.25cm]
        &
        \begin{minipage}{.45\linewidth}
            \begin{flushleft}
                $
                    \widehat \rho^{(\varepsilon)}(x)
                    =(U_\varepsilon')^{-1} \Big( C_\ee   - V (x)         \Big)
                $
            \end{flushleft}
        \end{minipage}
        \ar[d,"\varepsilon \to 0",  shift right=3.25cm]
        \\
        \rho_t^{(0)} \text{ solution to \eqref{eq:the problem Omega}}
        \ar[r,"t \to \infty" below, "?" above, dotted]
        &
        \begin{minipage}{.45\linewidth}
            \begin{flushleft}
                $
                    \widehat{\rho}^{(0)} (x)
                    =T_{0 , \alpha} \circ (U')^{-1} \Big( C_0   - V (x)         \Big).
                $
            \end{flushleft}
        \end{minipage}
    \end{tikzcd}
    \hspace{-2cm}
\end{equation}
Notice that we provide a counterexample for the commutativity of the diagram in \cref{sec:malicious counterexamples}. In fact, the regularisation \eqref{eq:the problem regularised} chooses always the $L^1$-local minimiser as $\ee \rightarrow 0$, see \Cref{th:ee continuous limit,thm:Asymptotic (P0)} (taking first $t\rightarrow \infty$ and then $\ee \rightarrow 0$ in \eqref{Diagram}). Nevertheless there exists some cases for which the diagram is commutative, for instance $\mob(s) = s (\alpha - s)$, $U(s) = s\log(s) $ and $V(x) = |x|^2$. 
A more complete version of this diagram, including the numerical results, is provided at the end of this section in \eqref{Numeric Diagram}.
Here, we use the truncation function defined as
\[
    T_{0,\alpha} (s) = \begin{dcases}
        \alpha & \text{if } s > \alpha ,     \\
        s      & \text{if }s \in [0,\alpha], \\
        0      & \text{if }s < 0.
    \end{dcases}
\]
We use the notation $T_{0,\alpha} \circ (U')^{-1}$ in a generalised sense.
Recall that $U' : (0,\alpha) \to \mathbb R$ is non-decreasing. We define
\begin{align}
    \label{eq:defn xi underline and overline}
    \underline{\zeta} \coloneqq U'(0^+ ) \quad \text{and} \quad    \overline{\zeta} \coloneqq U'(\alpha^-).
\end{align}
Either of these values can be infinite.
With this definition, we define
\begin{equation*}
    T_{0,\alpha} \circ (U')^{-1} (\zeta) \coloneqq \begin{dcases}
        \alpha           & \text{if } \zeta \ge \overline{\zeta},                      \\
        (U')^{-1}(\zeta) & \text{if } \zeta \in (\underline{\zeta}, \overline{\zeta}), \\
        0                & \text{if } \zeta \le \underline{\zeta} .                    \\
    \end{dcases}
\end{equation*}

\begin{figure}[H]
    \centering
    \includegraphics[width=0.75\textwidth]{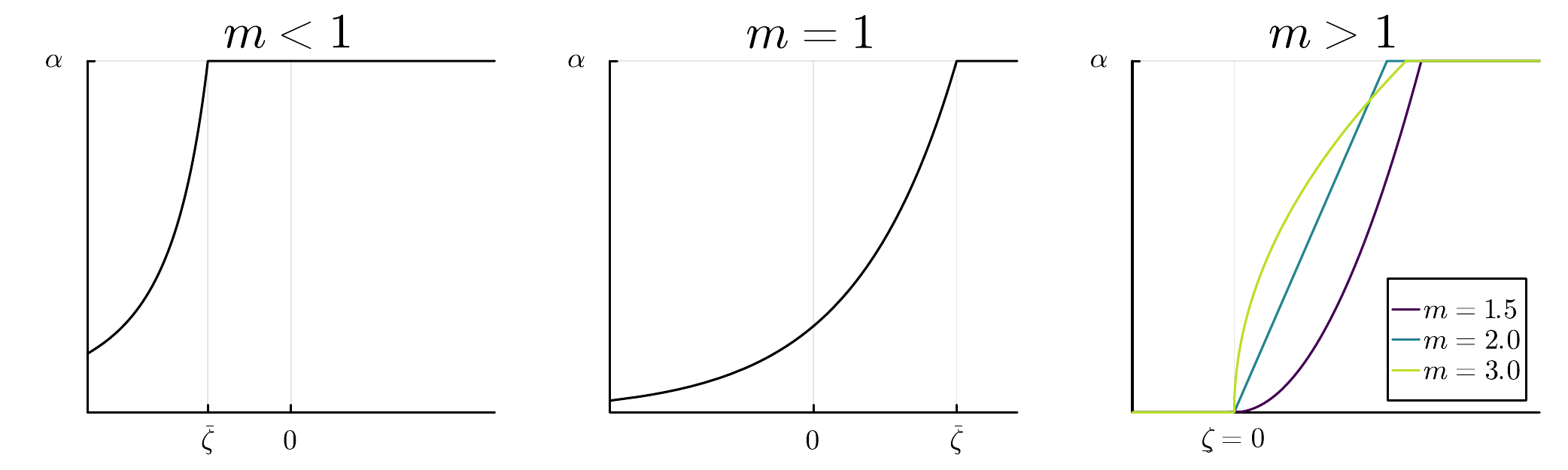}
    \caption{$\inversedU (\zeta)$ for $U(s)= \frac{s^m}{m-1}$ and different choices of the exponent $m$.}
    \label{fig:Figure_for_Theta_m}
\end{figure}

\subsection{Notions of solution}

Throughout this manuscript we use the following notion of weak solution.

\begin{definition}[Weak solution]
    \label{def:Weak solutions P0}
    We say $\rho$ is a weak solution of the problem \eqref{eq:the problem Omega} in $(0,T) \times \Omega$ if $\rho \in L^1((0,T) \times \Omega)$, $\Phi (\rho) \in L^2(0,T; H^1(\Omega))$, and
    \begin{equation*}
        \int_{\Omega} \rho_0 \varphi (0) + \int_0^T \int_{\Omega} \rho_t \frac{\partial \varphi}{\partial t}  = \int_0^T \int_{\Omega} \mob (\rho) \nabla ( U'(\rho) + V) \cdot \nabla \varphi ,
    \end{equation*}
    for all $\varphi \in C^{\infty}([0,T] \times \Omega)$ such that $\varphi(T,\cdot) = 0$. Let us recall that $\nabla \Phi(\rho) = \mob(\rho) \nabla U'(\rho)$.
    Respectively, for $\ee > 0$ and the problem \eqref{eq:the problem regularised}, we consider the analogous version of weak solution.
\end{definition}
Beside $\mathcal A$, we consider the following sets of initial data
\begin{align*}
    \mathcal{A}_+ & \coloneqq \left\{ \rho \in L^1 (\Omega) : \exists \delta > 0 \text{ s.t. } \delta \leq \rho \leq \alpha - \delta \right\}, \\
    \mathcal A_M  & \coloneqq \left\{ \rho \in \mathcal A :  \|\rho\|_{L^1(\Omega)} = M
    \right\} .
\end{align*}
We will work with the notion of semigroup of solutions (possibly non-unique) for \eqref{eq:the problem Omega} as follows.
\begin{definition}
    \label{def:Free-energy dissipating semigroup}
    We say that $S_t : \mathcal A \to \mathcal A$ is a \goodSemigroup{}  of solutions for \eqref{eq:the problem Omega} if
    \begin{enumeratedefinition}
        \item
        \label{item:Semi-group weak solution}
        For $\rho_0 \in \mathcal A$, $\rho_t = S_{t}\rho_0$ is a weak solution to \eqref{eq:the problem Omega}.

        \item
        \label{it:S^eps is C0}
        $S_t$ is a $C_0$-semigroup in $L^1$, i.e., for $t, h > 0$ we have
        \[
            S_{t+h} = S_t S_h, \qquad \lim_{t \to 0^+}\| S_t \rho_0 - \rho_0 \|_{L^1 (\Omega)} = 0 \text{ for all } \rho_0 \in \mathcal A.
        \]

        \item
        \label{it:S^eps is Lp continuous}
        $S_t : \mathcal A \to \mathcal A$ is an $L^1$-contraction,
        i.e., for any $\rho_0, \eta_0 \in \mathcal A$ we have that
        $
            \|S_t \rho_0 - S_t \eta_0 \|_{L^1(\Omega)} \le \|\rho_0 - \eta_0\|_{L^1(\Omega)}.
        $

        \item
        \label{item:W-11 equicontinuity}
        Free-energy dissipation and $C^{\frac 1 2}_{loc}([0,\infty), W^{-1,1}(\Omega))$ continuity:
        If $\rho_0 \in \mathcal A_+$ then calling $\rho_t = S_t \rho_0$ we have
        \begin{align}
            \nonumber
            \text{For all } 0 < t_1 < t_2 \text{ we get } \qquad                           &                                                      \\
            \label{eq:free energy dissipation}
            \int_{t_1}^{t_2} \int_\Omega \mob(\rho_\sigma) |\nabla(U'(\rho_\sigma) + V)|^2 & \le \mathcal F[\rho_{t_1}] - \mathcal F[\rho_{t_2}], \\
            \label{eq:W-11 continuity}
            \| \rho_{t_2} -  \rho_{t_1} \|_{W^{-1,1}(\Omega)}
                                                                                           & \leq
            \| \mob \|_{L^\infty(0,\alpha)}^{\frac{1}{2}}
            | \Omega |^{\frac{1}{2}} \left( \mathcal{F} [\rho_{t_1}] - \mathcal{F} [\rho_{t_2}] \right)^{\frac{1}{2}} |t_2-t_1|^{\frac{1}{2}}.
        \end{align}
        In particular, $t \mapsto \mathcal F[S_t \rho_0]$ is non-increasing.
    \end{enumeratedefinition}
\end{definition}
\begin{remarks*}
    Notice that for a $C_0$-semigroup it follows $t \mapsto S_t \rho_0$ in $C([0,T]; L^1(\Omega))$. Since $\rho \ge 0$, weak solutions are mass preserving by using the test function $\varphi(t,x) = 1$, i.e.,
    $
        \|S_t \rho_0\|_{L^1(\Omega)} = \|\rho_0\|_{L^1 (\Omega)}.
    $
    For $\rho \in \mathcal A$ we have $0 \le \| \rho \|_{L^1} \le \alpha |\Omega|$, mass conservation implies that
    $S_t 0 = 0$ and $S_t \alpha = \alpha$.
    Using that $s_+ = \frac{s+|s|}{2}$, mass conservation and the $L^1$-contraction directly imply the $L^1$ comparison principle
    \[
        \| (S_t \rho_0 - S_t \eta_0)_+ \|_{L^1(\Omega)}
        \le \|(\rho_0 - \eta_0)_+\|_{L^1(\Omega)}, \qquad \text{for all } \rho_0, \eta_0 \in \mathcal A.
    \] 
    Let us finally point out that we recall the definition of the negative Sobolev space and some properties at the beginning of \Cref{sec:Analysis of Pee}.
\end{remarks*}

\subsection{Existence of solutions for \texorpdfstring{\eqref{eq:the problem Omega}}{(P)} by approximation}\label{sec:approximating problem}

Consider $\ee \in (0,1]$.
We will work on approximating problems of the form
\begin{equation}
    \tag{P$_\ee$}
    \label{eq:the problem regularised}
    \begin{dcases}
        \frac{\partial \rho}{\partial t} = \Delta \Phi_{\varepsilon} (\rho ) + \diver \left(  \mobee (\rho ) \nabla V  \right)  \qquad & \mathrm{in} \, (0, \infty ) \times \Omega,          \\
        \left(\mobee (\rho )  \nabla (U_{\varepsilon}'(\rho ) + V ) \right) \cdot \nu (x) = 0                                          & \mathrm{on} \, (0, \infty ) \times \partial \Omega, \\
        \rho (0, x) = \rho_0 (x)                                                                                                       & x \in \Omega.
    \end{dcases}
\end{equation}
Here we regularise the mobility and the non-linear diffusion.
We also make the problem uniformly elliptic by assuming $\Phi_\ee \in C^3([0,\alpha])$ such that
\begin{equation}
    \label{eq:properties of Phi_ee}
    \begin{gathered}
        \underline {\Phi'_\ee} (s) \le \Phi_{\varepsilon}'(s) \le
        (1+\ee)
        \underline {\Phi'_\ee} (s)
        \quad  \text{where} \quad
        \underline {\Phi'_\ee} (s) =  \min (\Phi'(s),
        \kappa(\ee)^{-1}) + \ee ,
        \\
        \underline {\Phi_\ee} (s_0) = \Phi_{\ee} \left(s_0 \right) = 0
        \text{ for $s_0$ given by \eqref{eq:s0}},
        \\
        \Phi_\ee \to \Phi  \text{ in } C^2_{loc} ((0,\alpha)),
    \end{gathered}
\end{equation}
where $\kappa(\ee) \to 0$ as $\ee \to 0$.
Given any family $\kappa(\ee) \to 0$ and due to \eqref{eq:s0}, it is easy to construct such family of $\Phi_\ee$.

We also construct a suitable $\mobee \in C^1([0,\alpha])$ such that, for $\ee > 0$,
\begin{equation}
    \tag{M$_\ee$}
    \label{hyp:Sublinear saturation}
    \begin{aligned}
         & \mob_\ee(0) = \mob_\ee(\alpha) = 0, \qquad \mob_\ee > 0 \text{ in } (0,\alpha), \qquad
        \text{and } |\mobee'| > 1 \text{ near } 0 \text{ and } \alpha .
    \end{aligned}
\end{equation}
To connect this problem with \eqref{eq:the problem Omega} we consider the approximation of $U$ given by the conditions $U_\ee \in
    C^{3,1}((0,\alpha))$, $U_{\varepsilon} (\frac{\alpha}{2}) = U (\frac \alpha 2 ),  U_{\varepsilon}'(\frac{\alpha}{2}) = U'(\frac \alpha 2)$, and
we define
$
    U_{\varepsilon}'' (s) \coloneqq \Phi_{\varepsilon}' (s) / \mob_\ee (s) .
$
By construction, we already have $U_\ee'' \to U''$ point-wise in $(0,\alpha)$.
We make a few more assumptions for the convergence as $\ee \to 0$, namely
\begin{equation}
    \tag{M$_0$}\label{eq:mobee a.e. mob}
    \begin{aligned}
        \mobee \rightarrow \mob \text{ pointwise as } \ee \to 0, \qquad
        \sup_{\ee \in (0,1)} \| U_\ee' \|_{L^1((0, \alpha))} < \infty \text{ and } \\
        (\ee,s) \mapsto \mobee (s) \text{ is } C([0,1] \times [0,\alpha]) \cap C^\infty_{loc} ((0,1]\times[0,\alpha]).
    \end{aligned}
\end{equation}
In \cref{sec:Well Posedness Pee} we construct such regularised mobilities.
\begin{lemma}
    \label{lem:mobee exists}
    There exists $\kappa(\ee) \to 0$ and $\mobee$ such that \eqref{hyp:Sublinear saturation} and \eqref{eq:mobee a.e. mob} hold.
\end{lemma}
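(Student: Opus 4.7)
The plan is to construct $\mobee$ piecewise: keep $\mobee = \mob$ on a bulk subinterval $[\delta(\ee), \alpha - \delta(\ee)]$ and replace $\mob$ by a steeper linear profile on the shrinking boundary layers near $0$ and $\alpha$, so that the slope condition $|\mobee'| > 1$ in \eqref{hyp:Sublinear saturation} is imposed by hand. Setting $\mob_0 \defeq \mob$, the technical work is then to coordinate the layer width $\delta(\ee) \to 0$ with the cutoff $\kappa(\ee) \to 0$ from \eqref{eq:properties of Phi_ee} so that the uniform $L^1$ bound on $U_\ee'$ required by \eqref{eq:mobee a.e. mob} holds.

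Concretely, I would fix a smooth decreasing $\delta : (0,1] \to (0, \min(s_0, \alpha - s_0))$ with $\delta(\ee) \to 0$, set $\mobee(s) = 2s$ on $[0, \delta(\ee)/2]$ and $\mobee(s) = 2(\alpha - s)$ on $[\alpha - \delta(\ee)/2, \alpha]$, $\mobee = \mob$ on $[\delta(\ee), \alpha - \delta(\ee)]$, and bridge between them on the two transition intervals via a smooth $C^1$ cutoff matching values and slopes at both endpoints. Since all coefficients depend smoothly on $\ee$, one gets $\mobee \in C^1([0,\alpha])$ for every $\ee \in (0,1]$, and the map $(\ee, s) \mapsto \mobee(s)$ lies in $C([0,1] \times [0,\alpha]) \cap C^\infty_{\mathrm{loc}}((0,1] \times [0,\alpha])$; joint continuity at $\ee = 0$ is immediate since on any compact $K \subset (0,\alpha)$ we have $\mobee \equiv \mob$ on $K$ for small $\ee$, while near the endpoints $\mobee(s) \lesssim s \wedge (\alpha - s) \to 0$. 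By design $\mobee > 0$ on $(0,\alpha)$, $\mobee(0) = \mobee(\alpha) = 0$ and $|\mobee'| = 2$ on each boundary layer, giving \eqref{hyp:Sublinear saturation} and the regularity and pointwise convergence parts of \eqref{eq:mobee a.e. mob}.

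The remaining assertion is $\sup_\ee \|U_\ee'\|_{L^1(0,\alpha)} < \infty$. Using $U_\ee'' \ge 0$ and Fubini,
\[
\|U_\ee'\|_{L^1(0,\alpha)} \le \alpha |U'(\tfrac{\alpha}{2})| + \int_0^\alpha \min(\tau, \alpha - \tau) \, \frac{\Phi_\ee'(\tau)}{\mobee(\tau)} \, d\tau.
\]
On the bulk the inequality $\Phi_\ee' \le (1+\ee)(\Phi' + \ee)$ splits the integrand into $(1+\ee) \min(\tau, \alpha - \tau) U''(\tau)$, whose integral is at most $(1+\ee) \|U'\|_{L^1}$ by integration by parts using \ref{hyp:H_U}, and a remainder of order $\ee \int_{\delta(\ee)}^{\alpha - \delta(\ee)} \min(\tau, \alpha - \tau)/\mob(\tau) \, d\tau$. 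On each boundary layer, the design bound $\mobee(\tau) \ge \tau$ (respectively $\ge \alpha - \tau$) yields
\[
\int_0^{\delta(\ee)} \tau \, \frac{\Phi_\ee'(\tau)}{\mobee(\tau)} \, d\tau \le \int_0^{\delta(\ee)} \Phi_\ee'(\tau) \, d\tau = \Phi_\ee(\delta(\ee)) - \Phi_\ee(0),
\]
uniformly bounded thanks to \ref{hyp:Phi}. The main obstacle is exactly the coordinated choice of $\delta(\ee)$ and $\kappa(\ee)$ that keeps the bulk remainder bounded: one can first let $\delta(\ee) \to 0$ slowly enough that $\ee \int_{\delta(\ee)}^{\alpha - \delta(\ee)} \min(\tau, \alpha - \tau)/\mob(\tau) \, d\tau$ stays $O(1)$ (possible since $\mob$ only vanishes at the endpoints), and then pick $\kappa(\ee) \to 0$ compatibly with the $C^2_{\mathrm{loc}}$ convergence $\Phi_\ee \to \Phi$ in \eqref{eq:properties of Phi_ee}, which closes the bound.
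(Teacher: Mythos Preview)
Your approach has a genuine gap: the requirement in \eqref{eq:mobee a.e. mob} that $(\ee,s)\mapsto\mobee(s)$ lie in $C^\infty_{\mathrm{loc}}((0,1]\times[0,\alpha])$ cannot be met by setting $\mobee=\mob$ on the bulk $[\delta(\ee),\alpha-\delta(\ee)]$, because \ref{hyp:mobility} only assumes $\mob\in C([0,\alpha])\cap C^1((0,\alpha))$. Your patched function is then at most $C^1$ in $s$ (that is the regularity you match at the transition points), and in particular need not be $C^2$; yet the paper later uses $\|\mobee''\|_{L^\infty}$ in the $L^1$-contraction argument. The paper's construction avoids this by never forcing $\mobee=\mob$: it builds lower and upper envelopes $\underline\mob\le\overline\mob$ (linear of slope $\ge 1$ near the endpoints, equal to $\mob$ and $(1+\ee)\mob$ respectively in the bulk), invokes the Kat\v{e}tov--Tong insertion theorem to produce a continuous function between them after dividing by $s(\alpha-s)$, and then inductively smooths on nested sets $K_n=[\tfrac{1}{n+1},1]\times[0,\alpha]$ via $C^\infty$ cutoffs in the $\ee$-variable. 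The slack $\overline\mob-\underline\mob\ge \ee\mob$ in the bulk is precisely what leaves room to mollify in $s$.

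There is also a secondary issue in your $L^1$ bound: you invoke $\mobee(\tau)\ge\tau$ on the whole layer $[0,\delta(\ee)]$, but your construction only enforces $\mobee(\tau)=2\tau$ on $[0,\delta(\ee)/2]$; in the transition zone $[\delta(\ee)/2,\delta(\ee)]$ the interpolant must descend to $\mob(\delta(\ee))$, which can be $o(\delta(\ee))$ (e.g.\ $\mob(s)\sim s^2$), so the inequality fails there. This is fixable---the paper handles it by taking $\underline\mob(\ee,s)\ge s$ throughout the full layer and using the crude bound $\Phi_\ee'\le(1+\ee)(\kappa(\ee)^{-1}+\ee)$ on $[0,\kappa(\ee)]$---but your argument as written does not cover it. Your coordination idea for $\delta(\ee)$ and $\kappa(\ee)$ is essentially the right one; the paper implements it by defining $\kappa(\ee)$ via an infimum that forces $\ee\int \chi_{[\kappa,\alpha-\kappa]}/\mob$ to be $O(\ee^{1/2})$.
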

Moreover,  if we consider the free energy of the regularised problem  
\begin{equation}\label{eq:Free-energy}
    \mathcal{F}_{\varepsilon} [\rho] = \int_{\Omega} U_{\varepsilon}(\rho (x)) \, dx + \int_{\Omega} V(x) \rho (x) \, dx ,
\end{equation}
we can rewrite \eqref{eq:the problem regularised} again as a formal generalized mobility Wasserstein-type gradient flow.

First, we will prove a well-posedness result for \eqref{eq:the problem regularised}.
\begin{theorem}[Well-posedness of \eqref{eq:the problem regularised}]\label{thm:Properties (Pee)}
    Let $\ee > 0$ be fixed and assume \eqref{hyp:Sublinear saturation} and \eqref{eq:properties of Phi_ee}. It follows that:
    \begin{enumeratetheorem}
        \item
        \label{item:(Pee) classical solutions}
        If $\rho_0 \in \mathcal{A}_+ \cap C^2 (\overline{\Omega})$, then problem \eqref{eq:the problem regularised} has a unique classical solution.

        \item
        \label{item:(Pee) good semigroup}
        These classical solutions can be uniquely extended to a \goodSemigroup{}  for \eqref{eq:the problem regularised}, denoted by $S_t^{(\ee)}$.

        \item
        \label{it:S^eps maximum principle}

        If $\rho_0 \in \mathcal A \setminus \{0,\alpha\}$ then $0 < S_t^{(\ee)} \rho < \alpha$ in $\Omega$ for $t > 0$.

        \item
        \label{it:S^eps strict positivity}
        $S_t^{(\ee)} : \mathcal A_+ \to \mathcal A_+$.
    \end{enumeratetheorem}
\end{theorem}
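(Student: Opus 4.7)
The plan is to build from smooth, strictly-separated initial data — where classical parabolic theory applies to the uniformly parabolic regularised equation — and then extend to all of $\mathcal A$ via density and $L^1$-contraction.

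\textbf{Parts (i) and (iii).} The regularisation \eqref{eq:properties of Phi_ee} gives $\Phi_\ee' \geq \ee > 0$, so the equation in \eqref{eq:the problem regularised} is quasilinear and uniformly parabolic with smooth coefficients on $[0,\alpha]$. For $\rho_0 \in \mathcal A_+ \cap C^2(\overline\Omega)$, I would invoke standard quasilinear parabolic theory with Neumann conditions (Ladyzhenskaya--Solonnikov--Uraltseva, or Amann) to obtain a unique classical solution on a maximal time interval. Since $\mobee$ vanishes at $0$ and $\alpha$, the constants $0$ and $\alpha$ are themselves classical solutions, and the comparison principle for uniformly parabolic operators forces $0 \leq \rho_t \leq \alpha$; this a priori $L^\infty$ bound combined with Schauder estimates excludes blow-up and gives global existence, proving (i). For (iii), parabolic regularisation makes the $L^1$ solution of part (ii) classical for $t > 0$, and then the strong maximum principle applied to $\rho_t$ and to $\alpha - \rho_t$ yields the strict bounds whenever $\rho_0 \not\equiv 0, \alpha$.

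\textbf{Part (ii).} For two classical solutions, a Kruzhkov-type argument — subtracting the equations and testing with a smooth approximation of $\sign_+(\rho^1_t - \rho^2_t)$ — yields
\[
\frac{d}{dt} \int_\Omega (\rho^1_t-\rho^2_t)_+ \dx \leq 0,
\]
using monotonicity of $\Phi_\ee$, Lipschitzness of $\mobee$, and the no-flux boundary condition. Mass conservation follows from testing with $\varphi \equiv 1$. Because $\mathcal A_+ \cap C^2(\overline\Omega)$ is $L^1$-dense in $\mathcal A$, the classical solution map extends uniquely by $L^1$-contraction to $S_t^{(\ee)} \colon \mathcal A \to \mathcal A$; the semigroup law is inherited from uniqueness of classical solutions, and the $C_0$ property comes from an $\eta/3$ argument combining contraction with continuity in time of classical solutions. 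For the free-energy dissipation \eqref{eq:free energy dissipation}, I would differentiate $\mathcal F_\ee[\rho_t]$ along classical solutions and integrate by parts, using the boundary condition, to obtain
\[
\frac{d}{dt}\mathcal F_\ee[\rho_t] = -\int_\Omega \mobee(\rho_t) |\nabla (U_\ee'(\rho_t)+V)|^2 \dx.
\]
The $W^{-1,1}$ continuity \eqref{eq:W-11 continuity} follows by duality: for $\varphi \in W^{1,\infty}(\Omega)$ with $\|\nabla\varphi\|_\infty \leq 1$, estimate $|\int_\Omega (\rho_{t_2}-\rho_{t_1})\varphi \dx|$ by $\int_{t_1}^{t_2} \int_\Omega \mobee(\rho_\sigma)^{1/2} \cdot \mobee(\rho_\sigma)^{1/2} |\nabla(U_\ee'(\rho_\sigma)+V)| \dx \, d\sigma$, then apply Cauchy--Schwarz in space and time together with the dissipation identity. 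Finally, propagate all properties from smooth data to general $\rho_0 \in \mathcal A$ by approximation, using lower semicontinuity of $\mathcal F_\ee$ and of the dissipation functional.

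\textbf{Part (iv), and the main difficulty.} Given $\rho_0 \in \mathcal A_+$ with $\delta \leq \rho_0 \leq \alpha - \delta$, I would construct spatially-constant sub- and super-solution barriers $v_-(t)$ and $v_+(t)$. For a spatial constant $v$ the PDE residual reduces to $v'(t) - \mobee(v)\Delta V$, so choosing $v_-$ and $w_+ := \alpha - v_+$ to satisfy the ODEs
\[
v_-'(t) = -C\,\mobee(v_-(t)), \qquad w_+'(t) = -C\,\mobee(\alpha - w_+(t)),
\]
with $v_-(0) = w_+(0) = \delta$ and $C = \|\Delta V\|_{L^\infty(\Omega)}$, produces valid sub-/super-solutions modulo an auxiliary boundary-flux argument (a Hopf-type lemma at points of extremum on $\partial\Omega$, using the Neumann condition). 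The crux is that the property $|\mobee'| > 1$ near $0$ and $\alpha$ in \eqref{hyp:Sublinear saturation} gives $\mobee(s) \gtrsim s$ near $0$ and $\mobee(s) \gtrsim \alpha - s$ near $\alpha$, so both $\int_0^\delta ds/\mobee(s)$ and $\int_{\alpha-\delta}^\alpha ds/\mobee(s)$ diverge, and the barrier ODEs never reach the degenerate endpoints in finite time. Comparison then yields $v_-(t) \leq S_t^{(\ee)}\rho_0 \leq v_+(t)$, so the semigroup preserves $\mathcal A_+$. This is the main technical step: parts (i)--(iii) follow from uniform parabolicity, but (iv) crucially exploits the tailored construction of $\mobee$ near the degeneracies to ensure the non-integrability of $1/\mobee$ that prevents finite-time saturation or vanishing.
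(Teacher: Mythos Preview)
Your approach for parts (i) and (ii) matches the paper closely: classical quasilinear parabolic theory for smooth separated data, a Kruzhkov-type $L^1$-contraction (the paper's Lemma~3.8), extension by density, the $\eta/3$ argument for the $C_0$ property, and direct differentiation of $\mathcal F_\ee$ along classical solutions for the dissipation identity.

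The substantive difference is in part (iv), which you flag as the main difficulty but which the paper dispatches in one line. Rather than time-dependent constant barriers solving ODEs, the paper uses the \emph{stationary} solutions
\[
(U_\ee')^{-1}(C_1 - V(x)) \le \rho_0 \le (U_\ee')^{-1}(C_2 - V(x)),
\]
which exist for any $\rho_0 \in \mathcal A_+$ because the hypothesis $|\mobee'| > 1$ near $0,\alpha$ forces $U_\ee' : (0,\alpha) \to (-\infty,\infty)$ to be a bijection. These are exact constant-in-time solutions with identically zero flux, so the no-flux boundary condition is satisfied automatically and comparison is immediate. Your ODE barriers $v_\pm(t)$ are also valid, but since the paper explicitly does \emph{not} assume $\nabla V \cdot \nu = 0$ on $\partial\Omega$, a spatial constant fails the boundary condition and you correctly note the need for a Hopf-type patch; the paper's choice sidesteps this entirely. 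A secondary advantage is that the stationary barriers give bounds uniform in $t$, which the paper reuses when passing the energy dissipation from $\mathcal A_+ \cap C^2$ to $\mathcal A_+$.

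For part (iii), you invoke parabolic smoothing to make the $L^1$ solution classical for $t>0$; the paper instead compares $\rho_0$ with a smooth bump $\eta_0$ supported in a small ball where $\rho_0 \ge c > 0$, applies the strong maximum principle (its Lemma~3.2) to the classical solution $S_t^{(\ee)}\eta_0$, and concludes by comparison. Both routes are fine; the paper's avoids having to justify regularisation for merely $L^1$ data.
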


We show \ref{item:(Pee) classical solutions}  in \Cref{sec:Well Posedness Pee} and discuss the remaining items in \cref{sec:Semigroup}.
We can now prove the left side of the diagram \eqref{Diagram}.

\begin{theorem}[Existence for \eqref{eq:the problem Omega}]
    \label{thm:left side of D}
    There exists a sequence $\ee_k \to 0$ and $S_t : \mathcal A \to \mathcal A$ a \goodSemigroup{} for \eqref{eq:the problem Omega} such that
    \[
        S^{(\ee_k)} \rho_0 \to S \rho_0 \qquad \text{in } C_{loc} ( [0,\infty); L^1( \Omega ) )\text{ for all } \rho_0 \in \mathcal A.
    \]
\end{theorem}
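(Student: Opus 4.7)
The plan is to extract a limit semigroup from the approximating family $\{S_t^{(\ee)}\}$ provided by \Cref{thm:Properties (Pee)} by a compactness argument on the dense subclass $\mathcal A_+ \cap C^2(\overline\Omega)$ (where the $\rho^{(\ee)}_t \defeq S_t^{(\ee)}\rho_0$ are classical solutions and the free-energy identity is rigorous), then extend to all of $\mathcal A$ using the uniform $L^1$-contraction. A single subsequence $\ee_k\to 0$ will be selected by a diagonal procedure over a countable dense subset $D\subset\mathcal A\cap C^2(\overline\Omega)\cap \mathcal A_+$ and times $T_n\uparrow\infty$.

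\textbf{Step 1: uniform estimates.} Fix $\rho_0\in D$. By \Cref{thm:Properties (Pee)}, $\rho^{(\ee)}\in\mathcal A_+$ with $0\le\rho^{(\ee)}\le \alpha$, is mass-preserving, and satisfies the free-energy dissipation and $W^{-1,1}$-continuity \eqref{eq:free energy dissipation}--\eqref{eq:W-11 continuity}. Since $\sup_{\ee}\|U_\ee'\|_{L^1((0,\alpha))}<\infty$ and $V\in C^2(\overline\Omega)$, we control $\mathcal F_\ee[\rho_0]$ uniformly in $\ee$; together with $\mathcal F_\ee[\rho_t]\ge \int_\Omega U_\ee(\rho_t)\,dx$ bounded below (again by $\sup_\ee\|U_\ee'\|_{L^1}$), the dissipation gives
\begin{equation*}
\int_0^T\!\!\int_\Omega \mobee(\rho^{(\ee)}_\sigma)\,\big|\nabla\big(U_\ee'(\rho^{(\ee)}_\sigma)+V\big)\big|^2 \,dx\,d\sigma \le C(T).
\end{equation*}
Since $\nabla\Phi_\ee(\rho^{(\ee)}) = \mobee(\rho^{(\ee)})\,\nabla U_\ee'(\rho^{(\ee)})$ and $\mobee\le \|\mob\|_\infty+o(1)$, expanding $|\nabla(U_\ee'(\rho^{(\ee)})+V)|^2$ and using $V\in C^2$ yields a uniform $L^2(0,T;L^2(\Omega))$ bound on $\nabla\Phi_\ee(\rho^{(\ee)})$; combined with the $L^\infty$ bound on $\Phi_\ee(\rho^{(\ee)})$ (which follows from $\Phi_\ee\to\Phi$ uniformly on $[0,\alpha]$, see \eqref{eq:properties of Phi_ee}), this gives a uniform $L^2(0,T;H^1(\Omega))$ bound on $\Phi_\ee(\rho^{(\ee)})$. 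Estimate \eqref{eq:W-11 continuity} provides uniform $C^{1/2}([0,T];W^{-1,1}(\Omega))$ equicontinuity.

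\textbf{Step 2: compactness and selection.} An Aubin--Lions type argument combining the uniform $H^1$-compactness in space of $\Phi_\ee(\rho^{(\ee)})$, the uniform invertibility of $\Phi$ (from \ref{hyp:Properties Phi}) and the $W^{-1,1}$ time-equicontinuity gives, up to subsequences, $\rho^{(\ee_k)}\to \rho$ in $C([0,T];L^1(\Omega))$. A standard diagonal extraction over $D$ and $T_n\uparrow\infty$ produces a single sequence $\ee_k\to 0$ and an operator $S_t:D\to\mathcal A$ with $S^{(\ee_k)}\rho_0\to S\rho_0$ in $C_{loc}([0,\infty);L^1(\Omega))$ for every $\rho_0\in D$. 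The $L^1$-contraction for $S_t^{(\ee)}$ passes to the limit, so $S_t$ extends uniquely by density to an $L^1$-contraction on all of $\mathcal A$; the same uniform convergence then holds for every $\rho_0\in\mathcal A$ by an $\tfrac{\varepsilon}{3}$-argument.

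\textbf{Step 3: verifying the semigroup properties.} Properties \ref{it:S^eps is C0} and \ref{it:S^eps is Lp continuous} follow by passing to the limit in the analogous properties of $S_t^{(\ee)}$: the semigroup identity $S_{t+h}=S_tS_h$ uses uniform-in-$t$ convergence on $[0,T]$ and $L^1$-continuity of $S_t$; the $C_0$ property follows from the uniform $W^{-1,1}$-equicontinuity at $t=0$ together with the uniform $L^\infty$ bound (which, via interpolation $\|\cdot\|_{L^1}\le \|\cdot\|_{W^{-1,1}}^{1/2}\|\cdot\|_{H^1}^{1/2}$-type reasoning, or directly by checking on $D$ and extending by contraction, gives $\|S_t\rho_0-\rho_0\|_{L^1}\to 0$). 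Properties \ref{item:W-11 equicontinuity} follow from lower semicontinuity: $\mathcal F_\ee[\rho^{(\ee_k)}_t]\to \mathcal F[\rho_t]$ using $U_\ee\to U$ locally uniformly and dominated convergence, and the dissipation term is lower semicontinuous under the weak $L^2$ convergence of $\sqrt{\mobee(\rho^{(\ee)})}\nabla(U_\ee'(\rho^{(\ee)})+V)$ to $\sqrt{\mob(\rho)}\nabla(U'(\rho)+V)$, which is obtained in the spirit of the chain rule $\nabla\Phi_\ee(\rho^{(\ee)}_t)\rightharpoonup \nabla\Phi(\rho_t)$ in $L^2_{tx}$ (identified via the strong $L^1$ convergence of $\Phi_\ee(\rho^{(\ee)}_t)\to\Phi(\rho_t)$).

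\textbf{Step 4: weak-solution property.} Fix a test function $\varphi\in C^\infty([0,T]\times\Omega)$ with $\varphi(T,\cdot)=0$ and pass to the limit in the weak formulation of \eqref{eq:the problem regularised}. The time-derivative and initial-datum terms pass by strong $C([0,T];L^1)$ convergence. For the right-hand side, rewrite $\mobee(\rho^{(\ee)})\nabla(U_\ee'(\rho^{(\ee)})+V) = \nabla\Phi_\ee(\rho^{(\ee)}) + \mobee(\rho^{(\ee)})\nabla V$; the first summand converges weakly in $L^2_{tx}$ to $\nabla\Phi(\rho)$ as above, and the second converges strongly in $L^1_{tx}$ because $\rho^{(\ee_k)}\to \rho$ in $L^1$, $\mobee\to\mob$ uniformly on $[0,\alpha]$ by \eqref{eq:mobee a.e. mob}, and $\nabla V$ is bounded. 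This identifies $\rho$ as a weak solution in the sense of \Cref{def:Weak solutions P0}, with $\Phi(\rho)\in L^2(0,T;H^1(\Omega))$ by lower semicontinuity of the $H^1$ norm.

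\textbf{Main obstacle.} The delicate point is the identification of the weak $L^2$ limit of $\nabla\Phi_\ee(\rho^{(\ee_k)})$ as $\nabla\Phi(\rho)$: one has uniform $L^2(0,T;H^1(\Omega))$ bounds and strong $L^1_{tx}$ convergence of $\rho^{(\ee_k)}$, but because $\Phi_\ee$ itself varies with $\ee$, one must combine the uniform convergence $\Phi_\ee\to \Phi$ on $[0,\alpha]$ (a consequence of \eqref{eq:properties of Phi_ee} and the compactness of $[0,\alpha]$) with the strong $L^1$ convergence of $\rho^{(\ee_k)}$ to deduce $\Phi_\ee(\rho^{(\ee_k)})\to\Phi(\rho)$ in $L^1_{tx}$, and then promote this to weak $H^1$-convergence via the uniform bound. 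A second subtle issue is proving the $C_0$ property at $t=0$ uniformly in $\rho_0\in\mathcal A$ (not only on $\mathcal A_+\cap C^2$): this is handled by combining the uniform-in-$\ee$ $W^{-1,1}$-equicontinuity on the dense class with the $L^1$-contraction, which together force $\|S_t\rho_0-\rho_0\|_{L^1}\to 0$ for every $\rho_0\in\mathcal A$.
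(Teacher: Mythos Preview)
Your Step~2 has a genuine gap. You invoke ``the uniform invertibility of $\Phi$ (from \ref{hyp:Properties Phi})'', but \ref{hyp:Properties Phi} does not give this: \eqref{eq:Assumption Existence} only asserts $\Phi(0)<\Phi(s)<\Phi(\alpha)$, while $\Phi'=\mob\,U''$ may vanish on whole subintervals of $(0,\alpha)$ since the standing hypotheses only require $U''\ge 0$ with $U''(s_0)>0$ at a single point. Thus even if $\Phi_\ee(\rho^{(\ee)})$ converged strongly, you could not recover strong convergence of $\rho^{(\ee)}$ by inversion. There is also a mismatch in your Aubin--Lions input: the $L^2(0,T;H^1)$ bound is on $\Phi_\ee(\rho^{(\ee)})$ but the time control is on $\rho^{(\ee)}$, and $\partial_t\Phi_\ee(\rho^{(\ee)})=\Phi_\ee'(\rho^{(\ee)})\,\partial_t\rho^{(\ee)}$ is \emph{not} uniformly bounded because $\Phi_\ee'$ can be as large as $\kappa(\ee)^{-1}\to\infty$.

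The paper's remedy (\Cref{lem:Existence weak solution}) is to replace $\Phi_\ee(\rho^{(\ee)})$ by the auxiliary quantities $\Psi_j^{(\ee)}(\rho^{(\ee)})$, $j=1,2$, built from primitives of $[\Phi_\ee(\alpha)-\Phi_\ee][\Phi_\ee]^+$ and $[\Phi_\ee-\Phi_\ee(0)][\Phi_\ee]^-$. These are engineered so that: (i) $(\Psi_j^{(\ee)})'$ is uniformly bounded, hence $\partial_t\Psi_j^{(\ee)}(\rho^{(\ee)})$ is controlled in $L^1(0,T;H^{-1})$; (ii) $|\nabla\Psi_j^{(\ee)}(\rho^{(\ee)})|\le C|\nabla\Phi_\ee(\rho^{(\ee)})|$ uniformly, precisely via the technical hypothesis \eqref{eq:hypothesis Phi/Phi' ee = 0} (this is where that assumption is used, and your outline never touches it); and (iii) $\Psi_1^{(0)},\Psi_2^{(0)}$ are \emph{strictly} monotone on $[s_0,\alpha]$ and $[0,s_0]$ respectively even when $\Phi$ is not, so after Aubin--Lions one can invert and write $\rho=\varpi_1(\Psi_1/\Psi_1(\alpha))+\varpi_2(\Psi_2/\Psi_2(\alpha))-s_0$. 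A secondary point: the paper does not obtain $C([0,T];L^1)$ convergence directly from compactness; it first gets $L^1((0,T)\times\Omega)$ convergence and then upgrades using the uniform Lipschitz-in-time estimate $\|S_t^{(\ee)}\rho_0-S_s^{(\ee)}\rho_0\|_{L^1}\le |t-s|\,\|\Delta\Phi_\ee(\rho_0)+\diver(\mobee(\rho_0)\nabla V)\|_{L^1}$ valid for $\rho_0\in\mathcal A_+\cap C^2(\overline\Omega)$, before extending by contraction.
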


We prove the existence result on \cref{sec:Existence P0}.

\begin{remarks*}
    Notice in \eqref{hyp:Sublinear saturation} and \eqref{eq:mobee a.e. mob} that since $\mob_\ee \to \mob $ pointwise in $[0,\alpha]$ and the fact that the map $(\ee, s) \mapsto \mobee(s)$ is $C^1$ then it holds the convergence
    \begin{equation}\label{eq:mob uniform}
        \mobee \to \mob \quad \text{in } C([0,\alpha]) \cap C^1_{loc} ((0,\alpha)).
    \end{equation}
    The semigroup can also be constructed using the theory of m-accretive operators in $L^1(\Omega)$, see e.g., \cite{Brezis73}.
    With the construction we have made $U_\ee'' \ge \ee / \mobee > 0$, $\mobee(0) = \mobee(\alpha) = 0$, and $|\mobee'| > 1$ at $0,\alpha$, so
    \[
        U_\ee' : (0,\alpha) \to (-\infty,\infty) \text{ is a strictly-increasing bijection with continuous inverse}.
    \]
    Therefore, for each $C \in \mathbb R$ we have
    \[
        \rho_t^{(\ee)} = (U_\ee')^{-1} (C - V) \text{ is a constant-in-time classical solution to \eqref{eq:the problem regularised}}.
    \]
    Notice that  $\rho_t^{(\ee)}$ tend uniformly to $0$ as $C \to -\infty$ and to $\alpha$ as $C \to \infty$.
    We point out that if $\rho_0 \in \mathcal A_+$ then
    \begin{equation}
        \label{eq:initial datum A+ bounds}
        (U_\ee')^{-1} (C_1 - V) \le \rho_0 \le (U_\ee')^{-1} (C_2 - V) \text{ for some }C_1, C_2 \in \mathbb R.
    \end{equation}
    \ref{it:S^eps strict positivity} follows by using these bounds and the comparison principle can also be used for $S_t^{(\ee)} \rho_0$. 
    Notice that
    \begin{equation*}
        0 \le \Phi_\ee' \le (1+\varepsilon) \left( \Phi' + \varepsilon \right)
    \end{equation*}
    Since $\Phi_\ee' \to \Phi'$ uniformly over compacts of $(0,\alpha)$ we have also almost everywhere convergence.
    Since $\Phi' \in L^1(0,\alpha)$ we can use the Dominated Convergence Theorem to show that
    \begin{equation}
        \label{eq:stronger convergence of Phiee}
        \Phi_\ee' \to \Phi' \qquad \text{in } L^1(0,\alpha).
    \end{equation}
    This, in particular, implies that $\Phi_\ee \to \Phi$ uniformly in $[0,\alpha]$.
\end{remarks*}

\subsection{\texorpdfstring{$L^1$}{L1}-local minimisers of the free energy}

In \Cref{sec:Local minimiser} we characterise the $L^1$-local minimisers, by deducing (and solving in some cases) the corresponding Euler-Lagrange condition. 
Since \eqref{eq:the problem regularised} are particular examples of the family \eqref{eq:the problem Omega}, we state the following result only for the later, more general, class.

\begin{theorem}[Euler-Lagrange condition]
    \label{thm:Euler-Lagrange}
    If $\widehat \rho$ is a local minimiser of $\mathcal F$ on $\mathcal A_M$ with the $L^1$ topology, then there exists $C \in \mathbb R$ such that
    \begin{equation}
        \label{eq:EL for ee = 0}
        \begin{aligned}
            U'(\widehat \rho(x)) + V(x) \ge C, & \qquad \text{ for a.e. } x \text{ such that } 0 \le \widehat \rho(x) <\alpha.  \\
            U'(\widehat \rho(x)) + V(x) \le C, & \qquad \text{ for a.e. } x \text{ such that } 0 < \widehat \rho(x) \le\alpha .
        \end{aligned}
    \end{equation}
    Furthermore, if $U'$ is invertible
    \begin{equation}
        \label{eq:Euler-Lagrange for P}
        \widehat \rho (x)= \inversedU (C- V(x)) \quad \text{a.e.~in } \Omega.
    \end{equation}
    Lastly, if we assume \eqref{eq:U locally strictly convex} and $M \in (0,\alpha|\Omega|)$, there exists a unique $C$ such that \eqref{eq:Euler-Lagrange for P} has mass $M$.
\end{theorem}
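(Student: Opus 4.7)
\textit{Strategy.} The proof proceeds by the standard calculus-of-variations approach for minimisation under the pointwise constraint $0 \le \rho \le \alpha$ and the linear mass constraint $\int_\Omega \rho = M$, producing KKT-type conditions where $C$ plays the role of a Lagrange multiplier for the mass. Since $\mathcal{F}$ is convex on $\mathcal A_M$ (the $U$-term by \ref{hyp:H_U} and the $V$-term linear), the first-order conditions derived below are both necessary and sufficient; the work consists in computing the first variation of $\mathcal{F}$ along admissible directions and translating the resulting inequality into the pointwise statement of the theorem.

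\textit{First variation.} For $\eta \in L^\infty(\Omega)$ with $\int_\Omega \eta = 0$ and $\widehat\rho + t\eta \in \mathcal{A}$ for small $t > 0$ (call such $\eta$ admissible), local minimality yields
\begin{equation*}
    0 \le \liminf_{t\to 0^+} \frac{\mathcal F[\widehat\rho + t\eta] - \mathcal F[\widehat\rho]}{t} = \int_\Omega \bigl(U'(\widehat\rho) + V\bigr)\,\eta\, dx,
\end{equation*}
where the difference quotient is monotone in $t$ by convexity of $U$, so monotone convergence identifies the limit, with $U'$ understood at the endpoints as the one-sided limits $\underline\zeta,\overline\zeta$ of \eqref{eq:defn xi underline and overline}. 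Setting $\Omega_0 := \{\widehat\rho = 0\}$, $\Omega_\alpha := \{\widehat\rho = \alpha\}$, $\Omega_m := \{0 < \widehat\rho < \alpha\}$, and $\Omega_{m,\delta} := \{\delta \le \widehat\rho \le \alpha - \delta\}$, any bounded $\eta$ supported in $\Omega_{m,\delta}$ with $\int \eta = 0$ is admissible with either sign, forcing $U'(\widehat\rho) + V = C$ a.e.\ on $\Omega_m$ for some $C \in \mathbb R$.

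\textit{Boundary inequalities and explicit form.} For small $A \subset \Omega_0$ and $B \subset \Omega_{m,\delta}$, the test $\eta = \chi_A - \frac{|A|}{|B|}\chi_B$ is admissible for small $t$, and the variational inequality combined with the identity on $B$ and Lebesgue differentiation at $x \in \Omega_0$ gives $\underline\zeta + V(x) \ge C$ a.e.\ on $\Omega_0$; the symmetric choice with $A \subset \Omega_\alpha$ gives $\overline\zeta + V(x) \le C$ a.e.\ on $\Omega_\alpha$. When $|\Omega_m| = 0$ one instead takes $A \subset \Omega_0$, $B \subset \Omega_\alpha$ with $|A| = |B|$ and picks $C$ in the nonempty interval $[\operatorname{ess\,sup}_{\Omega_\alpha}(\overline\zeta + V),\operatorname{ess\,inf}_{\Omega_0}(\underline\zeta + V)]$. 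If $U'$ is invertible on $(0,\alpha)$, solving $U'(\widehat\rho) = C - V$ on $\Omega_m$ and noting that the boundary inequalities say precisely that $C-V$ lies outside the range of $U'$ in the appropriate direction yields $\widehat\rho = T_{0,\alpha}\circ(U')^{-1}(C - V)$. For uniqueness under \eqref{eq:U locally strictly convex}, the map $F(C) := \int_\Omega T_{0,\alpha}\circ(U')^{-1}(C - V)\,dx$ is continuous and non-decreasing, and strictly increasing whenever $\{0 < T_{0,\alpha}\circ(U')^{-1}(C-V) < \alpha\}$ has positive measure, with $F(C) \to 0$ as $C \to -\infty$ and $F(C) \to \alpha|\Omega|$ as $C \to +\infty$; hence, a unique $C$ satisfies $F(C) = M$ for each $M \in (0, \alpha|\Omega|)$.

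\textit{Main obstacle.} The delicate step is the first-variation identity when $\underline\zeta$ or $\overline\zeta$ is infinite (fast-diffusion or logarithmic $U$). Admissibility forces $\eta \ge 0$ on $\Omega_0$ and $\eta \le 0$ on $\Omega_\alpha$, which is consistent with the monotone convergence argument in the extended real sense; one then rules out $\underline\zeta = -\infty$ together with $|\Omega_0| > 0$ by noting that the difference quotient would diverge to $-\infty$ against a suitable admissible test, contradicting minimality under $\mathcal F[\widehat\rho] < \infty$, and symmetrically for $\overline\zeta = +\infty$ on $\Omega_\alpha$. Careful bookkeeping of this finite/infinite endpoint dichotomy, while ensuring all perturbations remain inside $\mathcal A$ and mass-preserving along the variation, is the main technical burden.
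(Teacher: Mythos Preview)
Your argument is correct and reaches the same conclusion, but the route differs from the paper's. The paper never isolates the interior set $\Omega_m$ first; instead it works directly on the level sets $A_\lambda = \{\widehat\rho < \alpha-\lambda\}$ (respectively $A^\lambda = \{\widehat\rho > \lambda\}$) and uses the single-parameter family of mass-preserving perturbations
\[
\varphi = \chi_{A_\lambda}\Bigl(\psi - \widehat\rho(\alpha-\widehat\rho)\,\tfrac{\int_{A_\lambda}\psi}{\int_{A_\lambda}\widehat\rho(\alpha-\widehat\rho)}\Bigr),\qquad 0\le\psi\in C^\infty(\Omega),
\]
whose built-in weight $\widehat\rho(\alpha-\widehat\rho)$ vanishes exactly at the constraints. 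This yields the inequality $U'(\widehat\rho)+V\ge C_\lambda$ on $A_\lambda$ in one stroke and produces the constant in closed form as the weighted average $C=\int_\Omega(U'(\widehat\rho)+V)\widehat\rho(\alpha-\widehat\rho)\big/\int_\Omega\widehat\rho(\alpha-\widehat\rho)$; the two inequalities in the statement automatically share the same $C$ because both limits $C_\lambda\to C$ and $C^\lambda\to C$ land on this common expression. Your approach --- equality on $\Omega_m$ first via two-sided tests, then characteristic-function tests against $\Omega_0,\Omega_\alpha$, with the degenerate case $|\Omega_m|=0$ handled separately --- is the more classical KKT decomposition; it is slightly more case-laden but arguably more transparent, and you are more explicit than the paper about the endpoint issue when $\underline\zeta=-\infty$ or $\overline\zeta=+\infty$ (the paper's first-variation identity on $A_\lambda$ tacitly relies on the same monotone-convergence reasoning you spell out). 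The uniqueness step via strict monotonicity of $C\mapsto\int_\Omega T_{0,\alpha}\circ(U')^{-1}(C-V)$ is essentially identical in both.
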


\begin{remark*}
    The Euler-Lagrange condition for the case of $\mob(\rho) = \rho$ is well-understood  \cite{Balague_Carrillo_Laurent_Raoul13, CDM16,Carrillo_Delgadino_Patacchini19}. Here, we adapt these techniques in order to study the case with saturation.
\end{remark*}

\subsection{Long-time behaviour. Relation to free-energy minimisers}\label{sec:Main results Long time behaviour}

We begin this section by giving an interpretation of long-time behaviour in terms of semigroups.
\begin{definition}\label{def:timeLimit}
    We say that a semigroup $S$ for a problem \eqref{eq:the problem Omega}  has a \timeLimit{} $S_\infty : \mathcal A \to \mathcal A$ if all the following are satisfied:
    \begin{enumeratedefinition}
        \item For any $\rho_0 \in \mathcal A$ there exists a limit in time
        \[
            S_t \rho_0 \to S_\infty \rho_0 \qquad \text{ strongly in } L^1(\Omega) \text{ as } t \to \infty.
        \]
        \item $S_\infty$ is stationary for the semigroup, i.e., $S_t S_\infty = S_\infty$.
        \item For any $\rho_0 \in \mathcal A$, $S_\infty \rho_0$ is a constant-in-time weak solution to \eqref{eq:the problem Omega}.
    \end{enumeratedefinition}
\end{definition}

With this definition we can now proceed to study the other three sides of the diagram \eqref{Diagram}. First, in \Cref{sec:Long time behaviour} we construct a \timeLimit{}\, for the problems \eqref{eq:the problem regularised} and \eqref{eq:the problem Omega}.

\begin{theorem}[Long-time behaviour for \eqref{eq:the problem Omega} and \eqref{eq:the problem regularised}]
    \label{thm:long-time behaviour}
    We have that:
    \begin{enumeratetheorem}
        \item
        \label{item:Time limit operator regularised}
        For $\ee >0$ the \goodSemigroup{} $S^{(\ee)}$ for \eqref{eq:the problem regularised} has a \timeLimit{}, which we denote $S_\infty^{(\ee)}$.

        \item
        \label{item:Time limit operator}
        Any \goodSemigroup{} $S$ for \eqref{eq:the problem Omega} has a \timeLimit{}, which we denote $S_\infty$.
    \end{enumeratetheorem}
    Both $S^{(\ee)}_\infty$ and $S_\infty$ are $L^1$-contractions.
\end{theorem}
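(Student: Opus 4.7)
My strategy is a LaSalle-type invariance argument: the free energy $\mathcal F$ serves as a Lyapunov functional constraining the $\omega$-limit to stationary points, while the $L^1$-contraction from \cref{it:S^eps is Lp continuous} collapses it to a single point. I would handle $\rho_0 \in \mathcal A_+$ first and extend to $\mathcal A$ by density.

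For $\rho_0 \in \mathcal A_+$, property \cref{item:W-11 equicontinuity} makes $t \mapsto \mathcal F[S_t\rho_0]$ non-increasing; the convexity of $U$ and $V \ge 0$ make $\mathcal F$ bounded below on $\mathcal A$, so $\mathcal F[S_t \rho_0] \downarrow \mathcal F_\infty$ and \eqref{eq:free energy dissipation} gives
\[
    \int_{t_1}^{t_2}\!\!\int_\Omega \mob(\rho_\sigma)\,|\nabla (U'(\rho_\sigma) + V)|^2 \, d\sigma \;\longrightarrow\; 0 \qquad \text{as } t_1, t_2 \to \infty.
\]
The decisive intermediate step is the precompactness of the orbit $\{S_t \rho_0 : t \ge 0\}$ in $L^1(\Omega)$. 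For \cref{item:Time limit operator regularised}, the uniform parabolicity $\Phi_\ee' \ge \ee$ of \eqref{eq:the problem regularised} yields uniform $C^{2,\alpha}$ bounds for $t \ge \delta$ on the classical solutions from \cref{item:(Pee) classical solutions}; density of such data in $\mathcal A$ and the $L^1$-contraction then transfer this to arbitrary initial data. For \cref{item:Time limit operator}, the $L^\infty$ bound built into $\mathcal A$ combined with the $W^{-1,1}$-Hölder equicontinuity \eqref{eq:W-11 continuity} and the pointwise identity $|\nabla \Phi(\rho)|^2 \le \|\mob\|_{L^\infty}\,\mob(\rho)\,|\nabla U'(\rho)|^2$ provides, via a good-time selection within each unit window and a Fréchet--Kolmogorov argument, the spatial equicontinuity needed to upgrade weak-$\ast$ compactness to strong $L^1$-compactness.

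Given precompactness, pick any $\rho^* \in \omega(\rho_0)$ with $S_{t_n} \rho_0 \to \rho^*$ in $L^1$ for some $t_n \to \infty$. The shifted trajectories $\tau \mapsto S_{t_n + \tau}\rho_0 = S_\tau(S_{t_n}\rho_0)$ converge in $C_{loc}([0,\infty); L^1(\Omega))$ to $\tau \mapsto S_\tau \rho^*$ by the $L^1$-contraction and the $C_0$-semigroup property. The dissipation of these shifted trajectories on each $[0,T]$ tends to zero, and lower semicontinuity of the dissipation forces $\tau \mapsto S_\tau \rho^*$ to carry zero flux; consequently $S_\tau \rho^* = \rho^*$ for all $\tau \ge 0$. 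Stationarity of $\rho^*$ combined with \cref{it:S^eps is Lp continuous} makes $t \mapsto \|S_t \rho_0 - \rho^*\|_{L^1} = \|S_t \rho_0 - S_t \rho^*\|_{L^1}$ non-increasing, and the subsequence $t_n$ drives it to zero, so the whole monotone function converges to zero. Setting $S_\infty \rho_0 := \rho^*$ defines the time-limit operator on $\mathcal A_+$.

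Passing $t \to \infty$ in \cref{it:S^eps is Lp continuous} yields the $L^1$-contraction of $S_\infty$ on $\mathcal A_+$. Since $\rho_0^\delta := (1-2\delta)\rho_0 + \delta \alpha$ lies in $\mathcal A_+$ and converges to $\rho_0$ in $L^1$, $\mathcal A_+$ is $L^1$-dense in $\mathcal A$, and $S_\infty$ extends uniquely as an $L^1$-contraction; a standard $3\varepsilon$ argument based on the $L^1$-contraction of $S_t$ then transfers the time-convergence $S_t \rho_0 \to S_\infty \rho_0$ in $L^1$ to every $\rho_0 \in \mathcal A$. The stationarity $S_t S_\infty = S_\infty$ and the fact that $S_\infty \rho_0$ is a constant-in-time weak solution pass through the $L^1$-limit of stationary weak solutions. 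The hardest part is expected to be the $L^1$-precompactness of the orbit for the limit problem \eqref{eq:the problem Omega}: without classical parabolic regularity, the $L^\infty$ bound, the $W^{-1,1}$-Hölder continuity and the spacetime $H^1$-integrability of $\Phi(\rho)$ must be orchestrated carefully to produce strong $L^1$-compactness.
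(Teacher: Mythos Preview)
Your overall strategy---energy decay, subsequential compactness, stationarity of $\omega$-limit points, monotonicity of $t\mapsto\|S_t\rho_0-\rho^*\|_{L^1}$ via the $L^1$-contraction, then density extension from $\mathcal A_+$ to $\mathcal A$---is exactly what the paper does; it is packaged there as \Cref{lem:compactness on 0 1 times Omega is enough} plus two compactness lemmas (\Cref{lem:omega-limit} for \eqref{eq:the problem Omega}, \Cref{lem:regularised asymptotics existence subsequence} for \eqref{eq:the problem regularised}).

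The difference, and the place where your sketch has a gap, is the compactness step. The paper does \emph{not} try to prove orbit precompactness in $L^1(\Omega)$. Instead it shows that the shifted trajectories $s\mapsto S_{s+t_n}\rho_0$ are precompact in $L^1((0,1)\times\Omega)$ via Aubin--Lions, and then upgrades to $C([0,1];L^1(\Omega))$ by a finite-cover argument using the $C_0$-property. For \eqref{eq:the problem regularised} this is elementary (uniform $L^2(0,1;H^1)$ bounds on $\rho^{[n]}$ itself via the auxiliary $G_\ee$ of \eqref{eq:G_ee}); for \eqref{eq:the problem Omega} it reuses the $\Psi_1,\Psi_2$ construction from the proof of \Cref{lem:Existence weak solution}, which is precisely what is needed to recover $L^1$-convergence of $\rho$ from compactness of $\Phi(\rho)$ when $\Phi$ need not be strictly monotone (only \eqref{eq:Assumption Existence} is assumed). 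Your ``good-time selection plus Fr\'echet--Kolmogorov'' would at best give compactness of $\Phi(\rho_{s_n})$ along selected times; getting back to $\rho_{s_n}$ in $L^1$ still requires the $\Psi_j$-type inversion, and extending from the good times to all times via the $W^{-1,1}$ H\"older estimate does not work since $W^{-1,1}$ closeness does not imply $L^1$ closeness. So the space-time route is not a stylistic choice but the mechanism by which the degeneracy of $\Phi$ is handled under \ref{hyp:Phi}.
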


In \cref{sec:Asymptotic behaviour ee} we study the \timeLimit{}\, of problem \eqref{eq:the problem regularised}.

\begin{theorem}
    [The global attractors of \eqref{eq:the problem regularised}]
    \label{th:ee continuous limit}
    Let $\ee > 0$ be fixed, $M \in (0, \alpha |\Omega|)$ and define the corresponding form of \eqref{eq:Euler-Lagrange for P}, i.e.,
    \begin{equation}
        \label{eq:ee time limit}
        \widehat\rho^{(\ee)} (x) \coloneqq \inversedUee \left( C_{\varepsilon} - V(x) \right), \quad \text{in } \Omega ,
    \end{equation}
    where $C_\ee$ is uniquely determined by the mass condition
    $
        \int_\Omega \inversedUee \left( C_{\varepsilon} - V \right) = M.
    $
    Then:
    \begin{enumeratetheorem}
        \item
        \label{item:Pee long time 1}
        $\widehat\rho^{(\ee)}$ is the unique fixed-point of the semigroup in $\mathcal A_M$, i.e., $\widehat \rho \in \mathcal A_M$ such that $S_t^{(\ee)} \widehat \rho = \widehat \rho$ for all $t > 0$. \\
        In particular, it is the global attractor, i.e., for any $\rho_0 \in \mathcal A_M$ we have  $S^{(\ee)}_\infty \rho_0 = \widehat \rho^{(\ee)}$.

        \item
        \label{item:Pee long time 2}
        $\widehat\rho^{(\ee)}$ is the unique constant-in-time weak solution of \eqref{eq:the problem regularised} in $\mathcal A_M \cap \mathcal A_+$.

        \item
        \label{item:Pee long time 3}
        $\widehat\rho^{(\ee)}$ is the unique $L^1$-local minimiser of the free energy $\mathcal F_\ee$ over $\mathcal A_M$.
        It is also the unique global minimiser of the free energy $\mathcal F_\ee$ over $\mathcal A_M$.
    \end{enumeratetheorem}
\end{theorem}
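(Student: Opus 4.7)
The plan is to prove (iii), (ii), and (i) in that order, since the argument for (i) reduces to combining the uniqueness in (ii) with the $L^1$-contraction of $S_\infty^{(\ee)}$. First I would record the preliminary facts about $\widehat\rho^{(\ee)}$: as noted in the remarks after \Cref{thm:Properties (Pee)}, the relation $U_\ee''\ge \ee/\|\mobee\|_{L^\infty}>0$ makes $U_\ee': (0,\alpha)\to\mathbb{R}$ a continuous strictly increasing bijection, so $C\mapsto\int_\Omega (U_\ee')^{-1}(C-V)\,dx$ is continuous and strictly increasing with range $(0,\alpha|\Omega|)$, fixing a unique $C_\ee$ for each $M\in(0,\alpha|\Omega|)$. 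The resulting $\widehat\rho^{(\ee)}\in C^2(\overline\Omega)\cap\mathcal{A}_+$ satisfies $U_\ee'(\widehat\rho^{(\ee)})+V\equiv C_\ee$, so its flux vanishes identically and it is a stationary classical solution of \eqref{eq:the problem regularised}.

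For (iii), I would note that strict convexity of $U_\ee$ makes $\mathcal{F}_\ee$ strictly convex and weak-$*$ lower semicontinuous on the weakly-$*$ compact set $\mathcal{A}_M$, yielding a unique global minimiser by the direct method. Applying \Cref{thm:Euler-Lagrange} with $U$ replaced by $U_\ee$: since $U_\ee'$ is invertible, any $L^1$-local minimiser in $\mathcal{A}_M$ must have the form $(U_\ee')^{-1}(C-V)$, and the mass constraint pins $C=C_\ee$, so the unique $L^1$-local minimiser coincides with the unique global minimiser and equals $\widehat\rho^{(\ee)}$.

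For (ii), I would take $\varphi(t,x)=(T-t)\psi(x)$ with $\psi\in C^\infty(\overline\Omega)$ in the weak formulation (\Cref{def:Weak solutions P0}, for \eqref{eq:the problem regularised}) to reduce a constant-in-time weak solution $\rho\in\mathcal{A}_M\cap\mathcal{A}_+$ to the stationary identity $\int_\Omega\mobee(\rho)\nabla(U_\ee'(\rho)+V)\cdot\nabla\psi\,dx = 0$. The $\mathcal{A}_+$ hypothesis gives $\rho\in[\delta,\alpha-\delta]$ and hence $\mobee(\rho)\ge c>0$; combined with $\Phi_\ee(\rho)\in H^1(\Omega)$ and the Sobolev chain rule applied through the locally Lipschitz map $U_\ee'\circ\Phi_\ee^{-1}$, this yields $U_\ee'(\rho)+V\in H^1(\Omega)$. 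A density argument then allows $\psi = U_\ee'(\rho)+V$ as a test function, and the resulting identity $\int_\Omega\mobee(\rho)|\nabla(U_\ee'(\rho)+V)|^2\,dx = 0$, combined with the strict positivity of $\mobee(\rho)$ and connectedness of $\Omega$, forces $U_\ee'(\rho)+V\equiv C$; the mass constraint then identifies $C=C_\ee$, so $\rho=\widehat\rho^{(\ee)}$.

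For (i), the plan is a two-step reduction. First, for $\rho_0\in\mathcal{A}_M\cap\mathcal{A}_+$, I would use \eqref{eq:initial datum A+ bounds} to sandwich $\rho_0$ between two stationary classical solutions $(U_\ee')^{-1}(C_i-V)\in\mathcal{A}_+$, $i=1,2$; the comparison principle preserves the sandwich for all $t>0$, and taking the $L^1$-limit gives $S_\infty^{(\ee)}\rho_0\in\mathcal{A}_+$. Since $S_t^{(\ee)}S_\infty^{(\ee)}=S_\infty^{(\ee)}$ by \Cref{def:timeLimit}, this limit is a constant-in-time weak solution in $\mathcal{A}_M\cap\mathcal{A}_+$, which (ii) identifies as $\widehat\rho^{(\ee)}$. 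Second, for general $\rho_0\in\mathcal{A}_M$, I would approximate by $\rho_0^{(k)} := (1-1/k)\rho_0 + (1/k)M/|\Omega|\in\mathcal{A}_+\cap\mathcal{A}_M$ and pass to the limit using the $L^1$-contraction of $S_\infty^{(\ee)}$ from \Cref{thm:long-time behaviour}. Uniqueness of fixed points in $\mathcal{A}_M$ is then automatic: any fixed point $\rho$ satisfies $\rho = S_\infty^{(\ee)}\rho = \widehat\rho^{(\ee)}$. The main hurdle will be the test-function step of (ii), where the hypothesis $\rho\in\mathcal{A}_+$ is exactly what is needed to promote $U_\ee'(\rho)+V$ to an $H^1$-admissible test function and convert the distributional stationary PDE into the pointwise identity $U_\ee'(\rho)+V\equiv C$.
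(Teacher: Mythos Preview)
Your proposal is correct, and the overall logical flow (iii) $\Rightarrow$ (ii) $\Rightarrow$ (i) matches the paper's. The treatment of (iii) is essentially identical to the paper's. The real difference is in (ii) and, as a consequence, in how (i) is closed.

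For (ii), the paper does \emph{not} rely on the hypothesis $\rho\in\mathcal A_+$. Instead it proves a strictly stronger lemma (\Cref{lem:Regularity hat rho ee}): any constant-in-time weak solution of \eqref{eq:the problem regularised} is either $0$, $\alpha$, or $\widehat\rho^{(\ee)}$. This requires first a regularity bootstrap on $w=\Phi_\ee(\rho)$ and a Harnack-inequality argument to force the trichotomy $\rho\equiv 0$, $\rho\equiv\alpha$, or $0<\rho<\alpha$; then, because the positivity is only pointwise (not uniformly away from $0,\alpha$), the paper cannot simply test with $U_\ee'(\rho)+V$ and instead introduces a truncation $U_{\ee,\delta}''=\min\{U_\ee'',\delta^{-1}\}$, tests with $U_{\ee,\delta}'(\rho)+V$, and passes $\delta\to0$ via monotone convergence. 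Your route exploits the $\mathcal A_+$ hypothesis already present in the statement of (ii): with $\rho\in[\delta,\alpha-\delta]$ you get $U_\ee'(\rho)+V\in H^1(\Omega)$ directly by the chain rule and $\mob_\ee(\rho)\ge c>0$, so the single test $\psi=U_\ee'(\rho)+V$ finishes the job. This is more elementary and perfectly sufficient for (ii) as stated.

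The price is paid in (i). Because the paper's lemma covers all constant-in-time weak solutions, it concludes immediately that $S_\infty^{(\ee)}\rho_0=\widehat\rho^{(\ee)}$ for every $\rho_0\in\mathcal A_M$. You, having only characterised steady states in $\mathcal A_+$, add a barrier argument (sandwiching $\rho_0\in\mathcal A_+$ between stationary profiles via \eqref{eq:initial datum A+ bounds} to force $S_\infty^{(\ee)}\rho_0\in\mathcal A_+$) and then a density step using the $L^1$-contraction of $S_\infty^{(\ee)}$ from \Cref{thm:long-time behaviour}. Both steps are sound. In summary: the paper invests more in the steady-state lemma to get (i) for free; you keep (ii) light and pay with a short comparison-plus-density argument for (i).
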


We next focus on studying the $\omega$-limit of \eqref{eq:the problem Omega} in more detail. First, in \cref{sec:Stationary state} we  obtain the following result.

\begin{theorem}[On steady states for \eqref{eq:the problem Omega}]
    \label{thm:Asymptotic (P0)}
    Assume \eqref{eq:U locally strictly convex} and that $M \in (0,\alpha|\Omega|)$.
    Then we can define
    \begin{equation*}
        \widehat\rho^{(0)} (x) \coloneqq \inversedU (C_0 - V(x) ), \quad \text{in } \Omega,
    \end{equation*}
    where $C_0$ is uniquely determined by the mass condition
    $
        \int_{\Omega} \inversedU (C_0 - V ) = M.
    $
    We have that:
    \begin{enumeratetheorem}
        \item
        \label{item:Semi-group local minimiser}
        $S_t \widehat\rho^{(0)} = \widehat\rho^{(0)}$ for any $t > 0$.

        \item
        \label{item:minimiser free energy}
        $\widehat\rho^{(0)}$ is the unique $L^1$-local minimiser of the free energy \eqref{eq:Free eenrgy ee=0 Omega} over $\mathcal A_M$.
        It is the unique global minimiser over $\mathcal A_M$.
        \item
        \label{item:Limit of steady states}
        If we also assume \eqref{eq:U uniformly strictly convex},
        then $\widehat\rho^{(0)}$ is the limit as $\ee \to 0$ of the constant-in-time weak solutions of \eqref{eq:the problem regularised} given by
        \eqref{eq:ee time limit}
        \begin{equation*}
            \widehat{\rho}^{(\ee)} \rightarrow \widehat{\rho}^{(0)} \quad \text{in } L^1(\Omega ) \text{ as } \ee \rightarrow 0.
        \end{equation*}

    \end{enumeratetheorem}
\end{theorem}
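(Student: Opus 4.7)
I would establish the three items in the order (ii), (i), (iii), since the identity in (i) relies on the uniqueness provided by (ii). For (ii), I run the direct method of the calculus of variations on $\mathcal A_M$: since $\mathcal A_M$ is bounded in $L^\infty(\Omega)$ it is sequentially weakly-$\ast$ compact, and the convexity of $U$ together with the continuity of $V$ render $\mathcal F$ weakly-$\ast$ lower semicontinuous, so any minimising sequence converges to a global minimiser on $\mathcal A_M$. Uniqueness follows from \Cref{thm:Euler-Lagrange}: under \eqref{eq:U locally strictly convex}, $U'$ is invertible, so every $L^1$-local minimiser takes the form $\inversedU(C - V)$ for a unique $C$ determined by the mass constraint (last statement of \Cref{thm:Euler-Lagrange}), which is precisely $\widehat\rho^{(0)}$.

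For (i), I first verify by direct computation that $\widehat\rho^{(0)}$ is a constant-in-time weak solution in the sense of \Cref{def:Weak solutions P0}. On $\{0 < \widehat\rho^{(0)} < \alpha\}$ we have $U'(\widehat\rho^{(0)}) + V = C_0$, so $\nabla\Phi(\widehat\rho^{(0)}) = -\mob(\widehat\rho^{(0)})\nabla V$; on the complementary set, $\widehat\rho^{(0)}$ is locally constant a.e.\ and $\mob(\widehat\rho^{(0)}) = 0$. Hence the flux $\nabla\Phi(\widehat\rho^{(0)}) + \mob(\widehat\rho^{(0)})\nabla V$ vanishes a.e., $\Phi(\widehat\rho^{(0)}) \in W^{1,\infty}(\Omega)$, and both sides of the weak formulation reduce to zero. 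As uniqueness of weak solutions is an open problem, I establish $S_t \widehat\rho^{(0)} = \widehat\rho^{(0)}$ via free-energy dissipation. Since \eqref{eq:free energy dissipation} requires data in $\mathcal A_+$, I approximate by $\rho_0^{\delta} \coloneqq (1-\delta)\widehat\rho^{(0)} + \delta M/|\Omega|$, which lies in $\mathcal A_+ \cap \mathcal A_M$ for $\delta \in (0,1)$ because $M/|\Omega| \in (0,\alpha)$. Then $\mathcal F[S_t \rho_0^{\delta}] \le \mathcal F[\rho_0^{\delta}]$, and passing $\delta \to 0$ via the $L^1$-contraction of $S_t$ and the $L^1$-continuity of $\mathcal F$ on $\mathcal A$ (which uses that $U$ is bounded on $[0,\alpha]$) yields $\mathcal F[S_t \widehat\rho^{(0)}] \le \mathcal F[\widehat\rho^{(0)}]$. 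Since $S_t \widehat\rho^{(0)} \in \mathcal A_M$ by mass conservation and $\widehat\rho^{(0)}$ is the unique minimiser on $\mathcal A_M$ by (ii), we conclude $S_t \widehat\rho^{(0)} = \widehat\rho^{(0)}$.

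For (iii), I pass to the limit in the explicit representation $\widehat\rho^{(\ee)} = (U_\ee')^{-1}(C_\ee - V)$. First, $C_\ee$ is bounded: if $C_{\ee_k} \to \pm \infty$ along a subsequence then $\widehat\rho^{(\ee_k)} \to \alpha$ or $0$ a.e., forcing the mass to tend to $\alpha|\Omega|$ or $0$ and contradicting $M \in (0,\alpha|\Omega|)$. Along any subsequence with $C_{\ee_k} \to C^\ast$, I claim the a.e.\ pointwise limit $\widehat\rho^{(\ee_k)}(x) \to \inversedU(C^\ast - V(x))$. This uses: (a) the pointwise convergence $U_\ee' \to U'$ on $(0,\alpha)$, which follows from $U_\ee'' = \Phi_\ee'/\mobee \to \Phi'/\mob = U''$ locally uniformly on $(0,\alpha)$ combined with the normalisation $U_\ee'(\alpha/2) = U'(\alpha/2)$; and (b) a compactness argument for strictly increasing bijections, whereby any cluster value $s^\ast \in (0,\alpha)$ of $s_{\ee_k} = (U_{\ee_k}')^{-1}(C_{\ee_k} - V(x))$ satisfies $s^\ast = (U')^{-1}(C^\ast - V(x))$, while cluster values $s^\ast \in \{0,\alpha\}$ correspond to $C^\ast - V(x)$ lying outside $(\underline\zeta,\overline\zeta)$, matching the definition of $T_{0,\alpha}\circ (U')^{-1}$. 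Bounded dominated convergence then upgrades the pointwise statement to $L^1(\Omega)$ and transmits the mass constraint, so $C^\ast = C_0$ by the uniqueness statement in \Cref{thm:Euler-Lagrange}, and the whole sequence converges. The main obstacle is exactly step (b): controlling the transition between the interior regime $\widehat\rho^{(\ee)} \in (0,\alpha)$ and the saturated regimes $\{0,\alpha\}$ in the limit. The uniform convexity hypothesis \eqref{eq:U uniformly strictly convex} enters precisely here, since it makes $(U')^{-1}$ Lipschitz on $(\underline\zeta,\overline\zeta)$ and renders the convergence of inverses robust near the thresholds $\underline\zeta, \overline\zeta$.
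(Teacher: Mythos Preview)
Your treatment of (ii) and (iii) is essentially the paper's argument: (ii) combines the Euler--Lagrange characterisation (\Cref{thm:Euler-Lagrange}) with weak-$L^1$ lower semicontinuity and existence of a global minimiser (\Cref{prop:Free energy bound from below}); (iii) follows the route of \Cref{lem:Limit ee to 0,lem:Convergence Asymptotics of (Pee)} (boundedness of $C_\ee$, convergence of $(U_\ee')^{-1}$, dominated convergence, identification of the constant via mass), and you correctly locate the role of \eqref{eq:U uniformly strictly convex} in controlling the inverses near the thresholds $\underline\zeta,\overline\zeta$.

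Your argument for (i), however, is genuinely different. The paper obtains $S_t\widehat\rho^{(0)}=\widehat\rho^{(0)}$ by passing to the limit in the identity $S_t^{(\ee_k)}\widehat\rho^{(\ee_k)}=\widehat\rho^{(\ee_k)}$ (Step~5 of \Cref{lem:Convergence Asymptotics of (Pee)}), which uses the $L^1$-convergence $\widehat\rho^{(\ee)}\to\widehat\rho^{(0)}$ and the convergence $S^{(\ee_k)}\to S$ from \Cref{thm:left side of D}; in particular it goes through the hypothesis \eqref{eq:U uniformly strictly convex} and is tied to the specific semigroup built as the limit of $S^{(\ee)}$. Your route---approximate $\widehat\rho^{(0)}$ by convex combinations in $\mathcal A_+\cap\mathcal A_M$, use the free-energy dissipation on $\mathcal A_+$, pass to the limit via the $L^1$-contraction of $S_t$ and the strong-$L^1$ continuity of $\mathcal F$, and invoke the unique global minimiser from (ii)---is self-contained, uses only \eqref{eq:U locally strictly convex} as stated in the theorem, and applies to \emph{any} \goodSemigroup{} in the sense of \Cref{def:Free-energy dissipating semigroup}. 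The paper's approach has the advantage of delivering (i) and (iii) in one stroke and of simultaneously showing that $\widehat\rho^{(0)}$ is a constant-in-time weak solution via the stability \Cref{lem:L1 stability of weak solutions}; your approach buys a cleaner hypothesis and independence from the approximation procedure. Your preliminary direct verification that $\widehat\rho^{(0)}$ is a constant-in-time weak solution is correct but, as you implicitly acknowledge, not needed for the conclusion $S_t\widehat\rho^{(0)}=\widehat\rho^{(0)}$.
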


We finally show that the double limits in diagram \eqref{Diagram} do not commute by giving a counterexample. In \cref{sec:malicious counterexamples}, we provide a non-linearity $U$, and a potential $V$, such that the problem \eqref{eq:the problem Omega} has infinitely many steady states different from $\widehat \rho^{(0)}$. Moreover, each of them have a large basin of attraction of initial data.

\begin{remarks*}
    The long-time behaviour result holds even if $\mathcal F[\rho_t]$ never becomes finite. This is a powerful consequence of the $L^1$-contraction theory.
    The Wasserstein-type gradient-flow theory is usually not able to deal with these cases. In \cref{sec:malicious counterexamples}  we construct $\mob, U, V$ such that the global minimiser of $\mathcal F$ is not the global attractor, i.e., there exists $\rho_0$ such that $S_\infty \rho_0 \ne \widehat \rho^{(0)}$. Furthermore, we construct a curve of stationary weak solutions such that each of them attracts some initial data (see \Cref{fig:Double_Well_V} and \Cref{fig:Mass_saddle_point}). For certain choices of $V$ (e.g., $V$ convex) it is easy to show that there exists a global attractor for \eqref{eq:the problem Omega}. See, e.g., \cite[Section 3]{Carrillo_Jungel_Markowich_Toscani_Unterreiter01} and \cite{Kim_Lei10}.
\end{remarks*}

\subsection{Numerical analysis}\label{sec:Intro Numerical analysis}

We now study the implicit Finite-Volume scheme proposed by Bailo, Carrillo, and Hu in \cite{BCH23}. Here, we consider a small generalisation of the method in \cite{BCH23} that also fit  to the regularised problem \eqref{eq:the problem regularised}. For the sake of clarity, in this manuscript we only cover the $1$-dimensional case.
We devote \Cref{sec:Numerical Analysis} to the numerical analysis. Let us fix $\ee \ge 0$. 
We prove a first result which we will need for the structure of our paper.
We will assume that
\begin{equation}
    \tag{M$^{\Delta}$}
    \label{hyp:mobility for numerics}
    \begin{aligned}
         & \mob (s) = \mob^{(1)}(s) \mob^{(2)} (s),
    \end{aligned}
\end{equation}
where $\mob^{(j)}$ are Lipschitz continuous, $\mob^{(1)}$ is non-decreasing, and $\mob^{(2)}$ is non-increasing.
This is required to perform the up-winding below. 
This assumption is not too restrictive, since we show the following result in \cref{sec:Numerical method}.
\begin{lemma}
    \label{lem:existence of decomposition}
    Assume \ref{hyp:mobility}.
    Then, there exists $\mob^{(1)} \in C^1((0,\alpha))$ non-decreasing and $\mob^{(2)} \in C^1((0,\alpha))$ non-increasing such that \eqref{hyp:mobility for numerics}.
    Furthermore, if $\mob$ is Lipschitz in $[0,\alpha]$, $\mob' \ge 0$ in a neighbourhood of $0$ and $\mob' \le 0$ in a neighbourhood of $\alpha$, then $\mob^{(1)}$ and $\mob^{(2)}$ are Lipschitz.
\end{lemma}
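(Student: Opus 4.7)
The plan is as follows. Since $\mob > 0$ on $(0,\alpha)$ by \ref{hyp:mobility}, I set $f \defeq \log \mob$, which is $C^1((0,\alpha))$ with $f'(s) = \mob'(s)/\mob(s)$ continuous on $(0,\alpha)$. I perform a Jordan-type decomposition of $f$ by splitting $f'$ into its positive and negative parts: define
\[
f_1(s) \defeq \log \mob(s_0) + \int_{s_0}^s \max\{f'(t), 0\} \diff t,
\qquad
f_2(s) \defeq - \int_{s_0}^s \max\{-f'(t), 0\} \diff t.
\]
Since $\max\{\pm f'(\cdot), 0\}$ are continuous on $(0,\alpha)$, I get $f_1, f_2 \in C^1((0,\alpha))$; $f_1$ is non-decreasing, $f_2$ is non-increasing, and $f_1 + f_2 = f$ pointwise on $(0,\alpha)$. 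Setting $\mob^{(1)} \defeq e^{f_1}$ and $\mob^{(2)} \defeq e^{f_2}$ then gives a pair of $C^1((0,\alpha))$ functions with the required monotonicity (since $(\mob^{(1)})' = \mob^{(1)} \max\{f',0\} \geq 0$ and similarly for $\mob^{(2)}$), whose product equals $e^{f_1 + f_2} = e^f = \mob$. This yields the first assertion.

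For the Lipschitz claim, let $\delta \in (0, s_0)$ be such that $\mob' \geq 0$ on $(0, \delta)$ and $\mob' \leq 0$ on $(\alpha - \delta, \alpha)$. On $(0, \delta)$, $f' = \mob'/\mob \geq 0$, hence $\max\{-f', 0\} \equiv 0$ and $f_2$ is constant there; consequently $\mob^{(2)}$ is a positive constant on that interval, while $\mob^{(1)} = \mob/\mob^{(2)}$ equals $\mob$ up to a positive multiplicative constant. This extends continuously to $s = 0$ with $\mob^{(1)}(0) = 0$ and inherits the Lipschitz property of $\mob$ near $0$. A symmetric argument on $(\alpha - \delta, \alpha)$ shows that $\mob^{(1)}$ is constant and $\mob^{(2)}$ is proportional to $\mob$ near $\alpha$. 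On the interior compact interval $[\delta, \alpha - \delta]$ both factors are $C^1$, hence Lipschitz there. Gluing these three Lipschitz bounds produces the desired global Lipschitz constant on $[0, \alpha]$.

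The main subtlety is at the endpoints: $f = \log \mob$ diverges to $-\infty$ at $0$ and at $\alpha$, so one cannot make sense of the decomposition by working directly with $f_1$ and $f_2$ as finite quantities extended to $[0,\alpha]$. The trick is to argue at the level of $\mob^{(j)} = e^{f_j}$ and exploit the sign conditions on $\mob'$ to show that, in a one-sided neighbourhood of each endpoint, one of the two factors reduces to a constant while the other reduces to a constant multiple of $\mob$. This simultaneously gives continuous extendability to $[0,\alpha]$, the correct boundary values $\mob^{(1)}(0)=\mob^{(2)}(\alpha)=0$, and the Lipschitz regularity that is simply inherited from $\mob$.
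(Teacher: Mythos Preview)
Your proposal is correct and follows essentially the same approach as the paper: both perform a Jordan-type decomposition of $\log \mob$ by splitting $(\log \mob)' = \mob'/\mob$ into its positive and negative parts, then exponentiate to obtain the factors $\mob^{(1)}, \mob^{(2)}$. The only cosmetic differences are the choice of base point (you use $s_0$, the paper uses $\alpha/2$) and the notation for positive/negative parts; the Lipschitz argument near the endpoints is also identical in spirit, observing that one factor becomes constant so the other is a constant multiple of $\mob$ and hence inherits its Lipschitz bound.
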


Without loss of generality, we can restrict to $x \in (0,1)$, we pick
$I = \{1, \cdots, N\}$ for some
$N \in \mathbb N $,
and let $\Delta x = 1/N$.
We now present (a small generalisation of) the Finite-Volumes scheme constructed in \cite{BCH23}.
The method is given as
\begin{equation}
    \label{def:Numerical method}
    \tag{P$^{\Delta}$}
    \begin{aligned}
        \frac{\rho_i^{ n+1} - \rho_i^{n}}{\Delta t} & = - \frac{F^{}_{i+\frac 1 2} (\rho^{n+1}) - F^{}_{i-\frac 1 2} (\rho^{ n+1})}{\Delta x}  , \qquad i \in I, n \in \mathbb N , \\
        F_{i+\frac 1 2}^{} (\rho)                   & =
        \mob^{(1)}(\rho_i) \mob^{(2)} (\rho_{i+1})   (v_{i+\frac 1 2}^{} (\rho))^+
        + \mob^{(1)}(\rho_{i+1}) \mob^{(2)} (\rho_{i}) (v_{i+\frac 1 2}^{}(\rho))^- ,                                                                                              \\
        v_{i+\frac 1 2}^{} (\rho)                   & = -\frac{\xi_{i+1}^{}(\rho) - \xi_{i}^{}(\rho)}{\Delta x} ,                                                                  \\
        \xi_i^{}(\rho)                              & = U_{}'(\rho_i) + V (x_i) ,                                                                                                  \\
        F_{\frac 1 2}^{} (\rho)                     & = F_{N + \frac 1 2}^{} (\rho) = 0 .
    \end{aligned}
\end{equation}
Here, we are using the notation $u = u^+ + u^-$.
We consider the initial condition
\begin{equation}
    \rho^0_i = \frac{1}{\Delta x} \int_{x_{i-\frac 1 2}}^{x_{i+\frac 1 2}} \rho_0(x) \diff x.
\end{equation}
In \cite[Theorem 2.4]{BCH23} the authors prove the decay of a discrete energy, defined for $\ee \ge 0$ as
\begin{equation}\label{def:Discrete energy}
    \Edisc[\rho] =
    \Delta x \sum_{i \in I} \left( U(\rho_i) + V(x_i) \rho_i  \right).
\end{equation}
We introduce the following discrete version of \eqref{eq:the problem regularised}:
\begin{equation}
    \label{eq:problem regularised discrete}
    \tag{P$_\varepsilon^{\Delta}$}
    \begin{minipage}{.5\textwidth}
        \eqref{def:Numerical method} when we replace $\mob^{(i)}$ and $U$ by $\mobee^{(i)}$ and $U_\ee$. \\
        We will use the notation $\rho^{\ee,n}, F^\ee, v^\ee, \xi^\ee$. \\
        The corresponding energy is denoted $\Ediscee$.
    \end{minipage}
\end{equation}

\begin{remarks*}
    For the approximating problems, due to ellipticity, $U_\ee'(0) = -\infty, U_\ee'(\alpha) = \infty$. We can only deal with solutions in $0 < \rho_i^0 < \alpha$.
    This is also the case in general if $U \not \in C^1([0,\alpha])$. We will separate the case of $U \in C^1([0, \alpha])$.
    Even if $U \in C^1([0,\alpha])$, from the computational point of view the approximating problem \eqref{eq:problem regularised discrete} is usually better than \eqref{def:Numerical method} since typically we will be able to use Newton iteration to compute the next step (we can guarantee the Jacobian is invertible).
    The scheme introduced in \cite{BCH23} is done for $\mob^{(1)}(s) = s$ and $\xi_i = U'(\rho) + V(x_i) + W*\rho$. This problem is significantly more difficult. For example, it does not have a comparison principle.
\end{remarks*}

\subsubsection{Finite-time properties of the numerical schemes}

We introduce
\begin{align*}
    \mathcal A_{\Delta } = \{ \rho \in \mathbb R^{|I|} : 0 \le \rho_i \le \alpha \},
    \qquad
    \mathcal A_{\Delta , +} = \{ \rho \in \mathbb R^{|I|} : 0 < \rho_i < \alpha \},
    \qquad
    \| \rho \|_{L^1_{\Delta }} = \sum_{i \in I} |\rho_i|.
\end{align*}
We will use the notation $\Delta = ( \Delta t , \Delta x )$.
In the same fashion we worked with semigroups, we can write
\[
    \numericStep : \rho^{0} \in \mathcal A_{\Delta} \mapsto \rho^{1}, \qquad \numericApproxStep : \rho^{0} \in \mathcal A_{\Delta} \mapsto \rho^{\ee,1} .
\]
Formally, $\rho^{n} \coloneqq (\numericStep)^n \rho^{0} $.
For $\rho, \eta \in \mathcal A_{\Delta ,+}$ we say that $\rho \le \eta$ if $\rho_i \le \eta_i$ for all $i\in I$.
We use the following definition.

\begin{definition}\label{def:goodNumericalProblem}
    Fix $\Delta$. We say that \eqref{def:Numerical method} is a \goodNumericalProblem{} over $\mathcal B \subset \mathcal A_{\Delta}$ if
    \begin{enumeratedefinition}
        \item
        \label{it:numerics existence under finite energy}
        For $\rho^{0} \in \mathcal B$, there exists a unique $\rho \in \mathcal B$ that solves \eqref{def:Numerical method}. We call this unique solution $\rho^{1}$.

        \item
        \label{it:numerics semigroup}
        The solution map
        There is mass conservation.

        \item
        \label{it:numerics energy dissipation}
        There is free-energy dissipation, i.e., for $\rho^0 \in \mathcal B$ we have
        $\Edisc [\numericStep \rho^{0}] \leq \Edisc [\rho^{0}]$.
    \end{enumeratedefinition}
\end{definition}

In \cite{BCH23} the authors do not prove that this implicit scheme admits a solution, or whether it is unique.

\begin{theorem}[Well-posedness theory]
    \label{thm:scheme well-posedness}
    We have that
    \begin{enumeratetheorem}
        \item \label{thm:scheme well-posedness regularised}
        \eqref{def:Numerical method}
        and
        \eqref{eq:problem regularised discrete}
        for $\ee > 0$
        are \goodNumericalProblems{} in $\mathcal A_{\Delta , +}$.
        If $\rho^0 \in \mathcal A_{\Delta, +}$ then $\numericApproxStep \rho^0 \to \numericStep \rho^0$ as $\ee \to 0$.

        \item
        \label{thm:scheme well-posedness with free boundary}
        If we also assume $U\in C^1([0,\alpha])$ then
        \eqref{def:Numerical method} is a \goodNumericalProblem{} in $\mathcal A_{\Delta}$.
    \end{enumeratetheorem}

\end{theorem}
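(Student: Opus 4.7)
The plan is to verify, in order, the three items of \Cref{def:goodNumericalProblem}---existence/uniqueness, mass conservation, and free-energy dissipation---separating the algebraic identities (mass conservation, energy decay) from the analytic core (existence of the nonlinear implicit step). I would first handle the regularised scheme \eqref{eq:problem regularised discrete} on $\mathcal A_{\Delta,+}$; \eqref{def:Numerical method} then follows by passing $\ee\to 0$, and case (ii) by a further approximation from $\mathcal A_{\Delta,+}$ to $\mathcal A_{\Delta}$. Mass conservation is immediate on summing the scheme in $i\in I$ and using the boundary conditions $F_{1/2}=F_{N+\frac 1 2}=0$ to telescope the flux differences. For free-energy dissipation, assume $\rho^{n+1}$ exists. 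Convexity of $U_\ee$ gives $U_\ee(\rho_i^{n+1})-U_\ee(\rho_i^n) \le U_\ee'(\rho_i^{n+1})(\rho_i^{n+1}-\rho_i^n)$; combining with the potential term, substituting $\rho_i^{n+1}-\rho_i^n$ from the scheme, and performing a discrete summation by parts with vanishing boundary fluxes yields
\begin{equation*}
\Ediscee[\rho^{n+1}]-\Ediscee[\rho^n] \le -\Delta t\,\Delta x \sum_{i\in I} F^\ee_{i+\frac12}(\rho^{n+1})\, v^\ee_{i+\frac12}(\rho^{n+1}).
\end{equation*}
The upwind decomposition makes the right-hand side manifestly non-positive, since $u\cdot u^\pm = (u^\pm)^2$ and the non-negativity of $\mobeend, \mobeeni$ rewrite $F^\ee_{i+\frac12} v^\ee_{i+\frac12}$ as a sum of two squared velocities weighted by non-negative factors. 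The same computation with $\mob^{(j)}, U, \Edisc$ handles \eqref{def:Numerical method}.

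The main obstacle is existence and uniqueness, since upwinding breaks the variational symmetry needed for a simple minimisation argument. I would argue by topological degree: define
\begin{equation*}
G_\ee(\rho)_i \defeq \rho_i - \rho_i^n + \tfrac{\Delta t}{\Delta x}\bigl(F^\ee_{i+\frac12}(\rho)-F^\ee_{i-\frac12}(\rho)\bigr)
\end{equation*}
on the compact convex set $K_\delta = \{\rho\in\mathbb R^{|I|}:\delta \le \rho_i \le \alpha-\delta\}$. The key structural input is that $U_\ee'(0^+)=-\infty$ and $U_\ee'(\alpha^-)=+\infty$, so $\Ediscee$ is coercive on $\mathcal A_{\Delta,+}$; together with the energy inequality applied along the homotopy $G_s(\rho) = (1-s)(\rho-\rho^n) + s\,G_\ee(\rho)$, this excludes zeros on $\partial K_\delta$ for $\delta=\delta(\rho^n)$ small enough. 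Therefore $\deg(G_\ee, K_\delta, 0) = \deg(\mathrm{Id}-\rho^n, K_\delta, 0) = 1$ and a solution exists. For uniqueness, if $\rho, \tilde\rho$ both solve the scheme, subtracting the two equations, multiplying by a regularised sign of $\xi^\ee(\rho)-\xi^\ee(\tilde\rho)$, and exploiting the strict monotonicity of $U_\ee'$ together with the monotone upwind structure of the fluxes gives $\rho=\tilde\rho$.

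For the convergence $\numericApproxStep\rho^0 \to \numericStep\rho^0$ and case (ii): compactness of $\mathcal A_\Delta$ in $\mathbb R^{|I|}$ yields a subsequential limit $\rho^*$; the uniform convergences $\mobee\to\mob$ in $C([0,\alpha])$ from \eqref{eq:mob uniform} and $U_\ee'\to U'$ locally uniformly on $(0,\alpha)$ let me pass to the limit in the scheme, while the energy bound $\Ediscee[\rho^{\ee,1}]\le\Ediscee[\rho^0]$ keeps $\rho^*_i$ away from $\{0,\alpha\}$ whenever $\rho^0\in\mathcal A_{\Delta,+}$; uniqueness then identifies $\rho^* = \numericStep\rho^0$ and promotes the subsequential convergence to full convergence. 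For case (ii), when $U\in C^1([0,\alpha])$ all fluxes in \eqref{def:Numerical method} extend continuously to $\mathcal A_\Delta$, so the degree argument applies directly on $\mathcal A_\Delta$; alternatively, one approximates $\rho^0\in \mathcal A_\Delta$ by interior data $\rho^0_\eta \defeq (1-\eta)\rho^0 + \eta\alpha/2$, invokes (i), and passes $\eta\to 0$ by the same compactness/uniqueness argument. I expect the degree computation to be the most delicate step: verifying that $G_s$ points inward on $\partial K_\delta$ along the homotopy requires careful control of $\xi^\ee$ near $\{\rho_i\in\{\delta,\alpha-\delta\}\}$, where the barrier behaviour of $U_\ee'$ must dominate the bulk drift.
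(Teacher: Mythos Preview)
Your mass-conservation and energy-dissipation arguments are correct and match the paper's, and the overall strategy---topological degree for existence, then stability limits---is also what the paper does; your homotopy $G_s$ is exactly the paper's $H(\lambda,\cdot)$ with $\lambda=s$.

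The gap is in how you exclude zeros on $\partial K_\delta$. You assert that $U_\ee'(0^+)=-\infty$ and $U_\ee'(\alpha^-)=+\infty$ force $\Ediscee$ to be coercive on $\mathcal A_{\Delta,+}$, but this is false: by construction $\sup_{\ee}\|U_\ee'\|_{L^1(0,\alpha)}<\infty$ (see \eqref{eq:mobee a.e. mob}), so each $U_\ee$ extends continuously to $[0,\alpha]$ and $\Ediscee$ is \emph{bounded} on all of $\mathcal A_\Delta$. An unbounded derivative is compatible with a bounded primitive (think $U_\ee'(s)\sim\log s$). The energy inequality therefore gives no barrier at the endpoints, and the same flaw recurs in your $\ee\to 0$ step where you invoke the energy bound to keep the limit away from $\{0,\alpha\}$.

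The paper replaces coercivity with a comparison principle. It first checks that $H_i(\lambda,\rho)=\rho_i+\lambda\tfrac{\Delta t}{\Delta x}(F_{i+\frac12}(\rho)-F_{i-\frac12}(\rho))$ satisfies $\partial H_i/\partial\rho_j\le 0$ for $j\ne i$ (this is where the upwind choice is used); together with mass conservation this yields an $L^1_\Delta$-contraction and a comparison principle for solutions of $H(\lambda,\cdot)=f$ (\Cref{lem:scheme comparison}, \Cref{lem:discrete monotonicity}). Then, instead of an energy barrier, one constructs \emph{constant} sub- and super-solutions $\underline c^{n+1},\overline c^{n+1}\in(0,\alpha)$, uniform in $\lambda\in[0,1]$ and in $\ee\in[0,1]$, satisfying $H^\ee_i(\lambda,\underline c^{n+1})\le\rho^n_i\le H^\ee_i(\lambda,\overline c^{n+1})$ (\Cref{lem:discrete positive sub solution}); the comparison principle then traps any solution of $H^\ee(\lambda,\rho)=\rho^n$ in $[\underline c^{n+1},\overline c^{n+1}]^{|I|}$, keeping it off $\partial K_\delta$ for the degree computation. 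Because these barriers are uniform in $\ee$, they also supply the interior bounds needed to pass to the limit $\ee\to 0$. The same monotonicity structure gives uniqueness immediately, so your sign-of-$(\xi^\ee(\rho)-\xi^\ee(\tilde\rho))$ argument is not needed.
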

This scheme is convergent at least under high regularity of the solution. We only include a small remark below about the regularity result, we will not discuss it any further and just focus on the convergence.
\begin{theorem}[Convergence as $\Delta \to 0$]
    \label{thm:Disc to continuous}
    Let $\rho_0 \in \mathcal A_{+}$ be fixed, $\rho$ a solution to \eqref{eq:the problem Omega},
    and $R \coloneqq \rho([0,T]\times \overline \Omega) \subset [0,\alpha]$,
    $U \in C^{3+\gamma}(R)$, $\mobee^{(j)} \in C^{1+\gamma}(R)$ for $j=1,2$, $V \in C^{2+\gamma}(\overline \Omega)$, and $\rho \in C^{1+\beta}_t, C^{2+\gamma}_x$. Then,
    if $\rho_i^n$ is the solution to \eqref{def:Numerical method} it is such that
    \[
        \sup_{ 0 \le n \le \frac{T}{\Delta t}}\Delta x \sum_{i \in I } |\rho_i^{n} - \rho(t_n, x_i)| \le C( (\Delta t)^\beta + (\Delta x)^\gamma).
    \]
    Hence, the numerical solution converges to the solution of the continuous problem as $\Delta \to 0$.
\end{theorem}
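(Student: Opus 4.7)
The plan is a standard consistency-plus-stability argument that rests entirely on the discrete $L^1$-contraction established in \Cref{thm:scheme well-posedness}. Write $\tilde\rho_i^n := \rho(t_n,x_i)$ for the exact solution sampled on the grid and $e_i^n := \rho_i^n - \tilde\rho_i^n$ for the grid error. Since $\rho_0\in\mathcal A_+$ and $\rho\in C^{2+\gamma}_x\cap C^{1+\beta}_t$, the set $R=\rho([0,T]\times\overline\Omega)$ is a compact subset of $(0,\alpha)$, so for $\Delta$ small enough $\tilde\rho^n\in\mathcal A_{\Delta,+}$ for every $n\le T/\Delta t$. Plug $\tilde\rho^n$ into \eqref{def:Numerical method} to define the truncation residual
\begin{equation*}
R_i^n := \frac{\tilde\rho_i^{n+1}-\tilde\rho_i^n}{\Delta t} + \frac{F_{i+\frac{1}{2}}(\tilde\rho^{n+1})-F_{i-\frac{1}{2}}(\tilde\rho^{n+1})}{\Delta x},
\end{equation*}
so that $\tilde\rho^{n+1} = \numericStep(\tilde\rho^n+\Delta t\,R^n)$, the shifted argument staying in $\mathcal A_{\Delta,+}$ for $\Delta$ small because $\tilde\rho^n$ takes values in $R$.

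Next I would estimate $R_i^n$ by Taylor expansion under the Hölder hypotheses. The backward-Euler time quotient differs from $\partial_t\rho(t_{n+1},x_i)$ by $O((\Delta t)^\beta)$ thanks to $\rho\in C^{1+\beta}_t$. For the spatial part, set $\xi(t,x):=U'(\rho(t,x))+V(x)$; the hypotheses give $\xi\in C^{2+\gamma}_x$ and $\mob^{(j)}\circ\rho\in C^{1+\gamma}_x$. A centred-difference expansion yields
\begin{equation*}
v_{i+\frac12}(\tilde\rho^{n+1}) = -\partial_x\xi(t_{n+1},x_{i+\frac12}) + O((\Delta x)^{1+\gamma}),
\end{equation*}
while expansion of the two upwind mobility products around the interface gives
\begin{equation*}
\mob^{(1)}(\tilde\rho_i)\mob^{(2)}(\tilde\rho_{i+1}) = \mob(\rho(t_{n+1},x_{i+\frac12})) + \tfrac{\Delta x}{2}D_{i+\frac12} + O((\Delta x)^{1+\gamma}),
\end{equation*}
and the analogous identity for $\mob^{(1)}(\tilde\rho_{i+1})\mob^{(2)}(\tilde\rho_i)$ with $-D_{i+\frac12}$ replacing $+D_{i+\frac12}$. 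Combining with $v=v^++v^-$ and $|v|=v^+-v^-$,
\begin{equation*}
F_{i+\frac12}(\tilde\rho^{n+1}) = -\mob(\rho)\partial_x\xi\big|_{(t_{n+1},x_{i+\frac12})} + \tfrac{\Delta x}{2}D_{i+\frac12}|v_{i+\frac12}| + O((\Delta x)^{1+\gamma}).
\end{equation*}
The antisymmetric numerical-viscosity term is a smooth function of $x$, so its discrete divergence contributes $O(\Delta x)$ to $R_i^n$; the $O((\Delta x)^{1+\gamma})$ remainder contributes $O((\Delta x)^\gamma)$ after division by $\Delta x$; and the leading flux divergence cancels $\partial_t\rho$ pointwise via the PDE. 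Altogether $\max_{i,n}|R_i^n|\le C\bigl((\Delta t)^\beta+(\Delta x)^\gamma\bigr)$.

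The $L^1$-contraction of $\numericStep$ from \Cref{thm:scheme well-posedness} now closes the argument:
\begin{equation*}
\|e^{n+1}\|_{L^1_{\Delta}} = \|\numericStep\rho^n - \numericStep(\tilde\rho^n+\Delta t\,R^n)\|_{L^1_{\Delta}} \le \|e^n\|_{L^1_{\Delta}} + \Delta t\,\|R^n\|_{L^1_{\Delta}}.
\end{equation*}
Iterating in $n\le T/\Delta t$, using $\|R^n\|_{L^1_{\Delta}}\le|\Omega|\,C((\Delta t)^\beta+(\Delta x)^\gamma)$ and the cell-averaging bound $\|e^0\|_{L^1_{\Delta}}=O(\Delta x)$ (which follows from $\rho_0\in C^{2+\gamma}_x$), gives the announced rate.

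The main obstacle I expect is the consistency analysis of the upwinded mobility at merely Hölder regularity: the antisymmetric cancellation between the two upwind branches is what reduces the nominally first-order upwind truncation to the $O((\Delta x)^\gamma)$ spatial error after taking the discrete divergence, and must be tracked carefully through the expansion. A secondary technicality is checking that $\tilde\rho^n+\Delta t\,R^n \in \mathcal A_{\Delta,+}$ so that the $L^1$-contraction of $\numericStep$ is applicable; this holds for $\Delta$ small because $\tilde\rho^n$ takes values in the compact set $R\subset(0,\alpha)$ while $\Delta t\,R^n\to 0$ uniformly.
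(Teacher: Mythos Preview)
Your overall consistency-plus-stability strategy is exactly what the paper does, and your stability step (iterating the $L^1_\Delta$-contraction of $\numericStep$, applied to $\rho^n$ and $\tilde\rho^n+\Delta t\,R^n$) is the right reading of the paper's one-line ``we deduce the estimate from the stability of the numerical scheme''. The difference is in how the consistency error is organised. The paper does \emph{not} expand each numerical flux around the interface midpoint; instead it splits into four cases according to the signs of $v_{i\pm\frac12}$, and in each case rewrites the discrete divergence $\frac{F_{i+\frac12}-F_{i-\frac12}}{\Delta x}$ via a discrete product rule so that it directly matches the continuous expansion $\diver\mathfrak F=(\mob^{(1)})'\mob^{(2)}\partial_x\rho\,\mathfrak v+\mob^{(1)}(\mob^{(2)})'\partial_x\rho\,\mathfrak v+\mob^{(1)}\mob^{(2)}\diver\mathfrak v$. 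The mixed-sign cases are then handled by noticing that the sign change forces $|v_{i\pm\frac12}|=O((\Delta x)^\gamma)$, which kills the problematic terms. Your interface-expansion route is equally valid and arguably more systematic (it makes the upwind numerical-viscosity term explicit), whereas the paper's case analysis avoids the non-smoothness of $|v|$ altogether by never isolating that term.

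One small over-claim: you call the numerical-viscosity term $D_{i+\frac12}|v_{i+\frac12}|$ ``a smooth function of $x$'' and conclude its discrete divergence is $O(\Delta x)$. In fact $D$ involves $(\mob^{(j)}\circ\rho)'$, which under the hypothesis $\mob^{(j)}\in C^{1+\gamma}$ is only $C^\gamma$, so the product $D|v|$ is merely $C^\gamma$ and its first difference over $\Delta x$ is $O((\Delta x)^\gamma)$, not $O(\Delta x)$. This does not affect the final bound, since $(\Delta x)^\gamma$ is exactly the target spatial rate, but the sentence should be corrected.
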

Notice that this theorem can also be applied to \eqref{eq:the problem regularised} if the solution of the continuous problem had the regularity needed.
The proof of these results can be found in \cref{sec:Numerical method}.

\begin{remarks*}
    The comparison principle
    holds,
    i.e., for $\underline\rho^0, \overline\rho^{0} \in \mathcal B$ if $\underline\rho^{0} \le \overline\rho^{0}$ then $\numericStep \underline\rho_i^{0} \le \numericStep\overline\rho_i^{0}$.
    In order to obtain the convergence result presented in
    \Cref{thm:Disc to continuous}, we rely on consistency and stability. Thus, it requires for the solution of the problem to be smooth. We expect this to happen in \eqref{eq:the problem regularised} with $\ee > 0$ due to uniform ellipticity. For \eqref{eq:the problem Omega} we can construct examples where it does not hold.
    \Cref{thm:Disc to continuous} implies uniqueness of smooth solutions of \eqref{eq:the problem Omega}.
\end{remarks*}

\subsubsection{Long-time behaviour for the numerical problems}

In \cref{sec:Discrete regalurised stationary state} we focus on the long-time analysis of the numerical method. First, we study the limit in the time step of the solutions of \eqref{eq:problem regularised discrete}.

\begin{theorem}[Asymptotics for \eqref{eq:problem regularised discrete}]
    \label{thm:discrete approx asymptotics}
    Let $\ee > 0$ be fixed. Assume \eqref{hyp:mobility for numerics}, $M \in (0,\alpha|\Omega|)$ and let
    \begin{equation*}
        \rho^{\ee , \infty}_i = \left( U_{\ee}' \right)^{-1} \left( C_{\ee}^{\Delta } - V(x_i) \right),
    \end{equation*}
    where $C_{\ee}^{\Delta }$ is uniquely determined by the mass condition
    $
        \Delta x \sum_i \left( U_{\ee}' \right)^{-1} \left( C_{\ee}^{\Delta } - V(x_i) \right) = M.
    $
    Then:
    \begin{enumeratetheorem}
        \item
        \label{it:discrete approx unique steady state}
        $\rho^{\ee , \infty}$ is a unique constant-in-time solution of \eqref{eq:problem regularised discrete} in $\mathcal A_{\Delta , +}$ of mass $M$.

        \item
        \label{it:discrete approx global attractor}
        It is the global attractor, i.e., for any
        $(\numericApproxStep)^n\rho^0
            \rightarrow \rho^{\ee , \infty}$ in  $\R^{|I|}$
        as $n \rightarrow \infty $.

        \item
        \label{thm:Asymptotic convergence discrete to continuous ee}
        Consider $\widehat \rho^{(\ee)}$ obtained in \Cref{th:ee continuous limit}. Then
        $
            | C_\ee^{\Delta } - C_{\ee}| \le \mathsf C (\ee) \Delta x.
        $
    \end{enumeratetheorem}

\end{theorem}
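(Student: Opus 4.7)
\textbf{Proof strategy for \Cref{thm:discrete approx asymptotics}.}

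For item \ref{it:discrete approx unique steady state} I will first substitute $\rho^{\ee,\infty}$ into the scheme and verify that it is stationary. Plugging $\rho_i^{\ee,\infty} = (U_\ee')^{-1}(C_\ee^\Delta - V(x_i))$ into the definition of $\xi_i^\ee$ yields $\xi_i^\ee(\rho^{\ee,\infty}) = C_\ee^\Delta$ independent of $i$, so $v_{i+\frac12}^\ee(\rho^{\ee,\infty}) = 0$ and all fluxes vanish, giving $\numericApproxStep \rho^{\ee,\infty} = \rho^{\ee,\infty}$. For uniqueness in $\mathcal A_{\Delta,+}$, take any $\eta \in \mathcal A_{\Delta,+}$ with $\numericApproxStep \eta = \eta$. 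The scheme forces $F_{i+\frac12}^\ee(\eta) - F_{i-\frac12}^\ee(\eta) = 0$ for all $i$; combined with the no-flux conditions $F^\ee_{\frac12} = F^\ee_{N+\frac12} = 0$ and induction, all interior fluxes vanish. Strict positivity of $\mobee^{(1)}\mobee^{(2)}$ on $(0,\alpha)$ and the sign decomposition $v = v^+ + v^-$ then imply $v_{i+\frac12}^\ee(\eta) = 0$, hence $\xi_i^\ee(\eta)$ is constant in $i$. Thus $\eta_i = (U_\ee')^{-1}(C - V(x_i))$, and the strict monotonicity of $C \mapsto \Delta x \sum_i (U_\ee')^{-1}(C - V(x_i))$ uniquely identifies $C = C_\ee^\Delta$ from the mass $M$.

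For item \ref{it:discrete approx global attractor} I will run a discrete LaSalle argument. The orbit $\rho^n \coloneqq (\numericApproxStep)^n \rho^0$ lies in the compact set $\{\eta \in \mathcal A_{\Delta,+} : \Delta x \sum_i \eta_i = M\}$ thanks to \Cref{thm:scheme well-posedness} and mass conservation. By the \goodNumericalProblem{} property, $\Ediscee[\rho^n]$ is non-increasing and bounded below, hence convergent, so $\Ediscee[\rho^{n+1}] - \Ediscee[\rho^n] \to 0$. Next, revisiting the free-energy dissipation identity behind \Cref{thm:scheme well-posedness}, which quantitatively controls the $\ell^2$-norm of the discrete velocities by the energy decrement, gives $v_{i+\frac12}^\ee(\rho^n) \to 0$ for every $i$. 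Extracting a convergent subsequence $\rho^{n_k} \to \bar\rho$ (compactness), continuity of $\numericApproxStep$ and of the flux $v^\ee$ pass the stationarity condition $v^\ee(\bar\rho) = 0$ to the limit, so $\bar\rho$ is a constant-in-time solution of mass $M$ in $\mathcal A_{\Delta,+}$. By item \ref{it:discrete approx unique steady state}, $\bar\rho = \rho^{\ee,\infty}$. Since every subsequence has the same limit, the full sequence converges. Alternatively, once a single subsequence converges to $\rho^{\ee,\infty}$, the discrete $L^1$-contraction property of the scheme gives that $n \mapsto \|\rho^n - \rho^{\ee,\infty}\|_{L^1_\Delta}$ is non-increasing, which upgrades subsequential to full convergence directly.

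For item \ref{thm:Asymptotic convergence discrete to continuous ee} I will compare the mass-characterising functions $f_\ee(C) \coloneqq \int_\Omega (U_\ee')^{-1}(C - V(x))\dx$ and $f_\ee^\Delta(C) \coloneqq \Delta x \sum_i (U_\ee')^{-1}(C - V(x_i))$, which satisfy $f_\ee(C_\ee) = M = f_\ee^\Delta(C_\ee^\Delta)$. For fixed $\ee$, the arguments $(U_\ee')^{-1}(C - V(x_i))$ for $C$ in a bounded neighbourhood of $C_\ee$ remain in a compact sub-interval $[\delta(\ee), \alpha - \delta(\ee)]$, where $U_\ee''$ is bounded above by a constant $K_1(\ee)$ thanks to \eqref{eq:properties of Phi_ee} and $\mobee > 0$ in the interior. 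This provides a uniform lower bound $(f_\ee^\Delta)'(C) \ge |\Omega|/K_1(\ee) =: c(\ee) > 0$. The integrand $g_\ee(x) \coloneqq (U_\ee')^{-1}(C_\ee - V(x))$ is $C^2(\overline\Omega)$ by \ref{hyp:H_V} and the regularity of $U_\ee$, so the one-dimensional midpoint/rectangle quadrature error gives $|f_\ee(C_\ee) - f_\ee^\Delta(C_\ee)| \le K_2(\ee)\Delta x$. Applying the mean value theorem to $f_\ee^\Delta$ between $C_\ee$ and $C_\ee^\Delta$ and using the lower bound $c(\ee)$ yields
\begin{equation*}
    |C_\ee^\Delta - C_\ee| \le \frac{|f_\ee^\Delta(C_\ee) - f_\ee^\Delta(C_\ee^\Delta)|}{c(\ee)} = \frac{|f_\ee^\Delta(C_\ee) - f_\ee(C_\ee)|}{c(\ee)} \le \mathsf{C}(\ee)\Delta x.
\end{equation*}

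The main obstacle will be item \ref{it:discrete approx global attractor}: making the LaSalle argument rigorous requires identifying a clean discrete free-energy dissipation identity in which the dissipation functional is continuous and vanishes \emph{only} at the stationary states of the scheme, so that any accumulation point of the orbit is caught by item \ref{it:discrete approx unique steady state}. The upwind flux splitting through $\mob^{(1)}\mob^{(2)}$ complicates this identity slightly, but the convexity of $U_\ee$ and positivity of $\mobee^{(j)}$ on $(0,\alpha)$ should make it go through.
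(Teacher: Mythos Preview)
Your arguments for items \ref{it:discrete approx unique steady state} and \ref{thm:Asymptotic convergence discrete to continuous ee} are essentially the paper's proofs. For \ref{it:discrete approx unique steady state} the paper reaches $F^\ee_{i+\frac12}=0$ via the energy-dissipation identity rather than your telescoping induction, but the two are equivalent; for \ref{thm:Asymptotic convergence discrete to continuous ee} the paper also compares the discrete and continuous mass functions and applies the mean value theorem to $(U_\ee')^{-1}$, exactly as you do.

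There is, however, a real gap in your plan for item \ref{it:discrete approx global attractor}. You assert that the orbit lies in the ``compact'' set $\{\eta\in\mathcal A_{\Delta,+}:\Delta x\sum_i\eta_i=M\}$, but $\mathcal A_{\Delta,+}$ is open, so this mass slice is not compact. A subsequential limit $\bar\rho$ could a priori lie on $\partial\mathcal A_{\Delta,+}$, i.e.\ have some coordinate equal to $0$ or $\alpha$. At such a point the mobility factors $\mobee^{(1)},\mobee^{(2)}$ vanish, so you can no longer deduce $v^\ee_{i+\frac12}(\bar\rho)=0$ from $F^\ee_{i+\frac12}(\bar\rho)=0$; worse, $U_\ee'$ blows up at $0$ and $\alpha$, so neither $\xi^\ee$ nor $\numericApproxStep$ is continuous up to the boundary, and you cannot pass the scheme to the limit as you propose. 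Your LaSalle argument therefore cannot close without first confining the orbit uniformly in $[\delta,\alpha-\delta]^{|I|}$ for some $\delta>0$.

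The paper supplies exactly this missing ingredient: since $\rho^0\in\mathcal A_{\Delta,+}$, one can choose $C_1<C_2$ with $(U_\ee')^{-1}(C_1-V(x_i))\le\rho^0_i\le(U_\ee')^{-1}(C_2-V(x_i))$; these barriers are stationary solutions of \eqref{eq:problem regularised discrete}, and the discrete comparison principle (\Cref{lem:scheme comparison,lem:discrete monotonicity,lem:scheme is monotone}) propagates the bounds to all $n$, giving $\rho^{\ee,n}_i\in[\delta,\alpha-\delta]$ uniformly. With this confinement in hand, the rest of your outline (compactness, $\Ediscee[\rho^{n+1}]-\Ediscee[\rho^n]\to0$, passage to the limit, identification via item \ref{it:discrete approx unique steady state}, upgrade to full convergence) goes through and matches the paper's argument. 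The ``main obstacle'' you flag (the upwind splitting in the dissipation identity) is in fact harmless; the obstacle you missed is the interior confinement.
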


For \eqref{def:Numerical method} we can prove existence of a steady state, but not its uniqueness. Furthermore, we study some properties of one of the steady states.

\newcommand{\numericAsymp}{J^{\Delta, \infty}}

\begin{theorem}[Asymptotics for \eqref{def:Numerical method}]
    \label{thm:Asymptotics discrete P0}
    Assume \eqref{eq: U is C1[0,alpha]}. Then:
    \begin{enumeratetheorem}
        \item
        \label{it:discrete P0 exists asymptotic limit}
        For every $\rho^0 \in \mathcal A$ there exists $\numericAsymp \rho^0 \coloneqq \lim_n (\numericStep)^n \rho^0$. This limit is a fixed point of $J^{\Delta}$ (i.e., a constant-in-time solution).

        \item
        \label{it:discrete P0 asymptotic limit is L1 contraction}
        The operator $\numericAsymp$ is an $L^1_{\Delta}$-contraction.
    \end{enumeratetheorem}
    \noindent Assume, furthermore, that $U$ satisfies
    \eqref{eq:U locally strictly convex}.
    Let $M \in (0,\alpha|\Omega|)$ and define
    \begin{equation*}
        \rho^{0, \infty}_i =\inversedU (C_0^{\Delta } - V(x_i)), \quad \text{in } \R^{|I|},
    \end{equation*}
    where $C_0^{\Delta }$ is determined by the mass condition
    $
        \Delta x \sum_i \inversedU (C_0^{\Delta } - V (x_i)) = M .
    $
    Then:
    \begin{enumeratetheorem}[resume]
        \item
        \label{it:discrete P0 discrete global minimiser}
        $\rho^{0, \infty}$ is a constant-in-time solution to \eqref{def:Numerical method}, i.e., $\numericStep\rho^{0, \infty} = \rho^{0, \infty}$. In particular, $J^{\Delta ,\infty}  \rho^{0, \infty} = \rho^{0, \infty} $.

        \item
        \label{it:discrete asymptotic state limit as ee to 0}
        For the same mass, $\rho^{\ee , \infty} \rightarrow \rho^{0, \infty}$  as $ \ee \to 0$.

        \item
        \label{cor:Convergence disc to cont stat state ee=0}
        Let $\widehat{\rho}^{(0)}$ be as obtained in \Cref{thm:Asymptotic (P0)}.
        Then,
        $C_0^{\Delta } \rightarrow C_0$ as $\Delta x \to 0$.
        If \eqref{eq:U uniformly strictly convex} and $(U')^{-1} \in C^\gamma ([0,\alpha])$, then $ |C_0^{\Delta } - C_0 | \le \mathsf C (\Delta x)^\gamma.$
    \end{enumeratetheorem}
\end{theorem}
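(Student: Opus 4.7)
The plan splits along the two blocks of hypotheses. For \ref{it:discrete P0 exists asymptotic limit} and \ref{it:discrete P0 asymptotic limit is L1 contraction}, I argue by a LaSalle-type invariance principle for the discrete dynamical system $(\numericStep,\mathcal A_\Delta)$. Writing $\rho^n:=(\numericStep)^n\rho^0$, the set $\mathcal A_\Delta$ is compact in $\mathbb R^{|I|}$, $\numericStep$ is continuous on it (by the $L^1_\Delta$-contraction from \Cref{thm:scheme well-posedness}), and $E^\Delta[\rho^n]$ is non-increasing and bounded below, so $E^\Delta[\rho^{n+1}]-E^\Delta[\rho^n]\to 0$. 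The key ingredient is strict dissipation: combining convexity of $U$ with summation by parts one obtains
\begin{equation*}
E^\Delta[\rho^{n+1}]-E^\Delta[\rho^n]\le -\Delta t\,\Delta x\sum_{i\in I} F_{i+\frac 1 2}(\rho^{n+1})\,v_{i+\frac 1 2}(\rho^{n+1}),
\end{equation*}
with $F_{i+\frac 1 2}v_{i+\frac 1 2} = \mobnd(\rho^{n+1}_i)\mobni(\rho^{n+1}_{i+1})(v_{i+\frac 1 2}^+)^2+\mobnd(\rho^{n+1}_{i+1})\mobni(\rho^{n+1}_i)(v_{i+\frac 1 2}^-)^2\ge 0$. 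Equality forces each squared-velocity term to vanish, and hence $F_{i+\frac 1 2}=\mobnd(\cdot)\mobni(\cdot)v^+ + \mobnd(\cdot)\mobni(\cdot)v^-=0$ for every $i$, which by the scheme gives $\rho^{n+1}=\rho^n$. Any subsequential limit $p$ of $\rho^n$, which exists by compactness, therefore satisfies $E^\Delta[\numericStep p]=E^\Delta[p]$ by continuity of $E^\Delta$ and $\numericStep$, hence $\numericStep p=p$. Since $p$ is a fixed point and $\numericStep$ is an $L^1_\Delta$-contraction, $n\mapsto\|\rho^n-p\|_{L^1_\Delta}$ is non-increasing and converges to $0$ along the extracting subsequence, so $\rho^n\to p$ for the full sequence. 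For \ref{it:discrete P0 asymptotic limit is L1 contraction} it suffices to pass $n\to\infty$ in $\|(\numericStep)^n\rho^0-(\numericStep)^n\eta^0\|_{L^1_\Delta}\le\|\rho^0-\eta^0\|_{L^1_\Delta}$.

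For \ref{it:discrete P0 discrete global minimiser} the plan is to show each $F_{i+\frac 1 2}(\rho^{0,\infty})=0$ directly. Under \eqref{eq:U locally strictly convex} together with $U\in C^1([0,\alpha])$, setting $\xi_i:=U'(\rho^{0,\infty}_i)+V(x_i)$, the definition $\inversedU=T_{0,\alpha}\circ(U')^{-1}$ yields a three-way dichotomy: either $\rho^{0,\infty}_i\in(0,\alpha)$ and $\xi_i=C_0^\Delta$, or $\rho^{0,\infty}_i=0$ and $\xi_i\ge C_0^\Delta$, or $\rho^{0,\infty}_i=\alpha$ and $\xi_i\le C_0^\Delta$. \Cref{lem:existence of decomposition} together with $\mob(0)=\mob(\alpha)=0$, $\mobnd$ non-decreasing and $\mobni$ non-increasing forces $\mobnd(0)=\mobni(\alpha)=0$. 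A short case analysis on the pair $(\rho^{0,\infty}_i,\rho^{0,\infty}_{i+1})$ then shows that either $v_{i+\frac 1 2}=0$ (both interior, or both at the same extreme), or the sign of $v_{i+\frac 1 2}$ enforced by the dichotomy aligns with the upwinded product that vanishes: e.g.\ $\rho^{0,\infty}_i=0$ gives $\xi_i\ge C_0^\Delta\ge \xi_{i+1}$ (when $\rho^{0,\infty}_{i+1}>0$), hence $v_{i+\frac 1 2}\ge 0$, so only the $v^+$-term survives and it carries $\mobnd(0)=0$; symmetrically at $\alpha$. Thus $\numericStep\rho^{0,\infty}=\rho^{0,\infty}$, and \ref{it:discrete P0 exists asymptotic limit} yields $\numericAsymp\rho^{0,\infty}=\rho^{0,\infty}$.

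For \ref{it:discrete asymptotic state limit as ee to 0} and \ref{cor:Convergence disc to cont stat state ee=0}, I combine compactness with pointwise convergence of the inverse maps. Since $M\in(0,\alpha|\Omega|)$, for fixed $\Delta$ the constants $C_\ee^\Delta$ of \Cref{thm:discrete approx asymptotics} remain bounded (otherwise the mass would collapse to $0$ or saturate at $\alpha|\Omega|$); along a subsequence $C_{\ee_k}^\Delta\to C^*$. Using $U_\ee'\to U'$ on compact subsets of $(0,\alpha)$, together with the fact that $U_\ee'$ has infinite limits at $0^+,\alpha^-$ for each $\ee>0$ (by construction $U_\ee''=\Phi_\ee'/\mobee$ with $\Phi_\ee'\ge \ee$ and $\mobee$ vanishing at the endpoints), a standard monotonicity argument gives $(U_\ee')^{-1}(\zeta)\to\inversedU(\zeta)$ at every $\zeta\neq \underline\zeta,\overline\zeta$. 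Passing to the limit in the mass identity for $\rho^{\ee_k,\infty}$ identifies $C^*$ with an admissible $C_0^\Delta$, and since $\inversedU(C-V(x_i))$ yields the same $\rho^{0,\infty}$ for every admissible $C$, the full sequence converges. For \ref{cor:Convergence disc to cont stat state ee=0}, set
\begin{equation*}
H(C):=\int_\Omega\inversedU(C-V(x))\dx,\qquad H^\Delta(C):=\Delta x\sum_{i\in I}\inversedU(C-V(x_i)).
\end{equation*}
Both are non-decreasing and continuous, with $H(C_0)=M=H^\Delta(C_0^\Delta)$, and $H^\Delta(C)\to H(C)$ pointwise as $\Delta x\to 0$, from which qualitative convergence $C_0^\Delta\to C_0$ follows by monotonicity and continuity near $C_0$. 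Under \eqref{eq:U uniformly strictly convex} with $(U')^{-1}\in C^\gamma([0,\alpha])$, the integrand $x\mapsto \inversedU(C-V(x))$ is $C^\gamma$ in $x$ with modulus uniform in $C$ on bounded ranges, so a Riemann-sum error bound gives $|H^\Delta(C)-H(C)|\le\mathsf C(\Delta x)^\gamma$; combined with a uniform Lipschitz-in-$C$ invertibility of $H$ near $C_0$ provided by \eqref{eq:U uniformly strictly convex}, this yields $|C_0^\Delta-C_0|\le\mathsf C(\Delta x)^\gamma$.

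The main obstacle I anticipate is the strict-dissipation step for \ref{it:discrete P0 exists asymptotic limit}, where one must carefully deduce from equality at the energy level that the upwinded flux itself (and not only the dissipation product $Fv$) vanishes, using the precise upwind structure $F=Av^++Bv^-$ with $A,B\ge 0$. A secondary subtlety is the case analysis in \ref{it:discrete P0 discrete global minimiser}: the asymmetric vanishings $\mobnd(0)=0$ and $\mobni(\alpha)=0$ must be matched against the sign of $v_{i+\frac 1 2}$ dictated by the $\xi_i$-dichotomy in every configuration, so that in each case the term of the upwinded flux that is not \emph{a priori} zero is killed by the correct mobility factor.
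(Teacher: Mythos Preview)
Your proposal is correct and follows essentially the same route as the paper. The arguments for \ref{it:discrete P0 asymptotic limit is L1 contraction}, \ref{it:discrete P0 discrete global minimiser}, \ref{it:discrete asymptotic state limit as ee to 0}, and \ref{cor:Convergence disc to cont stat state ee=0} match the paper's closely (the paper does the same case analysis on $F_{i+\frac12}$, the same compactness-of-constants argument inherited from the continuous case, and the same Riemann-sum/H\"older estimate for the rate).

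The one genuine, if minor, difference is in \ref{it:discrete P0 exists asymptotic limit}. The paper reaches the fixed-point conclusion by introducing a discrete $W^{-1,1}_\Delta$ norm and bounding $\|\rho^{n+1}-\rho^n\|_{W^{-1,1}_\Delta}$ by the energy drop, so that along any convergent subsequence $\rho^{n_k}\to u$ one also has $\rho^{n_k+1}\to u$, and then passes to the limit in $H(\rho^{n_k+1})=\rho^{n_k}$. You instead use continuity of $\numericStep$ (from the $L^1_\Delta$-contraction) to get $\rho^{n_k+1}\to \numericStep p$, and then the strict-dissipation identity to conclude $\numericStep p=p$. Your route is slightly more economical since it avoids the auxiliary norm; the paper's route has the advantage of reusing the machinery already built for the continuous problem. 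Both are valid and your anticipated obstacle (that $Fv=0$ forces $F=0$) is indeed handled by the upwind structure $F=Av^++Bv^-$ with $A,B\ge 0$, since $Fv=A(v^+)^2+B(v^-)^2$.

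One small attribution to tighten in \ref{cor:Convergence disc to cont stat state ee=0}: the lower bound on $H'(C)$ near $C_0$ (your ``Lipschitz-in-$C$ invertibility'') does not come from \eqref{eq:U uniformly strictly convex} itself, which only bounds $((U')^{-1})'$ from \emph{above}; it comes from the fact that $U''$ is continuous on compact subsets of $(0,\alpha)$, together with the existence of a set of positive measure on which $C-V(x)$ stays strictly between $\underline\zeta$ and $\overline\zeta$. The paper makes exactly this localisation argument explicit.
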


As we mention above for \eqref{def:Numerical method} we cannot prove uniqueness of a steady state. In \cref{sec:Num counterexample} we reproduce the example from \cref{sec:malicious counterexamples}. Thus, analogously to the continuous case, in the discrete problem there exists a steady state different from $\rho^{0, \infty}$ that attracts a large class of initial data. Finally, in \cref{sec:Numerical experiments} we show some numerical experiments.

\subsection{A complete convergence diagram}
\label{sec:Diagram}

As a summary of the results in the previous sections, we present the diagram \eqref{Numeric Diagram} superseding the diagram \eqref{Diagram}.
This diagram summarises the most important results of this work.
\begin{equation}\label{Numeric Diagram}\tag{D$_{2}$}
    \begin{tikzpicture}[baseline=(current  bounding  box.center)]
        \matrix[matrix of math nodes,column sep={8.75em,between origins},row sep={5.5em,between origins},nodes={inner sep=0.2pt}]
        {
        |(e)|  \hyperref[thm:scheme well-posedness]{\rho^{\ee,n}_i \text{ solution to \eqref{eq:problem regularised discrete}}}
        & & |(0)|
        \hyperref[thm:discrete approx asymptotics]{\rho^{\varepsilon, \infty}_i
            =(U_\varepsilon')^{-1} \Big( C_\ee^{\Delta }   - V (x_i)         \Big)}
        \\
        & |(1)|  \hyperref[thm:Properties (Pee)]{\rho^{(\ee)}_t \text{ solution to \eqref{eq:the problem regularised}}} & & |(01)|   \hyperref[th:ee continuous limit]{\widehat \rho^{(\varepsilon)}(x)
            =(U_\varepsilon')^{-1} \Big( C_\ee   - V (x) \Big)}  \\
        |(2)|  \hyperref[thm:scheme well-posedness]{\rho_i^{0,n} \text{ solution to \eqref{def:Numerical method}}} & & |(02)| \hyperref[thm:Asymptotics discrete P0]{\rho^{0, \infty}_i
        = T_{0,\alpha} \circ (U')^{-1} \Big( C_0^{\Delta }   - V (x_i)         \Big)} \\
        & |(12)| \hyperref[thm:left side of D]{\rho^{(0)}_t \text{ solution to \eqref{eq:the problem Omega}}} & & |(012)| \hyperref[thm:Asymptotic (P0)]{\widehat{\rho}^{(0)} (x)
        =T_{0 , \alpha} \circ (U')^{-1} \Big( C_0   - V (x)         \Big)} \\
        };

        \draw[->] (e)  to node[anchor=north west] {\hyperref[thm:scheme well-posedness]{\scriptsize $\ee \to 0$}} (2);
        \draw[->] (0)  to node[above=-8mm]
        {\, \, \, \, \, \,  \hyperref[it:discrete asymptotic state limit as ee to 0]{\scriptsize $\ee \to 0$}} (02);
        \draw[->] (e)  to node[anchor=south ] {\hyperref[it:discrete approx global attractor]{\scriptsize $n \to \infty$}} (0);
        \draw[->,dotted] (2)  to node[anchor=south east] {\hyperref[sec:Num counterexample]{\scriptsize ?}} node[anchor=north] {\hyperref[it:discrete P0 exists asymptotic limit]{\scriptsize $n \to \infty$} $\, \,$} (02);

        \draw[->,over] (1)  to node[above=7mm] {$\qquad \, \, \, $ \hyperref[thm:left side of D]{\scriptsize $\ee \rightarrow 0$}} (12);
        \draw[->]      (01) to node[anchor= west] {\hyperref[item:Limit of steady states]{\scriptsize $ \ee \to 0$}} (012);
        \draw[->,over] (1)  to node[anchor=south west] {\hyperref[item:Pee long time 1]{\scriptsize $t \to \infty$}} (01);
        \draw[->,dotted]      (12) to node[anchor=north west] {\hyperref[thm:Asymptotic (P0)]{\scriptsize $t \to \infty$}} node[anchor=south west] {$\, \, \,$ \hyperref[sec:malicious counterexamples]{\scriptsize ?}} (012);

        \draw[->] (e)  to node[anchor= west] {$\, \, \, \,$ \hyperref[thm:Disc to continuous]{\scriptsize $\Delta \to 0$}} (1);
        \draw[->] (0)  to node[anchor= south west] {\hyperref[thm:Asymptotic convergence discrete to continuous ee]{\scriptsize $\Delta  \to 0$}} (01);
        \draw[->] (2)  to node[anchor= west] {$\, \, \, \, $ \hyperref[thm:Disc to continuous]{\scriptsize $\Delta \to 0$}} (12);
        \draw[->] (02) to node[anchor= west] {$\qquad $\hyperref[cor:Convergence disc to cont stat state ee=0]{\scriptsize $\Delta  \to 0$}} (012);
    \end{tikzpicture}
\end{equation}
Notice again that  a similar counterexample for the commutativity of the diagram \eqref{Diagram} can be given here to the commutativity of the backward face of \eqref{Numeric Diagram}  in \cref{sec:Num counterexample}.

\section{Well-posedness theory}
\label{sec:Analysis of Pee}

We divide this section as follows. In \cref{sec:Well Posedness Pee} we construct the regularised mobility $\mobee$, we obtain some auxiliary results, we discuss  well-posedness, and we obtain a Strong Maximum Principle using classical theory.
In \cref{sec:a priori} we obtain some \textit{a priori} estimates for the sequence $\rho$ that come from the structure of the equation. Then, in \cref{sec:Semigroup} we understand the problem through a free-energy dissipating semigroup. Finally, we devote \cref{sec:Existence P0} to prove existence of weak solutions for the problem \eqref{eq:the problem Omega} as a limit of classical solutions of the problem \eqref{eq:the problem regularised}.

Before we begin, we recall some functional analysis results. We will take advantage of the negative Sobolev space
\begin{equation*}
    W^{-1,1} (\Omega) \coloneqq \left\lbrace \rho \in \mathcal{D}'(\Omega) : \, \exists F \in L^1(\Omega) \, \, \mathrm{s.t.} \, \,  \rho = \mathrm{div}(F) \right\rbrace
    \quad
    \text{ with } \| \rho \|_{W^{-1,1}(\Omega)} := \inf_{\rho = \diver (F)} \| F \|_{L^1(\Omega)}.
\end{equation*}
This space will be very helpful, since it allows us to use the free-energy dissipation to prove that if $\rho$ solves \eqref{eq:the problem Omega} then 
\begin{align}\label{eq:drho/dt W -1,1}
    \begin{split}
        \int_{t_1}^{t_2}  \left\| \frac{\partial \rho}{\partial t} \right\|^2_{W^{-1, 1}(\Omega)} & = \int_{t_1}^{t_2} \left\|  \diver  \left( \mob (\rho) \nabla \left( U' \left( \rho \right) +  V   \right)  \right)  \right\|^2_{W^{-1, 1}(\Omega)}                                        \\
        & \le \int_{t_1}^{t_2} \left\|  \mob (\rho) \nabla \left( U' \left( \rho \right) +  V   \right)   \right\|^2_{L^{1}(\Omega)}                                                                 \\
        & \leq  \| \mob \|_{L^{\infty}([0,\alpha])} |\Omega| \int_{t_1}^{t_2}   \left\|  \mob (\rho)^{\frac{1}{2}} \nabla \left( U' \left( \rho \right) +  V    \right)   \right\|^2_{L^{2}(\Omega)} \\
        & \leq  \| \mob \|_{L^{\infty}([0,\alpha])} |\Omega| \left( \mathcal{F}[\rho_{t_1} ] - \mathcal{F} [ \rho_{t_2} ] \right).
    \end{split}
\end{align}
The decay of the free energy allows to control the right-hand side uniformly. 
\begin{lemma}
    \label{lem:negative Sobolev Ascoli-Arzela}
    Let $\rho^{[n]} \in C([t_1,t_2]; W^{-1,1}(\Omega))$ such that
    \[
        \int_{t_1}^{t_2}  \left\| \frac{\partial \rho^{[n]}}{\partial t} \right\|^2_{W^{-1, 1}(\Omega)} \le C
    \]
    and uniformly bounded in $L^1((t_1,t_2) \times \Omega)$. Then, up to a subsequence,
    \begin{equation*}
        \rho^{[n]} \rightarrow \rho^{\infty} \text{ in } C([t_1,t_2]; W^{-1,1}(\Omega)) \text{ as } n \to \infty.
    \end{equation*}
\end{lemma}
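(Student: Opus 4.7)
The lemma is an Arzel\`a--Ascoli compactness statement in $C([t_1,t_2]; W^{-1,1}(\Omega))$, so the plan is to verify two ingredients and then invoke a standard Ascoli-type extraction: (a) uniform equicontinuity of $\{\rho^{[n]}\}$ with values in $W^{-1,1}(\Omega)$ and (b) precompactness of $\{\rho^{[n]}(t)\}_n$ in $W^{-1,1}(\Omega)$ at each $t\in[t_1,t_2]$.

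For equicontinuity I would apply Cauchy--Schwarz to the $L^2$-bound on $\partial_t\rho^{[n]}$: for $t_1\le s\le t\le t_2$,
\[
\|\rho^{[n]}(t)-\rho^{[n]}(s)\|_{W^{-1,1}}
\le \int_s^t \Big\|\tfrac{\partial \rho^{[n]}}{\partial t}(\tau)\Big\|_{W^{-1,1}} d\tau
\le C^{1/2}|t-s|^{1/2},
\]
giving uniform H\"older-$\tfrac12$ continuity in $W^{-1,1}$. Next, the continuous embedding $L^1(\Omega)\hookrightarrow W^{-1,1}(\Omega)$ (one can write $\rho=\diver F$ with $F=-\nabla u$, $-\Delta u=\rho$, and $\|F\|_{L^q}\le C\|\rho\|_{L^1}$ for $q<d/(d-1)$ by Calder\'on--Zygmund/Stampacchia) combined with the spacetime $L^1$ assumption gives $\int_{t_1}^{t_2}\|\rho^{[n]}(t)\|_{W^{-1,1}} dt\le C'$. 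Averaging selects $t_n^*\in[t_1,t_2]$ with $\|\rho^{[n]}(t_n^*)\|_{W^{-1,1}}$ uniformly bounded, and then the H\"older estimate transports this to $\sup_{n,t}\|\rho^{[n]}(t)\|_{W^{-1,1}}<\infty$.

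For the pointwise precompactness I would use that bounded subsets of $L^1(\Omega)$ are precompact in $W^{-1,q}(\Omega)$ for $1<q<d/(d-1)$ on a smooth bounded domain, obtained by dualizing the compact Sobolev embedding $W^{1,q'}_0(\Omega)\hookrightarrow\hookrightarrow C^0(\overline\Omega)$ for $q'>d$; together with the continuous embedding $W^{-1,q}(\Omega)\hookrightarrow W^{-1,1}(\Omega)$ this produces compactness in the target space. To deploy this at every $t$, I would fix a countable dense set $\{s_k\}\subset [t_1,t_2]$ of \emph{good} times at which $\|\rho^{[n]}(s_k)\|_{L^1}$ stays bounded along a subsequence (such times exist by the averaged bound, and arbitrary $t$ are approximated by them thanks to the $W^{-1,1}$-H\"older continuity), and diagonalize to extract a single subsequence converging in $W^{-1,1}$ at every $s_k$. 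Equicontinuity then upgrades convergence on this dense set to uniform convergence on $[t_1,t_2]$ by a standard three-$\varepsilon$ argument, finishing the proof. The main difficulty is exactly this pointwise-in-time step: the $L^1$ bound is only integrated in time, so it is the interplay between the averaged $L^1$ control, the Rellich-duality compactness $L^1\hookrightarrow\hookrightarrow W^{-1,q}$, and the $W^{-1,1}$-equicontinuity that allows one to propagate compactness from a dense subset of times to the whole interval.
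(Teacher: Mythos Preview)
Your proposal is correct and follows the same Ascoli--Arzel\`a strategy as the paper: equicontinuity in $W^{-1,1}(\Omega)$ from the $L^2_t$ bound on $\partial_t\rho^{[n]}$ via Cauchy--Schwarz, and pointwise precompactness from the compact embedding $L^1(\Omega)\hookrightarrow\hookrightarrow W^{-1,1}(\Omega)$. You are more careful than the paper about the fact that the $L^1$ bound is only integrated in time (handling it by good-time selection, diagonalization, and propagation via equicontinuity), whereas the paper's terse proof simply cites the compact embedding and invokes Ascoli--Arzel\`a; in the paper's actual applications the sequences lie in $\mathcal A$ and are thus $L^\infty$-bounded pointwise in time, so this subtlety does not arise there, but your extra care is what the lemma as stated requires.
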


\begin{proof}
    By properties of the Bochner integral 
    (see, e.g., \cite{Mikusinski1978BochnerIntegral}),
    we get that a family of $\rho^{[n]}$ with such uniform bound are equicontinuous as $C([t_1,t_2]; W^{-1,1}(\Omega))$ functions.
    We recall that $L^1((t_1,t_2) \times \Omega) = L^1 (t_1,t_2 ; L^1 (\Omega))$ and that $L^1(\Omega)$ is compactly embedded in $W^{-1,1} (\Omega)$. Thus, the result follows from the Ascoli-Arzelà theorem.
\end{proof}

\subsection{Well-posedness and interior bounds for \texorpdfstring{\eqref{eq:the problem regularised}}{(Peps)}}\label{sec:Well Posedness Pee}

We start by showing some auxiliary results. 
\begin{proof}[Proof of \Cref{lem:mobee exists}]
    First, we take $0\leq\kappa<\alpha/2$.
    Since $\Phi'_\ee \le (1+\ee)(\Phi' + \ee)$
    we observe that
    \begin{align*}
        \int_0^\alpha \left| \int_{\tfrac \alpha 2}^s \frac{\Phi_\ee'(\sigma)}{\mob(\sigma)} \chi_{[\kappa, \alpha - \kappa]} d\sigma \right| ds
         & \le (1+\ee)
        \int_0^\alpha \left| \int_{\tfrac \alpha 2}^s \frac{\Phi'(\sigma)}{\mob(\sigma)} d\sigma \right| ds \\
         & \quad + (1+\ee)
        \ee \left( \int_0^{\tfrac \alpha 2} \int_s^{\tfrac \alpha 2} \frac{\chi_{[\kappa , \alpha - \kappa]}(\sigma)}{\mob(\sigma)}  d\sigma ds + \int_{\tfrac \alpha 2}^\alpha \int_{\tfrac \alpha 2}^s \frac{\chi_{[\kappa , \alpha - \kappa]}(\sigma)}{\mob(\sigma)}  d\sigma ds \right).
    \end{align*}
    The first term is finite since $U' \in L^1 ((0,\alpha))$.
    We pick
    \[
        \kappa(\ee) = \ee \frac{\alpha}{2} + (1-\ee)\inf\left\{ \kappa \in (0,\tfrac \alpha 2] : \int_0^{\tfrac \alpha 2} \int_s^{\tfrac \alpha 2} \frac{\chi_{[\kappa , \alpha - \kappa]}(\sigma)}{\mob(\sigma)}  d\sigma ds + \int_{\tfrac \alpha 2}^\alpha \int_{\tfrac \alpha 2}^s \frac{\chi_{[\kappa , \alpha - \kappa]}(\sigma)}{\mob(\sigma)}  d\sigma ds \le \ee^{-\frac 1 2} \right\}.
    \]
    The set is non-empty, since the function inside infimum is continuous with respect to $\kappa$, and takes value $0$ at $\kappa=\frac \alpha 2$.
    Since the function inside the infimum is non-negative and non-increasing, we have that $\kappa(0^+) = 0$.

    We will use it to bound the $L^1$ norm of
    $U_\ee' (s) = U'(\tfrac \alpha 2) + \int_{\tfrac \alpha 2}^s \frac{\Phi_\ee'(\sigma)}{\mobee(\sigma)} d\sigma $.
    Thus, it suffices to estimate
    \[
        \int_0^\alpha \left| \int_{\tfrac \alpha 2}^s \frac{\Phi_\ee'(\sigma)}{\mobee(\sigma)} d\sigma \right| ds = \int_0^{\tfrac \alpha 2} \int_s^{\tfrac \alpha 2} \frac{\Phi_\ee'(\sigma)}{\mobee(\sigma)} d\sigma ds + \int_{\tfrac \alpha 2}^\alpha \int_{\tfrac \alpha 2}^s \frac{\Phi_\ee'(\sigma)}{\mobee(\sigma)} d\sigma ds.
    \]
    We split
    \[
    \frac{\Phi_\ee'(\sigma)}{\mobee(\sigma)} = \frac{\Phi_\ee'(\sigma)}{\mobee(\sigma)}\chi_{[0,\kappa(\ee))}
            + \frac{\Phi_\ee'(\sigma)}{\mobee(\sigma)}\chi_{[\kappa(\ee),\alpha - \kappa(\ee)]}
            + \frac{\Phi_\ee'(\sigma)}{\mobee(\sigma)}\chi_{( \alpha - \kappa(\ee) ,\alpha}].
            \]
            First we build a lower bound with a mobility of the form
            \[
                \underline \mob(\ee, s) =
                \begin{dcases}
                    s                                                   & \text{if } 0 \le s < \kappa(\ee),                  \\
                    \max\{ \kappa(\ee), \mob(\kappa(\ee)) \}            & \text{if } s = \kappa(\ee) ,                       \\
                    \mob(s)                                             & \text{if } \kappa(\ee) < s < \alpha - \kappa(\ee), \\
                    \max\{ \kappa(\ee), \mob (\alpha - \kappa( \ee)) \} & \text{if } s = \alpha - \kappa(\ee) ,              \\
                    \alpha - s                                          & \text{if } \alpha - \kappa(\ee) < s \le  \alpha.
                \end{dcases}
            \]
            Since we know that $\Phi'_\ee \le (1+\ee)(\kappa(\ee)^{-1} + \ee)$ and if $\mob_\ee \ge \underline \mob (\ee, \cdot)$, it follows that
            \begin{align*}
                \int_0^\alpha \left| \int_{\tfrac \alpha 2}^s \frac{\Phi_\ee'(\sigma)}{\mob_\ee (\sigma)} \chi_{[0, \kappa(\ee)]} d\sigma \right| ds
                 & \le (1+\ee)\int_0^\alpha \left| \int_{\tfrac \alpha 2}^s \frac{\kappa(\ee)^{-1} + \ee}{\underline \mob(\ee , \sigma)} \chi_{[0, \kappa(\ee)]} d\sigma \right| ds
                \\
                 &
                = (1+\ee)(\kappa(\ee)^{-1} + \ee)  \int_0^{\kappa(\ee)} \int_{s}^{\kappa(\ee)} \frac{d\sigma}{\sigma} ds
                = (1+\ee)(\kappa(\ee)^{-1} + \ee) \kappa(\ee),
            \end{align*}
            so it is bounded, and similarly for the terms $\chi_{(\alpha - \kappa(\ee), \alpha]}$.
            We also build the following upper bound
            \[
                \overline \mob(\ee,s) =
                \begin{dcases}
                    B_\ee s                                                          & \text{if } 0 \le s < \kappa(\ee),                       \\
                    \min\{ B_\ee \kappa(\ee),  (1+\ee) \mob(\kappa(\ee)) \}          & \text{if }  s = \kappa(\ee) ,                           \\
                    (1 + \ee) \mob(s)                                                & \text{if } \kappa(\ee) < s < \alpha - \kappa(\ee),      \\
                    \min\{ B_\ee \kappa(\ee),  (1+\ee) \mob(\alpha - \kappa(\ee)) \} & \text{if }  s =  \alpha - \kappa(\ee) , \\
                    B_\ee (\alpha - s)                                               & \text{if } \alpha - \kappa(\ee) < s \le  \alpha,
                \end{dcases}
            \]
            where
            \[
                B_\ee = \max\left\{ 1 + \ee , \frac{(1+\ee) \mob(\kappa(\ee))}{\kappa(\ee)}, \frac{(1 + \ee) \mob(\alpha - \kappa(\ee))}{\kappa(\ee)} \right\}.
            \]
            With this construction it is easy to see that in $(0,1] \times[0,\alpha]$ we have $\overline \mob \ge \underline \mob$. Furthermore, ${\overline \mob}$ is lower semi-continuous, and $\underline \mob$ is upper semi-continuous.
        The same holds when we divide by $s(\alpha-s)$.
        Due to the Katětov–Tong insertion theorem there exists $v_0 \in C((0,1] \times [0,\alpha])$ such that $\frac{\underline \mob}{s(\alpha-s)} \le v_0 \le \frac{\overline \mob}{s(\alpha-s)}$.

        We first build an auxiliary sequence $v_n$. Let $K_n \coloneqq [\ee_n,1] \times [0,\alpha]$ where $\ee_n = \frac 1{n+1}$ for $n \ge 0$.
        We are going to inductively construct a sequence of smooth functions $u_n$ for $n \ge 1$ such that
        \[
            u_n = u_{n+1} \text{ in } K_{n-1} \quad \text{and} \quad \underline \mob \le u_{n+1} \le \overline \mob + \ee s (\alpha-s) \text{ in } K_{n+1}.
        \]
        There exists $v_1 \in C^\infty(K_1)$ with $\|\tfrac \ee 2 + v_0 - v_1 \|_{L^\infty (K_1)} \le \frac {\ee_1} 2$, which in particular implies $\underline{\mob} (\ee, s) \leq v_1 (\ee , s) s (\alpha - s) \leq \overline{\mob} (\ee, s) + \ee s (\alpha - s)$ for all $(\ee, s) \in K_1$. We now set $u_1 = s(\alpha-s)v_1$. 
        Again, there exists $v_{n+1} \in C^\infty(K_{n+1})$ such that $\|\tfrac \ee 2 + v_0 - v_{n+1} \|_{L^\infty(K_{n+1})} \le \frac {\ee_{n+1}} 2$.
        Once more, this bound implies that $\underline{\mob} (\ee, s) \leq v_{n+1} (\ee, s) s(\alpha-s) \leq \overline{\mob} (\ee, s) + \ee s (\alpha - s)$ for all $(\ee, s) \in K_{n+1}$.

        We now construct $u_{n+1}$ from $u_n$ and $v_{n+1}$.
        Consider a non-decreasing function $\psi \in C^\infty([0,1])$ such that $\psi(0) = \psi^{(k)} (0) = \psi^{(k)}(1) = 0$ and $\psi(1) = 1$ for all $k \ge 1$.
        Let $\theta_{n+1}(\ee) = \psi (\tfrac {\ee_{n-1}-\ee}{\ee_{n-1} - \ee_{n}} )$.
        With this construction, we define
        \[
            u_{n+1} (\ee, s) = \begin{dcases}
                {u_n(\ee,s)}               & \text{in } K_{n-1} ,                 \\
                {u_n(\ee,s)} ( 1 - \theta_{n+1}(\ee) ) + v_{n+1}(\ee,s) s(\alpha-s) \theta_{n+1}(\ee)
                                           & \text{in } K_{n} \setminus K_{n-1} , \\
                v_{n+1}(\ee,s) s(\alpha-s) & \text{in } K_{n+1} \setminus K_n,
            \end{dcases}
        \]
        which also satisfies the desired bounds. Since we are pasting $C^\infty$ functions with $C^\infty$ contact conditions, we have $u_{n+1} \in C^\infty (K_{n+1})$.

        Let us now define the limit function $u \in C^\infty((0,1] \times[0,\alpha])$ so that $u = u_n$ in $K_{n-1}$ for all $n \ge 1$. Notice that  $u(\ee,0) = u(\ee,\alpha) = 0$ for $\ee > 0$. Furthermore, due to the lower bound we get $\frac{\partial u}{\partial s} (\ee, 0), -\frac{\partial u}{\partial s} (\ee, \alpha) \ge 1$ for any $\ee > 0$.
        Since $\underline \mob(0,s) = \overline \mob(0,s) = \mob(s)$ we have an extension $u \in C([0,1] \times [0,\alpha])$ where $u(0,s) = \mob(s)$.
        Finally, we define $\mob_\ee(s) = u (\ee,s)$. Since $\mob_\ee \ge \underline \mob (\ee,\cdot)$ the corresponding $U_\ee$ satisfies the desired properties.
\end{proof}

\begin{lemma}
    \label{lem:hypothesis Phi/Phi' is uniform}
    If \eqref{eq:hypothesis Phi/Phi' ee = 0} is satisfied, then it follows that
    \begin{align}
        \label{eq:hypothesis Phi/Phi'}
        \sup_{(\ee,s) \in (0,1] \times [0,\alpha] } \left|\frac{(\Phi_\ee (\alpha)-\Phi_\ee (s))\Phi_{\ee}(s)}{\Phi_{\ee}'(s)} \right| + \left|\frac{(\Phi_\ee (s)-\Phi_\ee (0))\Phi_{\ee}(s)}{\Phi_{\ee}'(s)} \right| & <\infty .
    \end{align}
\end{lemma}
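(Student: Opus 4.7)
The strategy is to transfer the hypothesis on $\Phi$ to $\Phi_\ee$ through three ingredients: the pointwise bracket $\underline{\Phi'_\ee} \le \Phi'_\ee \le (1+\ee)\underline{\Phi'_\ee}$ from \eqref{eq:properties of Phi_ee}, the resulting integral comparison between $\Phi_\ee$ and $\Phi$, and a case split based on whether $\Phi'(s)$ sits below or above the truncation level $\kappa(\ee)^{-1}$.

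\emph{Step 1: integral bounds on $\Phi_\ee$.} From the normalisation $\Phi_\ee(s_0)=0$ and the upper bound $\Phi'_\ee\le (1+\ee)(\Phi'+\ee)$, direct integration from $s_0$ yields
\[
|\Phi_\ee(s)|\le (1+\ee)\bigl(|\Phi(s)|+\ee\,|s-s_0|\bigr), \qquad s\in[0,\alpha],
\]
and, integrating between $s$ and $\alpha$ (resp.\ $0$ and $s$),
\[
0\le \Phi_\ee(\alpha)-\Phi_\ee(s)\le (1+\ee)\bigl[(\Phi(\alpha)-\Phi(s))+\ee\alpha\bigr],\quad
0\le \Phi_\ee(s)-\Phi_\ee(0)\le (1+\ee)\bigl[(\Phi(s)-\Phi(0))+\ee\alpha\bigr].
\]
In particular $\|\Phi_\ee\|_{L^\infty(0,\alpha)}$ is uniformly bounded in $\ee\in(0,1]$.

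\emph{Step 2: case split on the denominator.} Use the lower bound $\Phi'_\ee(s)\ge \min(\Phi'(s),\kappa(\ee)^{-1})+\ee$. Multiplying out the Step~1 estimates inside the first term of \eqref{eq:hypothesis Phi/Phi'} produces a main piece plus remainders of the form $\ee\,\|\Phi\|_\infty\alpha/(\min(\Phi',\kappa(\ee)^{-1})+\ee)$ and $\ee^2\alpha^2/(\min(\Phi',\kappa(\ee)^{-1})+\ee)$; each of these is bounded by a constant independent of $\ee$ because the denominator is at least $\ee$. The main piece is
\[
\frac{(\Phi(\alpha)-\Phi(s))\,|\Phi(s)|}{\min(\Phi'(s),\kappa(\ee)^{-1})+\ee},
\]
which we bound via two cases. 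If $\Phi'(s)\le \kappa(\ee)^{-1}$, the denominator dominates $\Phi'(s)$ and the quotient is bounded by $|(\Phi(\alpha)-\Phi(s))\Phi(s)/\Phi'(s)|$, hence by \eqref{eq:hypothesis Phi/Phi' ee = 0}. If $\Phi'(s)>\kappa(\ee)^{-1}$, the denominator is at least $\kappa(\ee)^{-1}$, and the quotient is bounded by $\kappa(\ee)\|\Phi\|_\infty^2$, which is finite since $\kappa(\ee)\to 0$. Exactly the same argument applies to the second term of \eqref{eq:hypothesis Phi/Phi'} with $(\Phi_\ee(s)-\Phi_\ee(0))$ in place of $(\Phi_\ee(\alpha)-\Phi_\ee(s))$.

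\emph{Step 3: uniformity.} Taking the supremum over $s\in[0,\alpha]$ and then over $\ee\in(0,1]$, each piece is controlled by the constants from \eqref{eq:hypothesis Phi/Phi' ee = 0}, $\|\Phi\|_{L^\infty(0,\alpha)}$, $\alpha$, and $\sup_\ee \kappa(\ee)$, which proves \eqref{eq:hypothesis Phi/Phi'}.

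The only delicate point is handling the sign of $\Phi(s)$ (which changes at $s_0$); this is taken care of by working consistently with $|\Phi_\ee(s)|$ and $|\Phi(s)|$ in Step~1, since both $\Phi_\ee(\alpha)-\Phi_\ee(s)$ and $\Phi_\ee(s)-\Phi_\ee(0)$ are non-negative on $[0,\alpha]$. The apparently dangerous regime where $\Phi'(s)$ is close to $0$ is exactly where the hypothesis on $\Phi$ is designed to compensate via the numerator, and the truncation $\kappa(\ee)^{-1}$ never interferes because in that case the denominator is still bounded below by $\ee + \Phi'(s)$.
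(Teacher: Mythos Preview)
Your proof is correct and follows essentially the same approach as the paper: both arguments bound $|\Phi_\ee|$ and $\Phi_\ee(\alpha)-\Phi_\ee(s)$ (resp.\ $\Phi_\ee(s)-\Phi_\ee(0)$) in terms of $\Phi$ via the inequality $\Phi_\ee'\le(1+\ee)(\Phi'+\ee)$, use $\Phi_\ee'\ge\min(\Phi',\kappa(\ee)^{-1})+\ee$ on the denominator, and then split into the two cases $\Phi'(s)\le\kappa(\ee)^{-1}$ and $\Phi'(s)>\kappa(\ee)^{-1}$. The only cosmetic difference is that the paper routes everything through the auxiliary function $\underline{\Phi_\ee}$ before comparing to $\Phi$, whereas you expand directly into a main piece plus $\ee$-remainders; the content is the same. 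One small wording point: in Case~II you say the bound $\kappa(\ee)\|\Phi\|_\infty^2$ is ``finite since $\kappa(\ee)\to 0$'', but what is actually needed (and what you correctly invoke in Step~3) is $\sup_{\ee\in(0,1]}\kappa(\ee)<\infty$.
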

\begin{proof}
    We work first on the term $\Phi_{\ee}(s)- \Phi_{\ee}(0)$. Using \eqref{eq:properties of Phi_ee}, we point out that
    \begin{align*}
        \Phi_\ee(s) = \int_{s_0}^s \Phi_\ee'(\sigma) d\sigma , & \quad \Phi_\ee(s) - \Phi_\ee(0) = \int_0^s \Phi_\ee'(\sigma) d\sigma \ge 0,
        \quad
        |\underline {\Phi_\ee} (s)| \le |\Phi_\ee(s)| \le (1+\ee) |\underline {\Phi_\ee} (s)| ,                                              \\
                                                               & \text{and }
        \quad \underline{\Phi_\ee}(s) - \underline{\Phi_\ee}(0) \leq \Phi_\ee(s) - \Phi_\ee(0) \leq (1+\ee) (\underline{\Phi_\ee}(s) - \underline{\Phi_\ee}(0)).
    \end{align*}
    Therefore, we have the estimates
    \[
        0 \leq \frac{\underline{\Phi_\ee}(s) - \underline{\Phi_\ee}(0)}{(1+\ee) \underline{\Phi_\ee'}(s)}
        |\underline \Phi_\ee(s)|
        \le
        \frac{\Phi_\ee(s) - \Phi_\ee(0)}{\Phi_\ee'(s)} |\Phi_\ee(s) |
        \le (1 + \ee)^2 \frac{ \underline{\Phi_\ee}(s) - \underline{\Phi_\ee}(0)}{\underline{\Phi_\ee'}(s)} |\underline \Phi_\ee(s)|.
    \]
    We distinguish two cases:

    \textit{Case I: $s$ such that $\Phi'(s) \le \kappa(\ee)^{-1}$.} In this case, we have $\underline {\Phi_\ee}'(s) = \Phi'(s) + \varepsilon$, and we point out that
    \[
        \underline {\Phi_\ee} (s) - \underline {\Phi_\ee} (0) = \int_0^s \underline {\Phi_\ee}'(\sigma) d\sigma \le \int_0^s (\Phi'(s) + \ee) ds =  \Phi(s) - \Phi(0) + \ee s.
    \]
    Therefore, we can estimate
    \begin{align*}
        \frac{\underline {\Phi_\ee} (s) - \underline {\Phi_\ee} (0)}{\underline {\Phi_\ee'} (s)}
         & \leq
        \frac{\Phi(s) - \Phi(0) + \ee s}{\Phi'(s) + \ee}
        \leq
        \frac{\Phi(s) - \Phi(0) }{\Phi'(s)
            + \varepsilon
        } + s,
    \end{align*}
    and therefore
    \begin{align*}
        \frac{\underline {\Phi_\ee} (s) - \underline {\Phi_\ee} (0)}{\underline {\Phi_\ee'} (s)} |\underline{\Phi_\ee}(s)| & \le
        \left( \frac{\Phi(s) - \Phi(0)}{\Phi'(s)  + \varepsilon} + s \right) (|\Phi(s)| + \ee)  < \infty.
    \end{align*}

    \textit{Case II: $s$ such that $\Phi'(s) > \kappa(\ee)^{-1}$.}
    In this case we have that $\underline{\Phi'_\ee}(s) = \kappa(\ee)^{-1} + \ee \ge \kappa(\ee)^{-1}$, and hence
    \begin{equation*}
        \frac{\underline {\Phi_\ee} (s) - \underline {\Phi_\ee} (0)}{\underline {\Phi_\ee'} (s)} \le \kappa(\ee)
        \left({\Phi(s) - \Phi(0) + \ee s} \right).
    \end{equation*}
    In particular, we recover that
    \begin{equation*}
        \frac{\underline {\Phi_\ee} (s) - \underline {\Phi_\ee} (0)}{\underline {\Phi_\ee'} (s)} |\underline{\Phi_\ee}(s)| \leq \kappa(\ee)
        \left({\Phi(s) - \Phi(0) + \ee s} \right)(|\Phi(s)| + \ee s) < \infty.
    \end{equation*}
    We can proceed similarly for $\Phi(\alpha) - \Phi(s)$.
\end{proof}

From \eqref{eq:properties of Phi_ee}, our problem has uniformly elliptic diffusion, and from our set of basic assumptions, $V$ is regular enough. Then, the problem \eqref{eq:the problem regularised} is uniformly parabolic and existence, uniqueness, and the maximum principle hold from the classical theory \cite{LSU68, GT01, Ama90, Yin05}, further details can also be seen in \cite{Carrillo_Gomez-Castro_Vazquez22}. Furthermore, from \cite{Ama90}, we know that classical solutions to \eqref{eq:the problem regularised} are as regular as we want depending on the regularity of the initial data $\rho_0$. This corresponds to \ref{item:(Pee) classical solutions}.

Afterwards, we continue by proving an interior bounds result for the classical solutions.

\begin{lemma}[Strong Maximum Principle]\label{lem:Strong Max Principle Classical Solutions}
    Assume \eqref{hyp:Sublinear saturation} and $\rho_0 \in \mathcal{A} \cap C^2 (\overline{\Omega})$. Then, the unique classical solution of \eqref{eq:the problem regularised} satisfies either $\rho^{(\ee)} \equiv 0$, $\rho^{(\ee)} \equiv \alpha$, or $0 < \rho_t^{(\ee)} < \alpha$ in $\Omega$ for all $t> 0$.

\end{lemma}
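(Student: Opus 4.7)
The plan is to apply the parabolic strong maximum principle and Hopf's lemma to both $\rho^{(\ee)}$ and $\alpha - \rho^{(\ee)}$, after recasting the PDE as a linear parabolic equation along the fixed classical solution.

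First, I would observe that the constant functions $\rho \equiv 0$ and $\rho \equiv \alpha$ are classical (in fact, constant-in-time) solutions of \eqref{eq:the problem regularised}, since \eqref{hyp:Sublinear saturation} gives $\mobee(0) = \mobee(\alpha) = 0$ and so the flux vanishes identically. By uniqueness of classical solutions for this uniformly parabolic problem, if $\rho_0 \equiv 0$ or $\rho_0 \equiv \alpha$ we are done. Assuming henceforth that neither holds, the classical comparison principle against these two constant solutions yields $0 \le \rho^{(\ee)}(t,x) \le \alpha$ on $(0,\infty) \times \overline{\Omega}$.

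Next, I would expand the PDE in non-divergence form as
\[
\partial_t \rho = \Phi_\ee'(\rho)\,\Delta \rho + \bigl[ \Phi_\ee''(\rho)\nabla \rho + \mobee'(\rho)\nabla V \bigr] \cdot \nabla \rho + \mobee(\rho)\,\Delta V.
\]
By \eqref{eq:properties of Phi_ee} the leading coefficient satisfies $\Phi_\ee'(\rho) \ge \ee > 0$, so the operator is uniformly elliptic; the first-order coefficient is bounded along the fixed classical solution since $\rho \in C^2(\overline \Omega)$, $V \in C^2(\overline \Omega)$, $\mobee \in C^1([0,\alpha])$, and $\Phi_\ee \in C^3([0,\alpha])$. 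For the zero-order term, because $\mobee(0) = \mobee(\alpha) = 0$ with $\mobee \in C^1([0,\alpha])$, the quotients $\mobee(s)/s$ and $\mobee(s)/(\alpha-s)$ extend to bounded continuous functions on $[0,\alpha]$; hence $\mobee(\rho)\,\Delta V = c(t,x)\,\rho = \tilde c(t,x)\,(\alpha - \rho)$ for some $c, \tilde c \in L^\infty$.

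Consequently both $u = \rho$ and $u = \alpha - \rho$ satisfy linear parabolic equations of the form $\partial_t u - \Phi_\ee'(\rho)\Delta u - b(t,x)\cdot \nabla u - c(t,x) u = 0$ with bounded coefficients, uniformly elliptic leading part, and $u \ge 0$. Setting $\tilde u = e^{-\lambda t} u$ with $\lambda > \|c\|_\infty$ absorbs the zero-order term into a non-positive one, so the classical parabolic strong minimum principle (see, e.g., Protter--Weinberger or Friedman) applies: if $\tilde u$ attains the value $0$ at some interior point $(t_0,x_0) \in (0,\infty) \times \Omega$, then $\tilde u \equiv 0$ on $[0,t_0] \times \Omega$, which after undoing the exponential factor contradicts $\rho_0 \not\equiv 0$ (respectively $\not\equiv \alpha$).

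The only delicate step, and the main obstacle, is to rule out a zero of $u$ at a boundary point $x_0 \in \partial \Omega$ at time $t_0 > 0$. There the Hopf boundary lemma would yield $\partial_\nu u(t_0,x_0) < 0$; on the other hand, evaluating the Neumann condition $\Phi_\ee'(\rho)\,\partial_\nu \rho + \mobee(\rho)\,\partial_\nu V = 0$ at the point where $\rho = 0$ (resp.\ $\rho = \alpha$) and using $\Phi_\ee'(0) \ge \ee > 0$ together with $\mobee(0) = 0$ forces $\partial_\nu \rho = 0$, contradicting Hopf. Combining the interior and boundary arguments for both $\rho$ and $\alpha - \rho$ yields the trichotomy.
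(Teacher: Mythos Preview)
Your proof is correct and follows essentially the same route as the paper: linearise the PDE along the classical solution, observe that $\mobee(s)/s$ and $\mobee(s)/(\alpha-s)$ are bounded thanks to $\mobee \in C^1$ with $\mobee(0)=\mobee(\alpha)=0$, absorb the zero-order term via $e^{-\lambda t}$, and invoke the parabolic strong maximum principle for each of $u=\rho$ and $u=\alpha-\rho$. The one difference is that you add a Hopf boundary-point argument, which is sound but not needed here since the statement only asserts strict inequality in the open set $\Omega$; the paper accordingly applies only the interior strong maximum principle.
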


\begin{proof}
    Assume $\rho^{(\ee)}$ is not constantly $0$ or $\alpha$.
    Let us freeze the coefficients to study strict positivity.
    Consider the extension
    $$
        c(s) = \begin{cases}
            \frac{\mob_\ee (s)}{s} & \text{if } s \in (0,\alpha], \\
            \mob_\ee'(0)           & \text{if } s = 0.
        \end{cases}
    $$
    Then $u = \rho^{(\ee)}$ is a solution of the following problem
    \begin{equation*}
        \frac{\partial u}{\partial t} = \Phi_{\ee}'(\rho^{(\ee)}) \Delta u + \left( \Phi_{\ee}'' (\rho^{(\ee)}) \nabla \rho^{(\ee)} + \mobee' (\rho^{(\ee)}) \nabla V \right) \cdot \nabla u  + c ( \rho^{(\ee)} )  (\Delta V ) u \quad  \text{in }  (0, \infty ) \times \Omega.
    \end{equation*}
    Since $\rho^{(\ee)}$ is a classical solution, all the coefficients are continuous. Furthermore, from \eqref{hyp:Sublinear saturation} it follows that $c(\rho^{(\ee)}) \Delta V $ is bounded.
    With the change of variable $v = e^{-\lambda t} u$ for any $\lambda \in \R$ we can replace the zero order coefficient by a non-negative quantity.
    Thus, we can apply the Strong Maximum Principle \cite[Section 7 - Theorem 12]{Evans10} to deduce that the solution $u = \rho^{(\ee)}$ is strictly positive for all $t>0$.

    Analogously, $u = \alpha - \rho^{(\ee)} $ solves the problem,
    \begin{equation*}
        \begin{dcases}
            \frac{\partial u}{\partial t} = \diver \left( \Phi_{\varepsilon}'(\rho^{(\ee)}) \nabla u \right) + \mobee'(\rho^{(\ee)} ) \nabla u \cdot \nabla V  - \frac{\mobee (\rho^{(\ee)})}{\alpha - \rho^{(\ee)} }  (\Delta V) u    \qquad & \text{in }  (0, \infty ) \times \Omega,                   \\
            u = \alpha - \rho^{(\ee)} \geq 0 \qquad                                                                                                                                                                                           & \text{on }  (0, \infty ) \times \partial \Omega,          \\
            u = \alpha - \rho_0 \geq 0 \qquad                                                                                                                                                                                                 & \text{on }  \left\lbrace t=0 \right\rbrace \times \Omega.
        \end{dcases}
    \end{equation*}
    Since $0 \leq \frac{\mobee (\rho^{(\ee)})}{\alpha -\rho^{(\ee)}} \leq \varepsilon^{-1}$ due to \eqref{hyp:Sublinear saturation}, once again, from the Strong Maximum Principle  it follows that $u > 0$ in $(0, \infty) \times \Omega$ and, in particular, $\rho^{(\ee)} < \alpha$ in $\Omega$ for all $t >0$.
\end{proof}

\subsection{A priori estimates for \texorpdfstring{\eqref{eq:the problem regularised}}{(Peps)}}\label{sec:a priori}

Let us first comment on some properties of  $\Phi_\ee$ and $\rho^{(\ee)}$. We point out that for every $0 \leq t_1 < t_2 < \infty$ and $p \in [1, \infty]$, the unique classical solution of \eqref{eq:the problem regularised} is such that,
\begin{equation}\label{eq:Lp estimate}
    \| \rho^{(\ee)} \|_{L^p((t_1,t_2) \times \Omega )} \leq C(p, t_1, t_2) \coloneqq \alpha (t_2-t_1)^{\frac{1}{p}} |\Omega|^{\frac{1}{p}}.
\end{equation}

\subsubsection{Spatial regularity}

In order to obtain an \textit{a priori} estimate on $\nabla \rho^{(\ee)}$ we need to define the following auxiliary flow,
\begin{equation}\label{eq:G_ee}
    G_{\ee}''(s) = \frac{1}{\Phi_{\varepsilon}' (s)}, \quad G_{\ee}' \left(0 \right) =G_{\ee} \left( 0 \right) = 0.
\end{equation}
\begin{lemma}[\textit{A priori} estimates on $\nabla \rho^{(\ee)}$]
    Assume \eqref{hyp:Sublinear saturation} and let $0 < t_1 < t_2 < \infty$. Then, the unique classical solution $\rho^{(\ee )}$ of \eqref{eq:the problem regularised} is such that
    \begin{equation}\label{eq:nabla rho a priori}
        \| \nabla \rho^{(\ee)} \|^2_{L^2((t_1,t_2) \times \Omega)} \leq 2 \int_{\Omega} G_{\ee}( \rho_{t_1}^{(\ee)})  + (t_2-t_1)  \left\|\frac{{ \mobee (\rho^{(\ee)}) }}{\Phi_{\ee}' (\rho^{(\ee)}) } \right\|_{L^{\infty} (\Omega)}  \| \nabla V \|_{L^2(\Omega)}^2.
    \end{equation}
\end{lemma}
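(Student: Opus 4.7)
The natural test function is $G_\ee'(\rho^{(\ee)})$, chosen precisely so that the degenerate diffusion unwinds to $|\nabla \rho|^2$. Multiplying the first equation of \eqref{eq:the problem regularised} by $G_\ee'(\rho^{(\ee)})$, integrating over $\Omega$, and using the no-flux boundary condition together with integration by parts yields
\begin{equation*}
    \frac{d}{dt} \int_\Omega G_\ee(\rho^{(\ee)})
    = - \int_\Omega \nabla G_\ee'(\rho^{(\ee)}) \cdot \bigl[ \nabla \Phi_\ee(\rho^{(\ee)}) + \mob_\ee(\rho^{(\ee)}) \nabla V \bigr].
\end{equation*}
Using $\nabla G_\ee'(\rho^{(\ee)}) = \tfrac{1}{\Phi_\ee'(\rho^{(\ee)})} \nabla \rho^{(\ee)}$ and $\nabla \Phi_\ee(\rho^{(\ee)}) = \Phi_\ee'(\rho^{(\ee)}) \nabla \rho^{(\ee)}$, the first summand reduces cleanly to $|\nabla \rho^{(\ee)}|^2$ and the second to $\tfrac{\mob_\ee(\rho^{(\ee)})}{\Phi_\ee'(\rho^{(\ee)})} \nabla \rho^{(\ee)} \cdot \nabla V$.

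Integrating from $t_1$ to $t_2$ and rearranging,
\begin{equation*}
    \int_{t_1}^{t_2}\!\!\int_\Omega |\nabla \rho^{(\ee)}|^2
    = \int_\Omega G_\ee(\rho^{(\ee)}_{t_1}) - \int_\Omega G_\ee(\rho^{(\ee)}_{t_2}) - \int_{t_1}^{t_2}\!\!\int_\Omega \frac{\mob_\ee(\rho^{(\ee)})}{\Phi_\ee'(\rho^{(\ee)})} \nabla \rho^{(\ee)} \cdot \nabla V.
\end{equation*}
Since $G_\ee$ is convex with $G_\ee(0)=G_\ee'(0)=0$, one has $G_\ee \ge 0$ on $[0,\alpha]$, so the $G_\ee(\rho_{t_2}^{(\ee)})$ term can be dropped. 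The cross term is then controlled by Young's inequality in the pointwise form $a\,b \le \tfrac12 |\nabla \rho^{(\ee)}|^2 + \tfrac12 f^2 |\nabla V|^2$ with $f = \mob_\ee/\Phi_\ee'$. Bounding $f^2$ in $L^\infty$ and absorbing the $\tfrac12 |\nabla \rho^{(\ee)}|^2$ contribution into the left-hand side delivers the desired inequality, up to the precise numerical constants as displayed in the statement.

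The essentially routine obstacle is rigorously justifying the use of $G_\ee'(\rho^{(\ee)})$ as a test function, since $G_\ee'$ need not be bounded near $\{0,\alpha\}$ a priori. This is where \Cref{lem:Strong Max Principle Classical Solutions} is essential: it guarantees that under the regularisation $\rho^{(\ee)}$ stays strictly inside $(0,\alpha)$ on each compact time interval where we work, so $G_\ee'(\rho^{(\ee)})$ is smooth and the integration by parts is legitimate (one can also proceed by a standard truncation $G_\ee'(T_{\delta,\alpha-\delta}(\rho^{(\ee)}))$ followed by $\delta \to 0^+$ to avoid appealing to strict positivity). Finally, the $L^\infty(\Omega)$-norm of $f = \mob_\ee(\rho^{(\ee)})/\Phi_\ee'(\rho^{(\ee)})$ is finite for each fixed $\ee$ by the lower bound $\Phi_\ee' \ge \ee$ in \eqref{eq:properties of Phi_ee} and boundedness of $\mob_\ee$, so the right-hand side is finite.
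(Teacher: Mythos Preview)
Your proof is correct and follows essentially the same approach as the paper: test the equation with $G_\ee'(\rho^{(\ee)})$, integrate by parts using the no-flux condition, exploit $G_\ee'' = 1/\Phi_\ee'$ to produce $|\nabla \rho^{(\ee)}|^2$, apply Young's inequality to the cross term, and drop the nonnegative $G_\ee(\rho^{(\ee)}_{t_2})$ contribution. Your additional remarks on justifying $G_\ee'(\rho^{(\ee)})$ as a test function via the strong maximum principle and on the finiteness of $\mob_\ee/\Phi_\ee'$ from $\Phi_\ee' \ge \ee$ are valid and make the argument more self-contained than the paper's terse version.
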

\begin{proof}
    We compute in order to obtain that 
    \begin{align*}
        \frac{\partial }{\partial t} \int_{\Omega} G_{\ee}(\rho^{(\ee)}) & = - \int_{\Omega} \nabla G_{\ee}'(\rho^{(\ee)}) \cdot \left( \nabla  \Phi_{\varepsilon} (\rho^{(\ee)}) + \mobee (\rho^{(\ee)}) \nabla V \right) \\
                                                                         & = - \int_{\Omega} | \nabla \rho^{(\ee)} |^2 - \int_{\Omega} \nabla \rho^{(\ee)} G_{\ee}'' (\rho^{(\ee)})  \mobee (\rho^{(\ee)}) \cdot \nabla V.
    \end{align*}
    Therefore, applying Young's inequality and integrating in time from $t_1$ to $t_2$ we get,
    \begin{equation*}
        \int_{t_1}^{t_2}\| \nabla \rho^{(\ee)} \|^2_{L^2( \Omega)} \leq \int_{\Omega} G_{\ee}( \rho_{t_1}^{(\ee)}) -\int_{\Omega} G_{\ee}( \rho_{t_2}^{(\ee)}) + \frac{1}{2} \int_{t_1}^{t_2} \| \nabla \rho^{(\ee)} \|^2_{L^2( \Omega)} + \frac{1}{2} \int_{t_1}^{t_2}  \left\|\frac{{ \mobee (\rho^{(\ee)}) }}{\Phi_{\ee}' (\rho^{(\ee)}) } \right\|_{L^{\infty} (\Omega)}  \| \nabla V \|_{L^2(\Omega)}^2.
    \end{equation*}
    Since $G_\ee (s) \geq 0$ for all $s \geq 0$ we recover \eqref{eq:nabla rho a priori}.
\end{proof}
Finally, we would also like to obtain an \textit{a priori} estimate on $\nabla \Phi_{\varepsilon} (\rho^{(\ee)} )$.
\begin{lemma}[\textit{a priori} estimates on $\Phi_{\varepsilon} (\rho^{(\ee)})$]\label{lem:nabla phi(rho) a priori}
    Assume \eqref{hyp:Sublinear saturation}, $\ee \in (0, 1]$ and let $0 < t_1 < t_2 < \infty$. Then, the unique classical solution $\rho^{(\ee )}$ of \eqref{eq:the problem regularised} is such that
    \begin{equation}\label{eq:nabla phi(rho) a priori}
        \| \nabla \Phi_{\varepsilon} (\rho^{(\ee)}) \|_{L^2((t_1,t_2) \times \Omega)}^2 \leq   4 \alpha |\Omega| \|\Phi_\ee\|_{L^\infty (0,\alpha)} + (t_2-t_1)\| \mobee \|_{L^{\infty}(0,\alpha)} \| \nabla V \|_{L^2(\Omega)}^2.
    \end{equation}
\end{lemma}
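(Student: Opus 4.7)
The plan is to perform an energy estimate by testing the PDE \eqref{eq:the problem regularised} against $\Phi_\varepsilon(\rho^{(\varepsilon)})$ itself. I would introduce the primitive
\[
    H_\varepsilon(s) \coloneqq \int_0^s \Phi_\varepsilon(\tau)\, d\tau,
\]
which is $C^1$ and satisfies $|H_\varepsilon(s)| \leq s\,\|\Phi_\varepsilon\|_{L^\infty(0,\alpha)} \leq \alpha\,\|\Phi_\varepsilon\|_{L^\infty(0,\alpha)}$ for every $s \in [0,\alpha]$. Since $\rho^{(\varepsilon)}$ is a classical solution that stays in $[0,\alpha]$ by \Cref{lem:Strong Max Principle Classical Solutions}, multiplying \eqref{eq:the problem regularised} by $\Phi_\varepsilon(\rho^{(\varepsilon)})$, integrating over $\Omega$, using the no-flux boundary condition, and the chain rule $\partial_t H_\varepsilon(\rho^{(\varepsilon)}) = \Phi_\varepsilon(\rho^{(\varepsilon)})\,\partial_t \rho^{(\varepsilon)}$ yields the identity
\[
    \frac{d}{dt} \int_\Omega H_\varepsilon(\rho^{(\varepsilon)})
    = -\int_\Omega |\nabla \Phi_\varepsilon(\rho^{(\varepsilon)})|^2 - \int_\Omega \mobee(\rho^{(\varepsilon)})\, \nabla V \cdot \nabla \Phi_\varepsilon(\rho^{(\varepsilon)}).
\]

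To close the estimate, I would handle the cross term by Young's inequality with weight $\tfrac{1}{2}$,
\[
    \left| \int_\Omega \mobee(\rho^{(\varepsilon)})\,\nabla V \cdot \nabla \Phi_\varepsilon(\rho^{(\varepsilon)}) \right|
    \leq \frac{1}{2}\int_\Omega |\nabla \Phi_\varepsilon(\rho^{(\varepsilon)})|^2 + \frac{1}{2}\int_\Omega \mobee(\rho^{(\varepsilon)})^2 |\nabla V|^2,
\]
so that half of the principal term is absorbed on the left. The remaining forcing is controlled by the pointwise bound $\mobee(\rho^{(\varepsilon)}) \leq \|\mobee\|_{L^\infty(0,\alpha)}$, yielding a term of the form $\|\mobee\|_{L^\infty(0,\alpha)}\,\|\nabla V\|_{L^2(\Omega)}^2$ (possibly with a squared norm of $\mobee$, which is harmless since $\mobee$ is uniformly bounded as $\varepsilon \to 0$ by \eqref{eq:mob uniform}). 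Integrating in time from $t_1$ to $t_2$ and bounding the boundary contribution by
\[
    \left| \int_\Omega H_\varepsilon(\rho^{(\varepsilon)}_{t_1}) - \int_\Omega H_\varepsilon(\rho^{(\varepsilon)}_{t_2}) \right| \leq 2\alpha|\Omega|\,\|\Phi_\varepsilon\|_{L^\infty(0,\alpha)}
\]
and multiplying through by $2$ to clear the absorption factor accounts for the constant $4$ appearing in front of $\alpha|\Omega|\|\Phi_\varepsilon\|_{L^\infty(0,\alpha)}$.

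No serious obstacle is anticipated: the argument is the natural dual counterpart of the proof of \eqref{eq:nabla rho a priori}, with the convex primitive $H_\varepsilon$ of $\Phi_\varepsilon$ playing the role that $G_\varepsilon$ (the double primitive of $1/\Phi_\varepsilon'$) played there. The crucial input is the saturation bound $0 \leq \rho^{(\varepsilon)} \leq \alpha$ from \Cref{lem:Strong Max Principle Classical Solutions}, which provides a uniform pointwise control of both $H_\varepsilon(\rho^{(\varepsilon)})$ and $\mobee(\rho^{(\varepsilon)})$ without requiring any growth or invertibility hypothesis on $\Phi_\varepsilon$ near the endpoints $\{0,\alpha\}$.
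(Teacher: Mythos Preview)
Your proposal is correct and essentially identical to the paper's proof: the paper defines the same primitive (denoted $\vartheta_\ee$ instead of $H_\varepsilon$), derives the same identity by testing against $\Phi_\varepsilon(\rho^{(\varepsilon)})$, applies Young's inequality to absorb half of the principal term, and bounds $|\vartheta_\ee(\rho)|\le \alpha\|\Phi_\ee\|_{L^\infty(0,\alpha)}$ exactly as you do. Your remark that Young's inequality naturally produces $\|\mobee\|_{L^\infty}^2$ rather than $\|\mobee\|_{L^\infty}$ is accurate and harmless, as you note.
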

\begin{proof}
    We define
    $
        \vartheta_{\ee}(s) \coloneqq \int_0^s \Phi_{\varepsilon} (\sigma) \, d \sigma.
    $
    We can compute that,
    \begin{equation*}
        \frac{\partial}{\partial t} \int_{\Omega} \vartheta_{\ee} (\rho^{(\ee)} ) = - \int_{\Omega} | \nabla \Phi_{\varepsilon} (\rho^{(\ee)}) |^2 - \int_{\Omega} \nabla \Phi_{\varepsilon} (\rho^{(\ee)}) \cdot \mobee (\rho^{(\ee)} )  \nabla V.
    \end{equation*}
    Therefore, applying Young's inequality and integrating in time from $t_1$ to $t_2$ we get that
    \begin{align*}
        \int_{t_1}^{t_2} \| \nabla \Phi_{\varepsilon} (\rho^{(\ee)}) \|_{L^2( \Omega)}^2 & \leq \int_{\Omega} \vartheta_{\ee}( \rho_{t_1}^{(\ee)}) -\int_{\Omega} \vartheta_{\ee}( \rho_{t_2}^{(\ee)}) + \frac{1}{2} \int_{t_1}^{t_2} \| \nabla \Phi_{\varepsilon} (\rho^{(\ee)}) \|_{L^2( \Omega)}^2 \\
                                                                                         & \quad + \frac{1}{2} \int_{t_1}^{t_2} \left\| \mobee (\rho^{(\ee)}) \right\|_{L^{\infty} ( \Omega)} \| \nabla V \|_{L^2(\Omega)}^2.
    \end{align*}
    Finally, we point out that $| \vartheta_\ee (\rho) | \le \alpha  \| \Phi_\ee \|_{L^\infty (0,\alpha)}$ for any $\rho \in \mathcal{A}_+$. Hence, we recover the desired inequality.
\end{proof}

\subsubsection{Free energy and its dissipation}\label{sec:compactness in time}
In order to obtain an \textit{a priori} estimate on the time derivative and be able to discuss the long time asymptotic behaviour of the problem, we need to take advantage of the gradient flow structure of the problem.

\begin{proposition}\label{prop:Convergence of the free_energies}
    Assume \eqref{hyp:Sublinear saturation} and let $\rho_0 \in \mathcal A_+ \cap C^2(\overline \Omega)  $. Then, the unique classical solution $\rho^{(\ee)}$ of the problem \eqref{eq:the problem regularised}  is such that,
    \begin{equation}\label{eq:flow of the free energy}
        \int_{t_1}^{t_2} \int_{\Omega} \mobee (\rho^{(\ee)}) \left| \nabla \left( U'_{\varepsilon}(\rho^{(\ee)}) + V  \right) \right|^2 \, dx \, dt = \mathcal{F}_{\varepsilon} [\rho_{t_1}^{(\ee)}] - \mathcal{F}_{\varepsilon} [\rho_{t_2}^{(\ee)}] \qquad \text{for all } 0 \le t_1 < t_2.
    \end{equation}
    In particular, $\mathcal{F}_{\varepsilon} [\rho_t^{(\ee)}]$ is a non-increasing sequence  in $t$.
\end{proposition}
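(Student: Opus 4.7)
The plan is to prove the identity by a direct differentiation of $t \mapsto \mathcal{F}_\ee[\rho^{(\ee)}_t]$ along the classical solution, followed by an integration by parts justified by the classical regularity and the no-flux boundary condition built into \eqref{eq:the problem regularised}.

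First I would set up the regularity. Since $\rho_0 \in \mathcal{A}_+ \cap C^2(\overline{\Omega})$, \Cref{thm:Properties (Pee)}\ref{item:(Pee) classical solutions} gives a unique classical solution $\rho^{(\ee)}$. Using the bounds \eqref{eq:initial datum A+ bounds} together with the comparison principle against the constant-in-time classical solutions $(U_\ee')^{-1}(C - V)$, I obtain uniform two-sided bounds
\[
  0 < \delta_1 \le \rho^{(\ee)}_t(x) \le \alpha - \delta_2 < \alpha \qquad \text{on } [0,T]\times\overline{\Omega}.
\]
On this range, $U_\ee' \in C^2$ and $\mob_\ee$ is smooth with bounded derivatives, so all composed quantities $U_\ee'(\rho^{(\ee)})$, $\mob_\ee(\rho^{(\ee)})$ and their spatial gradients are continuous on $[0,T]\times\overline{\Omega}$, and $\partial_t \rho^{(\ee)}$ is continuous as well. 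This is what will make the formal computation rigorous.

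Next I would differentiate the free energy in time. By the classical regularity,
\begin{equation*}
  \frac{d}{dt}\mathcal{F}_\ee[\rho^{(\ee)}_t]
    = \int_\Omega \bigl(U_\ee'(\rho^{(\ee)}_t) + V\bigr)\, \partial_t \rho^{(\ee)}_t \, dx.
\end{equation*}
Substituting the PDE from \eqref{eq:the problem regularised} and integrating by parts using the no-flux boundary condition $\mob_\ee(\rho^{(\ee)})\nabla(U_\ee'(\rho^{(\ee)}) + V)\cdot\nu = 0$ on $\partial\Omega$, I obtain
\begin{equation*}
  \frac{d}{dt}\mathcal{F}_\ee[\rho^{(\ee)}_t]
    = -\int_\Omega \mob_\ee(\rho^{(\ee)}_t)\, \bigl|\nabla\bigl(U_\ee'(\rho^{(\ee)}_t) + V\bigr)\bigr|^2 \, dx.
\end{equation*}
The boundary term vanishes thanks to no-flux, and the sign matches the dissipation claim.

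Finally, I would integrate from $t_1$ to $t_2$. The right-hand side is non-positive, so $t \mapsto \mathcal{F}_\ee[\rho^{(\ee)}_t]$ is non-increasing, and integration yields exactly \eqref{eq:flow of the free energy}. The only mildly delicate point is making sure the $L^1$ in time integrability of $\mob_\ee(\rho^{(\ee)})|\nabla(U_\ee'(\rho^{(\ee)})+V)|^2$ is finite so that the fundamental theorem of calculus applies on $[t_1,t_2]$; this follows because on the range $[\delta_1,\alpha-\delta_2]$ all coefficients are continuous and bounded, and $\nabla \rho^{(\ee)}$, $\nabla V$ belong to $L^\infty$ on $[0,T]\times\overline{\Omega}$ by classical parabolic regularity. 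I do not expect any real obstacle: the main subtlety is confirming strict positivity and strict separation from $\alpha$ uniformly on $[0,T]$, which is already handled by \Cref{thm:Properties (Pee)}\ref{it:S^eps strict positivity} together with the stationary barrier argument from \eqref{eq:initial datum A+ bounds}.
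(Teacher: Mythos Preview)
Your proposal is correct and follows essentially the same approach as the paper: differentiate $\mathcal F_\ee[\rho^{(\ee)}_t]$ in time, justify the computation via classical regularity together with the uniform separation from $0$ and $\alpha$ provided by \eqref{eq:initial datum A+ bounds}, integrate by parts using the no-flux condition, and then integrate over $(t_1,t_2)$. The paper's proof is more terse but relies on exactly the same ingredients you spelled out.
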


\begin{proof}
    Combining the sufficient regularity of $\rho^{(\ee)}$,   the regularity of $U_\ee$ in $(0,\alpha)$ and \eqref{eq:initial datum A+ bounds}, we can rigorously take the derivative
    \begin{equation*}
        \frac{d}{dt} \mathcal{F}_{\varepsilon } [\rho^{(\ee)}] = - \int_{\Omega} \mobee (\rho^{(\ee)} )  \left| \nabla \left( U_{\varepsilon }' (\rho^{(\ee)} ) + V) \right) \right|^2,
    \end{equation*}
    Integrating in $(t_1, t_2)$ yields the result.
\end{proof}

For the set $\mathcal A$, we can prove the following auxiliary result about the free energy.
\begin{lemma}
    \label{prop:Free energy bound from below}
    Let $\ee \ge 0$.
    Then $\mathcal{F}_\ee$ has a global minimiser in $\mathcal A$ and is lower semicontinuous in $\mathcal{A}$ with respect to the weak $L^1$-topology.
\end{lemma}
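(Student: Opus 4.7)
My plan is to prove the two claims by the direct method of the calculus of variations, treating $\ee\ge 0$ uniformly (the argument does not distinguish the regularised case).

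\textbf{Step 1: Weak sequential compactness of $\mathcal A$ in $L^1(\Omega)$.} I would first observe that any sequence $\{\rho_n\}\subset\mathcal A$ satisfies $0\le\rho_n\le\alpha$ a.e.\ on the bounded set $\Omega$, so the sequence is uniformly bounded in $L^\infty(\Omega)$; in particular it is equi-integrable and tight. By the Dunford--Pettis theorem there exists a subsequence (not relabelled) and $\rho_\star\in L^1(\Omega)$ with $\rho_n\rightharpoonup\rho_\star$ weakly in $L^1(\Omega)$. Since $\mathcal A$ is convex and strongly closed in $L^1(\Omega)$ (the pointwise bounds pass to a.e.\ limits), Mazur's lemma gives $\rho_\star\in\mathcal A$.

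\textbf{Step 2: Lower bound and weak lower semicontinuity.} Because $U_\ee$ is convex on $(0,\alpha)$ (for $\ee>0$ it is even strictly convex by construction, and for $\ee=0$ by \ref{hyp:H_U}), its one-sided limits $U_\ee(0^+),U_\ee(\alpha^-)\in\mathbb R\cup\{+\infty\}$ exist and $U_\ee$ is bounded from below on $[0,\alpha]$ by some constant $-c_\ee$. Extending $U_\ee$ by these limits at the endpoints yields a convex, lower-semicontinuous function on $[0,\alpha]$ taking values in $[-c_\ee,+\infty]$. Combined with $V\in C^2(\overline\Omega)\subset L^\infty(\Omega)$ and $\rho\le\alpha$, we obtain
\begin{equation*}
    \mathcal F_\ee[\rho]\ge -c_\ee|\Omega|,\qquad\text{for all }\rho\in\mathcal A,
\end{equation*}
so $\inf_{\mathcal A}\mathcal F_\ee>-\infty$. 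For the weak $L^1$ lower semicontinuity, I would split $\mathcal F_\ee$ into its two pieces: the linear term $\rho\mapsto\int_\Omega V\rho$ is weakly continuous on $L^1(\Omega)$ since $V\in L^\infty(\Omega)$. For the internal energy $\rho\mapsto\int_\Omega U_\ee(\rho)$, I invoke the classical fact that the integral of a convex, lower-semicontinuous, bounded-below integrand defines a convex functional on $L^1(\Omega)$ that is strongly $L^1$-lower-semicontinuous (by passing to a.e.\ subsequences and applying Fatou's lemma, using the uniform lower bound $-c_\ee$). A convex functional that is strongly lower semicontinuous is also weakly lower semicontinuous (Mazur). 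Hence $\mathcal F_\ee$ is weakly $L^1$-lower-semicontinuous on $\mathcal A$.

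\textbf{Step 3: Existence of a minimiser by the direct method.} Taking a minimising sequence $\{\rho_n\}\subset\mathcal A$ with $\mathcal F_\ee[\rho_n]\to\inf_{\mathcal A}\mathcal F_\ee$, Step 1 yields a weak $L^1$ limit $\rho_\star\in\mathcal A$, and Step 2 gives
\begin{equation*}
    \mathcal F_\ee[\rho_\star]\le\liminf_{n\to\infty}\mathcal F_\ee[\rho_n]=\inf_{\mathcal A}\mathcal F_\ee,
\end{equation*}
so $\rho_\star$ is a global minimiser.

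The only delicate point is the weak lower semicontinuity when $U_\ee$ can blow up at the endpoints $0$ or $\alpha$, but since the blow-up is only toward $+\infty$ and the integrand remains uniformly bounded below on $[0,\alpha]$, the standard convex-integrand machinery applies without modification.
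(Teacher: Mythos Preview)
Your proof is correct and follows essentially the same route as the paper: bound $\mathcal F_\ee$ below, establish strong $L^1$ lower semicontinuity of the convex integral functional, upgrade to weak $L^1$ lower semicontinuity via Mazur, then apply the direct method. The only minor difference is that the paper exploits $U_\ee\in C([0,\alpha])$ (which follows from $U'\in L^1$, hypothesis \ref{hyp:H_U}, and the uniform bound on $\|U_\ee'\|_{L^1}$ in \eqref{eq:mobee a.e. mob}) to get strong $L^1$ \emph{continuity} via dominated convergence, whereas you allow endpoint blow-up to $+\infty$ and use Fatou for strong lower semicontinuity---your version is slightly more general than needed here but equally valid.
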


\begin{proof}
    Using that $0 \le \rho \le \alpha$, the positivity of $V$, and the continuity of $U_\ee$, it is trivial to see that
    $\mathcal F$ is bounded from  below.

    Let us prove the continuity claim. The classical theorems for lower semi-continuity are written for functions $U : \mathbb R \to \mathbb R$ (e.g., \cite{GoffmanSerrin1964SublinearFunctionsMeasures}).
    However, the result in our setting has a very simple proof.
    First, we prove strong $L^1$ continuity.
    If $u_n \to u$ strongly in $L^1(\Omega)$, there exists a subsequence, $u_{n_k}$, converging a.e.~in $\Omega$.
    Since $U\in C([0,\alpha])$, $V$ is continuous and $0\le u_{n_k} \le \alpha$, by the Dominated Convergence Theorem we get $\mathcal F_\ee[u_{n_k}] \to \mathcal F_\ee[u]$. Since every sequence has a convergent subsequence, and they all share a limit, $\mathcal F_\ee[u_n] \to \mathcal F_\ee[u]$. The strong $L^1$ continuity is proven.

    We now prove weak-$L^1$ lower-semicontinuity. Let $u_n \rightharpoonup u$ weakly in $L^1(\Omega)$
    and $L = \liminf_n \mathcal F_\ee[u_n]$. Notice that $L$ is finite since $\mathcal F$ is bounded below.
    Pick a subsequence $v_i = u_{n_i}$ such that $\mathcal F_\ee[v_i] \to L$.
    Due to Mazur's lemma, there exist convex combinations
    \[
        y_j = \sum_{i=j}^{N_j} \theta_i^{(j)} v_i, \quad \text{where } \theta_i^{(j)} \in [0,1] \text{ and } \sum_{i=j}^{N_j} \theta_i^{(j)} = 1,
    \]
    such that $y_j \to u$ strongly in $L^1 (\Omega)$.
    Let $\delta > 0$. There exists $\mathcal{I}(\delta)$ such that for $i \ge \mathcal{I}(\delta)$ we get $\mathcal F_\ee[v_i] \le \delta + L$.
    Since $U_\ee$ is convex, so is $\mathcal F_\ee$, and for $j \ge \mathcal{I}(\delta)$
    \[
        \mathcal F_\ee[y_j] \le \sum_{i=j}^{N_j} \theta_i^{(j)} \mathcal F_\ee[v_i] \le \sum_{i=j}^{N_j} \theta_i^{(j)} \left( \delta + L \right) = \delta + L.
    \]
    Using the strong-$L^1$ lower semi-continuity and this estimate we deduce that $\mathcal F[u] \le L + \delta$ for any $\delta > 0$. Hence, we deduce that $\mathcal F[u] \le L$.

    Lastly, we prove the existence of a global minimiser. Taking a minimising sequence $\rho_n \in \mathcal A$, we have that it is bounded in $L^\infty(\Omega)$, so up to a subsequence, $\rho_{n_k} \rightharpoonup \rho^\star$ weakly-$\star$ in $L^\infty(\Omega)$.
    Due to the lower semi-continuous in the weak-$L^1$ topology  we notice that the infimum is achieved at $\rho^\star$.
\end{proof}

\subsubsection{Stability}
We devote this subsection to show some stability results that will appear several times through the manuscript.
\begin{lemma}[$L^1$-stability of weak solutions]
    \label{lem:L1 stability of weak solutions}
    Let $\mob^{\{k\}} \to \mob$ and $\Phi^{\{k\}} \to \Phi$ in $C([0,\alpha])$, $\rho^{\{k\}} \to \rho$ in $L^1((0,T) \times \Omega)$
    and assume
    $\Phi^{\{k\}}(\rho^{\{k\}})$ are uniformly bounded in $L^2(0,T; H^1(\Omega))$.
    If $\rho^{\{k\}}$ are weak solutions of \eqref{eq:the problem Omega} with $\mob^{\{k\}}$ and $\Phi^{\{k\}}$, then $\rho$ is a solution of \eqref{eq:the problem Omega} with $\mob$ and $\Phi$.
\end{lemma}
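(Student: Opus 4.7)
The plan is to pass to the limit term by term in the weak formulation of \eqref{eq:the problem Omega} satisfied by $\rho^{\{k\}}$. Using the identity $\mob(\rho)\nabla U'(\rho) = \nabla \Phi(\rho)$ built into \ref{hyp:Phi}, this reads
\[
    \int_{\Omega} \rho_0^{\{k\}}\, \varphi(0) + \int_0^T\int_\Omega \rho^{\{k\}}\, \partial_t \varphi = \int_0^T \int_\Omega \nabla \Phi^{\{k\}}(\rho^{\{k\}}) \cdot \nabla \varphi + \int_0^T \int_\Omega \mob^{\{k\}}(\rho^{\{k\}})\, \nabla V \cdot \nabla \varphi,
\]
for every admissible test function $\varphi$. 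The left-hand side poses no difficulty: the time-derivative term passes to the limit directly from $\rho^{\{k\}} \to \rho$ in $L^1((0,T)\times \Omega)$ against $\partial_t \varphi \in L^\infty$, and the initial datum term is either fixed along the sequence or, more generally, is assumed to converge in $L^1(\Omega)$ (which is how the lemma will be invoked).

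For the mobility term on the right, I would first extract a subsequence along which $\rho^{\{k\}} \to \rho$ a.e.~in $(0,T)\times \Omega$ (from $L^1$ convergence). Combined with $\mob^{\{k\}} \to \mob$ uniformly on $[0,\alpha]$ this yields $\mob^{\{k\}}(\rho^{\{k\}}) \to \mob(\rho)$ a.e., and since the family is uniformly bounded by $\sup_k \|\mob^{\{k\}}\|_{L^\infty(0,\alpha)} < \infty$, dominated convergence upgrades this to convergence in $L^p((0,T)\times \Omega)$ for every finite $p$. Paired with $\nabla V \cdot \nabla \varphi \in L^\infty((0,T)\times\Omega)$, this handles the mobility term.

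The main obstacle is the flux term $\nabla \Phi^{\{k\}}(\rho^{\{k\}})$, for which no strong convergence of gradients is available. The idea is to combine the available weak and strong informations. By the uniform bound in $L^2(0,T; H^1(\Omega))$, Banach--Alaoglu provides (along a further subsequence) a weak limit $\Phi^{\{k\}}(\rho^{\{k\}}) \rightharpoonup \Psi$ in $L^2(0,T; H^1(\Omega))$. On the other hand, exactly the same dominated-convergence argument used for the mobility, now using $\Phi^{\{k\}} \to \Phi$ uniformly on $[0,\alpha]$ and the a.e.~convergence of $\rho^{\{k\}}$, yields $\Phi^{\{k\}}(\rho^{\{k\}}) \to \Phi(\rho)$ strongly in every $L^p((0,T)\times\Omega)$. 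Uniqueness of the limit identifies $\Psi = \Phi(\rho)$, and hence $\nabla \Phi^{\{k\}}(\rho^{\{k\}}) \rightharpoonup \nabla \Phi(\rho)$ weakly in $L^2((0,T)\times\Omega)$, which is enough to pass to the limit against $\nabla \varphi \in L^2$.

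Putting all pieces together gives the weak formulation for $\rho$ with $(\mob, \Phi)$. A standard subsequence-of-subsequence argument (every subsequence of $\rho^{\{k\}}$ admits a further subsequence producing the same identity for $\rho$) removes the a priori non-uniqueness of extractions, and thus $\rho$ is indeed a weak solution. The delicate step is the identification of the weak limit of $\nabla \Phi^{\{k\}}(\rho^{\{k\}})$; everything else is a matter of combining uniform $L^\infty$ bounds coming from $\rho \in \mathcal A$ with the dominated convergence theorem.
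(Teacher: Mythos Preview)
Your proposal is correct and follows essentially the same route as the paper: extract an a.e.\ convergent subsequence, use uniform convergence of $\mob^{\{k\}}$ and $\Phi^{\{k\}}$ together with dominated convergence to get strong $L^p$ convergence of $\mob^{\{k\}}(\rho^{\{k\}})$ and $\Phi^{\{k\}}(\rho^{\{k\}})$, then use the uniform $L^2(0,T;H^1)$ bound and Banach--Alaoglu to identify the weak limit of $\Phi^{\{k\}}(\rho^{\{k\}})$ as $\Phi(\rho)$ and pass to the limit in the weak formulation. Your write-up is in fact more detailed than the paper's (which is quite terse), but the ideas coincide.
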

\begin{proof}
    We take a subsequence that converges a.e.~in $(0,T) \times \Omega$.
    Since $\Phi^{\{k\}}$ converges uniformly and $0 \le \rho^{\{k\}} \le \alpha$, by the Dominated Convergence Theorem we recover
    \begin{equation*}
        \Phi^{\{k\}} (\rho^{\{k\}} ) \rightarrow  \Phi (\rho ) \text{ in }L^p ( (0,T) \times  \Omega) \text{ for all } p \in [1, \infty) .
    \end{equation*}
    Similarly, $m^{\{k\}} (\rho^{\{k\}}) \to m(\rho)$ strongly in $L^p((0,T) \times \Omega)$.
    Due to the boundedness in $L^2 (0,T; H^1(\Omega))$,
    up to a further subsequence
    \begin{equation*}
        \Phi^{\{k\}}(\rho^{\{k\}}) \rightharpoonup  \Phi(\rho ) \quad \text{weakly in } L^2(0,T; H^1(\Omega) ).
    \end{equation*}
    This fact and the $L^1(\Omega)$ convergence for
    $\rho^{\{k\}}$ are sufficient to pass to the limit in the weak formulation.
\end{proof}

Using standard arguments of Calculus of Variations (see e.g., \cite[Chapter 2]{Rindler18}) we prove suitable convergence of the free energy dissipation terms for the approximating problems.
\begin{lemma}[Stability of the dissipation term]
    \label{lem:free energy dissipation LHS is lsc}
    Assume that $\Phi^{\{k\}}, \Phi \in C^1 ((0,\alpha))$, $\mob^{\{k\}} \in C([0,\alpha])$. 
    Let $\mob^{\{k\}} \to \mob$ and $\Phi^{\{k\}} \to \Phi$ in $C([0,\alpha])$ and $\rho^{\{k\}} \to \rho$ in $L^1((t_1,t_2) \times \Omega)$.
    Let us consider the associated $U^{ \{k\} }, U$ (i.e., $(U^{\{k\}})'' = (\Phi^{\{k\}})' / \mob^{\{k\}}$, $U'' = \Phi' / \mob$ with prescribed values of $U^{\{k\}} (\frac\alpha 2) = U(\frac\alpha 2) $ and $(U^{\{k\}})' (\frac\alpha 2) = U'(\frac\alpha 2) $) and the energy dissipation terms
    \begin{equation*}
        \mathcal D [\rho] \coloneqq \int_{t_1}^{t_2} \int_\Omega \mob(\rho) | \nabla (U'(\rho) + V) |^2, \qquad \mathcal D_k [\rho] \coloneqq \int_{t_1}^{t_2} \int_\Omega \mob^{\{k\}} (\rho) | \nabla ((U^{\{k\}})'(\rho) + V) |^2 .
    \end{equation*}
    Then,
    \[
        \mathcal D [\rho] \le \liminf_{k \to \infty} \mathcal D_k[\rho^{\{k\}}].
    \]
\end{lemma}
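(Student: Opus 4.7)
My plan is to recast the dissipation functional as
\[
\mathcal D_k[\rho^{\{k\}}] = \int_{t_1}^{t_2}\!\!\int_{\Omega} \frac{\left|\nabla \Phi^{\{k\}}(\rho^{\{k\}}) + \mob^{\{k\}}(\rho^{\{k\}})\nabla V\right|^2}{\mob^{\{k\}}(\rho^{\{k\}})}\dx\dt,
\]
via the chain-rule identity $\mob^{\{k\}}(\rho^{\{k\}})\nabla (U^{\{k\}})'(\rho^{\{k\}}) = \nabla \Phi^{\{k\}}(\rho^{\{k\}})$, and then to exploit the Legendre-duality representation $|a|^2/b = \sup_{\eta\in\R^d}(2\eta\cdot a - b|\eta|^2)$, valid for $b\ge 0$ under the convention $|a|^2/0 = +\infty$ for $a\neq 0$. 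I would first reduce to the case $L \coloneqq \liminf_k \mathcal D_k[\rho^{\{k\}}] < \infty$ and pass to a subsequence (not relabelled) that realises the liminf and along which $\rho^{\{k\}}\to \rho$ a.e.~in $(t_1,t_2)\times\Omega$.

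The next step is to upgrade the convergence of $\Phi^{\{k\}}(\rho^{\{k\}})$: the elementary bound $|a|^2/b \le 2|a+c|^2/b + 2|c|^2/b$ applied with $a = \nabla\Phi^{\{k\}}(\rho^{\{k\}})$, $c = \mob^{\{k\}}(\rho^{\{k\}})\nabla V$ and $b = \mob^{\{k\}}(\rho^{\{k\}})$, combined with the uniform bound $\|\mob^{\{k\}}\|_{L^\infty(0,\alpha)}\le C$ inherited from $\mob^{\{k\}}\to\mob$ in $C([0,\alpha])$, yields a uniform $L^2$ bound for $\nabla \Phi^{\{k\}}(\rho^{\{k\}})$. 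Dominated convergence (using $\Phi^{\{k\}}\to \Phi$ uniformly and $0\le \rho^{\{k\}}\le \alpha$) gives $\Phi^{\{k\}}(\rho^{\{k\}})\to \Phi(\rho)$ strongly in every $L^p$, $p<\infty$, and uniqueness of distributional limits then upgrades this to $\nabla \Phi^{\{k\}}(\rho^{\{k\}}) \rightharpoonup \nabla \Phi(\rho)$ weakly in $L^2((t_1,t_2)\times\Omega)^d$ along a further subsequence. For any test field $\eta\in L^\infty((t_1,t_2)\times\Omega)^d$ the Legendre inequality then reads
\[
\mathcal D_k[\rho^{\{k\}}] \ge 2\int_{t_1}^{t_2}\!\!\int_{\Omega}\eta\cdot\bigl(\nabla \Phi^{\{k\}}(\rho^{\{k\}}) + \mob^{\{k\}}(\rho^{\{k\}})\nabla V\bigr)\dx\dt - \int_{t_1}^{t_2}\!\!\int_{\Omega} \mob^{\{k\}}(\rho^{\{k\}})|\eta|^2\dx\dt,
\]
and letting $k\to\infty$ using weak $L^2$ convergence for the first term and dominated convergence for the other two, I obtain
\[
L \ge 2\int_{t_1}^{t_2}\!\!\int_{\Omega} \eta\cdot\bigl(\nabla\Phi(\rho) + \mob(\rho)\nabla V\bigr)\dx\dt - \int_{t_1}^{t_2}\!\!\int_{\Omega}\mob(\rho)|\eta|^2\dx\dt.
\]

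The final step is to take the supremum over $\eta$ by approximating the pointwise maximiser $\eta^{*} = (\nabla\Phi(\rho)+\mob(\rho)\nabla V)/\mob(\rho)$ on $\{\mob(\rho)>0\}$ (extended by zero elsewhere) via the bounded truncations $\eta^{*}\chi_{\{|\eta^{*}|\le R\}}$ and invoking the Monotone Convergence Theorem as $R\to\infty$. The main obstacle is to verify that this supremum actually reproduces $\mathcal D[\rho]$ and that nothing is lost on the degenerate set $\{\mob(\rho)=0\} = \{\rho=0\}\cup\{\rho=\alpha\}$ (using \ref{hyp:mobility}); this reduces to showing that $\nabla\Phi(\rho) = 0$ a.e.~on this set, so that the pointwise integrand of $\mathcal D[\rho]$ is consistently zero there. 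I would address this via Stampacchia's chain rule: since $\Phi(\rho_t) \in H^1(\Omega)$ for a.e.~$t$, its weak gradient vanishes a.e.~on the level sets $\{\Phi(\rho_t)=\Phi(0)\}\supset\{\rho_t=0\}$ and $\{\Phi(\rho_t)=\Phi(\alpha)\}\supset\{\rho_t=\alpha\}$, and Fubini transfers this property to the full space-time domain, closing the argument.
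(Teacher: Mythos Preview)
Your argument is correct and takes a genuinely different route from the paper's. The paper introduces the auxiliary variable $\xi_j \coloneqq \mob^{\{k_j\}}(\rho^{\{k_j\}})^{1/2}\nabla\bigl((U^{\{k_j\}})'(\rho^{\{k_j\}})+V\bigr)$, notes that $\mathcal D_{k_j}[\rho^{\{k_j\}}]=\|\xi_j\|_{L^2}^2$, extracts a weak $L^2$ limit $\xi$, and then uses weak lower semicontinuity of the $L^2$ norm (via Mazur's lemma); the identification $\nabla\Phi(\rho)=\mob(\rho)^{1/2}\bigl(\xi-\mob(\rho)^{1/2}\nabla V\bigr)$ then automatically yields $\|\xi\|_{L^2}^2\ge\mathcal D[\rho]$, with the degenerate set $\{\mob(\rho)=0\}$ handled implicitly because that identity forces $\nabla\Phi(\rho)=0$ there regardless of what $\xi$ is. Your approach instead works directly with the jointly convex integrand $(a,b)\mapsto|a|^2/b$ and its Legendre representation, passes to the limit against each bounded test field $\eta$, and recovers $\mathcal D[\rho]$ by truncating the pointwise maximiser. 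The paper's route is shorter and avoids the separate Stampacchia argument on the degenerate set; your route is the standard Benamou--Brenier/Fisher-information lower-semicontinuity mechanism, is more transparent about where degeneracy enters, and would extend verbatim to other convex integrands of the form $f(a,b)$ with $f$ jointly convex and lower semicontinuous.
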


\begin{proof}
    We assume that this $\liminf$ is finite, otherwise there is nothing to prove. There exists a subsequence $k_j \to \infty$ such that
    \[
        \lim_{j\to \infty} \mathcal D_{k_j}[\rho^{\{k_j\}}] = \liminf_{k \to \infty} \mathcal D_{k} [\rho^{\{k\}}].
    \]
    We pick a subsequence, not relabelled, with convergence a.e.~in $(t_1,t_2) \times \Omega$.
    Because of uniform boundedness, we can use the Dominated Convergence Theorem to prove $L^p$ convergence of $\mob^{\{k_j\}}(\rho^{\{k_j\}})$ and $\Phi^{\{k_j\}}(\rho^{\{k_j\}})$ for any $p < \infty$.
    Define
    \[
        \xi_j \coloneqq \frac{1}{\mob^{\{k_j\}}(\rho^{\{k_j\}})^{\frac 1 2}} \nabla \Phi^{\{k_j\}}(\rho^{\{k_j\}}) + \mob^{\{k_j\}}(\rho^{\{k_j\}})^{\frac 1 2} \nabla V.
    \]
    This is an $L^2 ((t_1,t_2) \times \Omega)^d$ bounded sequence. Thus, from the Banach-Alaoglu Theorem, it has a weakly convergent subsequence (which we will not relabel). Let $\xi \in L^2((t_1,t_2) \times \Omega)^d$ be its limit.

    We know strong $L^1$ convergence of all terms except $\nabla \Phi^{\{k_j\}}(\rho^{\{k_j\}})$. But we can write the weak $L^p$ convergence for $p \in (1,2)$
    \[
        \nabla \Phi^{\{k_j\}}(\rho^{\{k_j\}}) = \mob^{\{k_j\}}(\rho^{\{k_j\}})^{\frac 1 2}\Big(\xi_j - \mob^{\{k_j\}}(\rho^{\{k_j\}})^{\frac 1 2} \nabla V\Big) \rightharpoonup \mob(\rho)^{\frac{1}{2}}\Big(\xi - \mob(\rho)^{\frac 1 2} \nabla V\Big)
    \]
    Thus $\Phi^{\{k_j\}}(\rho^{\{k_j\}})$ converges weakly in $L^p(t_1,t_2; W^{1,p} (\Omega))$ for $p \in [1,2)$ and we have
    \[
        \nabla \Phi(\rho) = \mob(\rho)^{\frac 1 2} \Big(\xi - \mob(\rho)^{\frac 1 2} \nabla V\Big) .
    \]
    Now we can apply Mazur's lemma similarly to the proof of \Cref{prop:Free energy bound from below} using the convexity of the map $\xi \mapsto |\xi|^2$.
\end{proof}

\subsubsection{$L^1$ contraction and continuous dependence}\label{sec:L1 contraction and continuous dependece}
We study the $L^1$ contraction and continuous dependence result for classical solutions of the regularised problem \eqref{eq:the problem regularised}.

\newcommand{\onesolution}{\rho^{(\varepsilon)}}
\newcommand{\othersolution}{\eta^{(\varepsilon)}}
\begin{lemma}\label{lem:L1 contraction and continuous dependece}
    Let $\ee > 0$.
    Consider $\onesolution$ and $\othersolution$ two classical solutions of the problem \eqref{eq:the problem regularised} corresponding to the initial data $\rho_0^{(\ee)}$ and $\eta_0^{(\ee)}$ respectively. Then, it follows that for all $t>0$ the problem satisfies the $L^1$-contraction estimate
    \begin{equation}\label{eq:Pee L1 contraction}
        \| \onesolution_t - \othersolution_t \|_{L^1(\Omega)} \leq \| \onesolution_0 - \othersolution_0 \|_{L^1(\Omega)},
    \end{equation}
    and the $L^1$-comparison principle
    \begin{equation}\label{eq:Pee L1 continuous dependence}
        \int_\Omega ( \onesolution_t - \othersolution_t )^+ \leq \int_\Omega ( \onesolution_0 - \othersolution_0 )^+.
    \end{equation}
\end{lemma}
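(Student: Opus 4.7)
The plan is to prove the comparison principle \eqref{eq:Pee L1 continuous dependence} first and then deduce the $L^1$-contraction \eqref{eq:Pee L1 contraction} by applying the comparison to both $(\rho^{(\ee)}-\eta^{(\ee)})_+$ and $(\eta^{(\ee)}-\rho^{(\ee)})_+$ and adding.

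Setting $w = \rho^{(\ee)} - \eta^{(\ee)}$, $A = \Phi_\ee(\rho^{(\ee)}) - \Phi_\ee(\eta^{(\ee)})$, and $B = \mobee(\rho^{(\ee)}) - \mobee(\eta^{(\ee)})$, one subtracts the two equations to get
\[
    w_t = \Delta A + \diver(B\,\nabla V) \qquad \text{in } (0,\infty) \times \Omega,
\]
and the no-flux boundary condition for each solution yields $(\nabla A + B\,\nabla V)\cdot \nu = 0$ on $\partial\Omega$. The key structural fact is that $\Phi_\ee$ is bi-Lipschitz on $[0,\alpha]$: from \eqref{eq:properties of Phi_ee} we have $\Phi_\ee'(s) \ge \ee > 0$ so $|A| \ge \ee|w|$, and on the other side $\mob_\ee$ is globally Lipschitz on $[0,\alpha]$, so $|B| \le \|\mob_\ee'\|_{L^\infty}|w| \le (\|\mob_\ee'\|_{L^\infty}/\ee)\,|A|$.

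I would then implement a Kato-type test. Let $p_\delta \in C^\infty(\R)$ be non-decreasing with $p_\delta \equiv 0$ on $(-\infty,0]$, $p_\delta \equiv 1$ on $[\delta,\infty)$, $p_\delta \in [0,1]$, and $0 \le p_\delta' \le 2/\delta$. Multiplying the equation for $w_t$ by $p_\delta(A)$ and integrating by parts (the boundary contribution vanishes by the relation above) gives
\[
    \int_\Omega p_\delta(A)\, w_t \dx
    = -\int_\Omega p_\delta'(A)\,|\nabla A|^2 \dx - \int_\Omega p_\delta'(A)\,\nabla A \cdot B\,\nabla V \dx.
\]
Applying Young's inequality to the drift term and absorbing half of $\int p_\delta'(A)|\nabla A|^2$ into the first term, one is left with
\[
    \int_\Omega p_\delta(A)\, w_t \dx \le \tfrac{1}{2}\int_\Omega p_\delta'(A)\,|B|^2\,|\nabla V|^2 \dx.
\]
Using the Lipschitz bound $|B|^2 \le (\|\mob_\ee'\|_{L^\infty}/\ee)^2 A^2$ together with $p_\delta'(A)\,A^2 \le (2/\delta)\delta^2 \mathbf{1}_{\{0<A<\delta\}} = 2\delta\,\mathbf{1}_{\{0<A<\delta\}}$, the right-hand side is bounded by $C(\ee)\,\delta\,|\Omega|\,\|\nabla V\|_{L^\infty}^2$.

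Integrating in time on $(0,t)$ and letting $\delta \to 0$, the right-hand side vanishes. For the left-hand side, since $\Phi_\ee$ is strictly increasing one has $\mathrm{sign}_+(A) = \mathrm{sign}_+(w)$ pointwise, and $p_\delta(A) \to \mathrm{sign}_+(w)$ a.e.~with $|p_\delta(A)| \le 1$; classical smoothness of $w$ then allows the Dominated Convergence Theorem to give $\int_0^t\!\int_\Omega p_\delta(A)\, w_s \to \int_0^t\!\int_\Omega \mathrm{sign}_+(w)\, w_s = \int_\Omega (w(t))_+ - \int_\Omega (w(0))_+$, where the last identity uses the chain rule for $s \mapsto (w(s,x))_+$ applied to a $C^1$ function. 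This yields \eqref{eq:Pee L1 continuous dependence}, and \eqref{eq:Pee L1 contraction} follows by symmetry. The main obstacle, and the reason for testing against $p_\delta(A)$ rather than $p_\delta(w)$, is that only the former produces the non-positive term $-\int p_\delta'(A)|\nabla A|^2$ that can absorb the drift contribution; the bi-Lipschitz property of $\Phi_\ee$ (which costs a factor $\ee^{-1}$) is what allows us to convert $|B|$ into a bound in terms of $|A|$ and close the argument.
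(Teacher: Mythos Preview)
Your argument is correct, and it follows a genuinely different route from the paper. The paper tests the difference of the two equations against $j_\delta'(\rho^{(\ee)}-\eta^{(\ee)})$ with $j_\delta$ a smooth approximation of $|\cdot|$ (respectively $(\cdot)_+$). Because the test function depends on $w$ rather than on $A=\Phi_\ee(\rho^{(\ee)})-\Phi_\ee(\eta^{(\ee)})$, the diffusion term does not immediately have a sign; the paper must expand $\nabla A = \Phi_\ee'(\rho^{(\ee)})\nabla w + (\Phi_\ee'(\rho^{(\ee)})-\Phi_\ee'(\eta^{(\ee)}))\nabla\eta^{(\ee)}$, introduce an auxiliary primitive $\psi_\delta$ with $\psi_\delta'(s)=s\,j_\delta''(s)$, and control the residual cross terms via second-order Taylor expansions that invoke $\|\Phi_\ee'''\|_{L^\infty}$ and $\|\mobee''\|_{L^\infty}$. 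Your choice of $p_\delta(A)$ as test function makes the diffusive term manifestly non-positive from the start and reduces the drift to a single Young-inequality estimate, at the price of the bi-Lipschitz conversion $|B|\le(\|\mobee'\|_{L^\infty}/\ee)|A|$; this needs only $\Phi_\ee'\ge\ee$ and $\mobee\in C^1([0,\alpha])$, both of which are already part of \eqref{eq:properties of Phi_ee} and \eqref{hyp:Sublinear saturation}. In short, your argument is the classical Kato-type proof for quasilinear diffusion with drift and is both shorter and less regularity-demanding; the paper's approach is more hands-on and exploits the extra smoothness $\Phi_\ee\in C^3$, $\mobee\in C^\infty$ that the regularisation happens to provide.
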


\begin{proof} We divide the proof in several steps.
    \begin{enumeratesteps}
        \step[Auxiliary function]
        Let us fix $\delta > 0$. We choose an auxiliary function $j_\delta$ such that $j_\delta''(s) = \frac{1}{2 \delta}\chi_{(-\delta, \delta)}(s)$ and $j_\delta (s) \to |s|$ as $\delta \rightarrow 0$. We also define $\psi_\delta$ such that $\psi_\delta'(s) = s j_\delta '' (s) = \frac{s}{2 \delta} \chi_{(-\delta, \delta)}(s)$ and $\psi_\delta (0)=0$. Thus, $|\psi_\delta'(s)|\leq \frac{1}{2}\chi_{(-\delta, \delta)}(s)$, and $|\psi_\delta(s)|\leq \delta$. Hence,
        \begin{align*}
            \frac{\partial}{\partial t} \int_\Omega j_\delta (\onesolution_\tau - \othersolution_\tau) & = \int_\Omega j_\delta' (\onesolution_\tau - \othersolution_\tau) \diver \left(  \nabla \left( \Phi_\ee (\onesolution_\tau) -   \Phi_\ee (\othersolution_\tau) \right) \right)                        \\
                                                                                                       & \quad + \int_\Omega j_\delta' (\onesolution_\tau - \othersolution_\tau) \diver \left( (\mobee(\onesolution_\tau) - \mobee(\othersolution_\tau)) \nabla V \right) \eqqcolon \mathcal{I} + \mathcal{J}.
        \end{align*}

        \step[Diffusion]
        Let us start studying the diffusive terms. We compute to obtain,
        \begin{align*}
            \mathcal I & = - \int_\Omega  j_\delta'' (\onesolution_\tau - \othersolution_\tau) \nabla  (\onesolution_\tau - \othersolution_\tau) \cdot \nabla \left( \Phi_\ee (\onesolution_\tau) -   \Phi_\ee (\othersolution_\tau) \right)                                                              \\
                       & = -\int_\Omega j_\delta'' (\onesolution_\tau - \othersolution_\tau) \Phi_\ee'(\onesolution_\tau) | \nabla (\onesolution_\tau - \othersolution_\tau) |^2                                                                                                                          \\
                       & \quad - \int_\Omega j_\delta'' (\onesolution_\tau - \othersolution_\tau) \left( \Phi_\ee' (\onesolution_\tau) -   \Phi_\ee' (\othersolution_\tau) \right) \nabla \othersolution_\tau \cdot \nabla (\onesolution_\tau - \othersolution_\tau)                                      \\
                       & \leq - \int_\Omega \frac{\Phi_\ee' (\onesolution_\tau) -   \Phi_\ee' (\othersolution_\tau)}{\onesolution_\tau - \othersolution_\tau} \nabla \psi_\delta (\onesolution_\tau - \othersolution_\tau) \cdot \nabla \othersolution_\tau                                               \\
                       & = - \int_\Omega \left(\frac{\Phi_\ee' (\onesolution_\tau) -   \Phi_\ee' (\othersolution_\tau)}{\onesolution_\tau - \othersolution_\tau} - \Phi_\ee'' (\othersolution_\tau) \right) \nabla \psi_\delta (\onesolution_\tau - \othersolution_\tau) \cdot \nabla \othersolution_\tau \\
                       & \quad - \int_\Omega \Phi_\ee'' (\othersolution_\tau) \nabla \psi_\delta (\onesolution_\tau - \othersolution_\tau) \cdot \nabla \othersolution_\tau \eqqcolon \mathcal I_1 + \mathcal I_2.
        \end{align*}
        In order to study $\mathcal I_1$ we take advantage of the following remark. Using a Taylor expansion we have that
        \begin{equation*}
            \left| \frac{\Phi_\ee'(\onesolution_\tau) - \Phi_\ee'(\othersolution_\tau)}{\onesolution_\tau - \othersolution_\tau} - \Phi_\ee''(\othersolution_\tau) \right| \leq \frac{1}{2} \| \Phi_\ee''' \|_{L^{\infty}(0, \alpha)} | \onesolution_\tau - \othersolution_\tau |.
        \end{equation*}
        Therefore,
        \begin{align*}
            \mathcal I_1 & \leq \frac{1}{2} \|\Phi_\ee''' \|_{L^{\infty}(0, \alpha)} \int_\Omega | \onesolution_\tau - \othersolution_\tau | |\psi_\delta' (\onesolution_\tau - \othersolution_\tau) \nabla (\onesolution_\tau - \othersolution_\tau) \cdot \nabla \othersolution_\tau |                                              \\
                         & = \frac{1}{2} \|\Phi_\ee''' \|_{L^{\infty}(0, \alpha)} \int_{| \onesolution_\tau - \othersolution_\tau | < \delta} | \onesolution_\tau - \othersolution_\tau | |\psi_\delta' (\onesolution_\tau - \othersolution_\tau) \nabla (\onesolution_\tau - \othersolution_\tau) \cdot \nabla \othersolution_\tau | \\
                         & \leq \frac{\delta}{2} \|\Phi_\ee''' \|_{L^{\infty}(0, \alpha)}  \|  \nabla (\onesolution_\tau - \othersolution_\tau) \|_{L^2(\Omega)} \| \nabla \othersolution_\tau \|_{L^2(\Omega)}.
        \end{align*}
        Next, we use properties of $\psi_\delta$ in order to deal with $\mathcal I_2$. We integrate by parts in order to obtain that
        \begin{align*}
            \mathcal I_2 & = \int_\Omega \psi_\delta (\onesolution_\tau - \othersolution_\tau) \diver \left( \Phi_\ee'' (\othersolution_\tau) \nabla \othersolution_\tau \right) - \int_{\partial \Omega} \psi_\delta (\onesolution_\tau - \othersolution_\tau) \Phi_\ee'' (\othersolution_\tau) \nabla \othersolution_\tau \cdot {n} ,
        \end{align*}
        and $|\mathcal I_2| \leq C \delta$.

        \step[Drift]
        Let us focus now on the drift term. Integrating by parts we obtain that
        \begin{align*}
            \mathcal J & = - \int_\Omega j_\delta''(\onesolution_\tau - \othersolution_\tau) \nabla (\onesolution_\tau - \othersolution_\tau) (\mobee(\onesolution_\tau) - \mobee(\othersolution_\tau)) \cdot \nabla V                                                  \\
                       & = - \int_\Omega \frac{\mobee(\onesolution_\tau) - \mobee(\othersolution_\tau)}{\onesolution_\tau - \othersolution_\tau} \nabla \psi_\delta (\onesolution_\tau - \othersolution_\tau) \cdot \nabla V                                            \\
                       & = - \int_\Omega \left(\frac{\mobee(\onesolution_\tau) - \mobee(\othersolution_\tau)}{\onesolution_\tau - \othersolution_\tau} - \mobee'(\onesolution_\tau) \right) \nabla \psi_\delta (\onesolution_\tau - \othersolution_\tau) \cdot \nabla V \\
                       & \quad - \int_\Omega \mobee'(\onesolution_\tau) \nabla \psi_\delta (\onesolution_\tau - \othersolution_\tau) \cdot \nabla V \eqqcolon \mathcal J_1 + \mathcal J_2.
        \end{align*}
        We study $\mathcal J_1$ in the same way we have studied $\mathcal I_1$. Using a Taylor expansion we have that
        \begin{equation*}
            \left| \frac{\mobee(\onesolution_\tau) - \mobee(\othersolution_\tau)}{\onesolution_\tau - \othersolution_\tau} - \mobee'(\onesolution_\tau) \right| \leq \frac{1}{2} \| \mobee'' \|_{L^{\infty}} | \onesolution_\tau - \othersolution_\tau |.
        \end{equation*}
        Hence, we can estimate
        \begin{align*}
            |\mathcal J_1| & \leq \frac{1}{2} \| \mobee'' \|_{L^{\infty}} \int_{\Omega} | \onesolution_\tau - \othersolution_\tau | \left| \psi_\delta'( \onesolution_\tau - \othersolution_\tau ) \nabla ( \onesolution_\tau - \othersolution_\tau) \cdot \nabla V \right|                                            \\
                           & = \frac{1}{2} \| \mobee'' \|_{L^{\infty}} \int_{| \onesolution_\tau - \othersolution_\tau | < \delta} | \onesolution_\tau - \othersolution_\tau | \left| \psi_\delta'( \onesolution_\tau - \othersolution_\tau ) \nabla ( \onesolution_\tau - \othersolution_\tau) \cdot \nabla V \right| \\
                           & \leq \frac{\delta}{2} \| \mobee'' \|_{L^{\infty}} \| \nabla ( \onesolution_\tau - \othersolution_\tau) \|_{L^2(\Omega)} \| \nabla V \|_{L^2(\Omega)}.
        \end{align*}
        Integrating by parts and  using similar ideas, and the properties of $\psi_\delta$ we can deal with $\mathcal J_2$,
        \begin{align*}
            \mathcal J_2 = \int_{\Omega} \psi_\delta( \onesolution_\tau - \othersolution_\tau ) \diver \left( \mobee(\onesolution_\tau) \nabla V \right) - \int_{\partial \Omega} \psi_\delta (\onesolution_\tau - \othersolution_\tau ) \mobee(\onesolution_\tau) \nabla V \cdot {n} ,
        \end{align*}
        and therefore $|\mathcal J_2|  \leq C \delta$.

        \step[Limit $\delta \rightarrow 0$]
        Hence, combining everything and integrating in time from $0$ to $t$ we recover that,
        \begin{equation*}
            \int_\Omega j_\delta(\onesolution_t - \othersolution_t) \leq \int_\Omega j_\delta(\onesolution_0 - \othersolution_0) + C \delta.
        \end{equation*}
        Thus, if we take the limit $\delta \rightarrow 0$ we obtain the $L^1$ contraction,
        \begin{equation*}
            \| \onesolution_t - \othersolution_t \|_{L^1(\Omega)} \leq \| \onesolution_0 - \othersolution_0 \|_{L^1(\Omega)}.
        \end{equation*}
        Now, we choose $j_\delta$ such that $j_\delta''(s) = \frac{1}{\delta} \chi_{(0, \delta)} (s)$ and $j_\delta (s) \rightarrow s^+$ as $\delta \rightarrow 0$. Analogously to the previous case we recover,
        \begin{equation*}
            \int_\Omega ( \onesolution_t - \othersolution_t )^+ \leq \int_\Omega ( \onesolution_0 - \othersolution_0 )^+. \qedhere
        \end{equation*}
        \qedhere
    \end{enumeratesteps}
\end{proof}

\begin{remark}
    This argument is valid only for classical solutions. For strong solutions, the classical argument for $L^1$-contraction can be found in the book from Vázquez \cite{Vazquez07}, but it seems difficult to adapt it directly to the case of non-linear mobility.
    A more suitable notion of solution to deal with $L^1$ contractions is that of entropy solutions, but we will not discuss them here.
    We point the reader to \cite{Carrillo99,Karlsen_Risebro03}.
\end{remark}

\subsection{Semigroup theory for the problem \texorpdfstring{\eqref{eq:the problem regularised}}{(Peps)}. Proof of \texorpdfstring{\Cref{thm:Properties (Pee)}}{well-posedness of (Peps)}}\label{sec:Semigroup}

In this subsection we intend to develop the semigroup theory for the problem \eqref{eq:the problem regularised}. In order to do that, we take advantage of the classical theory constructed in the previous subsections, where we consider initial data $\rho_0 \in \mathcal{A}_+ \cap C^2 (\overline{\Omega})$. We have already discussed well-posedness, i.e., \ref{item:(Pee) classical solutions}, in \cref{sec:Well Posedness Pee}. We now proceed to prove the remaining claims.

\paragraph{Proof of \ref{item:(Pee) good semigroup}. $S^{(\ee)}$ is a \goodSemigroup{}.}
We divide the proof in several steps.
\begin{enumeratesteps}
    \step[Extension to $\mathcal{A}$]
    First we point out that, so far, we have defined $S_t^{(\ee)}$ in $\mathcal A_+ \cap C^2(\overline \Omega)$ in \ref{item:(Pee) classical solutions}. Since $S_t^{(\ee)}$ is an $L^1$-contraction in $\mathcal A_+ \cap C^2(\overline \Omega)$ (i.e., $1$-Lipschitz) and $\mathcal A_+ \cap C^2(\overline \Omega)$ is $L^1$-dense in $\mathcal A$, $S_t^{(\ee)}$ admits a unique continuous extension to $\mathcal A$. 
    We prove the four properties of a \goodSemigroup{} given in \Cref{def:Free-energy dissipating semigroup}.

    \step[Mass preserving and $L^1$-contraction property]
    They follow  from the conservation of mass for classical solutions and \Cref{lem:L1 contraction and continuous dependece} combined with the density of $\mathcal A_+ \cap C^2(\overline \Omega)$ in $\mathcal{A}$.

    \step[$C_0$-semigroup]
    Take $\eta_0 \in \mathcal A_+ \cap C^2(\overline \Omega)$. Then
    \begin{align*}
        \|S_h \rho_0 - \rho_0\|_{L^1 (\Omega)} & \le \|S_h \rho_0 - S_h \eta_0\|_{L^1(\Omega)} + \|S_h \eta_0 - \eta_0 \|_{L^1(\Omega)} + \|\eta_0 - \rho_0\|_{L^1(\Omega)} \\
                                               & \le \|S_h \eta_0 - \eta_0 \|_{L^1(\Omega)} + 2 \|\eta_0 - \rho_0\|_{L^1(\Omega)}
    \end{align*}
    Letting $h \to 0$, and using the properties of classical solutions
    \begin{align*}
        \limsup_{h \to 0} \|S_h \rho_0 - \rho_0\|_{L^1(\Omega)}
         & \le 2 \|\eta_0 - \rho_0\|_{L^1(\Omega)}.
    \end{align*}
    Now we can take the infimum over $\eta_0 \in \mathcal A_+ \cap C^2(\overline \Omega)$.

    \step[{Energy dissipation and $C([0,T]; W^{-1,1})$}]
    For data in $\rho_0 \in \mathcal A_+ \cap C^2(\overline \Omega)$ we recall \Cref{prop:Convergence of the free_energies}. Hence, we can apply \eqref{eq:drho/dt W -1,1}.
    In particular,
    \begin{align*}
        \| \rho_{t_2} -  \rho_{t_1} \|_{W^{-1,1}(\Omega)}
         & \leq \int_{t_1}^{t_2} \left\| \frac{\partial \rho_\sigma}{\partial t} \right\|_{W^{-1,1}(\Omega)} d \sigma
        \\
         & \le
        \| \mobee \|_{L^{\infty}([0,\alpha])}^{\frac{1}{2}} |\Omega|^{\frac{1}{2}}  \left( \mathcal{F}_{\varepsilon} [\rho_{t_1}] - \mathcal{F}_{\varepsilon} [ \rho_{t_2} ] \right)^{\frac{1}{2}} |t_2-t_1|^{\frac{1}{2}}.
    \end{align*}
    Let $\rho_0 \in \mathcal A_+$. Due to the upper and lower bound \eqref{eq:initial datum A+ bounds},
    there exists an approximating sequence $\rho_0^{\{k\}} \in \mathcal A_+ \cap C^2(\overline \Omega) $ such that
    $$
        0 < c_1 \leq (U_\ee')^{-1} (C_1 - V) \le  \rho_0^{\{k\}} \le (U_\ee')^{-1} (C_2 - V) \leq c_2 < \alpha
    $$
    uniformly. Since we have a comparison principle, these bounds are satisfied by $S_t^{(\ee)} \rho_0^{ \{k\} }$ for all positive times.
    Due to \Cref{lem:free energy dissipation LHS is lsc} and the continuity of $U_\ee$, we can pass to the limit in the estimates for $S_t^{(\ee)} \rho_0^{\{k\}}$.

    \step[Weak solution]
    It follows from the stability result in \Cref{lem:L1 stability of weak solutions}.
\end{enumeratesteps}

\paragraph{Proof of \ref{it:S^eps maximum principle}. Strong Maximum Principle.}
Take $\rho_0 \in \mathcal A$ and $\rho_0 \not \equiv 0, \alpha$.
We prove first that $S_t^{(\ee)}\rho > 0$.
Since $\rho_0 \ge 0$, there exists a $c > 0$ and small ball $B_r(x)$ such that $ \rho_0 \ge c$ in $B_r(x)$. Take a smooth function $0 \le \eta_0 \le c$ that is $0$ outside $B_r(x)$ and positive in $B_{r/2} (x)$.
By the comparison principle and \Cref{lem:Strong Max Principle Classical Solutions} we have
$S_t^{(\ee)} \rho_0 \ge S_t^{(\ee)} \eta_0 > 0$ in $\Omega$.
The claim $\rho < \alpha$ follows similarly.

\paragraph{Proof of \ref{it:S^eps strict positivity}. Strict positivity.}
As already mentioned, by using  \eqref{eq:initial datum A+ bounds} we have $0 < (U_\ee')^{-1} (C_1 - V) \le S_t^{(\ee)} \rho_0 \le (U_\ee')^{-1} (C_2 - V) < \alpha$.
\qed

\subsection{Existence for \texorpdfstring{\eqref{eq:the problem Omega}}{(P)}. Proof of \texorpdfstring{\Cref{thm:left side of D}}{existence}}\label{sec:Existence P0}

We divide the proof of \Cref{thm:left side of D} in several Lemmas.

\begin{lemma}
    \label{lem:Uee converges to U}
    Under the assumption \ref{hyp:H_U} the regularised diffusion $U_\ee$ from \eqref{eq:properties of Phi_ee}, \eqref{eq:mobee a.e. mob}, and \eqref{hyp:Sublinear saturation} is such that
    $U_\ee \to U$ uniformly in $[0,\alpha]$ and
    $U'_\ee \to U', U''_\ee \to U''
    $
    uniformly over compacts of $(0,\alpha)$.
\end{lemma}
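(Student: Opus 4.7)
The plan is to prove the three convergences in order, with the endpoint control being the main obstacle.

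First, for $U''_\ee \to U''$ uniformly on compacts $K \subset (0,\alpha)$: write $U''_\ee = \Phi'_\ee/\mobee$ and observe that $\mob$ is bounded below by a positive constant on $K$ (being continuous and strictly positive on $(0,\alpha)$), hence so is $\mobee$ for small $\ee$ by the uniform convergence $\mobee \to \mob$ in $C([0,\alpha])$ from \eqref{eq:mob uniform}. Combined with $\Phi'_\ee \to \Phi'$ uniformly on $K$ (from the $C^2_{\mathrm{loc}}$ convergence in \eqref{eq:properties of Phi_ee}), the quotient converges uniformly on $K$.

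Second, for $U'_\ee \to U'$ uniformly on compacts of $(0,\alpha)$: use $U'_\ee(\alpha/2) = U'(\alpha/2)$ and the identity $U'_\ee(s) - U'(s) = \int_{\alpha/2}^s (U''_\ee - U'')(\tau)\,d\tau$ on a slightly enlarged compact containing $K \cup \{\alpha/2\}$, together with Step~1. Integrating once more with $U_\ee(\alpha/2) = U(\alpha/2)$ yields uniform convergence of $U_\ee$ on every compact of $(0,\alpha)$.

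The third and hardest step is extending to the closed interval $[0,\alpha]$. The plan is to apply Arzel\`a--Ascoli: combined with the pointwise convergence already established on $(0,\alpha)$ and the continuity of $U$ on $[0,\alpha]$ (from $U \in W^{1,1}((0,\alpha))$ in one dimension), it suffices to establish equi-continuity of $\{U_\ee\}$ at $0$ and $\alpha$. For equi-continuity at $0$, a Fubini argument gives
\[
U_\ee(s) - U_\ee(0) = s\,U'(\alpha/2) - \int_0^{\alpha/2} \min(\sigma,s)\,\frac{\Phi'_\ee(\sigma)}{\mobee(\sigma)}\,d\sigma.
\]
Splitting the integral at a small $\delta$, on $(0,\delta)$ the condition $|\mobee'(0)|>1$ from \eqref{hyp:Sublinear saturation} gives $\mobee(\sigma)\ge \sigma$ uniformly, so $\min(\sigma,s)/\mobee(\sigma) \le 1$ and the contribution is bounded by $\int_0^\delta \Phi'_\ee\,d\sigma$, uniformly small in $\ee$ by the equi-integrability of $\{\Phi'_\ee\}$ implied by the $L^1$-convergence \eqref{eq:stronger convergence of Phiee}. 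On $[\delta,\alpha/2]$, $\mobee$ is uniformly bounded below and the contribution is $O(s)$ with a constant depending on $\delta$ and the limiting objects. A symmetric argument using $\mobee(\sigma)\ge \alpha-\sigma$ near $\alpha$ handles the other endpoint.

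The main obstacle is precisely this endpoint analysis: although $U_\ee$ is smooth on $[0,\alpha]$, the limiting $U$ may have unbounded first and second derivatives at $\{0,\alpha\}$, so passing to the closure requires carefully combining the equi-integrability of $\{\Phi'_\ee\}$ in $L^1(0,\alpha)$ with the sublinear lower bound on $\mobee$ guaranteed by \eqref{hyp:Sublinear saturation}. The mild pinning $U_\ee(\alpha/2) = U(\alpha/2)$ and $U'_\ee(\alpha/2)=U'(\alpha/2)$ is essential to propagate convergence from the interior to the boundary via the fundamental theorem of calculus.
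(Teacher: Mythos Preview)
Your first two steps coincide with the paper's proof: write $U_\ee'' = \Phi_\ee'/\mobee$, combine the $C^2_{loc}$ convergence of $\Phi_\ee$ from \eqref{eq:properties of Phi_ee} with the uniform convergence $\mobee \to \mob$ from \eqref{eq:mob uniform} (and the positivity of $\mob$ on compacts of $(0,\alpha)$) to get $U_\ee'' \to U''$ on compacts, then integrate twice from the anchor at $\alpha/2$.

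For the extension to $[0,\alpha]$ the paper is far terser than you: it simply invokes the uniform bound $\sup_\ee \|U_\ee'\|_{L^1((0,\alpha))} < \infty$ built into \eqref{eq:mobee a.e. mob} and declares that ``$U_\ee$ are uniformly continuous in $[0,\alpha]$, and the uniform convergence over $[0,\alpha]$ follows.'' Your Fubini-and-splitting route is in the same spirit---it is an attempt to make the implicit equi-continuity at the endpoints explicit---but there is a genuine gap in the execution. The lower bound $\mobee(\sigma) \ge \sigma$ on a neighbourhood $(0,\delta)$ with $\delta$ \emph{independent of $\ee$} does not follow from \eqref{hyp:Sublinear saturation}: that hypothesis only asserts $|\mobee'|>1$ on an $\ee$-dependent neighbourhood of $0$. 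In fact a uniform $\delta$ is impossible in general, because \eqref{eq:mob uniform} forces $\mobee(\sigma) \to \mob(\sigma)$ at every fixed $\sigma>0$, and $\mob(\sigma) < \sigma$ is perfectly compatible with \ref{hyp:mobility} (take e.g.\ $\mob(s)=s^2(\alpha-s)^2$). Hence for any $\delta>0$ there are $\sigma\in(0,\delta)$ and small $\ee$ with $\mobee(\sigma)<\sigma$, and your key inequality $\min(\sigma,s)/\mobee(\sigma)\le 1$ fails on the $(0,\delta)$ piece. The equi-integrability of $\{\Phi_\ee'\}$ does not repair this, since where $\mobee$ is small compared to $\sigma$ the integrand $\Phi_\ee'/\mobee = U_\ee''$ can be much larger than $\Phi_\ee'$ itself. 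The paper sidesteps this entirely by appealing directly to the $L^1$ bound on $U_\ee'$ that was engineered into the construction of $\mobee$ via \eqref{eq:mobee a.e. mob}.
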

\begin{proof}
    Recall that $U_\ee'' = \frac{\Phi_\ee'}{\mobee}$
    and \eqref{eq:properties of Phi_ee} includes that $\Phi_\ee \to \Phi$ in $C^2_{loc}((0,\alpha))$.
    From \eqref{eq:mob uniform}, the regularised mobility $\mobee$ converges uniformly to $\mob$, a continuous function that is positive in $(0,\alpha)$.
    Thus, $U_\ee'' \to U''$ uniformly over compacts of $(0,\alpha)$. Since $U_\ee'(\tfrac \alpha 2) = U'(\tfrac \alpha 2)$ we also get $U_\ee' \to U'$ uniformly over compacts of $(0,\alpha)$.
    Similarly, since $U_\ee(\tfrac \alpha 2) = U(\tfrac \alpha 2)$ we get convergence of $U_\ee \to U$ uniformly over compacts of $(0,\alpha)$.
    Lastly, since $\| U_\ee' \|_{L^1((0,\alpha))}$ is bounded, we have that $U_\ee$ are uniformly continuous in $[0,\alpha]$, and the uniform convergence over $[0,\alpha]$ follows.
\end{proof}

Using a sequence of classical solutions of the approximating problems \eqref{eq:the problem regularised} we are able to prove existence of a \goodSemigroup{} for \eqref{eq:the problem Omega}.
First, we prove the convergence of the semigroup in the weaker space $W^{-1,1}$.

\begin{lemma}[$W^{-1,1}$ precompactness]
    \label{lem:S^ee to S in W-11}
    There exists a sequence $\ee_k \to 0$ and a semigroup
    $
        S : [0,\infty) \times \mathcal A \to W^{-1,1} (\Omega)
    $
    such that, for each $\rho_0 \in \mathcal A$,
    \begin{equation}\label{eq:S^ee to S in W-11}
        S^{(\ee_k)} \rho_0 \to S\rho_0, \qquad \text{in } C_{loc} ([0,\infty);W^{-1,1} (\Omega)).
    \end{equation}
\end{lemma}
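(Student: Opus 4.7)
The plan is to use Arzelà–Ascoli compactness in the space $C_{loc}([0,\infty); W^{-1,1}(\Omega))$, a diagonal extraction on a countable dense set, and then an extension argument based on the $L^1$-contraction inherited from \Cref{thm:Properties (Pee)}\ref{it:S^eps is Lp continuous}. The main ingredients are \Cref{lem:negative Sobolev Ascoli-Arzela}, the Hölder-in-time estimate \eqref{eq:W-11 continuity} from the notion of \goodSemigroup{}, and the continuous embedding $L^1(\Omega) \hookrightarrow W^{-1,1}(\Omega)$ (valid on a bounded Lipschitz domain by writing $f = \partial_{x_1} F_1$ via one-variable antidifferentiation).

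First I would establish uniform equicontinuity and precompactness for data in $\mathcal A_+$. For $\rho_0 \in \mathcal A_+$, the estimate \eqref{eq:W-11 continuity} combined with the monotonicity of $t \mapsto \mathcal{F}_\ee[S^{(\ee)}_t \rho_0]$ yields
\[
    \|S^{(\ee)}_{t_2}\rho_0 - S^{(\ee)}_{t_1}\rho_0\|_{W^{-1,1}(\Omega)} \leq \|\mobee\|_{L^\infty}^{1/2} |\Omega|^{1/2} \bigl(\mathcal{F}_\ee[\rho_0] - \inf_{\mathcal A} \mathcal{F}_\ee\bigr)^{1/2} |t_2 - t_1|^{1/2}.
\]
Since $\mobee \to \mob$ uniformly on $[0,\alpha]$ by \eqref{eq:mob uniform}, and $U_\ee \to U$ uniformly on $[0,\alpha]$ by \Cref{lem:Uee converges to U}, together with the uniform lower bound from \Cref{prop:Free energy bound from below}, the constant on the right is bounded uniformly in $\ee \in (0,1]$ for each fixed $\rho_0 \in \mathcal A_+$. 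Combined with the uniform bound $S^{(\ee)}_t\rho_0 \in \mathcal A$ (hence bounded in $L^1$), the compactness of the embedding $L^1(\Omega) \hookrightarrow\hookrightarrow W^{-1,1}(\Omega)$ makes $\{S^{(\ee)}\rho_0\}_{\ee \in (0,1]}$ precompact in $C_{loc}([0,\infty); W^{-1,1}(\Omega))$ via \Cref{lem:negative Sobolev Ascoli-Arzela}.

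Second, I would perform a diagonal extraction. Choose a countable $L^1$-dense set $D \subset \mathcal A_+$, noting that $\mathcal A_+$ is itself $L^1$-dense in $\mathcal A$ (approximate $\rho\in \mathcal A$ by $(1-\delta)\rho + \delta\, \tfrac \alpha 2 \in \mathcal A_+$). A standard diagonal procedure produces $\ee_k \to 0$ such that $S^{(\ee_k)} \rho_0 \to S\rho_0$ in $C_{loc}([0,\infty); W^{-1,1}(\Omega))$ for every $\rho_0 \in D$. To extend $S$ to all of $\mathcal A$ I would use
\[
    \|S^{(\ee)}_t \rho_0 - S^{(\ee)}_t \tilde\rho_0\|_{W^{-1,1}(\Omega)} \leq C_\Omega \|S^{(\ee)}_t \rho_0 - S^{(\ee)}_t \tilde\rho_0\|_{L^1(\Omega)} \leq C_\Omega \|\rho_0 - \tilde\rho_0\|_{L^1(\Omega)},
\]
where the first inequality uses the continuous embedding and the second the $L^1$-contraction. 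A three-$\varepsilon$ argument then shows that $\{S^{(\ee_k)}\rho_0\}_k$ is Cauchy in $C([0,T]; W^{-1,1}(\Omega))$ for every $\rho_0 \in \mathcal A$ and every $T > 0$, defining $S\rho_0$ as its limit. The inherited bound $\|S_t \rho_0 - S_t \eta_0\|_{W^{-1,1}} \leq C_\Omega \|\rho_0 - \eta_0\|_{L^1}$ then follows by passage to the limit.

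The main obstacle is the semigroup identity $S_{t+h} = S_t S_h$. Passing to the limit in $S^{(\ee_k)}_{t+h}\rho_0 = S^{(\ee_k)}_t(S^{(\ee_k)}_h\rho_0)$ is delicate: the left-hand side is controlled directly, but on the right one has only $W^{-1,1}$-convergence of $S^{(\ee_k)}_h\rho_0 \to S_h \rho_0$, while the $L^1$-contraction of $S^{(\ee_k)}_t$ bridges from $L^1$ (not from $W^{-1,1}$) to $W^{-1,1}$. I expect this point to be settled only after upgrading the $W^{-1,1}$-convergence to strong $L^1$-convergence via an Aubin–Lions type compactness argument using the spatial estimate \eqref{eq:nabla phi(rho) a priori} together with the $W^{-1,1}$ time regularity; once such an upgrade is available, the semigroup property follows by a straightforward limit in the composition identity. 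I would therefore state the present lemma as producing the limit operator family $S$ in the $W^{-1,1}$ topology, with the full semigroup properties verified in the subsequent proof of \Cref{thm:left side of D}.
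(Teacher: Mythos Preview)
Your proposal is correct and reaches the same conclusion, but the route differs from the paper's. The paper applies a single Ascoli--Arzel\`a argument on $C(X,Y)$ with $X=[0,\infty)\times\mathcal A$ and $Y=W^{-1,1}(\Omega)$, treating time and initial datum jointly: pointwise precompactness comes from the compact embedding $L^1\hookrightarrow\hookrightarrow W^{-1,1}$, and equicontinuity in $(t,\rho_0)$ from the uniform energy bound $|\mathcal F_\ee[\rho]|\le C$ over all of $\mathcal A$ together with the $L^1$-contraction. A single subsequence $\ee_k$ then works simultaneously for all $\rho_0$. Your version instead fixes $\rho_0$ in a countable $L^1$-dense subset of $\mathcal A_+$, applies Ascoli--Arzel\`a in time only, diagonalises, and then extends to $\mathcal A$ by the $L^1$-contraction via a three-$\varepsilon$ argument. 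This is more pedestrian but also more faithful to the stated hypothesis that \eqref{eq:W-11 continuity} is only asserted for $\rho_0\in\mathcal A_+$; the paper tacitly uses that the H\"older constant there is governed by the uniformly bounded energy oscillation and hence extends to all of $\mathcal A$ by density. Your remark that the semigroup identity $S_{t+h}=S_tS_h$ cannot be closed at this stage and must await the $L^1$ upgrade is exactly how the paper proceeds as well: the lemma only produces the limiting family, and the full semigroup structure is verified later in the proof of \Cref{thm:left side of D}.
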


\begin{proof} 
    Our proof relies on the Ascoli-Arzelà theorem on $C(X,Y)$ with the topology of compact con\-ver\-gen\-ce (i.e., uniform convergence over compact sets), where
    $X := [0,\infty) \times \mathcal A$ and $Y := W^{-1,1} (\Omega)$.
    We observe that $X$ is a topological space and $Y $ is a metric space (in particular a uniform space).
    We want to show that
    $H \coloneqq \{ S^{(\varepsilon)} : \varepsilon \in (0,1) \}$
    is precompact in $C(X,Y)$ with the topology of compact convergence.
    It suffices to prove that (i) $H$ are equicontinuous and (ii) that
    for each $x = (t, \rho_0) \in X_{\ell}$ the set
    $H(x) \coloneqq \{ S_t^{(\varepsilon)}\rho_0 : \varepsilon \in (0,1) \}
    $ is pre-compact in $Y$.

    The proof of (ii) is simple. Given $x = (t,\rho_0) \in X$ we have that $\|S_t^{(\varepsilon)} \rho_0\|_{L^1} = \|\rho_0\|_{L^1}$. Due to the compact embedding of $L^1$ in $W^{-1,1}$ we observe that the set $H(x)$ is precompact in $W^{-1,1} (\Omega)$.

    Now let us prove (i).
    We point that the energy is controlled in $\mathcal A$ since
    \begin{align*}
        \left|\mathcal{F}_\ee [\rho] \right| & \leq \int_\Omega |U_\ee (\rho)| + \int_\Omega V \rho \leq \max_{\substack{\ee \in [0,1] \\ s \in \left[0,\alpha \right]}} |U_\ee(s)|
        |\Omega | + \alpha \| V \|_{L^1(\Omega)} \eqqcolon \mathsf C .
    \end{align*}
    Thus, we can control the time continuity using the gradient-flow structure
    \[
        \| S^{(\ee)}_{t+h} \rho_0 - S^{(\ee)}_t \rho_0 \|_{W^{-1,1}(\Omega)} \le C(\mathcal F_\ee[S^{(\ee)}_{t} \rho_0] - \mathcal F_\ee [S^{(\ee)}_{t+h} \rho_0] )^{\frac 1 2}h^{\frac 1 2} \le \mathsf C h^{\frac 1 2} .
    \]
    Using the continuous embedding of $L^1(\Omega)$ in $W^{-1,1}(\Omega)$, and the $L^1$-contraction property of the semigroup, we recover that
    \begin{align*}
        \| S^{(\ee)}_{t+h} \rho_0 - S^{(\ee)}_t \eta_0 \|_{W^{-1,1}(\Omega)} & \le \| S^{(\ee)}_{t+h} \rho_0 - S^{(\ee)}_t \rho_0 \|_{W^{-1,1}(\Omega)} + \| S^{(\ee)}_{t} \rho_0 - S_t^{(\ee)} \eta_0 \|_{W^{-1,1}(\Omega)}
        \\
                                                                             & \le \| S^{(\ee)}_{t+h} \rho_0 - S^{(\ee)}_t \rho_0 \|_{W^{-1,1}(\Omega)} + C(\Omega) \| S^{(\ee)}_{t} \rho_0 - S^{(\ee)}_t \eta_0 \|_{L^1(\Omega)} \\
                                                                             & \le \mathsf C h^{\frac 1 2} + C(\Omega) \| \rho_0 -  \eta_0 \|_{L^1(\Omega)} .
    \end{align*}
    This shows that the family $H$ are equicontinuous.

    Therefore, by the Ascoli-Arzelà theorem, there exists $\ee_k \to 0$ such that $S^{(\ee_k)}$ converges to $S$ uniformly over compacts of $X$.
\end{proof}

We also show that the convergence happens in $L^p$ for any $\rho_0$ fixed, although it may not be uniform in $\rho_0$.

\begin{lemma}[$L^p$ pre-compactness]
    \label{lem:Existence weak solution} 
    Given a sequence $\ee_k \to 0$,
    $\rho_0 \in \mathcal A$ and $T > 0$ fixed,
    there exists a sub-sequence and $u \in L^{\infty} ((0,T) \times \Omega)$ such that
    \[
        S^{(\ee_k)} \rho_0 \to u \quad \text{in } L^p ((0,T)\times \Omega) \text{ for all } p \in [1, \infty ).
    \]
\end{lemma}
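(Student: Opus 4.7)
The plan is to prove $L^p$ compactness first for $\rho_0 \in \mathcal{A}_+$, where the a priori estimates of Section~\ref{sec:a priori} apply uniformly in $\ee$, and then extend to general $\rho_0 \in \mathcal{A}$ by density together with the $L^1$-contraction of $S^{(\ee_k)}$ from \Cref{lem:L1 contraction and continuous dependece}. For the extension, I would approximate $\rho_0$ by a sequence $\rho_0^{(m)} \in \mathcal{A}_+ \cap C^2(\overline{\Omega})$ with $\rho_0^{(m)} \to \rho_0$ in $L^1(\Omega)$, perform a diagonal extraction to produce a subsequence $\{\ee_{k_j}\}$ along which $S^{(\ee_{k_j})}\rho_0^{(m)}$ converges in $L^1((0,T)\times\Omega)$ for every $m$, and then use the time-integrated $L^1$-contraction
\[
\|S^{(\ee_{k_j})}\rho_0 - S^{(\ee_{k_j})}\rho_0^{(m)}\|_{L^1((0,T)\times\Omega)} \le T\,\|\rho_0 - \rho_0^{(m)}\|_{L^1(\Omega)}
\]
in a triangle inequality to show that $\{S^{(\ee_{k_j})}\rho_0\}$ is Cauchy in $L^1((0,T)\times\Omega)$. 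The uniform bound $\|S^{(\ee_k)}\rho_0\|_{L^\infty}\le \alpha$ then yields $L^p$ convergence for every $p \in [1,\infty)$ by interpolation.

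For the case $\rho_0 \in \mathcal{A}_+$ I would rely on three uniform estimates: first, $\rho^{(\ee_k)}=S^{(\ee_k)}\rho_0$ stays in $[0,\alpha]$; second, \Cref{lem:nabla phi(rho) a priori} together with $\Phi_\ee \to \Phi$ and $\mobee \to \mob$ uniformly on $[0,\alpha]$ gives $\Phi_{\ee_k}(\rho^{(\ee_k)})$ bounded in $L^2(0,T;H^1(\Omega))$; and third, the $W^{-1,1}$ estimate \eqref{eq:drho/dt W -1,1}, combined with the uniform bound on $\mathcal{F}_\ee[\rho_0]$ available for $\rho_0 \in \mathcal{A}_+$, yields $\partial_t \rho^{(\ee_k)}$ bounded in $L^2(0,T;W^{-1,1}(\Omega))$. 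To get time compactness for $\Phi_{\ee_k}(\rho^{(\ee_k)})$, I would test $\rho^{(\ee_k)}(t+h)-\rho^{(\ee_k)}(t)$ against $\Phi_{\ee_k}(\rho^{(\ee_k)}(t+h))-\Phi_{\ee_k}(\rho^{(\ee_k)}(t))$, integrate by parts using the equation, and exploit the monotonicity of $\Phi_{\ee_k}$ to obtain the Minty-type bound
\[
\int_0^{T-h}\!\!\int_\Omega \bigl(\Phi_{\ee_k}(\rho^{(\ee_k)}(t+h))-\Phi_{\ee_k}(\rho^{(\ee_k)}(t))\bigr)\bigl(\rho^{(\ee_k)}(t+h)-\rho^{(\ee_k)}(t)\bigr)\,dx\,dt \le C\,h
\]
uniformly in $k$. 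Combined with the $L^2(0,T;H^1)$ bound, an Alt--Luckhaus / Simon-type compactness lemma then extracts a subsequence with $\Phi_{\ee_k}(\rho^{(\ee_k)}) \to v$ strongly in $L^2((0,T)\times\Omega)$ and almost everywhere.

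To identify the limit and upgrade to strong convergence of $\rho^{(\ee_k)}$ itself, I would invoke Banach--Alaoglu to get, along a further subsequence, $\rho^{(\ee_k)} \rightharpoonup^\ast u$ weakly-$\ast$ in $L^\infty((0,T)\times\Omega)$, with $u = S\rho_0$ by compatibility with \Cref{lem:S^ee to S in W-11}. A weak-$\ast$/strong pairing, using uniform convergence $\Phi_{\ee_k}\to \Phi$ on $[0,\alpha]$, identifies $v=\Phi(u)$ and yields
\[
\int_0^T\!\!\int_\Omega \bigl(\Phi_{\ee_k}(\rho^{(\ee_k)}) - \Phi(u)\bigr)\bigl(\rho^{(\ee_k)} - u\bigr)\,dx\,dt \longrightarrow 0.
\]
Since the integrand is nonnegative by monotonicity of $\Phi_{\ee_k}$, it tends to $0$ a.e.~after passing to a further subsequence; together with the strict monotonicity of $\Phi$ at the endpoints from \ref{hyp:Phi}, the uniform $L^\infty$ bound, and the weak-$\ast$ identification of the limit (which pins down the limit values on any interior flat regions of $\Phi$ via a Young-measure / level-set argument), this delivers a.e.~convergence $\rho^{(\ee_k)}\to u$. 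The Dominated Convergence Theorem then gives the claimed $L^p$ convergence. The main obstacle is the time equicontinuity step, because $\Phi_{\ee_k}$ is not uniformly Lipschitz as $\ee\to 0$: one cannot transfer time regularity of $\rho^{(\ee_k)}$ via a direct chain rule, and the Minty integration is essential precisely because it lands the estimate on $\nabla\Phi_{\ee_k}(\rho^{(\ee_k)})$ and $\mobee(\rho^{(\ee_k)})\nabla V$, the two quantities that we do have uniformly in $L^2$.
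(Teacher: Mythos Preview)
Your route is genuinely different from the paper's. The paper does not restrict to $\rho_0\in\mathcal A_+$ nor use an Alt--Luckhaus/Minty argument. Instead it introduces the auxiliary primitives
\[
\Psi_1^{(\ee)}(s)=\int_{s_0}^{\max\{s,s_0\}}[\Phi_\ee(\alpha)-\Phi_\ee(\sigma)]\,\Phi_\ee(\sigma)\,d\sigma,
\qquad
\Psi_2^{(\ee)}(s)=\int_{0}^{\min\{s,s_0\}}[\Phi_\ee(\sigma)-\Phi_\ee(0)]\,\Phi_\ee(\sigma)\,d\sigma,
\]
and applies the \emph{standard} Aubin--Lions lemma to $\Psi_j^{(\ee)}(\rho^{(\ee)})$. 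The point of the construction is that $(\Psi_j^{(\ee)})'(s)/\Phi_\ee'(s)$ is bounded precisely by the technical hypothesis~\ref{hyp:Phi} (via \Cref{lem:hypothesis Phi/Phi' is uniform}), so the $L^2(0,T;H^1)$ bound on $\Phi_\ee(\rho^{(\ee)})$ transfers to $\Psi_j^{(\ee)}(\rho^{(\ee)})$; the time derivative $\partial_t\Psi_j^{(\ee)}(\rho^{(\ee)})$ is controlled in $L^1(0,T;H^{-1})$ directly from the equation. Since each $\Psi_j^{(0)}$ is strictly monotone on its half-interval by design, one inverts pointwise and recovers a.e.\ convergence of $\rho^{(\ee_k)}$. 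This is exactly where \ref{hyp:Phi} earns its keep.

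Your scheme can be made to work, but as written it has a gap at the compactness step. The Minty product bound
\[
\int_0^{T-h}\!\!\int_\Omega\bigl(\Phi_{\ee_k}(\rho^{(\ee_k)}(t+h))-\Phi_{\ee_k}(\rho^{(\ee_k)}(t))\bigr)\bigl(\rho^{(\ee_k)}(t+h)-\rho^{(\ee_k)}(t)\bigr)\,dx\,dt\le C\,h^{1/2}
\]
does \emph{not} give ``time compactness for $\Phi_{\ee_k}(\rho^{(\ee_k)})$'' in the sense of bounding $\|\Phi_{\ee_k}(\rho(t+h))-\Phi_{\ee_k}(\rho(t))\|_{L^2}$: that would require either $\Phi'$ bounded or $\Phi'$ bounded below, and under \hyperref[hyp:H]{(H)} neither holds at $0$ and $\alpha$. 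What the Minty bound \emph{does} give, together with the $L^2(0,T;H^1)$ bound on $\Phi_{\ee_k}(\rho^{(\ee_k)})$ and the $L^\infty$ bound on $\rho^{(\ee_k)}$, is compactness of $\rho^{(\ee_k)}$ in $L^1$ via a Fr\'echet--Kolmogorov/Alt--Luckhaus argument: spatial translates of $\rho^{(\ee_k)}$ are small in measure because $\Phi^{-1}$ is uniformly continuous on the compact range $[\Phi(0),\Phi(\alpha)]$; time translates are small in measure because, by strict monotonicity of $\Phi$ on $[0,\alpha]$ (which is in fact a consequence of \eqref{eq:hypothesis Phi/Phi' ee = 0}), the set $\{|a-b|\ge\delta\}$ is contained in $\{(\Phi(a)-\Phi(b))(a-b)\ge\eta(\delta)\}$ for some $\eta(\delta)>0$, and the Minty bound plus Chebyshev controls the latter. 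This makes your subsequent Minty-identification and Young-measure steps superfluous. Note also that the $k$-dependence of $\Phi_{\ee_k}$ has to be tracked through the argument; the uniform convergence $\Phi_{\ee_k}\to\Phi$ suffices, but the standard Alt--Luckhaus statement is for a fixed nonlinearity. Finally, the reduction to $\mathcal A_+$ is unnecessary: $\mathcal F_\ee$ is uniformly bounded on all of $\mathcal A$ (as in the proof of \Cref{lem:S^ee to S in W-11}), so the a~priori estimates extend to $\rho_0\in\mathcal A$ by density without the outer diagonal layer.
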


\begin{proof}
    Through the proof we consider the notation $\rho^{(\ee)} = S^{(\ee)} \rho_0$.
    Let us recall that from our assumptions we know that $\Phi$ and $\Phi_\ee$ are non-decreasing and there exists $s_0 \in (0,\alpha)$ such that $\Phi'(s_0) > 0$ and $\Phi_\ee(s_0)=\Phi(s_0)=0$.
    Notice that this implies that $\Phi_\ee, \Phi < 0$ in $(0,s_0)$ and $\Phi_\ee, \Phi > 0$ in $(s_0,\alpha)$.
    We consider
    \begin{equation*}
        \begin{aligned}
            \PhiPrimpp^{(\ee)} (s)
             & =
            \int_{s_0}^{\max\{s,s_0\}} [\Phi_\ee (\alpha) - \Phi_\ee (\sigma)] \Phi_{\ee} (\sigma) d \sigma
            \quad \text{ and } \quad
            \PhiPrimnp^{(\ee)} (s)
            =
            \int_0^{\min\{s,s_0\}} [\Phi_\ee (\sigma) - \Phi_\ee (0)] \Phi_{\ee} (\sigma) d \sigma . 
        \end{aligned}
    \end{equation*}
    Notice that we can write $(\PhiPrimpp^{(\ee)})'(s) = [\Phi_\ee (\alpha) - \Phi_\ee (s)] [\Phi_{\ee} (s)]^+$ and $(\PhiPrimnp^{(\ee)})'(s)=[\Phi_\ee (s) - \Phi_\ee (0)] [\Phi_{\ee} (s)]^-$.
    We will prove convergence of $\PhiPrimpp^{(\ee)} (\rho^{(\ee)})$ and $\PhiPrimnp^{(\ee)} (\rho^{(\ee)})$ and combine them to show the limit of $\rho^{(\ee)}$.
    We divide the proof in several steps. 
    \begin{enumeratesteps}

        \step[Application of Aubin-Lions Lemma]
        \label{step:1 of lem:Existence weak solution}

        We define
        \begin{equation*}
            F_{\ee} = \mobee (\rho^{(\ee)}) \nabla \left( U_{\ee}' (\rho^{(\ee)} )+V \right),
        \end{equation*}
        which is uniformly bounded in $\ee$ in $L^2((0,T) \times \Omega)$ due to \eqref{eq:flow of the free energy} and the uniform bound on $\mobee$. Furthermore, we get
        \begin{equation}
            \label{eq:main term of step 1 of lem:Existence weak solution}
            \frac{\partial}{\partial t} \left( \PhiPrimpp^{(\ee)} (\rho^{(\ee)}) \right) = \diver \left( [\Phi_\ee(\alpha) - \Phi_\ee(\rho^{(\ee)})] [ \Phi_{\ee} (\rho^{(\ee)}) ]^+ F_{\ee} \right) -  \nabla( [\Phi_\ee(\alpha) - \Phi_\ee(\rho^{(\ee)})] [ \Phi_{\ee} (\rho^{(\ee)}) ]^+) \cdot F_{\ee}.
        \end{equation}
        Let us show that this is bounded in $L^1 (0,T; H^{-1} (\Omega))$. 
        We observe that
        \[
            \| [\Phi_\ee(\alpha) - \Phi_\ee(\rho^{(\ee)})] [ \Phi_{\ee} (\rho^{(\ee)}) ]^+ F_{\ee} \|_{L^2 ((0,T) \times \Omega)} \le 2 \|\Phi_\ee\|^2_{L^\infty(0,\alpha)} \|F_\ee \|_{L^2 ((0,T) \times \Omega)},
        \]
        and this is uniformly bounded in $\ee$.
        Therefore, the first term on the right-hand side of \eqref{eq:main term of step 1 of lem:Existence weak solution} is controlled in $L^2(0,T; H^{-1}(\Omega)).$ 
        The second term on the right-hand side of \eqref{eq:main term of step 1 of lem:Existence weak solution} is bounded in $L^1( (0,T) \times \Omega)$
        due to \eqref{eq:nabla phi(rho) a priori} and  \eqref{eq:flow of the free energy}. 
        Hence,  
        $\frac{\partial}{\partial t} \left( \PhiPrimpp^{(\ee)} (\rho^{(\ee)}) \right)$ is uniformly bounded in 
        $L^1 (0,T ; H^{-1} (\Omega ))$. 
        Moreover, from \Cref{lem:hypothesis Phi/Phi' is uniform}, we have that
        \begin{equation*}
            \left| \nabla \PhiPrimpp^{(\ee)} (\rho^{(\ee)}) \right| = \left| \frac{[\Phi_\ee(\alpha) - \Phi_\ee(\rho^{(\ee)})][ \Phi_{\ee} (\rho^{(\ee)}) ]^+}{\Phi_{\ee}' (\rho^{(\ee)})} \nabla \Phi_{\ee} (\rho^{(\ee)} ) \right| \leq C \left| \nabla \Phi_{\ee} (\rho^{(\ee)} ) \right|.
        \end{equation*} 
        The elements $\Phi_\ee(\rho^{(\ee)})$ are integrable and uniformly bounded, for $\ee$ small enough. This, in addition to \eqref{eq:nabla phi(rho) a priori}, implies that $\PhiPrimpp^{(\ee)} (\rho^{(\ee)})$ is uniformly bounded in $L^2(0,T; H^1 (\Omega))$. Let us take the given sequence $\ee_k$. Due to the Aubin-Lions Lemma there exists $\limitpp \in L^2 ((0,T) \times \Omega )$ and a subsequence such that
        \begin{equation}\label{eq:Strong Convergence +}
            \PhiPrimpp^{(\ee_k)} (\rho^{(\ee_k)}) \rightarrow \limitpp \quad \text{strongly in } L^2 ((0,T) \times \Omega ) \text{ and a.e.}
        \end{equation}
        Working analogously, we can prove that there exists $\limitnp \in L^2 ((0,T) \times \Omega )$ such that, up to a further subsequence,
        $
            \PhiPrimnp^{(\ee_{k})} (\rho^{(\ee_{k})}) \rightarrow \limitnp
        $
        strongly in $L^2 ((0,T) \times \Omega )$ and a.e..

        \step[Characterisation of the limit]
        By construction $\Phi_\ee(s)$ is non-decreasing, $\Phi_\ee(s_0) = 0$, and $\Phi_\ee'(s_0) > 0$. Therefore, we have that
        $\Phi_\ee (s) > 0 $ for all $\ee \in [0,1]$ and $s > s_0$.
        Joining this fact with \eqref{eq:Assumption Existence} we recover that  $(\PhiPrimpp^{(\ee)})'(s) > 0$ for all $s \in (s_0, \alpha)$.
        Clearly, $\PhiPrimpp^{(\ee)}(s) = 0$ for $s < s_0$.
        Likewise, $(\PhiPrimnp^{(\ee)})'(s) =\left[ \Phi_\ee(s) - \Phi_\ee (0) \right] \left[ \Phi_{\ee}(s) \right]^- \leq 0$ and is strictly negative in $(0,s_0)$.

        We can invert the functions $s \mapsto \Psi_i^{(\ee)} (s) / \Psi_i^{(\ee)} (\alpha)$.
        For $\varkappa \in (0,1)$ by Bolzano's theorem there exists $s(\varkappa) \in [0,\alpha]$ such that $\Psi_i^{(\ee)}(s(\varkappa)) = \varkappa \Psi_i^{(\ee)}(\alpha)$.
        Because of the construction of $\Phi_i^{(\ee)}$, we know that $s(\varkappa) \in (0,\alpha)$. We define 
        \[
            \InvPhiPrimpp^{(\ee)}  : [0,1] \rightarrow [s_0, \alpha], \qquad \InvPhiPrimpp^{(\ee)} (\varkappa) = \begin{dcases}
                s_0          & \text{if } \varkappa = 0,       \\
                s(\varkappa) & \text{if } \varkappa \in (0,1), \\
                \alpha       & \text{if } \varkappa = 1.
            \end{dcases}
        \]
        This is a non-decreasing function.
        We can make a similar construction for $\Psi_2^{(\ee)}$, which we denote $\InvPhiPrimnp^{(\ee)}$.
        For $\ee \ge 0$, it holds that
        \begin{equation}
            \label{eq:InvPhiPrimpp characterisation}
            \InvPhiPrimpp^{(\ee)} \left(\frac{\PhiPrimpp^{(\ee)} (s)}{\PhiPrimpp^{(\ee)} (\alpha)}\right)
            =
            \begin{dcases}
                s_0 & \text{if } s \in [0, s_0] ,    \\
                s   & \text{if } s \in (s_0,\alpha],
            \end{dcases}
            \quad \text{ and } \quad \InvPhiPrimnp^{(\ee)}  \left(\frac{\PhiPrimnp^{(\ee)} (s)}{\PhiPrimnp^{(\ee)} (\alpha)}\right)
            = \begin{dcases}
                s   & \text{if } s \in [0,s_0],       \\
                s_0 & \text{if } s \in (s_0,\alpha] .
            \end{dcases}
        \end{equation}
        We can therefore write
        \begin{equation}\label{eq:rho^eekl characterisation}
            \begin{aligned}
                \rho^{(\ee)} 
                 &
                = \InvPhiPrimpp^{(\ee)} \left(\frac{\PhiPrimpp^{(\ee)} (\rho^{(\ee)})}{\PhiPrimpp^{(\ee)} (\alpha)}\right)
                + \InvPhiPrimnp^{(\ee)}  \left(\frac{\PhiPrimnp^{(\ee)} (\rho^{(\ee)})}{\PhiPrimnp^{(\ee)} (\alpha)}\right)  - s_0.
            \end{aligned}
        \end{equation}
        To show the convergence along the sequence $\ee_k$ of each term, we write the triangular inequality,
        \begin{align*}
             & \left| \InvPhiPrimpp^{(\ee)} \left(\frac{\PhiPrimpp^{(\ee)} (\rho^{(\ee)})}{\PhiPrimpp^{(\ee)} (\alpha)}\right)
            -  \InvPhiPrimpp^{(0)} \left(\frac{\limitpp  (t,x)}{\PhiPrimpp^{(0)} (\alpha)}\right) \right|                      \\
             & \qquad \leq
            \left|  \InvPhiPrimpp^{(\ee)} \left(\frac{\PhiPrimpp^{(\ee)} (\rho^{(\ee)})}{\PhiPrimpp^{(\ee)} (\alpha)}\right)
            - \InvPhiPrimpp^{(0)} \left(\frac{\PhiPrimpp^{(\ee)} (\rho^{(\ee)})}{\PhiPrimpp^{(\ee)} (\alpha)}\right)
            \right|
            + \left| \InvPhiPrimpp^{(0)} \left(\frac{\PhiPrimpp^{(\ee)} (\rho^{(\ee)})}{\PhiPrimpp^{(\ee)} (\alpha)}\right)
            - \InvPhiPrimpp^{(0)} \left(\frac{\varkappa_1(t,x))}{\PhiPrimpp^{(0)} (\alpha)}\right) \right|.
        \end{align*}
        Due to \eqref{eq:Strong Convergence +}, the uniform convergence of $\Phi_\ee \to \Phi$,
        and the continuity of $\InvPhiPrimpp^{(0)}$
        we have that
        \begin{equation}\label{eq:Existence a.e. First Step}
            \InvPhiPrimpp^{(0)}  \left( \frac{\PhiPrimpp^{(\ee_{k})} (\rho^{(\ee_{k})})}{\PhiPrimpp^{(\ee_{k})} (\alpha)}\right) \rightarrow  \InvPhiPrimpp^{(0)} \left(\frac{\limitpp}{\PhiPrimpp^{(0)} (\alpha)} \right) 
            \quad \text{a.e. } (0,T) \times \Omega
        \end{equation}
        and similarly for $\PhiPrimnp^{(\ee_{k})}$. 

        Hence, it suffices to prove uniform convergence of $\InvPhiPrimpp^{(\ee_{k})}$ to $\InvPhiPrimpp^{(0)}$.
        Let us take $0 < \varkappa_\ast \leq \varkappa^\ast < 1$ arbitrarily. 
        For any $\varkappa \in [\varkappa_\ast, \varkappa^\ast]$
        \begin{equation*}
            0 \le (\InvPhiPrimpp^{(\ee)})'(\varkappa)
            = \frac{\PhiPrimpp^{(\ee)}(\alpha)}{(\PhiPrimpp^{(\ee)})'(s(\varkappa))} \leq \frac{\PhiPrimpp^{(\ee)}(\alpha)}{[ \Phi_{\ee}(\alpha) - \Phi_{\ee} (s(\varkappa^\ast)) ] \Phi_{\ee}(s(\varkappa_\ast)) } \eqqcolon C_\ee(\varkappa_\ast,\varkappa^\ast)
        \end{equation*}
        independent of $\ee$. 
        Due to \eqref{eq:stronger convergence of Phiee} we have that $\Phi_\ee \to \Phi$ and $\PhiPrimpp^{(\ee)} \to \PhiPrimpp$ uniformly in $[0,\alpha]$ as $\ee \to 0$ and hence $C_\ee(\varkappa_\ast,\varkappa^\ast)$ converges as $\ee \to 0$. 
        We conclude that $\InvPhiPrimpp^{(\ee)}$ are uniformly Lipschitz over compact subsets of $(0,1)$.
        We can use the Ascoli-Arzelà to prove that, up to a further subsequence of $\ee_k$, they converge uniformly over compacts of $(0,1)$.
        Furthermore, due to \eqref{eq:InvPhiPrimpp characterisation} we can establish that
        the limit is $\InvPhiPrimpp^{(0)}$.
        Also, we observe that $\InvPhiPrimpp^{(\ee_{k})}(0) = s_0 = \InvPhiPrimpp^{(0)}(0)$, $\InvPhiPrimpp^{(\ee_{k})}(1) = \alpha = \InvPhiPrimpp^{(0)}(1)$.
        Since $\InvPhiPrimpp^{(\ee_{k})}$ are non-decreasing functions and $\InvPhiPrimpp^{({0})}$ is continuous, then the convergence is uniform in $[0,1]$.
        Up to a further subsequence the same reasoning holds for $ \InvPhiPrimnp^{(\ee_{k})}$.

        Taking into account \eqref{eq:rho^eekl characterisation} and \eqref{eq:Existence a.e. First Step}, it follows that
        \begin{equation*}
            \rho^{(\ee_{k})} \rightarrow u \coloneqq \InvPhiPrimpp^{(0)} \left(\frac{\limitpp}{\PhiPrimpp^{(0)}(\alpha)} \right) + \InvPhiPrimnp^{(0)} \left(\frac{\limitnp}{\PhiPrimnp^{(0)}(\alpha)} \right) - s_0 \quad \text{a.e. } (0,T) \times \Omega.
        \end{equation*} 
        By the
        Dominated Convergence Theorem the convergence also holds in $L^p((0,T) \times \Omega)$ for $p \in [1,\infty)$. \qedhere 
    \end{enumeratesteps}
\end{proof}

We are now ready to show the main result of this subsection.

\begin{proof}[Proof of \Cref{thm:left side of D}]
    We divide the proof in several steps.

    \begin{enumeratesteps}
        \step[{For $\rho_0 \in \mathcal A$ the $C([0,T]; W^{-1,1})$ limit also happens in $L^1((0,1)\times\Omega)$}]

        In \Cref{lem:S^ee to S in W-11} we describe the $C([0,T]; W^{-1,1} (\Omega ))$ limit.
        Due to \Cref{lem:Existence weak solution}, for any $\rho_0 \in \mathcal A$ and any $T>0$
        we have that, up to a subsequence $\ee_{k_\ell}$, $S^{(\ee_{k_\ell})} \rho_0 \to u$ strongly in $L^1((0,T)\times \Omega)$.
        Due to the uniqueness of the limit, $u = S \rho_0$.
        By the stability \Cref{lem:L1 stability of weak solutions}, we conclude that $S \rho_0$ is a weak solution of \eqref{eq:the problem Omega}.

        \step[Uniform time continuity for good initial data]
        \label{step:1 of thm:left side of D}
        Let us fix $\ee > 0$. For $\rho_0 \in \mathcal A_+ \cap C^2(\overline \Omega)$ we have the $L^1$ limit
        \[
            \lim_{h \to 0} \frac{S_h^{(\ee)} \rho_0 - \rho_0}{h} = \frac{\partial }{\partial t} \rho (0,x) = \Delta \Phi_\ee (\rho_0) + \diver(\mobee(\rho_0) \nabla V).
        \]
        Notice that if $\Delta \Phi_\ee (\rho_0) + \diver(\mobee(\rho_0) \nabla V) = 0$ then $\rho_0$ is a stationary strong solution, so $S_h^{(\ee)} \rho_0 = \rho_0$. 
        There exists $ h_0(\ee,\delta) > 0$ such that
        \[
            \| {S_h^{(\ee)} \rho_0 - \rho_0} \|_{L^1(\Omega )} \le (1+\delta) h \| \Delta \Phi_\ee (\rho_0) + \diver(\mobee(\rho_0) \nabla V) \|_{L^1(\Omega )}, \qquad \forall h \le h_0.
        \]
        Let us take $t = hk$ for some $h \leq h_0$. Then, using the triangular inequality and the $L^1$-contraction property it follows that
        \begin{align*}
            \| {S_h^{(\ee)} \rho_0 - \rho_0} \|_{L^1(\Omega )} & \leq \sum_{j=1}^k \| {S_{hj}^{(\ee)} \rho_0 - S_{h(j-1)}^{(\ee)}\rho_0} \|_{L^1(\Omega )} \leq k  \| {S_h^{(\ee)} \rho_0 - \rho_0} \|_{L^1(\Omega )} \\
                                                               & \leq (1+\delta) hk \| \Delta \Phi_\ee (\rho_0) + \diver(\mobee(\rho_0) \nabla V) \|_{L^1(\Omega )}                                                                                   \\
                                                               & \leq (1+\delta) t \| \Delta \Phi_\ee (\rho_0) + \diver(\mobee(\rho_0) \nabla V) \|_{L^1(\Omega )}.
        \end{align*}
        We now take $\delta \rightarrow 0$ to recover that
        \begin{equation*}
            \| {S_t^{(\ee)} \rho_0 - \rho_0} \|_{L^1(\Omega )} \leq t \| \Delta \Phi_\ee (\rho_0) + \diver(\mobee(\rho_0) \nabla V) \|_{L^1(\Omega )}.
        \end{equation*}
        Therefore, due to the semigroup property and the $L^1$ contraction, for any $t, s \geq 0$ it follows that
        \begin{equation}\label{eq:uniform time continuity for good data}
            \| {S_t^{(\ee)} \rho_0 - S_s^{(\ee)} \rho_0} \|_{L^1(\Omega )} \leq |t-s| \| \Delta \Phi_\ee (\rho_0) + \diver(\mobee(\rho_0) \nabla V) \|_{L^1(\Omega )},
        \end{equation}
        which is uniformly bounded since $\rho_0 \in \mathcal{A}_+ \cap C^2 (\overline{\Omega})$.

        \step[{Convergence $C([0,T]; L^1 (\Omega))$ for good initial data}]
        \label{step:2 of thm:left side of D}
        Take $\ee_{k_\ell}$ any subsequence of $\ee_k$.
        We recall that $L^1 ((0,T) \times \Omega) = L^1(0,T; L^1(\Omega))$, where the latter $L^1$ space is understood in the Bochner sense.
        As it happens for the usual $L^1$ spaces, we have a.e. convergence in time (see e.g., \cite[Theorem 9.2]{Mikusinski1978BochnerIntegral}), i.e., there exists a further sub-sequence, still denote $\ee_{k_\ell}$, such that
        $S_t^{(\ee_{k_\ell})} \rho_0 \to S_t \rho_0$ in $L^1(\Omega)$ for a.e. $t \in (0,T)$.
        Notice that \eqref{eq:uniform time continuity for good data} gives a uniform-Lipschitz time continuity.
        Let us call
        \[
            L \coloneqq \sup_{\ee \in (0,1]} \| \Delta \Phi_\ee (\rho_0) + \diver(\mobee(\rho_0) \nabla V) \|_{L^1(\Omega )}
        \]
        which is finite since $\rho_0 \in \mathcal A_+ \cap C^2 (\overline \Omega)$, $\Phi_\ee \to \Phi$ in $C^2_{loc} ((0,\alpha))$ due to \eqref{eq:properties of Phi_ee} and $\mobee \to \mob$ in $C([0,\alpha]) \cap C^1_{loc}((0, \alpha))$ due to \eqref{eq:mob uniform}.
        Thus, $S \rho_0 \in C([0,T]; L^1 (\Omega))$.
        Let $\Lambda = \{t \in [0,T] : S^{(\ee_{k_\ell})}_t \rho_0 \to S_t \rho_0 \}$.
        Take $\delta > 0$.
        Since $[0,T] \setminus \Lambda$ has measure $0$, the covering $\{(t - \delta, t+ \delta)\}_{t \in \Lambda}$ admits a finite sub-cover given by $t_1, \ldots, t_N$.
        For $t \in [0,T]$ there exists $t_i$ such that $|t-t_i|<\delta$. We estimate
        \begin{align*}
            \|S^{(\ee_{k_\ell})}_t \rho_0 - S_t \rho_0\|_{L^1(\Omega)} & \le \|S^{(\ee_{k_\ell})}_t \rho_0 - S^{(\ee_{k_\ell})}_{t_i} \rho_0\|_{L^1(\Omega)} + \|S^{(\ee_{k_\ell})}_{t_i} \rho_0 - S_{t_i} \rho_0\|_{L^1(\Omega)} + \|S_{t_i} \rho_0 - S_{t} \rho_0 \|_{L^1(\Omega)} \\
                                                                       & \le 2 L \delta + \|S^{(\ee_{k_\ell})}_{t_i} \rho_0 - S_{t_i} \rho_0\|_{L^1 (\Omega)}.
        \end{align*}
        Since there are finitely many $t_i$ and we have pointwise convergence over all of them, we can write
        \[
            \limsup_{\ell \to \infty} \sup_{t\in[0,T]} \|S^{(\ee_{k_\ell})}_t \rho_0 - S_t \rho_0\|_{L^1 (\Omega)} \le 2L\delta.
        \]
        Since any sub-sequence of $\ee_k$ has a further sub-sequence that converges in $C([0,T]; L^1(\Omega))$ and they all do so to the same limit, the whole sequence converges in $C([0,T]; L^1(\Omega))$.

        \step[$L^1$-contraction for good initial data]
        \label{step:3 of thm:left side of D}
        Take $\rho_0$, $\eta_0 \in \mathcal{A}_+ \cap C^2(\overline \Omega)$. Then, combining
        \Cref{step:2 of thm:left side of D}
        and using \eqref{eq:uniform time continuity for good data}, the $L^1$-contraction property of $S^{(\ee)}$, we have that
        \begin{align*}
            \| S_t \rho_0 - S_t \eta_0 \|_{L^1(\Omega)} & \leq \| S_t \rho_0 - S_t^{(\ee_k)} \rho_0 \|_{L^1(\Omega)} + \| S_t^{(\ee_k)} \eta_0 - S_t \eta_0 \|_{L^1(\Omega)} + \| S_t^{(\ee_k)} \rho_0 - S_t^{(\ee_k)} \eta_0 \|_{L^1(\Omega)} \\
                                                        & \leq  \| S_t \rho_0 - S_t^{(\ee_k)} \rho_0 \|_{L^1(\Omega)} + \| S_t^{(\ee_k)} \eta_0 - S_t \eta_0 \|_{L^1(\Omega)} + \|  \rho_0 -  \eta_0 \|_{L^1(\Omega)} .
        \end{align*}
        If we take the limit $k \rightarrow \infty $ it follows $\| S_t \rho_0 - S_t \eta_0 \|_{L^1(\Omega)} \leq \|  \rho_0 -  \eta_0 \|_{L^1(\Omega)}$. Furthermore, by density, we can extend this result to $\rho_0$, $\eta_0 \in \mathcal{A}$.

        \step[{Convergence in $C([0,T];L^1)$ for $\rho_0 \in \mathcal A$}] 
        Let us take $\eta_0 \in \mathcal{A}_+ \cap C^2(\overline \Omega)$. From the $L^1$-contraction property (\Cref{step:1 of thm:left side of D}) it follows that
        \begin{align*}
            \| S_t^{(\ee_k)} \rho_0 - S_t \rho_0 \|_{L^1 (\Omega)} & \leq \| S_t^{(\ee_k)} \rho_0 - S_t^{(\ee_k)} \eta_0 \|_{L^1 (\Omega )} + \| S_t^{(\ee_k)} \eta_0 - S_t \eta_0 \|_{L^1 (\Omega)} + \| S_t \eta_0 - S_t \rho_0 \|_{L^1 (\Omega)} \\
                                                                   & \leq  2 \| \rho_0 - \eta_0 \|_{L^1 (\Omega)} + \| S_t^{(\ee_k)} \eta_0 - S_t \eta_0 \|_{L^1 (\Omega)}.
        \end{align*}
        Hence, we have that
        \begin{equation*}
            \limsup_{k \rightarrow \infty} \sup_{t \in [0, T]} \| S_t^{(\ee_k)} \rho_0 - S_t \rho_0 \|_{L^1 (\Omega)} \leq 2 \| \rho_0 - \eta_0 \|_{L^1 (\Omega)}.
        \end{equation*}
        Therefore, if we take the infimum over $\eta_0 \in \mathcal{A}_+ \cap C^2(\overline \Omega)$ we conclude the proof.

        \step[$S$ is a \goodSemigroup{}  over $\mathcal{A}$]
        From \Cref{step:3 of thm:left side of D},
        $S$
        is an $L^1$-contraction for $\rho_0$, $\eta_0 \in \mathcal{A}$. From \Cref{lem:nabla phi(rho) a priori,lem:L1 stability of weak solutions}, we know that $S \rho_0$ are weak solutions to \eqref{eq:the problem Omega}.

        Let us now show that $S_t$ is a $C_0$-semigroup.
        The semigroup property is a direct consequence of
        \Cref{step:2 of thm:left side of D}.
        Due to the point-wise convergence $S_0 \rho_0 = \rho_0$, and we know $t \mapsto S_t \rho_0$ is continuous.

        Furthermore, for $\rho_0 \in \mathcal A$, if we use the notation $\rho^{(\ee)}_t = S_t^{(\ee)} \rho_0$ we know
        \begin{equation*}
            \int_{t_1}^{t_2} \int_\Omega \mobee (\rho^{(\ee)}) | \nabla (U'(\rho^{(\ee)}) + V) |^2 = \mathcal{F}_\ee [\rho_{t_1}^{(\ee)}] - \mathcal{F}_\ee [\rho_{t_2}^{(\ee)}].
        \end{equation*}
        We recall that $U_\ee \rightarrow U$ uniformly (\Cref{lem:Uee converges to U}) and $\rho^{(\ee_k)} \rightarrow\rho$ strongly in $C([0,T]; L^1(\Omega))$.
        Hence, it follows that $\mathcal{F}_\ee[\rho^{(\ee)}_t] \rightarrow \mathcal{F} [\rho_t]$.
        We can pass to the limit due to \Cref{lem:free energy dissipation LHS is lsc}.
        We can also pass to the limit in the $C([0,T]; W^{-1,1} (\Omega))$ estimate.

        \step[$S \rho_0 \in \mathcal{A}$]
        The semigroup $S^{(\ee_k)}$ is such that $0 \le S_t^{(\ee_k)} \rho_0 \le \alpha$ for every $k$ and every $t \geq 0$. Therefore, \Cref{step:2 of thm:left side of D} implies that $0 \leq S_t \rho_0 \leq \alpha$ for every $t \geq 0$. \qedhere
    \end{enumeratesteps}
\end{proof}

\section{Local minimisers of the free energy. Proof of \texorpdfstring{\Cref{thm:Euler-Lagrange}}{Euler-Lagrange condition}}
\label{sec:Local minimiser}

When $V$ is not radially increasing we could build several constant-in-time weak solutions by ``pasting'' constant-in-time weak solutions of the problem \eqref{eq:the problem Omega} of the form $\inversedU (C - V(x))$ (\ref{item:Semi-group local minimiser}) for different values of the constants $C$, see \Cref{fig:Non-local minimiser}. However, as stated in \Cref{thm:Euler-Lagrange}, there exists a unique $L^1$-local minimiser, which corresponds to the case in which there is only one constant involved.
\begin{figure}[H]
    \centering
    \includegraphics[width=0.8\textwidth]{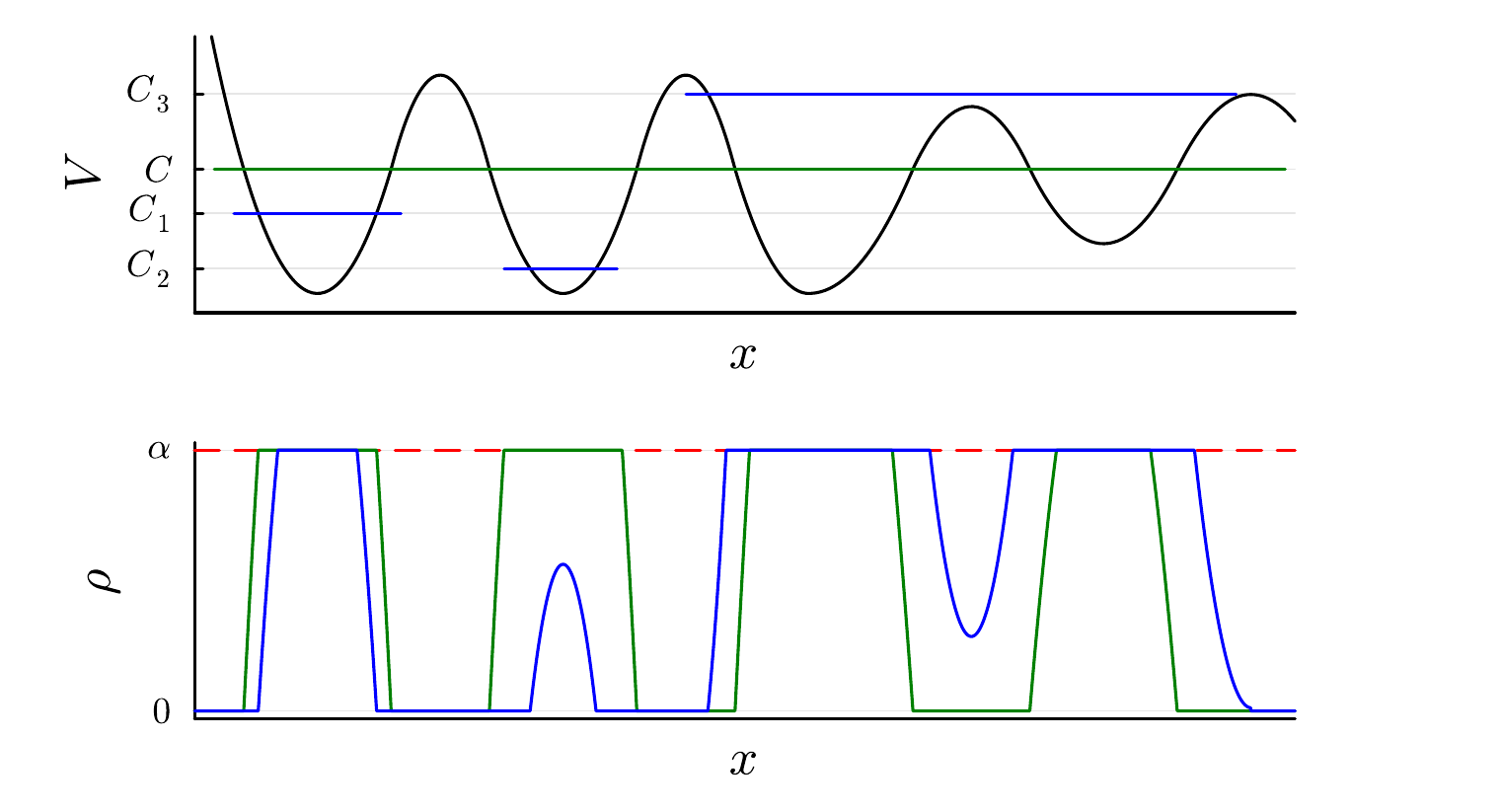}
    \caption{Example of steady states for $U(s)=s^2$ and $V$, the potential above, not radially increasing. The blue steady state is not an $L^1$-local minimiser of the free energy \eqref{eq:Free eenrgy ee=0 Omega}. On the other hand, the green steady state is an $L^1$-local minimiser, since there is only one constant involved, $C$; see \Cref{thm:Euler-Lagrange}.}
    \label{fig:Non-local minimiser}
\end{figure}
The phenomenon exposed in \Cref{fig:Non-local minimiser} is known in the no-saturation case $\mob (\rho) = \rho$. We now proceed to show it in the case with saturation.
Before the proof, we construct some auxiliary scaffolding.

\begin{proof}[Proof of \Cref{thm:Euler-Lagrange}]
    We split the proof in several steps.
    \begin{enumeratesteps}
        \step[Euler-Lagrange condition \eqref{eq:EL for ee = 0}]
        We distinguish two cases
        \begin{enumeratesteps}
            \step[{Euler-Lagrange condition when $0 \le \widehat \rho < \alpha$}]
            We take sets
            $
                A_\oneconstant = \{  x \in \Omega : \widehat\rho (x) < \alpha - \oneconstant \}.
            $
            If $|A_0| = 0$, the statement does not claim anything and the proof is complete.
            For the rest of the proof, we assume that $|A_\oneconstant| > 0$ for all $\oneconstant$ small enough.

            Now, for $\delta > 0$ small, we consider the variation $\rho_{\delta} (x) = \widehat\rho (x) + \delta\varphi (x)$ for $0 \le \psi \in C^\infty (\Omega)$ and
            \begin{equation}
                \label{eq:perturbation}
                \varphi (x) =  \chi_{A_\oneconstant} (x)\left( \psi(x) - \widehat \rho (x) ( \alpha - \widehat \rho (x)) \frac{\int_{A_\oneconstant} \psi(y) \diff y}{\int_{A_\oneconstant} \widehat \rho(y) ( \alpha - \widehat \rho (y)) \diff y}  \right).
            \end{equation}
            By construction $\int_\Omega \varphi = 0$.
            In order to prove $\rho_\delta \ge 0$, we consider
            $$
                0 < \delta < \frac{\int_{A_\oneconstant} \widehat \rho (\alpha - \widehat \rho)}{ \alpha \int_{A_\oneconstant} \psi }.
            $$
            Lastly we check that $\rho_\delta \le \alpha$.
            If $\widehat \rho(x) \ge \alpha - \oneconstant$ then
            $
                \rho_\delta (x) = \widehat \rho (x) \le \alpha.
            $
            If $\widehat \rho(x) < \alpha - \oneconstant$ then it suffices that
            $$
                0 < \delta < \frac{1}{\|\psi\|_{L^\infty}}.
            $$
            We have concluded that $\rho_\delta \in \mathcal A_M$.
            Computing the first variation
            $$
                0 \le \lim_{\delta \to 0^+} \frac{\mathcal F[\rho_\delta] - \mathcal F[\widehat \rho]} \delta = \int_{A_\lambda} \left( U'(\widehat \rho) + V \right) \varphi = \int_{A_\oneconstant} \left( U'(\widehat \rho(x)) + V (x) - C_\oneconstant \right) \psi(x) \diff x,
            $$
            where
            $$
                C_\oneconstant =  \frac{\int_{A_\oneconstant} (U'(\widehat \rho) + V) \widehat \rho ( \alpha - \widehat \rho )  }{\int_{A_\oneconstant} \widehat \rho ( \alpha - \widehat \rho )}.
            $$
            Since this holds for any $\psi \in C^\infty (\Omega)$ non-negative, we conclude
            $$
                U'(\widehat \rho) + V \ge C_\oneconstant , \qquad \text{ a.e.~in } A_\oneconstant.
            $$
            Since these sets $A_\oneconstant$ are monotonically increasing to $A_0$, we can let $\oneconstant \to 0$ to deduce the claim with
            $$
                C =  \frac{\int_{\widehat \rho < \alpha} (U'(\widehat \rho) + V) \widehat \rho ( \alpha - \widehat \rho ) }{\int_{\widehat \rho < \alpha} \widehat \rho ( \alpha - \widehat \rho ) } =
                \frac{\int_{\Omega} (U'(\widehat \rho) + V) \widehat \rho ( \alpha - \widehat \rho ) }{\int_{\Omega} \widehat \rho ( \alpha - \widehat \rho ) }
                .
            $$
            ~

            \step[{Euler-Lagrange condition when $0 < \widehat \rho \le \alpha$}]
            The proof is similar. Let us now take the sets
            $
                A^{\oneconstant} = \left\lbrace x \in \Omega : \widehat\rho (x) > \oneconstant \right\rbrace.
            $
            As we explain before, we assume that $|A^{\oneconstant}|>0$ for all $\oneconstant$ small enough. For $\delta > 0$ small, we choose the variation $\rho^{\delta} (x) = \widehat\rho (x) - \delta\varphi (x)$ for $0 \le \psi \in C^\infty (\Omega)$ and
            $$
                \varphi (x) =  \chi_{A^\oneconstant} (x)\left( \psi(x) - \widehat \rho (x) ( \alpha - \widehat \rho (x)) \frac{\int_{A^\oneconstant} \psi(y) \diff y}{\int_{A^\oneconstant} \widehat \rho(y) ( \alpha - \widehat \rho (y)) \diff y}  \right).
            $$
            With the same assumptions on $\delta$ that we have considered earlier is easy to prove that $\rho^{\delta} \in \mathcal{A}$. Computing the first variation
            $$
                0 \le \lim_{\delta \to 0^+} \frac{\mathcal F[\rho^\delta] - \mathcal F[\widehat \rho]} \delta = - \int_\Omega \left( U'(\widehat \rho) + V \right) \varphi = - \int_{A^\oneconstant} \left( U'(\widehat \rho(x)) + V (x) - C^\oneconstant \right) \psi(x) \diff x,
            $$
            where
            $$
                C^\oneconstant =  \frac{\int_{A^\oneconstant} (U'(\widehat \rho) + V) \widehat \rho ( \alpha - \widehat \rho )  }{\int_{A^\oneconstant} \widehat \rho ( \alpha - \widehat \rho )}.
            $$
            Analogously to what we remarked earlier, it follows that
            $$
                U'(\widehat \rho) + V \le C^\oneconstant , \qquad \text{ a.e.~in } A^\oneconstant,
            $$
            and
            $$
                C^{\oneconstant} \rightarrow C =  \frac{\int_{\widehat \rho >0} (U'(\widehat \rho) + V) \widehat \rho ( \alpha - \widehat \rho ) }{\int_{\widehat \rho >0} \widehat \rho ( \alpha - \widehat \rho ) } =
                \frac{\int_{\Omega} (U'(\widehat \rho) + V) \widehat \rho ( \alpha - \widehat \rho ) }{\int_{\Omega} \widehat \rho ( \alpha - \widehat \rho ) }
                .
            $$
        \end{enumeratesteps}

        \step[Proof of formula \eqref{eq:Euler-Lagrange for P}]
        We re-write the Euler-Lagrange conditions using that $U'$ is non-decreasing as
        \begin{align*}
            \widehat \rho(x)\ge (U')^{-1}( C - V(x) ),   & \qquad \text{ if } 0 \le \widehat\rho(x) <\alpha ,  \\
            \widehat \rho (x) \le (U')^{-1}( C - V(x) ), & \qquad \text{ if } 0 < \widehat\rho(x) \le \alpha .
        \end{align*}
        We distinguish the three possible cases
        \begin{itemize}
            \item Let $x \in \Omega$ be such that $(U')^{-1}( C - V(x) ) \in (0,\alpha)$.
                  If $\widehat \rho(x) = 0$ we get to contradiction with the first condition. If $\widehat \rho(x) = \alpha$ we get into contradiction with the second condition. Thus, we conclude
                  $$
                      \widehat \rho (x) = (U')^{-1}( C - V(x) ).
                  $$

            \item Let $x \in \Omega$ be such that $(U')^{-1}( C - V(x) ) \le 0$. If $\widehat\rho(x) > 0$ we get a contradiction with the second condition. Since we know $\widehat\rho \ge 0$ in $\Omega$, we conclude $\widehat \rho(x) = 0$.

            \item Let $x \in \Omega$ be such that $(U')^{-1}( C - V(x) ) \ge \alpha$. If $\widehat\rho(x) < \alpha$ we get a contradiction with the first condition. Therefore, we conclude that $\widehat \rho(x) = \alpha$.
        \end{itemize}

        \step[Uniqueness of the constant]
        Due to \eqref{eq:U locally strictly convex}, we have that $U'$ is invertible with continuous inverse. Furthermore, the function
        \begin{equation*}
            F(C) \coloneqq \int_\Omega \inversedU (C- V(x))
        \end{equation*}
        is non-decreasing and continuous. Let us recall the definition of $\underline{\zeta}$ and $\overline{\zeta}$ in \eqref{eq:defn xi underline and overline}. Then,
        we distinguish three cases of $C$:
        \begin{enumerate}
            \item $C \ge \max V + \overline{\zeta}$. Then $F(C) = \alpha |\Omega|$.
            \item $C \le \min V + \underline{\zeta}$. Then $F(C) = 0$.
            \item $\min V + \underline{\zeta} < C < \max V + \overline{\zeta}$ in $\Omega$.
                  Then, by the continuity of $V$ there exists $\delta > 0$ and $A \subset \Omega$ of positive measure, such that for all $c\in\R$ satisfying $|c-C| < \delta$ and $x \in A$, then
                          $\underline{\zeta} < c - V (x) < \overline{\zeta}$.
                  Hence, we can write
                  \[
                      F(C) = \int_A (U')^{-1} (C- V(x)) + \int_{\Omega \setminus A} \inversedU (C- V(x)).
                  \]
                  The second term of the sum is still non-decreasing.
                  Due to \eqref{eq:U locally strictly convex}, the first term of the sum is strictly increasing.
        \end{enumerate}
        Thus, if $M \in (0,\alpha|\Omega|)$ there is a unique $C$ such that $F(C) = M$. \qedhere
    \end{enumeratesteps}
\end{proof}

\begin{remark}
    Notice that we are taking perturbations $\widehat \rho + \delta \varphi$ with $\varphi$ given by \eqref{eq:perturbation} (and the corresponding modification at $\alpha$).
    This means that if $\widehat \rho + \delta \varphi$ converges to $\widehat \rho$ in a topology $\mathcal T$, then any local minimiser in the topology $\mathcal T$ satisfies the Euler-Lagrange conditions and it is therefore \eqref{eq:Euler-Lagrange for P}.
    In particular, $\mathcal T$ can be any $L^p$ topology, $C^k$ topology, or even 2-Wasserstein. Notice, however, that $\widehat \rho +  \delta \varphi$ does not converge to $\widehat \rho$ in the $\infty$-Wasserstein topology. In fact, there can be many more $\infty$-Wasserstein local minimisers (see \Cref{rem:W infty minimisers} below).
    For necessary condition for $\infty$-Wasserstein minimisation we point the reader to \cite{Balague_Carrillo_Laurent_Raoul13, Carrillo_Delgadino_Patacchini19}.
\end{remark}

\section{Existence of long-time behaviour. Proof of \texorpdfstring{\Cref{thm:long-time behaviour}}{long-time behaviour}}
\label{sec:Long time behaviour}

We study the long-time behaviour of the problem using semigroup theory. The proof of this result relies on an Aubin-Lions compactness argument, and the application of $L^1$ contraction techniques, we devote \cref{sec:Semigroup compactness} to this goal. Afterwards, in the next two subsections we discuss the two examples and for each one of them, we construct a \timeLimit{}. First, in \cref{sec:omega-limit} we describe the \timeLimit{} of  \eqref{eq:the problem Omega}, which corresponds to \ref{item:Time limit operator}. Afterwards, in \cref{sec:Convergence stationary state Pee} we do the same for the problem \eqref{eq:the problem regularised}, which corresponds to \ref{item:Time limit operator regularised}.

\subsection{From compactness to convergence}\label{sec:Semigroup compactness}

We present the following auxiliary lemma for the existence of a \timeLimit{}.

\begin{lemma}
    \label{lem:compactness on 0 1 times Omega is enough}
    Let $S$ be a \goodSemigroup{} for \eqref{eq:the problem Omega}. Assume that, for each $\rho_0 \in \mathcal A_+$, there exists $t_n \to \infty$ and $\rho^\infty \in \mathcal A$ such that
    \[
        S_{\bullet + t_n} \rho_0 \to \rho^\infty \text{ in } L^1 ((0,1) \times \Omega).
    \]
    Then, there exists a \timeLimit{}\,for $S_\infty$ and it is an $L^1$-contraction.
\end{lemma}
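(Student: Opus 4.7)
The plan is, starting from a single $\rho_0 \in \mathcal A_+$, first to identify $\rho^\infty$ as a stationary weak solution, then to upgrade the hypothesized subsequential $L^1((0,1)\times\Omega)$ convergence to a full $L^1(\Omega)$ limit as $t \to \infty$, next to extend this construction from $\mathcal A_+$ to all of $\mathcal A$ by density, and finally to verify the remaining three properties from \Cref{def:timeLimit} together with the $L^1$-contraction.

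Fix $\rho_0 \in \mathcal A_+$ and let $t_n \to \infty$, $\rho^\infty \in \mathcal A$ be as in the hypothesis. The shifts $\rho^{[n]}(t,x) \coloneqq (S_{t+t_n}\rho_0)(x)$ are weak solutions to \eqref{eq:the problem Omega} on $(0,1) \times \Omega$. Combining the free-energy dissipation \eqref{eq:free energy dissipation} (valid since $\rho_0 \in \mathcal A_+$), the lower bound on $\mathcal F$ from \Cref{prop:Free energy bound from below}, the identity $\nabla \Phi(\rho) = \mob(\rho)\nabla U'(\rho)$, and the $L^\infty$-bound on $\mob$, the sequence $\Phi(\rho^{[n]})$ is uniformly bounded in $L^2(0,1; H^1(\Omega))$. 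Together with the hypothesized strong $L^1((0,1)\times\Omega)$ convergence $\rho^{[n]} \to \rho^\infty$, \Cref{lem:L1 stability of weak solutions} identifies the time-independent limit $\rho^\infty$ as a weak solution of \eqref{eq:the problem Omega} on $(0,1) \times \Omega$; being time-independent it is stationary, so $S_t \rho^\infty = \rho^\infty$ for every $t \ge 0$.

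Because $\rho^\infty$ is a fixed point of the semigroup, the $L^1$-contraction gives that $t \mapsto \|S_t \rho_0 - \rho^\infty\|_{L^1(\Omega)} = \|S_t \rho_0 - S_t \rho^\infty\|_{L^1(\Omega)}$ is non-increasing, hence converges to some $\ell \ge 0$. The Bochner identification $L^1((0,1)\times\Omega) = L^1(0,1; L^1(\Omega))$ produces (up to a further subsequence) $\sigma_n \in (0,1)$ with $\|S_{t_n + \sigma_n}\rho_0 - \rho^\infty\|_{L^1(\Omega)} \to 0$, which forces $\ell = 0$. Thus $S_t \rho_0 \to \rho^\infty$ strongly in $L^1(\Omega)$ as $t \to \infty$, the limit does not depend on the subsequence, and we set $S_\infty \rho_0 \coloneqq \rho^\infty$. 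To pass from $\mathcal A_+$ to $\mathcal A$, I would use the $L^1$-dense approximants $\rho_0^{(k)} \coloneqq (1-\tfrac 1 k)\rho_0 + \tfrac{\alpha}{2k} \in \mathcal A_+$ and the $L^1$-contraction triangle estimate
\begin{equation*}
    \|S_t \rho_0 - S_s \rho_0\|_{L^1(\Omega)} \le 2\|\rho_0 - \rho_0^{(k)}\|_{L^1(\Omega)} + \|S_t \rho_0^{(k)} - S_s \rho_0^{(k)}\|_{L^1(\Omega)};
\end{equation*}
an $\varepsilon/2$-argument (choose $k$ large, then $s, t$ large using the previous step for $\rho_0^{(k)}$) shows that $\{S_t \rho_0\}$ is $L^1$-Cauchy. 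Define $S_\infty \rho_0$ as its limit; pointwise-a.e.\ extraction preserves $0 \le S_\infty \rho_0 \le \alpha$, so $S_\infty \rho_0 \in \mathcal A$.

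The three remaining properties follow easily from the contraction structure. Passing to the limit in $\|S_t \rho_0 - S_t \eta_0\|_{L^1} \le \|\rho_0 - \eta_0\|_{L^1}$ gives that $S_\infty$ is an $L^1$-contraction; the identity $S_t S_\infty \rho_0 = S_\infty \rho_0$ follows from $S_t S_s \rho_0 = S_{t+s}\rho_0 \to S_\infty \rho_0$ as $s \to \infty$ combined with the $L^1$-continuity of $S_t$ in its initial datum; and since $t \mapsto S_t(S_\infty \rho_0) \equiv S_\infty \rho_0$, item \ref{item:Semi-group weak solution} makes $S_\infty \rho_0$ a constant-in-time weak solution. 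The principal obstacle is the first step: without \eqref{eq:free energy dissipation}, which is only available for $\rho_0 \in \mathcal A_+$, one cannot promote the subsequential $L^1$-precompactness to stationarity of $\rho^\infty$, and without stationarity the monotone $L^1$-contraction argument that upgrades subsequential to full convergence collapses.
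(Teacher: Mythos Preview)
Your overall strategy is right, but there is a genuine gap in the first step. You write: ``being time-independent it is stationary, so $S_t \rho^\infty = \rho^\infty$ for every $t \ge 0$.'' This implication is \emph{unjustified}: you have shown that the constant-in-time function $\rho^\infty$ is \emph{a} weak solution of \eqref{eq:the problem Omega}, but the semigroup $S_t$ is only known to produce \emph{some} weak solution from each datum, not the unique one. Uniqueness of weak solutions is explicitly left open in the paper, so from ``$\rho^\infty$ is a constant-in-time weak solution'' you cannot conclude that $S_t\rho^\infty=\rho^\infty$. Without this fixed-point property your monotone argument for $t\mapsto\|S_t\rho_0-\rho^\infty\|_{L^1}$ collapses, and the rest of the proof (definition of $S_\infty$, extension to $\mathcal A$, etc.) never gets off the ground. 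Your later, correct derivation of $S_tS_\infty=S_\infty$ via $L^1$-continuity cannot rescue this, since it presupposes that $S_\infty$ has already been constructed.

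The paper avoids this trap by proving the fixed-point property \emph{first} and the weak-solution property \emph{last}. From the Bochner identification you already invoke, one gets (along a subsequence) $S_{t_{n_k}+s}\rho_0\to\rho^\infty$ in $L^1(\Omega)$ for a.e.\ $s\in(0,1)$; the paper then upgrades this to convergence in $C([0,1];L^1(\Omega))$ by a finite-cover argument using the $C_0$-semigroup equicontinuity $\|S_{s+t_{n_k}}\rho_0-S_{s_i+t_{n_k}}\rho_0\|_{L^1}\le\|S_{|s-s_i|}\rho_0-\rho_0\|_{L^1}$. Once $S_{t_{n_k}}\rho_0\to\rho^\infty$ and $S_{a+t_{n_k}}\rho_0\to\rho^\infty$ hold in $L^1(\Omega)$ for \emph{every} $a\in[0,1]$, the identity $S_a\rho^\infty=\rho^\infty$ follows immediately from $S_aS_{t_{n_k}}\rho_0=S_{a+t_{n_k}}\rho_0$ and the $1$-Lipschitz continuity of $S_a$. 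You actually have all the ingredients for this in your second paragraph; you just need to use them \emph{before} the monotone argument rather than after it.
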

\begin{proof}
    We divide the proof in several steps.

    \begin{enumeratesteps}
        \step[Analysis for $\rho_0 \in \mathcal A_+$]~

        \begin{enumeratesteps}
            \step[{Convergence in $C([0,1]; L^1(\Omega))$}]
            Consider a sub-sequence in time $t_{n_k}$.
            Let us recall again that $L^1((0,1) \times \Omega) = L^1(0,1; L^1(\Omega))$ where the latter $L^1$ space is in the Bochner sense (see \cite[Theorem 9.2]{Mikusinski1978BochnerIntegral}). Then, up to a further subsequence still denoted $t_{n_k}$, we have that
            \[
                S_{s + t_{n_k}} \rho_0 \to \rho^\infty \qquad \text{in } L^1(\Omega) \text{ for a.e. } s \in [0,1].
            \]
            Let $\Lambda$ be the set of $s \in [0,1]$ where the convergence happens.
            Taking $\delta > 0$, then $\{(s - \delta, s + \delta)\}_{s \in \Lambda}$ is a cover of $[0,1]$, and there exists a finite subcover $\{(s_i - \delta, s_i + \delta)\}_{1 \leq i \leq N}$.
            For $s \in [0, 1]$
            \begin{align*}
                \|S_{s+t_{n_k}} \rho_0 - \rho^\infty \|_{L^1(\Omega)} & \le \|S_{s+t_{n_k}} \rho_0 - S_{s_i+t_{n_k}} \rho_0 \|_{L^1(\Omega)} + \| S_{s_i+t_{n_k}} \rho_0 - \rho^\infty\|_{L^1(\Omega)} \\
                                                                      & \le \|S_{|s-s_i|} \rho_0 - \rho_0 \|_{L^1(\Omega)} + \| S_{s_i+t_{n_k}} \rho_0 - \rho^\infty\|_{L^1(\Omega)} .
            \end{align*}
            Since there is a finite number of $s_i$, we recover
            \[
                \adjustlimits \limsup_{k \to \infty} \sup_{\mathclap{s \in [0,1]}} \|S_{s+t_{n_k}} \rho_0 - \rho^\infty \|_{L^1 (\Omega)} \le \sup_{\tau \in [0,\delta]} \|S_{\tau} \rho_0 - \rho_0 \|_{L^1 (\Omega)}.
            \]
            Letting $\delta \to 0$ we have shown the convergence of this subsequence.
            Since any sub-sequence of 
            $\rho_{t_n}$ 
            has a further subsequence converging in $C([0,1]; L^1 (\Omega))$, and they all do so to $\rho^\infty$, then the whole sequence $S_{\bullet + t_n} \rho_0$ converges in $C([0,1]; L^1(\Omega))$ to $\rho^\infty$.

            \step[$\rho^\infty$ is stationary for $S$, i.e., $S_t \rho^\infty = \rho^\infty$ for all $t\ge 0$]
            Due to the semigroup property for $a \in [0,1]$
            \[
                S_a S_{t_n} \rho = S_{a+t_n} \rho .
            \]
            Since $S_a$ is $1$-Lipschitz in $L^1$ we can pass to the limit on both sides to recover $S_a \rho^\infty = \rho^\infty$. Once more, due to the semigroup property, $S_t \rho^\infty = \rho^\infty$ for all $t > 0$.

            \step[Convergence of the whole sequence]
            Let us define
            \[
                F(t) \coloneqq \|S_{t} \rho_0 - \rho^\infty\|_{L^1(\Omega)} = \|S_{t} \rho_0 - S_{t} \rho^\infty\|_{L^1(\Omega)}.
            \]
            This function is clearly non-negative. Due to the $L^1$ contraction, it is non-increasing.
            We have assumed that $F(t_n) \to 0$. Thus, $F(t) \to 0$ as $t \to \infty$.
        \end{enumeratesteps}

        \step[General $\rho_0 \in \mathcal A$]
        From the previous step, there exists $S_\infty : \mathcal A_+ \to \mathcal A$ that is $1$-Lipschitz. There is a unique continuous extension $S_\infty : \mathcal A \to \mathcal A$, and it is also $1$-Lipschitz.

        For $\rho_0 \in \mathcal A$ we can now simply take $\eta_0 \in \mathcal A_+$ and write
        \begin{align*}
            \| S_t \rho_0 - S_\infty \rho_0 \|_{L^1(\Omega)} & \le \|S_t \rho_0 - S_t \eta_0\|_{L^1(\Omega)} + \|S_t \eta_0 - S_\infty \eta_0 \|_{L^1(\Omega)} + \|S_\infty \eta_0 - S_\infty \rho \|_{L^1(\Omega)} \\
                                                             & \le \|S_t \eta_0 - S_\infty \eta_0 \|_{L^1(\Omega)} + 2 \| \eta_0 - \rho_0 \|_{L^1(\Omega)} .
        \end{align*}
        Letting $t \to \infty$ we arrive at
        \[
            \limsup_{t\to \infty} \| S_t \rho_0 - S_\infty \rho_0 \|_{L^1 (\Omega)} \le  2 \| \eta_0 - \rho_0 \|_{L^1 (\Omega)}.
        \]
        Taking infimum over $\eta_0 \in \mathcal A_+$ we get the result.

        \step[The limit is a constant-in-time weak solution] 
        Since the free energy is bounded from below and the energy $\mathcal F[\rho_t]$ decays with time, it has a limit as $t\to\infty$. Hence, due to the stability of the disperssion term in \Cref{lem:free energy dissipation LHS is lsc}, it follows that $\mob(\rho^\infty)^{\frac 1 2} \nabla (U'(\rho^\infty) + V) = 0$ almost everywhere $\Omega$. 
        Multiplying once again by $\mob(\rho^\infty)^{\frac 1 2}$ we get $\nabla \Phi(\rho^\infty) + \mob(\rho^\infty) \nabla V = 0$. 
        Given a test function $\varphi \in H^1(\Omega)$, we can multiply by $\nabla \varphi$ and integrate in $ \Omega$ to the deduce the weak formulation of the stationary problem. \qedhere
    \end{enumeratesteps}
\end{proof}

\subsection{For problem \texorpdfstring{\eqref{eq:the problem Omega}}{(P)}}\label{sec:omega-limit}

\begin{lemma}\label{lem:omega-limit}
    Let $\rho_0 \in \mathcal A_+$. Then, there exists $\rho^\infty \in \mathcal A$ and $t_n \to \infty$ such that $S_{\bullet + t_n} \rho_0 \to \rho^\infty$ in $L^1((0,1)\times \Omega)$ as $n \rightarrow \infty$.
\end{lemma}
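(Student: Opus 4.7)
The strategy is to adapt the compactness argument of \Cref{lem:Existence weak solution} with time shifts $t_n = n$ in place of the approximation parameter $\ee_k$. Set $\rho^{[n]}(s,x) := S_{s+n}\rho_0(x)$ for $(s,x) \in (0,1) \times \Omega$; by the semigroup property this is a weak solution to \eqref{eq:the problem Omega} on $(0,1) \times \Omega$ with initial datum $S_n\rho_0$. Since $\rho_0 \in \mathcal{A}_+$ implies $\mathcal{F}[\rho_0] < \infty$, \ref{item:W-11 equicontinuity} together with \Cref{prop:Free energy bound from below} ensures that $\mathcal{F}[\rho_t]$ is non-increasing and bounded below, hence converges as $t\to\infty$. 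In particular $\mathcal{F}[\rho_n] - \mathcal{F}[\rho_{n+1}] \to 0$, and summing over $n$ shows that the total dissipation $\int_0^\infty \int_\Omega \mob(\rho)|\nabla(U'(\rho)+V)|^2$ is finite.

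For spatial regularity, I would apply the a priori estimate of \Cref{lem:nabla phi(rho) a priori} on each interval $(n,n+1)$. The estimate transfers to the weak solution $\rho$ by passing to the limit $\ee \to 0$ in $\rho^{(\ee)} = S^{(\ee)}\rho_0$ using \Cref{thm:left side of D}, combined with the uniform convergence $\Phi_\ee \to \Phi$ from \eqref{eq:stronger convergence of Phiee}, weak-$L^2$ lower semicontinuity of $\|\nabla \Phi(\cdot)\|$, and the dissipation bound from \Cref{lem:free energy dissipation LHS is lsc}. This yields $\|\nabla \Phi(\rho^{[n]})\|_{L^2((0,1)\times\Omega)} \le C$ independent of $n$. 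Then, defining $\Psi_1, \Psi_2$ as in \Cref{lem:Existence weak solution} and using hypothesis \ref{hyp:Phi} (which gives $|\nabla \Psi_i(\rho)| \le C|\nabla \Phi(\rho)|$ uniformly), the functions $\Psi_1(\rho^{[n]})$ and $\Psi_2(\rho^{[n]})$ are bounded in $L^2(0,1; H^1(\Omega))$ uniformly in $n$.

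For the time regularity, writing $F^{[n]} := \mob(\rho^{[n]}) \nabla(U'(\rho^{[n]}) + V)$, one computes
\[
    \partial_t \Psi_1(\rho^{[n]}) = \diver\!\bigl(\Psi_1'(\rho^{[n]})\, F^{[n]}\bigr) - \nabla\Psi_1'(\rho^{[n]}) \cdot F^{[n]}.
\]
The first term is bounded in $L^2(0,1; H^{-1}(\Omega))$ uniformly in $n$ since $\Psi_1'$ is bounded on $[0,\alpha]$ and $\|F^{[n]}\|_{L^2((0,1)\times\Omega)}^2 \le \|\mob\|_{L^\infty}\bigl(\mathcal{F}[\rho_n] - \mathcal{F}[\rho_{n+1}]\bigr)$. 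The second term is bounded in $L^1((0,1)\times\Omega)$: $|\nabla \Psi_1'(\rho^{[n]})|$ is controlled in $L^2$ via \ref{hyp:Phi} and the uniform $L^2$-bound on $\nabla \Phi(\rho^{[n]})$, so Cauchy-Schwarz with $F^{[n]}$ gives an $L^1$ bound. Identical estimates hold for $\Psi_2$. By the Aubin-Lions lemma, up to a subsequence $\Psi_i(\rho^{[n]}) \to \varkappa_i$ strongly in $L^2((0,1)\times\Omega)$ and a.e.

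Finally, as in \Cref{lem:Existence weak solution}, the strict monotonicity of $\Psi_i$ near $0$ and $\alpha$ provided by \ref{hyp:Phi} ensures continuous inverses $\varpi_i$ and the identity
\[
    \rho^{[n]} = \varpi_1\!\left(\frac{\Psi_1(\rho^{[n]})}{\Psi_1(\alpha)}\right) + \varpi_2\!\left(\frac{\Psi_2(\rho^{[n]})}{\Psi_2(\alpha)}\right) - s_0.
\]
A.e.\ convergence of the $\Psi_i(\rho^{[n]})$ thus yields a.e.\ convergence of $\rho^{[n]}$ to some $\rho^\infty \in \mathcal{A}$, which is upgraded to $L^1((0,1)\times\Omega)$ convergence by the Dominated Convergence Theorem, using $0 \le \rho^{[n]} \le \alpha$. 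The main technical obstacle is the rigorous justification of the identity for $\partial_t \Psi_1(\rho^{[n]})$ at the level of weak solutions; this is handled by establishing both the identity and all uniform bounds first for the classical solutions of \eqref{eq:the problem regularised} from \Cref{thm:Properties (Pee)}, then passing to the limit $\ee \to 0$ using the convergences of \Cref{thm:left side of D} and \Cref{lem:Uee converges to U}.
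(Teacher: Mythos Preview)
Your proposal is correct and follows essentially the same approach as the paper: both argue by repeating the Aubin--Lions compactness machinery of \Cref{lem:Existence weak solution} with time shifts $t_n$ playing the role of the approximation parameter $\ee_k$. The paper's own proof is in fact terser than yours---it simply says ``analogously to Step~1 of the proof of \Cref{lem:Existence weak solution}''---and you have correctly spelled out the details, including the passage from the a~priori estimates for classical solutions of \eqref{eq:the problem regularised} to the weak solutions of \eqref{eq:the problem Omega} via \Cref{thm:left side of D}. One small omission: you do not explicitly verify that the limit $\rho^\infty$ is constant in time (which is needed since $\rho^\infty \in \mathcal A$ means a function of $x$ alone); the paper handles this via \eqref{eq:W-11 continuity} and the convergence of $\mathcal F[\rho_t]$, which forces $\|\rho^{[n]}_{s_2} - \rho^{[n]}_{s_1}\|_{W^{-1,1}} \to 0$ uniformly in $s_1,s_2 \in [0,1]$.
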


\begin{proof}
    We use the lighter notation $\rho^{[n]}_s = S_{s +t_n} \rho_0$ for $s \in [0,1]$ and $\rho_t = S_{t} \rho_0$ for $t \geq 0$.
    First, we prove convergence by compactness.
    Analogously to the \Cref{step:1 of lem:Existence weak solution} of the proof of \Cref{lem:Existence weak solution}, up to a subsequence, we obtain that for all $p \in [1, \infty )$,
    \begin{equation}\label{eq:Convergence omega-limit}
        \rho^{[n]} \rightarrow \rho^{\infty} \quad \text{strongly in } L^p ([0,1] \times \Omega ).
    \end{equation}
    Using \Cref{lem:negative Sobolev Ascoli-Arzela} we have
    $
        \rho^{[n]} \rightarrow \rho^{\infty}$
    in
    $C([0,1]; W^{-1,1}(\Omega))$.

    Lastly, we show that the limit is stationary. From \Cref{thm:left side of D} it follows that  $S$ is a \goodSemigroup{}. Hence, due to \eqref{eq:W-11 continuity}, 
    $\mathcal F[\rho_t]$ is non-increasing and bounded below, therefore it admits a limit which we denote $\underline{\mathcal F}$, 
    the limit does not depend on time.
\end{proof}

\subsection{For problem \texorpdfstring{\eqref{eq:the problem regularised}}{(Peps)}}\label{sec:Convergence stationary state Pee}

The proof of \Cref{lem:omega-limit} can be applied also to \eqref{eq:the problem regularised}.
However, 
using that approach some of the technical become too complicated. 
We include now an elementary proof that works for $\ee > 0$.

\begin{lemma}
    \label{lem:regularised asymptotics existence subsequence}
    Let $\ee > 0$ and $\rho_0 \in \mathcal A_+$. Then, there exists $\rho^{(\ee),\infty} \in \mathcal{A}$ and $t_n \to \infty$ such that $S_{\bullet + t_n}^{(\ee)} \rho_0 \to \rho^{(\ee),\infty}$ in $L^1 ((0,1) \times \Omega)$ as $n \to \infty$.
\end{lemma}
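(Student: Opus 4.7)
The plan is to obtain compactness in $L^1((0,1) \times \Omega)$ via an Aubin--Lions argument, exploiting the uniform ellipticity of \eqref{eq:the problem regularised} (in contrast with the proof of \Cref{lem:omega-limit}, where degeneracy forced us to work with the auxiliary quantities $\Psi_i$). Throughout, we write $\rho^{[n]}_s \coloneqq S_{s+t_n}^{(\ee)} \rho_0$ for $s \in [0,1]$, and $t_n \to \infty$ will be chosen along the way.

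First, since $\rho_0 \in \mathcal A_+$, the bounds \eqref{eq:initial datum A+ bounds} and the comparison principle from \ref{it:S^eps strict positivity} give uniform estimates $\delta \le \rho^{[n]}_s \le \alpha-\delta$ for some $\delta > 0$ independent of $n,s$. Next, \Cref{lem:nabla phi(rho) a priori} applied to the interval $(t_n, t_n+1)$ yields a bound on $\|\nabla \Phi_\ee(\rho^{[n]})\|_{L^2((0,1)\times \Omega)}$ that is independent of $n$. Because of the uniform ellipticity $\Phi_\ee' \ge \ee > 0$ in \eqref{eq:properties of Phi_ee}, we have $|\nabla \rho^{[n]}| \le \ee^{-1}|\nabla \Phi_\ee(\rho^{[n]})|$, so $\rho^{[n]}$ is uniformly bounded in $L^2(0,1; H^1(\Omega))$.

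For time regularity I would use the energy dissipation encoded in \Cref{def:Free-energy dissipating semigroup}\ref{item:W-11 equicontinuity}. Since $t \mapsto \mathcal F_\ee[S_t^{(\ee)}\rho_0]$ is non-increasing and bounded from below (\Cref{prop:Free energy bound from below}), the dissipation over $(t_n, t_n+1)$ vanishes as $n \to \infty$. Combining this with \eqref{eq:drho/dt W -1,1} gives $\|\partial_t \rho^{[n]}\|_{L^2(0,1;W^{-1,1}(\Omega))} \to 0$, and in particular a uniform bound. Since $H^1(\Omega) \hookrightarrow\hookrightarrow L^1(\Omega) \hookrightarrow W^{-1,1}(\Omega)$, the Aubin--Lions lemma produces a subsequence along which $\rho^{[n]}$ converges strongly in $L^2(0,1; L^1(\Omega))$, hence in $L^1((0,1)\times \Omega)$, to some $\rho^{(\ee),\infty} \in \mathcal A$.

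The main technical point to be checked is that the time-derivative bound really is uniform in $n$: this is immediate from the fact that the total dissipation $\int_0^\infty \int_\Omega \mobee(\rho)|\nabla(U_\ee'(\rho)+V)|^2$ is bounded by the full decrease of the free energy, which is finite. No further obstacle is anticipated, since the identification of $\rho^{(\ee),\infty}$ as the pointwise limit as $s$ varies (and the fact that it can be taken constant in $s$) is handled downstream by \Cref{lem:compactness on 0 1 times Omega is enough}.
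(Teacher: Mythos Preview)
Your argument is correct in substance and follows the same Aubin--Lions strategy as the paper. The paper obtains the $L^2(0,1;H^1(\Omega))$ bound via the auxiliary flow $G_\ee$ and \eqref{eq:nabla rho a priori}, whereas you go through \Cref{lem:nabla phi(rho) a priori} together with the lower bound $\Phi_\ee' \ge \ee$; your route is slightly more direct. The time-derivative control and the Aubin--Lions triple are essentially the same in both proofs.

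There is one logical slip: you say that the constancy of the limit in $s$ is ``handled downstream by \Cref{lem:compactness on 0 1 times Omega is enough}''. It is not --- that lemma \emph{assumes} as input that the limit $\rho^{(\ee),\infty}$ already lies in $\mathcal A$, i.e., is independent of $s$. You must establish this here. Fortunately you already have the ingredient: since $\|\partial_t \rho^{[n]}\|_{L^2(0,1;W^{-1,1}(\Omega))} \to 0$ and $\rho^{[n]} \to u$ in $L^1((0,1)\times\Omega)$, it follows that $\partial_t u = 0$ in the sense of distributions, so $u(s,\cdot)$ is constant in $s$ and hence lies in $\mathcal A$. The paper reaches the same conclusion via \eqref{eq:W-11 continuity} and the convergence of $\mathcal F_\ee[\rho_t]$.
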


\begin{proof}
    Once more, we use the lighter notation $\rho^{(\ee), [n]}_s = S_{s +t_n}^{(\ee)} \rho_0$ for $s \in [0,1]$ and $\rho^{(\ee)}_t = S_{t}^{(\ee)} \rho_0$ for $t \geq 0$.
    We prove convergence by compactness arguments.
    Using \Cref{lem:negative Sobolev Ascoli-Arzela} we have that, up to a subsequence,
    $$
        \rho^{(\ee),[n]} \rightarrow \rho^{(\ee), \infty} \quad \text{in } C([0,1];W^{-1,1}(\Omega )).
    $$
    For the point-wise convergence we aim to use the Aubin-Lions Lemma with the spaces $H^1 (\Omega) \subset L^2 (\Omega) \subset W^{-1,1} (\Omega)$.
    From \eqref{eq:nabla rho a priori} we recover
    \begin{equation*}
        \| \nabla \rho^{(\ee), [n]} \|_{L^2((0,1) \times \Omega )}^2 \leq 2 \int_{\Omega} G_{\ee} (\rho_{t_n}^{(\ee)} ) + C \| \nabla V \|_{L^2(\Omega)}^2,
    \end{equation*}
    where $G_{\ee}$ is defined in \eqref{eq:G_ee}. Next, we show that the first term in the RHS is uniformly bounded for each $\ee > 0$ fixed. Taking advantage of the definition of $G_\ee$, \eqref{eq:properties of Phi_ee}, and \eqref{eq:Lp estimate} it follows that
    \begin{equation*}
        \int_{\Omega} G_{\ee} (\rho_{t_n}^{(\ee)} ) = \int_{\Omega}  \int_{\frac{\alpha}{2}}^{{\rho_{t_n}^{(\ee)} (x)}} \int_{\frac{\alpha}{2}}^{\sigma} G_{\ee}''(s) \ds \, d \sigma \dx \leq C(\ee, \alpha) \left( |\Omega| + \| \rho_{t_n}^{(\ee)} \|_{L^1(\Omega)} + \| \rho_{t_n}^{(\ee)} \|_{L^2(\Omega)}^2 \right) \leq C(\ee, \alpha, |\Omega|).
    \end{equation*}
    Hence, for each $\ee > 0$ fixed, the sequence $\rho^{(\ee),[n]}$ is uniformly bounded in $L^2(0,1; H^1(\Omega))$.
    Therefore, we can apply the Aubin-Lions Lemma, up to a subsequence, we have that
    \begin{equation*}
        \rho^{(\ee),[n]} \rightarrow \rho^{(\ee), \infty} \quad \text{in } L^2((0,1) \times \Omega) ,
    \end{equation*}
    Due to \eqref{eq:W-11 continuity}, 
    $\mathcal F_\varepsilon[\rho_t]$ is non-increasing and bounded below, therefore it admits a limit which we denote $\underline{\mathcal F}_\varepsilon$.
    Hence, 
    the limit does not depend on time.
\end{proof}

\Cref{thm:long-time behaviour} follows from the combination of \Cref{lem:compactness on 0 1 times Omega is enough,lem:omega-limit,lem:regularised asymptotics existence subsequence}.

\section{Analysis of the long-time limit}\label{sec:Analysis long time}

In this Section we understand the long-time behaviour. First, in \cref{sec:Asymptotic behaviour ee} we focus on the global attractor of \eqref{eq:the problem regularised} for $\ee >0$. Afterwards, in \cref{sec:Stationary state} we study some properties of the $\omega$-limit of \eqref{eq:the problem Omega}. Finally, in \cref{sec:malicious counterexamples} we construct an example in order to show that uniqueness of constant-in-time solutions of \eqref{eq:the problem Omega} is not necessarily true, and that the extra constant-in-time solutions also attract a large class of initial data.

\subsection{The global attractors for \texorpdfstring{\eqref{eq:the problem regularised}}{(Peps)}. Proof of \texorpdfstring{\Cref{th:ee continuous limit}}{global attractors of (Peps)}}
\label{sec:Asymptotic behaviour ee}

We first show an auxiliary result.

\begin{lemma}[A generalisation of \ref{item:Pee long time 2}]
    \label{lem:Regularity hat rho ee}
    If ${\rho}$ is a constant-in-time weak solution of \eqref{eq:the problem regularised}
    then exactly one of the following holds: $\rho \equiv 0$ in $\Omega$, $\rho \equiv \alpha$ in $\Omega$, or
    $\rho = \rho^{(\ee),\infty}$ given by
    \[
        \rho^{(\ee),\infty} (x) = (U_\ee ')^{-1} (C_\ee - V(x)) ,
    \]
    where $C_\ee$ is uniquely determined by the mass of $\rho$.
\end{lemma}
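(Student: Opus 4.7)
The plan is to exploit the uniform ellipticity of the regularised diffusion to promote the regularity of any constant-in-time weak solution, then apply a strong minimum principle together with Hopf's lemma to rule out $\rho$ touching $\{0,\alpha\}$ anywhere on $\overline\Omega$ (unless $\rho\equiv 0$ or $\rho\equiv\alpha$), and finally recover the explicit form by testing the stationary weak problem with $\zeta:=U_\ee'(\rho)+V$. Choosing the test function $\varphi(t,x)=(T-t)\phi(x)$ in \Cref{def:Weak solutions P0} and using that $\rho$ is constant in time, one extracts the weak stationary problem
\begin{equation*}
\int_\Omega\bigl(\nabla\Phi_\ee(\rho)+\mobee(\rho)\nabla V\bigr)\cdot\nabla\phi\dx=0,\qquad \forall\phi\in H^1(\Omega),
\end{equation*}
which is the weak form of $-\Delta\Phi_\ee(\rho)=\diver(\mobee(\rho)\nabla V)$ in $\Omega$ with the natural no-flux condition $\nabla\Phi_\ee(\rho)\cdot\nu=-\mobee(\rho)\nabla V\cdot\nu$ on $\partial\Omega$. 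Because $\Phi_\ee'\ge\ee>0$ by \eqref{eq:properties of Phi_ee}, $\Phi_\ee:[0,\alpha]\to[\Phi_\ee(0),\Phi_\ee(\alpha)]$ is a bi-Lipschitz homeomorphism, and the right hand side $\mobee(\rho)\nabla V$ lies in $L^\infty(\Omega)^d$. De Giorgi-Nash-Moser together with Schauder boundary estimates then yield $\Phi_\ee(\rho)\in C^{1,\gamma}(\overline\Omega)$, and composition with the Lipschitz inverse $\Phi_\ee^{-1}$ gives $\rho\in C(\overline\Omega)$.

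Assume now $\rho\not\equiv 0$ and $\rho\not\equiv\alpha$. Setting $w:=\Phi_\ee(\rho)-\Phi_\ee(0)\ge 0$ and expanding, one obtains pointwise
\begin{equation*}
-\Delta w-\frac{\mobee'(\rho)}{\Phi_\ee'(\rho)}\nabla V\cdot\nabla w=\mobee(\rho)\Delta V.
\end{equation*}
The drift is bounded by $\ee^{-1}\|\mobee'\|_\infty\|\nabla V\|_\infty$, and since $\mobee(0)=0$, $\mobee$ is Lipschitz on $[0,\alpha]$, and $w\ge\ee\rho$, one has $|\mobee(\rho)|\le Lw/\ee$, so the source is controlled by $Cw$ with $C\ge 0$. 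Rearranging, $-\Delta w-b\cdot\nabla w+Cw\ge 0$ in $\Omega$ with $w\ge 0$, $C\ge 0$, and $b$ bounded. The strong minimum principle excludes interior zeros of $w$, while at any boundary zero $x_0\in\partial\Omega$ the no-flux identity forces $\partial_\nu w(x_0)=\nabla\Phi_\ee(\rho)(x_0)\cdot\nu=-\mobee(0)\nabla V(x_0)\cdot\nu=0$, contradicting Hopf's lemma. Hence $\rho>0$ on $\overline\Omega$, and the symmetric argument applied to $\widetilde w:=\Phi_\ee(\alpha)-\Phi_\ee(\rho)$ gives $\rho<\alpha$ on $\overline\Omega$. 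By continuity and compactness, $0<c_1\le\rho\le c_2<\alpha$ uniformly in $\overline\Omega$.

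With these two-sided bounds, $\mobee(\rho)$ is bounded below by some $\underline m>0$, $U_\ee'(\rho)\in L^\infty(\Omega)$, and $\nabla U_\ee'(\rho)=\nabla\Phi_\ee(\rho)/\mobee(\rho)\in L^2(\Omega)$, so $\zeta\in H^1(\Omega)$. Testing the stationary weak form with $\phi=\zeta-|\Omega|^{-1}\int_\Omega\zeta\dx$ yields $\int_\Omega\mobee(\rho)|\nabla\zeta|^2\dx=0$, so $\nabla\zeta\equiv 0$; connectedness of $\Omega$ gives $\zeta\equiv C_\ee\in\R$, and hence $\rho=(U_\ee')^{-1}(C_\ee-V)$. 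Uniqueness of $C_\ee$ for a prescribed mass follows because $F(C):=\int_\Omega(U_\ee')^{-1}(C-V(x))\dx$ is continuous and strictly increasing with range $(0,\alpha|\Omega|)$, thanks to the remarks following \Cref{thm:left side of D} identifying $(U_\ee')^{-1}$ as a strictly increasing continuous bijection $\R\to(0,\alpha)$. The main obstacle is combining the strong minimum principle with Hopf's lemma under the no-flux condition: the $C^{1,\gamma}$ regularity of $\Phi_\ee(\rho)$ is precisely what allows us to read the Neumann-type condition pointwise and invoke Hopf classically.
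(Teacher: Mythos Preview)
Your proof is correct and follows the same two-step outline as the paper (first exclude $\rho$ touching $\{0,\alpha\}$, then characterise via testing with the first variation), but the execution of both steps differs in a way worth noting.

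For the positivity step, the paper writes $w=\Phi_\ee(\rho)$, bootstraps to $W^{2,\infty}_{loc}$, and applies interior Harnack to conclude $0<\rho<\alpha$ in $\Omega$ only. You instead push regularity up to the boundary via Schauder for the conormal problem, obtaining $w\in C^{1,\gamma}(\overline\Omega)$, and then combine the strong minimum principle in the interior with Hopf's lemma at the boundary: at any boundary zero the no-flux condition forces $\partial_\nu w=-\mobee(0)\nabla V\cdot\nu=0$, contradicting Hopf. This yields the stronger conclusion $\rho\in[c_1,c_2]\subset(0,\alpha)$ uniformly on $\overline\Omega$.

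This extra uniformity pays off in the second step. Because the paper lacks two-sided bounds and $U_\ee''=\Phi_\ee'/\mobee$ blows up at $0$ and $\alpha$, it cannot use $U_\ee'(\rho)+V$ as a test function directly; instead it introduces a truncation $U_{\ee,\delta}''=\min\{U_\ee'',\delta^{-1}\}$, tests with $U_{\ee,\delta}'(\rho)+V$, and passes to the limit $\delta\to0$ by monotone convergence. Your uniform bound makes $\mobee(\rho)\ge\underline m>0$ and $\zeta=U_\ee'(\rho)+V$ an admissible $H^1$ test function outright, so the free-energy dissipation identity $\int_\Omega\mobee(\rho)|\nabla\zeta|^2=0$ follows in one stroke without regularisation. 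In short, your route trades an interior-only argument plus an approximation step for boundary Schauder estimates plus Hopf; the payoff is a shorter second step.
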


\begin{proof}
    We divide the proof in several steps.
    \begin{enumeratesteps}
        \step[{$\rho\equiv 0$, $\rho\equiv \alpha$, or $0 < \rho < \alpha$}]
        We write
        \begin{equation*}
            - \Delta \Phi_{\varepsilon} (\rho ) = \diver \left( \mobee (\rho ) \nabla V \right)
        \end{equation*}
        in the weak sense. Let us now pick $w = \Phi_\varepsilon (\rho)$.
        Since $\rho$ is a weak solution of \eqref{eq:the problem regularised} and $\Phi_\ee$ fulfils the assumption \eqref{eq:properties of Phi_ee} we know that $w \in H^1(\Omega) \cap L^\infty(\Omega)$.
        Therefore, $w$ satisfies the equation
        $$
            - \Delta w = \diver \left( f_\ee(w) \nabla V \right) , \qquad f_\ee(w) = \mobee (\Phi_\ee^{-1}(w) )
        $$
        with no-flux boundary condition.
        The right-hand side is in $L^2 (\Omega)$, and so $w \in H^2_{loc}(\Omega)$.
        Notice that $f_\ee(0) = 0$.
        By a bootstrap argument $w \in W_{loc}^{2,\infty} (\Omega)$ (see e.g., \cite{Caffareli_Cabre95}).
        So $\rho \in W_{loc}^{2,\infty} (\Omega)$, and it is a classical solution of the interior equation.

        Assume there is a set of positive measure such that $\rho > 0$.  
        Let $A \subset \Omega$ be the largest set where this is satisfied.
        Since $\rho$ is continuous, then $A$ is an open set.
        Assume, towards contradiction, that $A \subsetneq \Omega$.
        Take a point in $x_0 \in \Omega \cap \partial A$. Let $r$ be small enough that $B(x_0, 4r) \subset \Omega$.
        Notice that $u = \rho$ satisfies the elliptic problem
        \[
            -\diver(a(x) \nabla u + b(x) u )  = 0, \qquad \text{in } \Omega,
        \]
        where $a(x) = \Phi_\ee'(\rho(x)) \ge c(\varepsilon)$ and bounded, and
        \begin{equation*}
            b(x) = \begin{dcases}
                \frac{\mob_\ee(\rho)}{\rho} \nabla V & \text{if } \rho(x) > 0, \\
                \mobee'(0)\nabla V                   & \text{if } \rho(x) = 0
            \end{dcases}
        \end{equation*}
        is continuous.
        Hence, we can use Harnack's inequality (see \cite{GT01}). Then
        \[
            \sup_{ B(x_0, r) } \rho \le C \inf_{ B(x_0, r) } \rho = 0.
        \]
        This contradicts the definition of $x_0$.
        Similarly, taking $u = \alpha - \rho$ we deduce $\rho \equiv \alpha$ in $\Omega$ or $\rho < \alpha$ in $\Omega$.

        \step[{Characterisation of the case $0 < \rho < \alpha$}]
        Let us take $\delta \in (0,1)$.
        Notice that $U_\ee'' = \Phi_\ee' / \mobee$ is singular at $0$ and $\alpha$. For this proof we consider a smoothing of $U_\ee$.
        We define $U_{\ee, \delta}$ such that $U_{\ee, \delta} (\delta) = U_\ee(\delta)$, $U_{\ee,\delta}'(\delta)= U_\ee' (\delta)$ and
        \begin{equation*}
            U_{\ee, \delta}''(s) = \min\{ U_\ee''(s), \delta^{-1} \}.
        \end{equation*}
        Using $\varphi = U'_{\ee , \delta} (\rho) + V$ as a test function, we have that
        \begin{equation}
            \label{eq: regularity hat rho ee free energy diss}
            \begin{aligned}
                0 & = \int_\Omega \mobee(\rho) \nabla \left( U'_\ee (\rho) + V \right) \cdot \nabla \left( U'_{\ee , \delta} (\rho) + V \right) \\
                  & = \int_\Omega \mobee(\rho) U_\ee''(\rho)U_{\ee,\delta}''(\rho) |\nabla \rho|^2
                + \int_\Omega \mobee(\rho)  U_{\ee,\delta}''(\rho)   \nabla \rho \cdot \nabla V
                + \int_\Omega \mobee(\rho)  U_{\ee}''(\rho)   \nabla \rho \cdot \nabla V  + \int_\Omega \mobee(\rho) |\nabla V|^2.
            \end{aligned}
        \end{equation}
        Only the first two terms vary with $\delta$. 
        From this equality we can estimate the first term on the right-hand side. Using that $0 \le U_{\ee,\delta}'' \le U_\ee''$ we have
        \[
            0 \le \mobee(\rho) U_{\ee,\delta}''(\rho) |\nabla \rho| \le \mobee (\rho) U_\ee (\rho) |\nabla \rho| = |\nabla \Phi_\ee(\rho)|.
        \]
        Hence, this quantity is in $L^2 (\Omega)$ due to our notion of weak solution.
        Hence, we can bound
        \begin{align*}
            \int_\Omega \mobee(\rho) U_\ee''(\rho)U_{\ee,\delta}''(\rho) |\nabla \rho|^2
             & \le 2 \int_\Omega |\nabla \Phi_\ee (\rho) | |\nabla V| -  \int_\Omega \mobee(\rho) |\nabla V|^2 
            \le 2 \int_\Omega |\nabla \Phi_\ee (\rho) | |\nabla V|.
        \end{align*}
        We observe that as $\delta \to 0$ we have pointwise that
        \begin{equation*}
            \mobee(\rho) U_\ee'' (\rho) U_{\ee,\delta}'' (\rho) |\nabla \rho|^2 \nearrow \mobee(\rho) U_\ee''(\rho)^2 |\nabla \rho|^2.
        \end{equation*}
        Therefore, by the monotone convergence theorem $\mobee(\rho) |\nabla U_\ee'(\rho)|^2 \in L^1$ as we have convergence as $\delta \to 0$ of the first term in \eqref{eq: regularity hat rho ee free energy diss}.
        Since $\mobee(\rho) U_{\ee, \delta}''(\rho) |\nabla \rho|$ is uniformly bounded in $L^2(\Omega)$, a subsequence admits a weak-$L^2$ limit.
        By point-wise convergence we deduce that, up to a subsequence, $\mob_\ee(\rho) U_{\ee,\delta}''(\rho) \nabla \rho \rightharpoonup \mob_\ee(\rho) U_{\ee}''(\rho) \nabla \rho$ weakly in $L^2(\Omega)$ as $\delta \to 0$.
        We can therefore pass to the limit in \eqref{eq: regularity hat rho ee free energy diss} to deduce
        \[
            \int_\Omega \mobee(\rho) \Big|\nabla (U'_\ee (\rho) + V )\Big|^2 = 0.
        \]
        Since $0 < \rho < \alpha$ a.e.~in $\Omega$, we deduce $U'_\ee (\rho) + V = C_\ee$ and, in particular, $\rho = (U_\ee')^{-1} (C_\ee - V(x))$.
        Using \Cref{thm:Euler-Lagrange} the constant is uniquely defined.
        And the proof is complete. \qedhere
    \end{enumeratesteps}
\end{proof}

Any constant-in-time weak solution of the problem \eqref{eq:the problem regularised} is described in \Cref{lem:Regularity hat rho ee}. The combination of this result with \Cref{thm:long-time behaviour} yields that the \timeLimit{}\, $S_\infty$ for the problem \eqref{eq:the problem regularised} is such that $S_\infty^{(\ee)} \rho_0 = 0$, $S_\infty^{(\ee)} \rho_0= \alpha$ or $S_\infty^{(\ee)} \rho_0 (x) = (U_\ee')^{-1} (C_\ee - V(x))$, where the constant $C_\ee$ depends only on the mass of $\rho_0$, i.e., the only possible constant-in-time weak solutions. Hence, as we point out in the statement,  \ref{item:Pee long time 2} follows as a consequence of \Cref{lem:Regularity hat rho ee}.

Using this result, we are now ready to study further properties of $\widehat{\rho}^{(\ee)}$.

\begin{proof}[Proof of \ref{item:Pee long time 1} and \ref{item:Pee long time 3}]
    The fact that it is the unique $L^1$-local minimiser follows directly from \Cref{thm:Euler-Lagrange}. Combining this with \Cref{prop:Free energy bound from below} we obtain that it is also the unique global minimiser.

    Furthermore, from \Cref{thm:long-time behaviour} and \Cref{lem:Regularity hat rho ee} we have that for any $\rho_0 \in \mathcal{A}$
    \begin{equation*}
        S_t^{(\ee)} \rho_0 \rightarrow S_\infty^{(\ee)} \rho_0 = \rho^{(\ee), \infty} \quad \text{strongly in } L^1(\Omega) \text{ as } t \rightarrow \infty,
    \end{equation*}
    finishing the proof.
\end{proof}

\subsection{The \texorpdfstring{$\omega$}{omega}-limit of
    \texorpdfstring{\eqref{eq:the problem Omega}}{(P)}. Proof of \texorpdfstring{\Cref{thm:Asymptotic (P0)}}{the steady states for (P)}}
\label{sec:Stationary state}

We proceed to study the limit of the constant-in-time weak solutions of \eqref{eq:the problem regularised} as $\ee \rightarrow 0$. In order to do that and study its properties we present some auxiliary results.

\begin{lemma}\label{lem:Limit ee to 0}
    Assume \eqref{eq:U uniformly strictly convex}. Consider the approximation $U_{\ee}$ defined with the condition \eqref{eq:properties of Phi_ee}.
    Then, the functions $(U_{\ee}')^{-1}$ are such that $(U_{\ee}' )^{-1} \rightarrow \inversedU$ pointwise in $\R$ and uniformly over compacts of $\R \backslash \left\lbrace \underline{\zeta} , \overline{\zeta} \right\rbrace$ as $\ee \searrow 0$, where $\underline{\zeta}, \overline{\zeta}$ are defined in \eqref{eq:defn xi underline and overline}.
\end{lemma}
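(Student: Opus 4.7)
My plan is to combine the uniform convergence $U_\ee' \to U'$ on compact subsets of $(0,\alpha)$ supplied by \Cref{lem:Uee converges to U} with the monotonicity of $(U_\ee')^{-1}$, and then promote pointwise convergence to uniform convergence via a Pólya/Dini-type argument using the continuity of the limit away from $\{\underline{\zeta},\overline{\zeta}\}$. The strict convexity \eqref{eq:U locally strictly convex} (implied by \eqref{eq:U uniformly strictly convex}) gives injectivity of $U'$ on $(0,\alpha)$, which will be needed to identify subsequential limits.

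\textbf{Pointwise convergence.} Fix $\zeta \in \R$. Since $(U_\ee')^{-1}:\R \to (0,\alpha)$ and the values lie in $[0,\alpha]$, every subsequence of $s_\ee \coloneqq (U_\ee')^{-1}(\zeta)$ admits a further subsequence $s_{\ee_k} \to s^* \in [0,\alpha]$. I will verify $s^* = T_{0,\alpha} \circ (U')^{-1}(\zeta)$ in each of three cases.
\begin{itemize}
  \item If $s^* \in (0,\alpha)$: pick a compact neighborhood $K \subset (0,\alpha)$ of $s^*$; uniform convergence $U_{\ee_k}' \to U'$ on $K$ gives $U_{\ee_k}'(s_{\ee_k}) \to U'(s^*)$, so $\zeta = U'(s^*)$, i.e., $\zeta \in (\underline{\zeta},\overline{\zeta})$ and $s^* = (U')^{-1}(\zeta)$ by injectivity.
  \item If $s^* = 0$: for any $s_0 \in (0,\alpha/2)$, monotonicity gives $\zeta = U_{\ee_k}'(s_{\ee_k}) \le U_{\ee_k}'(s_0) \to U'(s_0)$ eventually; letting $s_0 \to 0^+$ yields $\zeta \le \underline{\zeta}$, hence $T_{0,\alpha}\circ(U')^{-1}(\zeta)=0$.
  \item If $s^* = \alpha$: symmetric argument using $s_0 \in (\alpha/2,\alpha)$ gives $\zeta \ge \overline{\zeta}$, and hence $T_{0,\alpha}\circ(U')^{-1}(\zeta)=\alpha$.
\end{itemize}
Since every subsequence admits a further subsequence with the same limit, the full sequence converges pointwise to $T_{0,\alpha}\circ(U')^{-1}(\zeta)$.

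\textbf{Uniform convergence on compacts of $\R\setminus\{\underline{\zeta},\overline{\zeta}\}$.} Let $K$ be such a compact set. Under \eqref{eq:U uniformly strictly convex} the limit $\Theta \coloneqq T_{0,\alpha}\circ(U')^{-1}$ is continuous on $\R$, hence uniformly continuous on $K$. For $\eta>0$, subdivide $K$ by points $\zeta_1<\cdots<\zeta_N$ so that $\Theta$ varies by at most $\eta/2$ on each piece. By the pointwise convergence already established, choose $\ee_0$ so small that $|(U_\ee')^{-1}(\zeta_j)-\Theta(\zeta_j)|<\eta/2$ for all $j$ and all $\ee<\ee_0$. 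Since both $(U_\ee')^{-1}$ and $\Theta$ are non-decreasing, for any $\zeta \in [\zeta_j,\zeta_{j+1}]$ a three-way sandwich with the reference values at $\zeta_j,\zeta_{j+1}$ forces $|(U_\ee')^{-1}(\zeta)-\Theta(\zeta)|<\eta$. This yields uniform convergence on $K$.

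\textbf{Expected obstacle.} The delicate point is ruling out $s^*\in\{0,\alpha\}$ when $\zeta\in(\underline{\zeta},\overline{\zeta})$: near the endpoints the maps $U_\ee'$ diverge to $\mp\infty$ (due to the construction making \eqref{eq:the problem regularised} uniformly elliptic), while $U'$ stays bounded if $\underline{\zeta}$ or $\overline{\zeta}$ is finite, so $U_\ee'\to U'$ fails uniformly on $(0,\alpha)$. The argument above circumvents this by probing with a fixed interior test value $s_0$ where uniform convergence does hold; this pointwise monotonicity trick replaces any attempt to control $U_\ee'$ near the endpoints directly. The exclusion of $\{\underline{\zeta},\overline{\zeta}\}$ in the uniform statement is only a convenience ensuring the continuous partitioning step in the Dini-type argument works cleanly.
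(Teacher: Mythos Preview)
Your proposal is correct and takes a genuinely different route from the paper's proof. The paper proceeds by first establishing quantitative uniform Lipschitz bounds on $(U_\ee')^{-1}$ over compacts of $(\underline{\zeta},\overline{\zeta})$: it computes $((U_\ee')^{-1})'(\zeta) = \mobee/(\Phi_\ee')$ evaluated at $(U_\ee')^{-1}(\zeta)$ and bounds this explicitly by splitting into the two regimes $\Phi' \lessgtr \kappa(\ee)^{-1}$ coming from \eqref{eq:properties of Phi_ee}; this is precisely where \eqref{eq:U uniformly strictly convex} enters, to make the denominator $\min_{[s_1,s_2]}\mob\, U''$ strictly positive. Pointwise convergence is then obtained from these Lipschitz bounds together with $U_\ee'(s)\to U'(s)$ at a fixed interior point, and the remaining regions $\{\underline{\zeta},\overline{\zeta}\}$ and $(-\infty,\underline{\zeta})\cup(\overline{\zeta},\infty)$ are handled separately by monotonicity. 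Your approach instead bypasses any derivative computation: pointwise convergence follows from a soft subsequential compactness argument (which is cleanly exhaustive over the three cases $s^*\in(0,\alpha)$, $s^*=0$, $s^*=\alpha$), and uniform convergence is obtained directly from the Pólya-type monotone-convergence lemma. The paper's route yields the extra quantitative byproduct of uniform Lipschitz constants for $(U_\ee')^{-1}$, but these are not re-used elsewhere; your argument is more elementary, uniform across all three $\zeta$-regions at once, and in fact only relies on \eqref{eq:U locally strictly convex} (injectivity of $U'$) rather than the full \eqref{eq:U uniformly strictly convex}. As you note in your closing remark, since $\Theta$ is continuous on all of $\R$ the Pólya step even delivers uniform convergence on compacts containing $\underline{\zeta}$ or $\overline{\zeta}$, slightly more than the stated conclusion.
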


\begin{proof}
    We recall that \Cref{lem:Uee converges to U} ensures
    $U_{\ee}' \rightarrow U'$ pointwise in $(0,\alpha)$.
    We will use the notations \eqref{eq:defn xi underline and overline}.
    In order to show the convergence, we divide the proof into the sets $(\underline{\zeta} , \overline{\zeta})$, $\left\lbrace \underline{\zeta} , \overline{\zeta} \right\rbrace$, and $\R \backslash [\underline{\zeta} , \overline{\zeta}]$.
    \begin{enumeratesteps}

        \step[Convergence in $(\underline{\zeta} , \overline{\zeta})$]
        We first prove that $(U_\ee')^{-1}$ are uniformly Lipschitz over compacts.
        We know that it is non-decreasing. So the derivative is bounded below.
        Take any $\zeta_1 , \zeta_2 \in (\underline{\zeta} , \overline{\zeta} )$ with $\zeta_1 < \zeta_2$. By construction $0 < U'(\zeta_1) \le U'(\zeta_2) < \alpha$. Let us define
        \begin{align*}
            s_1 \coloneqq \frac{(U')^{-1} (\zeta_1 )}{2} \in \left(0, U'(\zeta_1) \right)  \quad \text{and} \quad s_2 \coloneqq \frac{(U')^{-1} (\zeta_2) + \alpha }{2} \in \left(U'(\zeta_2) , \alpha \right).
        \end{align*}
        Due to uniform convergence over compacts (see \Cref{lem:Uee converges to U}), for $\ee$ small enough, $(U_{\ee}')^{-1} ([\zeta_1 , \zeta_2]) \subseteq [s_1, s_2 ]$. We pick $\zeta \in [\zeta_1 , \zeta_2 ]$.
        Recalling \eqref{eq:properties of Phi_ee}, we observe that
        \[
            \left( (U_{\ee}')^{-1} \right)' (\zeta ) = \frac{1}{U_{\ee}'' \left( (U_{\ee}')^{-1} (\zeta ) \right)} = \frac{\mobee\left( (U_{\ee}')^{-1} (\zeta ) \right)}{\Phi_\ee' \left( (U_{\ee}')^{-1} (\zeta ) \right)} \leq \frac{\mobee\left( (U_{\ee}')^{-1} (\zeta ) \right)}{\underline{\Phi_\ee'} \left( (U_{\ee}')^{-1} (\zeta ) \right)}.
        \]
        We distinguish two cases:

        \begin{enumeratesteps}
            \step[$\Phi_\ee'((U_\ee')^{-1}(\zeta)) \ge \kappa(\ee)^{-1}$]
            In this case we simply estimate
            \[
                \left( (U_{\ee}')^{-1} \right)' (\zeta ) \leq \frac{\mobee\left( (U_{\ee}')^{-1} (\zeta ) \right)}{\kappa(\ee)^{-1} + \ee} \le \kappa(\ee) \max_{\ee \in [0,1]} \| \mobee \|_{L^\infty(0,\alpha)}.
            \]
            \step[$\Phi_\ee'((U_\ee')^{-1}(\zeta)) < \kappa(\ee)^{-1}$]
            Then we have that
            \begin{align*}
                \begin{split}
                    \left( (U_{\ee}')^{-1} \right)' (\zeta )
                     & \le \frac{\mobee\left( (U_{\ee}')^{-1} (\zeta ) \right)}{\mob\left( (U_{\ee}')^{-1} (\zeta ) \right) U'' \left( (U_{\ee}')^{-1} (\zeta ) \right) + \ee}
                    \le  \frac{\max_{\ee \in [0,1]} \| \mobee \|_{L^\infty(0,\alpha)}}{\min_{s\in [s_1,s_2]} \mob\left( s \right)  U'' \left( s \right)}.
                \end{split}
            \end{align*}
            The denominator is positive due to \ref{hyp:mobility} and \eqref{eq:U uniformly strictly convex}. 
        \end{enumeratesteps}
        Thus, we have show that $(U_{\ee}')^{-1}$ are uniformly Lipschitz over compacts of $(\underline{\zeta} , \overline{\zeta} )$.
        Now we prove point-wise convergence.
        Let $\zeta \in (\underline{\zeta}, \overline{\zeta})$ and
        $\delta > 0$ small enough so that $\zeta \in (\underline{\zeta} + \delta, \overline{\zeta} - \delta)$.
        Due to continuity, $s \coloneqq \inversedU(\zeta) \in (0,\alpha)$.
        Let $\zeta_\ee \coloneqq U_\ee'(s)$.
        By \Cref{lem:Uee converges to U}, there exists $\ee_0 > 0$ such that for $\ee < \ee_0$ we have $\zeta_\ee \in (\underline{\zeta} + \delta, \overline{\zeta} - \delta)$.
        In the previous step we have shown that $(U_\ee')^{-1}$ are uniformly Lipschitz in $(\underline{\zeta} + \delta, \overline{\zeta} - \delta)$. Letting $L(\delta)$ be the continuity constant we have
        \begin{equation}\label{eq:Inverse pointwise convergence}
            | (U_{\ee}')^{-1} ({\zeta} ) - \inversedU ({\zeta} ) |  =  | (U_{\ee}')^{-1} ({\zeta} ) - (U'_{\ee})^{-1} (\zeta_{\ee} ) | \le L(\delta) |\zeta - \zeta_\ee| \to 0 \quad \text{as } \ee \to 0.
        \end{equation}
        Since this holds for any $\zeta \in (\underline \zeta, \overline \zeta)$, point-wise convergence holds.
        Joining this fact with the uniformly Lipschitz continuity over compacts of $(\underline{\zeta} , \overline{\zeta} )$, we obtain uniform convergence over compact sets of $(\underline{\zeta} , \overline{\zeta} )$.

        \step[Point-wise convergence at $\left\lbrace \underline{\zeta} , \overline{\zeta} \right\rbrace$]
        Let us now assume $0 < \oneconstant < \overline{\zeta} - \underline{\zeta}$. Then, due to monotonicity it follows that
        \begin{equation*}
            0 \le \limsup_{\ee \to 0^+ }(U_{\ee}')^{-1} (\underline{\zeta} ) \leq \limsup_{\ee \to 0^+ }(U_{\ee}')^{-1} (\underline{\zeta} + \oneconstant) = \inversedU(\underline{\zeta} + \oneconstant) .
        \end{equation*} 
        Letting then $\oneconstant \to 0$ we get $(U_{\ee}')^{-1} (\underline{\zeta} ) \to 0 = \inversedU(\underline{\zeta}) $.
        Analogously,
        $ (U_{\ee}')^{-1} (\overline{\zeta}) \rightarrow \alpha = \inversedU (\overline{\zeta}).
        $

        \step[Uniform convergence on $(-\infty , \underline{\zeta}) $ and $(\overline{\zeta} , \infty )$]
        Finally, for the set $(-\infty , \underline{\zeta})$ we can compute, using that $U_\ee'(\zeta) \in (0,\alpha)$ for all $\zeta \in \mathbb R$ and $\inversedU (\zeta) = 0$ for $\zeta \le \underline \zeta$ we have that
        \begin{align*}
            \sup_{\zeta < \underline{\zeta}} \left| ( U_{\ee}')^{-1} (\zeta ) - \inversedU (\zeta) \right| = \sup_{\zeta < \underline{\zeta}} \, (U_{\ee}')^{-1} (\zeta )  \leq (U_{\ee}')^{-1} (\underline{\zeta} ) \rightarrow 0 \text{ as } \ee \to 0.
        \end{align*}
        Analogously for the set $( \overline{\zeta}, \infty )$.
        \qedhere
    \end{enumeratesteps}
\end{proof}

Now we are ready to prove convergence of $\widehat\rho^{(\ee)}$ as $\ee \rightarrow 0$ to a stationary weak solution of \eqref{eq:the problem Omega}.

\begin{lemma}\label{lem:Convergence Asymptotics of (Pee)}
    Assume $M \in (0,\alpha|\Omega|)$ and \eqref{eq:U uniformly strictly convex}.
    Let $C_\ee$ and $C_0$ be given by \Cref{thm:Euler-Lagrange}.
    Then, as $\ee \to 0$
    $C_\ee \to C_0$ and
    \begin{equation*}
        \widehat\rho^{(\ee)} = \inversedUee( C_\ee - V) \rightarrow \widehat{\rho}^{(0)} \coloneqq \inversedU (C_0 -V) \quad \text{in } L^1(\Omega).
    \end{equation*}
    Furthermore, $\widehat\rho^{(0)}$ is a constant-in-time weak solution to \eqref{eq:the problem Omega}.
    If $S$ is the \goodSemigroup{} coming from \Cref{thm:left side of D}, then $S_t \widehat\rho^{(0)} = \widehat\rho^{(0)}$.
\end{lemma}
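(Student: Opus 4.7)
The plan is to establish the claim in four stages: first show the constants $C_\ee$ stay bounded, then extract a pointwise-a.e.~limit of $\widehat\rho^{(\ee)}$, identify the limit via the uniqueness in Euler--Lagrange, and finally pass to the limit in the stationary equation.

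For the boundedness of $C_\ee$, I would introduce the mass functions
\[
    F_\ee(C) \coloneqq \int_\Omega (U_\ee')^{-1}(C - V(x))\dx, \qquad F_0(C) \coloneqq \int_\Omega \inversedU(C - V(x))\dx,
\]
which are non-decreasing in $C$, bounded by $\alpha|\Omega|$, and satisfy $F_\ee(C_\ee) = M = F_0(C_0)$. By \Cref{lem:Limit ee to 0} combined with dominated convergence, $F_\ee(C) \to F_0(C)$ pointwise. Since $F_0(C) \to 0$ as $C \to -\infty$ and $F_0(C) \to \alpha|\Omega|$ as $C \to +\infty$, one can sandwich $M$ between $F_0(C_{\min}) < M < F_0(C_{\max})$; for $\ee$ small the same strict inequalities hold for $F_\ee$, and monotonicity forces $C_\ee \in (C_{\min}, C_{\max})$.

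Given a subsequence with $C_{\ee_k} \to C^*$, the delicate point is that \Cref{lem:Limit ee to 0} fails to give \emph{uniform} convergence of $(U_\ee')^{-1}$ at $\underline\zeta$ and $\overline\zeta$, which is exactly where $\inversedU$ has kinks. I plan to circumvent this with a monotone squeeze: for $\delta > 0$ and $k$ large enough that $|C_{\ee_k} - C^*| < \delta$,
\[
    (U_{\ee_k}')^{-1}(C^* - V(x) - \delta) \le (U_{\ee_k}')^{-1}(C_{\ee_k} - V(x)) \le (U_{\ee_k}')^{-1}(C^* - V(x) + \delta).
\]
Sending $k \to \infty$ using the pointwise convergence of \Cref{lem:Limit ee to 0} and then $\delta \to 0$ using the continuity of $\inversedU$ on $\R$ gives $\widehat\rho^{(\ee_k)} \to \inversedU(C^* - V)$ a.e.~in $\Omega$, and dominated convergence upgrades this to $L^1(\Omega)$. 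Passing to the limit in the mass constraint, $\int_\Omega \inversedU(C^* - V) = M$, and the uniqueness clause of \Cref{thm:Euler-Lagrange} forces $C^* = C_0$. A subsequence-of-subsequence argument then promotes the convergence to the full family, yielding $C_\ee \to C_0$ and $\widehat\rho^{(\ee)} \to \widehat\rho^{(0)}$ in $L^1(\Omega)$.

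For the stationarity, each $\widehat\rho^{(\ee)}$ is a constant-in-time weak solution of \eqref{eq:the problem regularised} by \Cref{th:ee continuous limit}, and $\Phi_\ee(\widehat\rho^{(\ee)})$ is uniformly bounded in $H^1(\Omega)$ thanks to the stationary equation together with \Cref{lem:nabla phi(rho) a priori}. Since $\mob_\ee \to \mob$ and $\Phi_\ee \to \Phi$ in $C([0,\alpha])$ by \eqref{eq:mob uniform} and \eqref{eq:stronger convergence of Phiee}, the stability \Cref{lem:L1 stability of weak solutions} identifies $\widehat\rho^{(0)}$ as a (constant-in-time) weak solution of \eqref{eq:the problem Omega}. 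Finally, to show $S_t \widehat\rho^{(0)} = \widehat\rho^{(0)}$, I would start from $S_t^{(\ee_k)}\widehat\rho^{(\ee_k)} = \widehat\rho^{(\ee_k)}$, add and subtract $S_t^{(\ee_k)}\widehat\rho^{(0)}$, and use the $L^1$-contraction of $S_t^{(\ee_k)}$ together with the convergence $S_t^{(\ee_k)}\widehat\rho^{(0)} \to S_t \widehat\rho^{(0)}$ supplied by \Cref{thm:left side of D}. The main obstacle throughout is Step~2: handling the non-uniform convergence of $(U_\ee')^{-1}$ at $\underline\zeta, \overline\zeta$, resolved by the squeeze combined with the global continuity of $\inversedU$ on $\R$.
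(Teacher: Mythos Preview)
Your proof is correct and follows essentially the same route as the paper: boundedness of $C_\ee$, subsequential limit via \Cref{lem:Limit ee to 0} and a monotone squeeze, identification of the constant through the mass constraint and the uniqueness clause of \Cref{thm:Euler-Lagrange}, then \Cref{lem:L1 stability of weak solutions} for the stationary weak formulation and the $L^1$-contraction for the semigroup fixed point. Your write-up is in fact slightly cleaner in two spots---you sandwich $C_\ee$ directly via $F_\ee(C_{\min})<M<F_\ee(C_{\max})$ where the paper argues by contradiction, and you run the monotone squeeze uniformly in $x$ rather than splitting $\Omega$ into $\{x:\widetilde C - V(x)\neq\underline\zeta,\overline\zeta\}$ and its complement---but these are cosmetic simplifications of the same argument.
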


\begin{proof}
    We divide the proof into several steps.
    \begin{enumeratesteps}
        \step[{The set $\left\lbrace C_{\ee} \, : \, 0 < \ee < 1 \right\rbrace$ is bounded}] Let us argue by contradiction. Assume there exists a sequence $C_{\ee_k}$ such that $C_{\ee_k} \searrow - \infty$. Then, for every $\zeta \in \R$ there exists a constant $N_0 (\zeta )$ such that
        \begin{equation*}
            C_{\ee_k} - V(x) < \zeta \qquad \text{for all } x \in \overline{\Omega}, \, k \geq N_0 (\zeta ).
        \end{equation*}
        Hence, for $k \geq N_0 (\zeta )$,
        \begin{equation*}
            M = \int_{\Omega} \widehat{\rho}^{(\ee_k)} \leq | \Omega | \left( U_{\ee_k}' \right)^{-1} (\zeta ) .
        \end{equation*}
        Due to \Cref{lem:Limit ee to 0} we pass to the limit in $k \to \infty$ to recover
        \begin{equation*}
            M \le |\Omega| \, T_{[0,\alpha]} \circ (U')^{-1} (\zeta).
        \end{equation*}
        As $\zeta \to -\infty$ we recover $M = 0$, a contradiction.
        Therefore, the set $\left\lbrace C_{\ee} \, : \, 0 < \ee < 1 \right\rbrace$ is bounded from below.

        Similarly, using that $\lim_{\zeta \rightarrow \infty } \left( U_{\ee_k}' \right)^{-1} (\zeta ) = \alpha$, and arguing by contradiction, we are able to prove that $\left\lbrace C_{\ee} \, : \, 0 < \ee < 1 \right\rbrace$ is bounded from above.

        \step[Convergence] Since the set $\left\lbrace C_{\ee} \, : \, 0 < \ee < 1 \right\rbrace \subset \mathbb R$ is bounded there is a sequence $C_{\ee_k}$ and a constant $\widetilde{C}$ such that $C_{\ee_k} \rightarrow \widetilde{C}$. In the following we will prove that $\widehat{\rho}^{(\ee_k)} \rightarrow \inversedU (\widetilde{C} - V )$ in $L^1 (\Omega)$.

        First, we prove point-wise convergence, we separate the domain into two subsets
        \begin{align*}
            A_1 & := \left\lbrace x \, : \, \widetilde{C} - V(x) \neq \underline{\zeta},  \overline{\zeta} \right\rbrace , \qquad
            A_2 := \Omega \setminus A_1.
        \end{align*}
        For $x \in A_1$, \Cref{lem:Limit ee to 0} implies uniform convergence in a neighbourhood of $\widetilde{C} - V(x)$, and in particular it also implies
        \begin{equation*}
            \widehat{\rho}^{(\ee_k)} (x) \rightarrow \inversedU (\widetilde{C} - V(x)).
        \end{equation*}
        For $x \in A_2$ we use a monotonicity argument. Let us take any $\oneconstant > 0$ such that $\widetilde C - V(x) \pm \lambda \ne \underline{\zeta}, \overline{\zeta}$. Then, there exist $\oneconstant_\ee \to \oneconstant$ such that $C_\ee - V(x) \pm \oneconstant_\ee \ne \underline{\zeta}, \overline{\zeta}$. We observe
        \begin{equation*}
            (U_{\ee_k}')^{-1} (C_{\ee_k} - V(x) - \oneconstant_{\ee_k} ) \leq \widehat{\rho}^{(\ee_k)} (x) \leq (U_{\ee_k}')^{-1} (C_{\ee_k} - V(x) + \oneconstant_{\ee_k} ).
        \end{equation*}
        Now, let $k \rightarrow \infty$ We recover that
        \begin{equation*}
            \inversedU ( \widetilde{C} - V(x) - \oneconstant ) \leq \liminf_{k \rightarrow\infty} \widehat{\rho}^{(\ee_k)} (x) \leq \limsup_{k \rightarrow\infty} \widehat{\rho}^{(\ee_k)} (x) \leq \inversedU ( \widetilde{C} - V(x) + \oneconstant ) .
        \end{equation*}
        The function $\inversedU$ is continuous. Therefore, as $\oneconstant \to 0^+$, we have that
        \begin{equation*}
            \lim_{k \rightarrow\infty} \widehat{\rho}^{(\ee_k)} (x) = \inversedU (\widetilde{C} - V(x) ) ,
        \end{equation*}
        from where it follows pointwise convergence in $\Omega$. Furthermore, since $0 \leq \widehat{\rho}^{(\ee_k)} \leq \alpha$, by the Dominated Convergence Theorem it follows that
        \begin{equation*}
            \widehat{\rho}^{(\ee_k)} \rightarrow \inversedU (\widetilde{C} - V ) \quad \text{in } L^1 (\Omega) .
        \end{equation*}
        Due to conservation of mass and \Cref{thm:Euler-Lagrange}, we have that $\widetilde C = C_0(M)$.

        \step[Convergence of the whole sequence]
        Every sequence $C_{\ee_k}$ has a convergent subsequence and the limit is unique, so $C_{\ee} \rightarrow C_0$ and
        \begin{equation*}
            \widehat{\rho}^{(\ee)} \rightarrow \inversedU (C_0 - V) \quad \text{in } L^1(\Omega ).
        \end{equation*}

        \step[$\widehat{\rho}^{(0)}$ is a constant-in-time weak solution]
        Due to \Cref{lem:L1 stability of weak solutions}, $\widehat{\rho}^{(0)}$ is a weak solution of the problem \eqref{eq:the problem Omega} and it does not depend on time.

        \step[$\widehat{\rho}^{(0)}$ is stationary for the semigroup]
        Given that $S^{(\ee_k)}_t \widehat \rho^{(\ee_k)} = \widehat \rho^{(\ee_k)}$, due to the $L^1(\Omega)$ convergence, as  $k \to \infty$ we get $S_t \widehat \rho^{(0)} = \widehat \rho^{(0)}$.
        \qedhere
    \end{enumeratesteps}
\end{proof}

We conclude this section with the proof of the main theorem.

\begin{proof}[Proof of \Cref{thm:Asymptotic (P0)}]
    \ref{item:Semi-group local minimiser} and \ref{item:Limit of steady states} follow as a consequence of \Cref{lem:Convergence Asymptotics of (Pee)}.
    Lastly, to prove
    \ref{item:minimiser free energy},
    from \Cref{thm:Euler-Lagrange} it follows that $\widehat{\rho}^{(0)}$ is the unique $L^1$-local minimiser of the free energy \eqref{eq:Free eenrgy ee=0 Omega}. Moreover, if we combine this result with \Cref{prop:Free energy bound from below} we also have that $\widehat{\rho}^{(0)}$ is the unique global minimiser.
\end{proof}

\subsection{Examples with several steady states with non-trivial basin of attraction}\label{sec:malicious counterexamples}

During this subsection we prove that the diagram \eqref{Diagram} presented in \Cref{sec:Main results} is not commutative. In order to do that, we construct a counterexample. We take $U(s) = \frac{1}{m-1} s^m$ with $m > 1$ and $V$ a double well potential. First, we explain the counterexample for the case with no saturation and linear mobility, and then we generalise it to the problem with saturation.

\subsubsection{Linear mobility}
\label{sec:Malicious example linear}
Let us discuss first the following example.
For $m > 1$ we consider the
famous Barenblatt solution of mass $M$
\[
    \mathfrak B (x, M) = \left( \frac{m-1}{m} \left(  C - \frac{|x|^2}{2}\right) \right)_+^{\frac{1}{m-1}}, \qquad \text{ where $C$ is s.t. } \int_\Rd \mathfrak B(x,M) d x = M.
\]
Since $m > 1$, $\supp \mathfrak B (\cdot, 2) = B_{\widehat R}$ for some $\widehat{R} > 0$. Take $x_1, x_2 \in \mathbb R^d$ with $|x_1 - x_2| > 2\widehat R$ and consider
$$
    \overline \rho (x) = \mathfrak B (x-x_1,2) + \mathfrak B (x - x_2,2). 
$$
We also consider
$$
    V(x) = \begin{cases}
        \frac{|x - x_i|^2}{2} & \text{if } |x - x_i|  \le {\widehat R} \\
        \frac{|x|^2}{2} & \text{if } |x| \gg 1 \\
        \text{smooth} & \text{in the intermediate regions.}
    \end{cases}
$$
For $0 \leq \rho_0 \le \overline \rho$, by the comparison principle the semigroup solution $S_t \rho_0$ is the unique weak solution to
\begin{equation} 
\label{eq:PME FP}
    \frac{\partial \rho}{\partial t} = \Delta \rho^m + \diver( \rho \nabla V )  \qquad \mathrm{in} \, (0, \infty ) \times \mathbb R^d . 
\end{equation}
Due to the comparison principle we get $0 \leq \rho_t \le \overline \rho$. 
The $\rho_t$ is also a solution to $\frac{\partial \rho}{\partial t} = \Delta \rho^m + \diver( \rho (x-x_i) )$ in each ball $B(x_i, \widehat R)$ with Dirichlet and no-flux boundary condition $(\nabla \rho^m + \rho \nabla V) \cdot \nu = 0$ in any set $\Omega$ such that $\overline \Omega \supset B(x_1, \widehat R) \cup B(x_2, \widehat R)$.

We notice that there is no mass exchange between these two balls $B(x_i, \widehat R)$, $i=1,2$. Consider $M_1, M_2 \le 1$ and $0 \leq \rho_0 \le \overline \rho$  such that
\begin{equation}\label{eq:Counterexample initial mass}
    \int_{x_1 + B_{\widehat R}} \rho_0 = M_1 , \qquad \int_{x_2 + B_{\widehat R}} \rho_0 = M_2.
\end{equation}
We show an example of such initial datum in \Cref{fig:Double_Well_V}.
Then, from \cite{Carrillo_Toscani00} it follows that
\begin{equation}\label{eq:Saddle point}
    S_\infty \rho_0 =\mathfrak B(x - x_1, M_1) + \mathfrak B(x - x_2, M_2).
\end{equation}
See \Cref{fig:Double_Well_V}.
Notice that, naturally, $0 \leq S_\infty \rho_0 \le \overline \rho$.
We show a numerical example in \Cref{fig:numerical experiment non-minimising}.

In particular, if $M_1 \ne M_2$ then $S_\infty \rho_0$ satisfies the Euler-Lagrange condition \eqref{eq:EL for ee = 0} with different constants in each component of its support. Hence, $S_\infty \rho_0$ is not an $L^1$-local minimiser of the free energy. Nevertheless, it attracts some initial data.

\begin{figure}[htb!]
    \centering
    \includegraphics[width=0.8\textwidth]{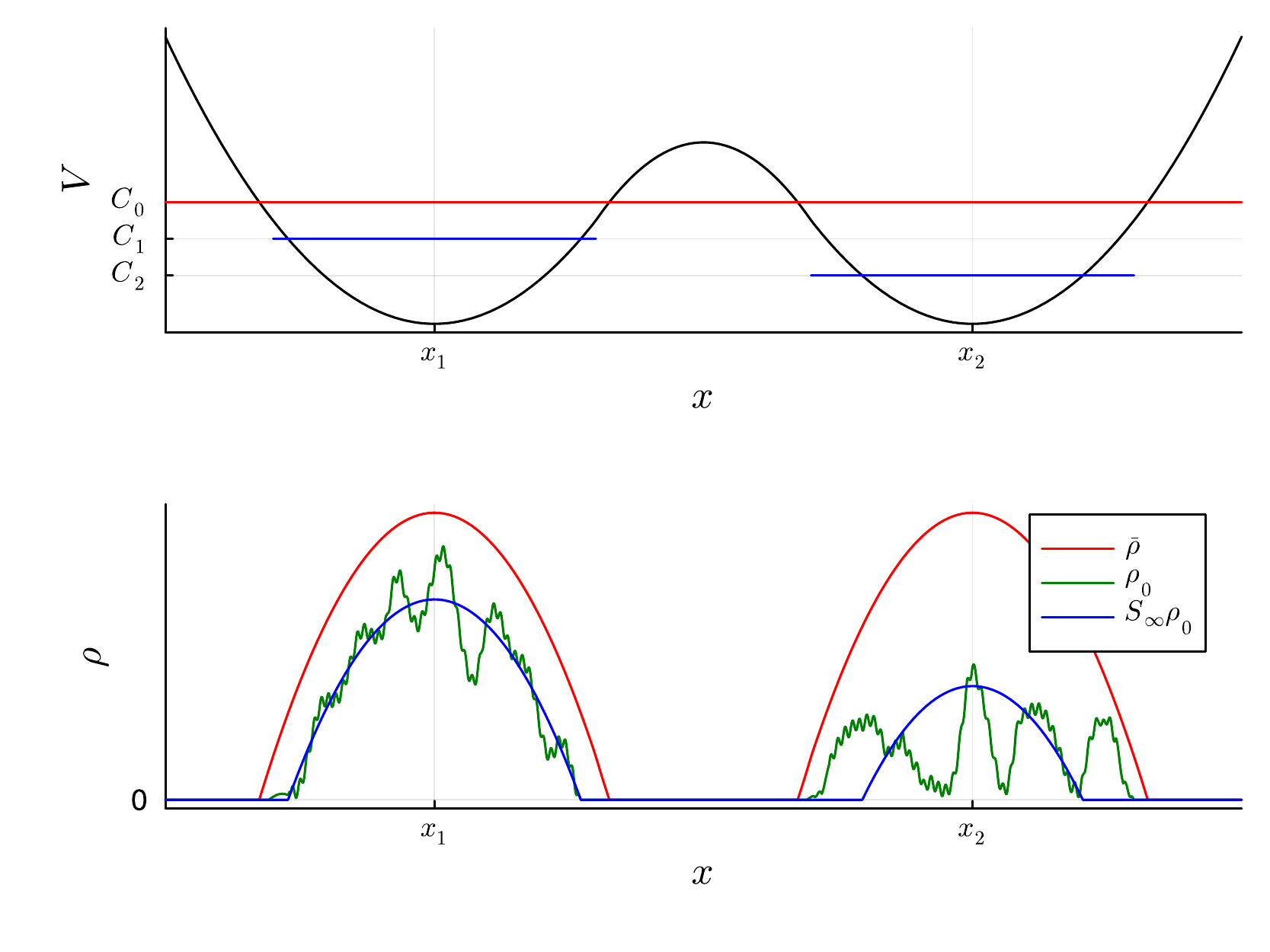}
    \caption{Double-well potential and the levels of energy. Since the initial data $\rho_0$ (green) is such that $0 \leq \rho_0 \leq \overline{\rho}$ (red), it converges to $S_\infty \rho_0$ (blue).}
    \label{fig:Double_Well_V}
\end{figure}

\begin{remark}
    This example can be generalised to $U$ such that $U'$ invertible with $U'(0) = 0$, and potentials $V$ that are uniformly convex, in the sense that 
    $$
        D^2 V \geq \lambda I
    $$
    for some $\lambda > 0$, in a region $B(x_1 , \widehat{R}) \cup B(x_2 , \widehat{R} ) \subset \overline{\Omega}$. Then, from \cite{Carrillo_McCann_Villani03}, we have that
    \[
        S_t \rho_0 \to S_\infty \rho_0 , \qquad \text{ in } \mathcal{W}_2 (\Omega) \text{ as } t \to \infty .
    \]
    Using this convergence result and the $2$-Wasserstein distance, we can reconstruct the previous example for more choices of the potential $V$. 
\end{remark}

\begin{remark}[Local minimiser in $p$-Wasserstein spaces]
    \label{rem:W infty minimisers}
    Let us make $M_1 + M_2 = 1$ and $M_1 \ne M_2$.
    Even though $\widehat \rho$ is not a $L^1$ local minimiser, or even a $2$-Wasserstein local minimiser, it is a local minimiser in the $\infty$-Wasserstein sense, see e.g., \cite{Carrillo_Delgadino_Patacchini19}.
    The $p$-Wasserstein cost of moving a mass $\mathfrak m$ from $B(x_1,\widehat R)$ to $B(x_2,\widehat R)$ can be estimated by
    $$
        \mathfrak m^{\frac 1 p} \min_{\substack{ x \in B(x_1,\widehat R) \\ y \in B(x_2, \widehat R) }} |x-y| 
        \le cost_p 
        \le \mathfrak m^{\frac 1 p} \max_{\substack{ x \in B(x_1,\widehat R) \\ y \in B(x_2, \widehat R) }} |x-y|.
    $$
    Hence, for $M$ fixed the curve of steady states
    \begin{equation}
    \label{eq:curve of non-minimising steady states}
        M_1 \in (0, M) \mapsto \mathfrak B (x-x_1,M_1) + \mathfrak B (x - x_2,M - M_1)
    \end{equation}
    is continuous in $p$-Wasserstein distance for $p \in [1,\infty)$.
    As $p \to \infty$ we get an infinite cost to pass any mass, so \eqref{eq:curve of non-minimising steady states} is not continuous in $\infty$-Wasserstein distance. We show this curve and the effect on the free energy in \Cref{fig:Mass_saddle_point}.
    The minimum is achieved for the case $M_1 = \frac{M}{2}$, i.e., $C_1 = C_2$.  
    \Cref{fig:Mass_saddle_point} also shows that (for these examples) the cases $M_1 \neq M_2$ (i.e., $C_1 \neq C_2$) are saddle points of the free energy \eqref{eq:Free eenrgy ee=0 Omega} with respect to the $L^1$ topology.
    
    Furthermore, even though $\widehat \rho$ is not a critical point of the $2$-Wasserstein gradient flow it is indeed stationary, and in fact a saddle point. We show that it has a large basin of attraction.

    \begin{figure}[htb!]
    \centering
    \includegraphics[width=1.0\textwidth]{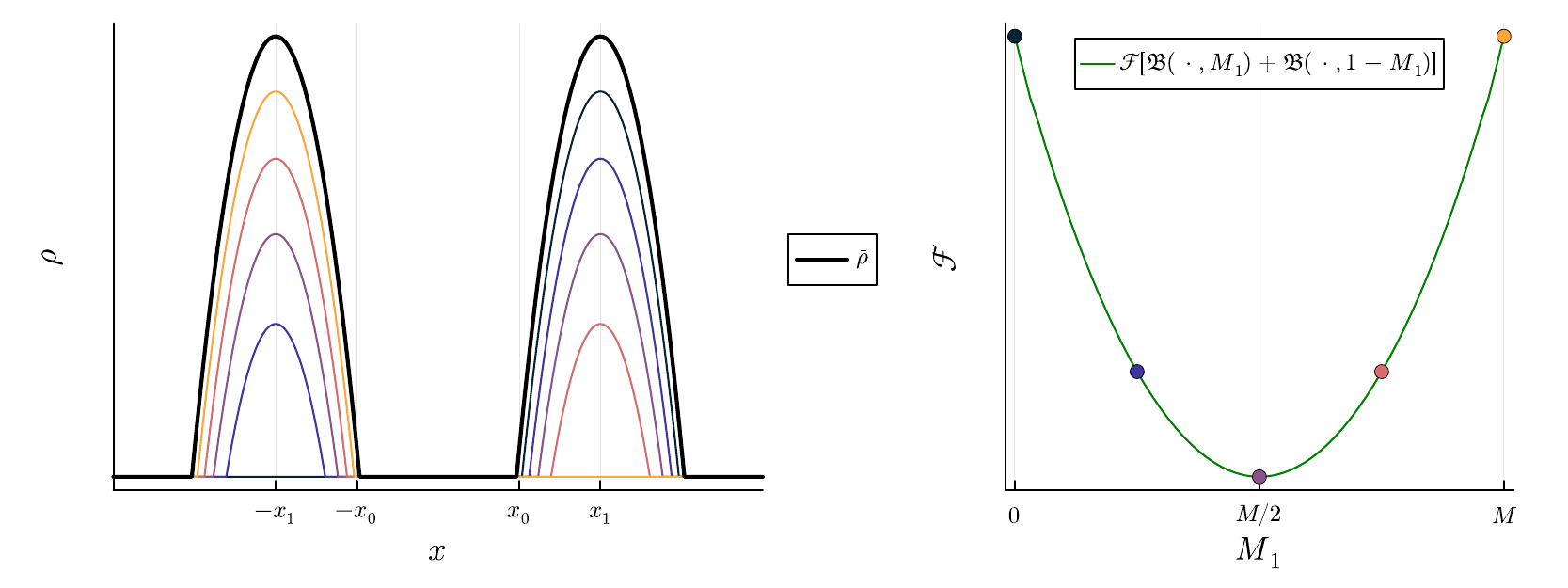}
    \caption{Free energy of \eqref{eq:Saddle point} for different values of $M_1$.}
    \label{fig:Mass_saddle_point}
\end{figure}
\end{remark}

\subsubsection{Extension to the saturation case}\label{sec:Malicious example saturation}

Using the linear mobility case, we extend the counterexample, and we show that \eqref{Diagram} is not a commutative diagram. We keep the notation from the previous subsection.

For the construction of this counter-example let us set $\alpha = 4 \sup_{x \in \overline{\Omega}} \overline{\rho}$, and the mobility $\mob (s) = \mob^{(1)}(s)\mob^{(2)}(s)$ with,
\begin{equation}\label{eq:double well mobility}
	\mob^{(1)} (s) 
	=
	\begin{dcases}
		s & \text{if } \quad 0 \leq s \leq \frac{\alpha}{4}, \\
		\alpha & \text{if } \quad \frac{3\alpha }{4}  \leq s \leq \alpha,
	\end{dcases}
        \qquad \mob^{(2)} (s) 
	=
	\begin{dcases}
		1 & \text{if } \quad 0 \leq s \leq \frac{\alpha}{4}, \\
		1 - \frac s \alpha & \text{if } \quad \frac{3\alpha}{4}   \leq s \leq \alpha,
	\end{dcases}
\end{equation}
and regular for $s \in \left(\frac{\alpha}{4}, \frac{3 \alpha}{4} \right)$. 
Notice that with the construction, if $\rho \le \overline \rho$ then $\mob(\rho) = \rho$ and thus, solving \eqref{eq:PME FP} is equivalent to solving
\begin{equation} 
\label{eq:Saturation PME FP}
    \partial_t \rho = \diver \left( \mob(\rho) \nabla \left( \frac{m}{m-1} \rho^{m-1} + V \right)\right).
\end{equation}
Taken $V$ as in the previous section,
$\overline \rho$ and $S_\infty \rho_0$ are also steady solutions of
\eqref{eq:Saturation PME FP}.
In fact, $S_\infty \rho_0$ is still an attractor for all initial data $0 \leq \rho_0 \le \overline \rho$ satisfying the mass condition \eqref{eq:Counterexample initial mass}.

Finally, let us remark that we can also study this effect at $\rho=\alpha$. 
Let us consider the same construction of the non-linear mobility $\mob(s)$. If $\rho$ is a solution of \eqref{eq:Saturation PME FP} then $u = \alpha - \rho$ solves
\begin{equation}
\label{eq:Saturation PME FP on alpha}
    \partial_t u = \diver \left( \mob(u) \nabla ( U'(u) - V )\right),
\end{equation}
where $U'(s) = \tfrac{-m}{m-1} (\alpha - s)^{m-1}$.
The comparison principle for this problem is inherited through the change of variables.
We can use the sub-solution
$\underline{u} = \alpha - \overline{\rho}$ to be a sub-solution of the problem \eqref{eq:Saturation PME FP on alpha}.
If $\alpha \geq u_0 \ge \underline u$ with correct mass $M_1$ in $B(x_1, \widehat R)$ and mass $M_2$ in $B(x_2, \widehat R)$ then its limit is $u_\infty = \alpha - S_\infty \rho_0$. Once more, there are different constants in each part of the support, so it is not an $L^1$-local minimiser.

Moreover, these examples can be generalised to larger families of potentials $U$ and $V$.

\section{An upwind numerical scheme}\label{sec:Numerical Analysis}

In this Section we study the numerical method  \eqref{def:Numerical method}. For simplicity, we restrict ourselves to the case of dimension $1$. However, these results can be extended to $\Omega \subset \Rd$ with $d > 1$, see e.g., \cite{Cances_Venel23}.

We consider a variation of the numerical method from \cite{BCH23} that can be also adapted to the regularised problem, i.e., $\ee > 0$. In \cref{sec:Numerical method} we recall some of the basic properties of the method from \cite{BCH23}. Here we also study well-posedness, convergence of the regularised method \eqref{eq:problem regularised discrete} to \eqref{def:Numerical method} as $\ee \rightarrow 0$, and convergence from the discrete to continuous solutions as $\Delta \rightarrow 0$.

Afterwards, in \cref{sec:Discrete regalurised stationary state} we study the long-time behaviour. We find the global attractors of \eqref{eq:problem regularised discrete}, and we study their limit as $\ee \rightarrow 0$. Nevertheless, analogous to the phenomenon observed in the continuous case, this last limit might not be the unique constant-in-time solution \eqref{def:Numerical method}. In  \cref{sec:Num counterexample}, applying the same strategy from \cref{sec:malicious counterexamples}, we construct examples of a different constant-in-time solution of \eqref{def:Numerical method} with a large basin of attraction. Therefore, the back face of the diagram \eqref{Numeric Diagram} is not commutative for the numerical method \eqref{def:Numerical method} and its regularisation \eqref{eq:problem regularised discrete} either.

Finally, in \cref{sec:Numerical experiments} we perform some numerical experiments.

\subsection{The numerical method. Presentation and analysis}\label{sec:Numerical method}

In \cite{BCH23} the authors propose the scheme \eqref{def:Numerical method} with $\mob^{(1)} (s) = s$, and $U$ locally bounded. In this subsection we present and recall some of the properties of the scheme. For a variation of the method that suits our case better, we obtain existence, uniqueness, and a comparison principle. Later, we also show convergence of solutions of \eqref{eq:problem regularised discrete} to a solution of \eqref{def:Numerical method} as $\ee \rightarrow 0$. This is all included in \Cref{thm:scheme well-posedness}.
Furthermore, under high regularity of the continuous solutions, we show that the discrete solution of \eqref{eq:problem regularised discrete} and \eqref{def:Numerical method} converges to the one of the continuous case \eqref{eq:the problem regularised} and \eqref{eq:the problem Omega} respectively as $\Delta\rightarrow 0$, \Cref{thm:Disc to continuous}.
Therefore, this section contains the analysis of the left face of the diagram \eqref{Numeric Diagram}, i.e., the results concerning the numerical scheme at finite time. 
Let us first prove the lemma on the decomposition of $\mob$
\begin{proof}[Proof of \Cref{lem:existence of decomposition}]
    Our problem is equivalent to decomposing $\log \mob$ as a the sum of a non-decreasing and non-increasing function. Since $\frac{\diff}{\diff s} \log \mob (s) = \frac{\mob'(s)}{\mob(s)}$ we can simply expand the Fundamental Theorem of Calculus
    \begin{equation*}
        \log \mob (s)
        =
        \log \mob(\tfrac{\alpha}{2}) +
        \int_{\frac \alpha 2}^s \frac{(\mob'(\sigma))_+}{\mob(\sigma)} \diff \sigma +
        \int_{\frac \alpha 2}^s \frac{(\mob'(\sigma))_-}{\mob(\sigma)} \diff \sigma
    \end{equation*}
    Therefore, we can write
    \begin{equation*}
        \mob^{(1)}(s) = \mob(\tfrac{\alpha}{2}) \exp\left(\int_{\frac \alpha 2}^s \frac{(\mob'(\sigma))_+}{\mob(\sigma)} \diff \sigma  \right)  ,
        \qquad
        \text{and}
        \qquad
        \mob^{(2)}(s) = \exp\left( \int_{\frac \alpha 2}^s \frac{(\mob'(\sigma))_-}{\mob(\sigma)} \diff \sigma \right).
    \end{equation*}
    Under the additional assumption, let the neighbourhoods be $(0,\delta)$ and $(\alpha - \delta, \alpha)$.
    Due to the assumption
    $\frac{\diff m^{(1)}}{\diff s} = 0$ in $(\alpha - \delta, \alpha)$ and $\frac{\diff m^{(2)}}{\diff s} = 0$ is constant in $(0,\delta)$.
    We can write
    \begin{equation*}
        \frac{\diff m^{(1)}}{\diff s} (s) = \mob^{(1)}(s) \frac{(\mob'(s))_+}{\mob(s)} = \frac{(\mob'(s))_+}{\mob^{(2)}(s)}
    \end{equation*}
    In $(0,\delta)$ we have $\frac{\diff \mob^{(1)}}{\diff s} (s) = \mob'(s) / \mob^{(2)}(0^+)$, so it is bounded. Similarly for $\mob^{(2)}$.
\end{proof}

In \cite[Proposition 2.2]{BCH23}, the authors prove that the scheme preserves boundedness and non-negativity, 
i.e., $0 \le \rho^n_i \le \alpha$. 
We show this also holds for our scheme.

We write the system \eqref{def:Numerical method} as $H(\rho^{n+1}) = \rho^{n}$ where
\begin{equation}\label{def:Numerical method step}
    H_i(\rho) = \rho_i + \Delta t \frac{F_{i+\frac{1} 2 }(\rho) - F_{i-\frac{1} 2 }(\rho)}{\Delta x}
\end{equation}
We show first a continuous dependence result and comparison principle.
\begin{lemma}[Continuous dependence]
    \label{lem:scheme comparison}
    Let $\underline\rho, \overline \rho \in \mathcal A_{\Delta}$,
    and that for some
    $H : Q \to \mathbb R^{|I|}$
    where
    \[
        Q = \Big \{ \rho \in \mathbb R^{|I|} : \min\{\underline\rho_i ,\overline \rho_i \} \le \rho_i \le \max\{ \underline\rho_i , \overline \rho_i \} \Big \}
    \]
    and $H$ is such that it satisfies:
    \begin{enumeratetheorem}
        \item
        We have mass conservation in the sense that
        $
            \sum_{i} H_i(\rho)= \sum_i \rho_i
        $
        for all
        $\rho \in Q.$

        \item
        \label{item:H monotonicity}
        In the weak sense we have the monotonicity condition:
        $
            \frac{\partial H_i}{\partial s_j} (\rho) \le 0 $  for all $j \ne i$ and $\rho \in Q.$
    \end{enumeratetheorem}
    Assume that $\underline\rho, \overline \rho$ are almost solutions in the sense that
    \[
        |H_i( \underline\rho) - \underline f_i| \le \underline g_i, \qquad |H_i(\overline \rho) - \overline f_i| \le \overline g_i .
    \]
    Then, we have that
    \begin{equation}
        \label{eq:scheme L1 continuous dependence}
        \sum_i |\underline\rho_i - \overline \rho_i| \le \sum_i |\underline f_i - \overline f_i| + 2 \sum_i \max\{ \underline g_i, \overline g_i \},
    \end{equation}
    and
    \[
        \sum_i (\underline\rho_i - \overline \rho_i)^+ \le \sum_i (\underline f_i - \overline f_i)^+ + 2 \sum_i \max\{\underline g_i, \overline g_i \}.
    \]
\end{lemma}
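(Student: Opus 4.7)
The whole statement is an $L^1_\Delta$-contraction-type inequality and my plan is to reduce it to a purely algebraic fact about a matrix whose off-diagonal entries are non-positive and whose columns sum to one. Differentiating the mass-conservation identity $\sum_i H_i(\rho) = \sum_i \rho_i$ with respect to $\rho_j$ gives $\sum_i \frac{\partial H_i}{\partial s_j}(\rho) = 1$ for every $j$, and combined with hypothesis \ref{item:H monotonicity} this forces $\frac{\partial H_j}{\partial s_j}(\rho) \ge 1$. Since $H$ is a composition of smooth/Lipschitz pieces restricted to the compact box $Q$, it is Lipschitz there; writing
\[
H(\underline\rho) - H(\overline\rho) \;=\; A(\underline\rho - \overline\rho), \qquad A \coloneqq \int_0^1 DH\bigl(\theta\underline\rho + (1-\theta)\overline\rho\bigr)\,d\theta,
\]
by the fundamental theorem of calculus and Rademacher, the matrix $A$ inherits $A_{ij} \le 0$ for $i \ne j$ and $\sum_i A_{ij} = 1$ for every $j$ (so $A_{jj} \ge 1$).

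\textbf{Step A (contraction of $H$).} Set $d = \underline\rho - \overline\rho$ and $e = Ad$. I will prove the clean estimate $\|d\|_{L^1_\Delta} \le \|e\|_{L^1_\Delta}$ by a sign trick: pick $\xi_i \in \{-1,+1\}$ with $\xi_i d_i = |d_i|$, so $\|d\|_{L^1_\Delta} = \sum_i \xi_i d_i$. Then
\[
\sum_i \xi_i e_i \;=\; \sum_j d_j \Bigl( \sum_i \xi_i A_{ij} \Bigr).
\]
For each $j$, I would split the inner sum into the diagonal term $\xi_j A_{jj}$ and the off-diagonal terms; using $A_{ij} \le 0$ for $i \ne j$, the inequality $\xi_i A_{ij} \ge A_{ij}$ always holds, so the inner sum is $\ge \sum_i A_{ij} = 1$ when $\xi_j = +1$, and the dual argument yields $\le -1$ when $\xi_j = -1$. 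Hence $d_j \bigl( \sum_i \xi_i A_{ij} \bigr) \ge |d_j|$ term-by-term, and $\|d\|_{L^1_\Delta} \le \sum_i \xi_i e_i \le \|e\|_{L^1_\Delta}$.

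\textbf{Step B (defect estimate).} From the approximate-solution bounds I write $H_i(\underline\rho) = \underline f_i + \underline\delta_i$ and $H_i(\overline\rho) = \overline f_i + \overline\delta_i$ with $|\underline\delta_i| \le \underline g_i$, $|\overline\delta_i| \le \overline g_i$, so
\[
|H_i(\underline\rho) - H_i(\overline\rho)| \;\le\; |\underline f_i - \overline f_i| + \underline g_i + \overline g_i \;\le\; |\underline f_i - \overline f_i| + 2\max\{\underline g_i, \overline g_i\}.
\]
Summing over $i$ and inserting into Step A yields \eqref{eq:scheme L1 continuous dependence}. For the one-sided inequality I rerun Step A with the choice $\xi_i = \mathbbm{1}_{\{d_i > 0\}} \in \{0,1\}$: when $d_j > 0$ the inner sum $\sum_i \xi_i A_{ij}$ is still $\ge 1$ by the same monotonicity argument, and when $d_j \le 0$ it is $\le 0$, so $d_j \bigl( \sum_i \xi_i A_{ij} \bigr) \ge (d_j)^+$ in either case; this gives $\sum_i (d_i)^+ \le \sum_i (e_i)^+$. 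Combined with the elementary bound $(a+b)^+ \le (a)^+ + |b|$ and the defect estimate above, I conclude the one-sided statement.

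\textbf{Main obstacle.} The algebraic content (the sign trick in Step A) is standard for monotone schemes; the only technical point worth checking is the integral representation in Step A when the monotonicity $\frac{\partial H_i}{\partial s_j} \le 0$ is only known in the weak/a.e.\ sense. This is handled by the Lipschitz regularity of $H$ on the compact box $Q$ (which follows from \eqref{hyp:mobility for numerics}, the regularity of $U'$ or $U_\ee'$ on the range of $\rho$, and \ref{hyp:H_V}) together with Fubini, so I do not expect a genuine difficulty there.
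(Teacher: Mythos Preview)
Your argument is correct, but the route is genuinely different from the paper's. You linearise along the segment between $\overline\rho$ and $\underline\rho$, produce the averaged Jacobian $A$ with column sums equal to $1$ and non-positive off-diagonal entries, and then run a sign-vector (Kato-type) argument to get $\sum_i|\underline\rho_i-\overline\rho_i|\le\sum_i|H_i(\underline\rho)-H_i(\overline\rho)|$ and its one-sided analogue directly. The paper instead uses a lattice argument in the spirit of Crandall--Tartar: it evaluates $H$ at the coordinatewise $\min\{\underline\rho,\overline\rho\}$ and $\max\{\underline\rho,\overline\rho\}$ (both in $Q$), uses the off-diagonal monotonicity to bound $H_i(\min)\ge\min\{\underline f_i,\overline f_i\}-\max\{\underline g_i,\overline g_i\}$ and $H_i(\max)\le\max\{\underline f_i,\overline f_i\}+\max\{\underline g_i,\overline g_i\}$, and then applies the mass identity \emph{pointwise} to $\max$ and $\min$ rather than to derivatives. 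For the one-sided bound the paper does not prove $\sum_i(d_i)^+\le\sum_i(e_i)^+$; instead it combines the absolute-value estimate with the algebraic identity $a^+=\tfrac12(|a|+a)$ and a separate use of mass conservation on $\sum_i(\underline\rho_i-\overline\rho_i)$.

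What each approach buys: the paper's min/max proof never differentiates $H$ and never integrates along a segment, so it needs nothing beyond coordinatewise monotonicity and the mass identity on $Q$; this sidesteps the Rademacher/chain-rule issue you flag as your main obstacle. Your approach packages the mechanism as a clean matrix inequality ($\|d\|_1\le\|Ad\|_1$ for $M$-matrix-type $A$ with unit column sums), which is reusable and makes the one-sided contraction transparent, at the mild cost of justifying the integral representation when $H$ is only Lipschitz. In the concrete setting of \eqref{def:Numerical method} and \eqref{eq:problem regularised discrete} that justification is routine (piecewise $C^1$ with lower-dimensional non-smooth set), so both proofs are equally rigorous here.
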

\begin{proof}
    Due to the conservation of mass we have
    $
        \sum_i \frac{\partial H_i}{\partial s_j} = 1,
    $
    so $\frac{\partial H_j}{\partial s_j} \ge 0$.
    Abusing slightly the notation, we write $H_i (\rho) = H_i (\rho_i, (\rho_j)_{j\ne i})$. With the monotonicity indicated
    \[
        H_i \Big(\underline\rho_i, (\min\{ \underline\rho_j , \overline \rho_j \} )_{j\ne i}\Big) \ge H_i (\underline\rho) \ge  \underline f_i - \underline g_i, \qquad  H_i \Big(\overline \rho_i, (\min\{ \underline \rho_j , \overline \rho_j \} )_{j\ne i}\Big) \ge H_i (\overline \rho) \ge \overline  f_i - \overline g_i.
    \]
    We have that
    \[
        H_i \Big((\min\{ \underline\rho_j , \overline \rho_j \} )_{j \in I}\Big) =
        \begin{dcases}
            H_i \Big(\underline\rho_i, (\min\{ \underline\rho_j , \overline \rho_j \} )_{j\ne i}\Big), & \text{if } \underline\rho_i \le \overline \rho_i , \\
            H_i \Big(\overline \rho_i, (\min\{\underline \rho_j , \overline \rho_j \} )_{j\ne i}\Big), & \text{if } \overline\rho_i \le \underline\rho_i .
        \end{dcases}
    \]
    In either case $H_i (\min\{\underline\rho,\overline \rho\}) \ge \min\{\underline f_i, \overline  f_i\} - \max \{\underline g_i, \overline g_i\} $.
    Similarly, we get $H_i (\max\{\underline \rho,\overline \rho\}) \le \max\{ \underline f_i,  \overline  f_i \} + \max \{\underline g_i, \overline g_i\}$.
    Then, we write
    \begin{align*}
        \sum_{i \in I} \left( \max\{ \underline\rho_i, \overline \rho_i \} - \min\{\underline \rho_i, \overline \rho_i \} \right)
         & = \sum_{i \in I} \Big( H_i (\max\{\underline \rho,\overline \rho\}) - H_i (\min\{\underline \rho,\overline \rho\})  \Big)                      \\
         & \le \sum_{i \in I} (\max\{\underline f_i, \overline f_i\} - \min\{\underline f_i, \overline f_i\} + 2 \max\{\underline g_i, \overline g_i\} ).
    \end{align*}
    Taking into account that $|a-b| = \max\{a,b\} - \min \{a,b\}$ we recover
    \[
        \sum_{i \in I} |\underline \rho_i - \overline \rho_i|
        \le \sum_{i \in I} |\underline f_i - \overline f_i| + 2\sum_{i \in I} \max\{\underline g_i, \overline g_i\} .
    \]
    Similarly to before, we also have due to mass conservation that
    \[
        \sum_{i \in I} (\underline \rho_i - \overline \rho_i) = \sum_{i \in I} (H_i (\underline \rho) - H_i(\overline \rho_i)) \le
        \sum_{i \in I} (\underline f_i - \overline f_i) + \sum_{i \in I} (\underline g_i + \overline g_i) .
    \]
    Using that $a^+ = \frac{|a| + a}{2}$ we deduce that
    \[
        \sum_{i \in I} (\underline \rho_i - \overline \rho_i)^+
        \le \sum_{i \in I} (\underline f_i - \overline{f}_i)^+ + 2 \sum_{i\in I} \max\{\underline g_i, \overline g_i\} . \qedhere
    \]
\end{proof}

Thanks to this result, we recover the uniqueness and the comparison principle.

\begin{lemma}[Comparison principle]
    \label{lem:discrete monotonicity}
    In the hypotheses of \Cref{lem:scheme comparison}
    if $H_i(\underline \rho) \le H_i(\overline \rho)$ for all $i$, then $\underline \rho_i \le \overline \rho_i$ for all $i$.
\end{lemma}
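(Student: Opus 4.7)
The plan is to deduce this comparison principle as an immediate corollary of \Cref{lem:scheme comparison}. The hypotheses on $H$ (mass conservation and off-diagonal monotonicity on the relevant box $Q$) are exactly the ones needed to invoke that lemma, so the task reduces to choosing the data $\underline f, \overline f, \underline g, \overline g$ cleverly and reading off the consequence.

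Concretely, I would set $\underline f_i \coloneqq H_i(\underline\rho)$ and $\overline f_i \coloneqq H_i(\overline\rho)$, with $\underline g_i = \overline g_i = 0$. With these choices, the hypotheses of \Cref{lem:scheme comparison} are satisfied trivially (the ``almost solution'' bounds become equalities), so the second estimate of that lemma yields
\[
\sum_{i \in I} (\underline\rho_i - \overline\rho_i)^+ \;\le\; \sum_{i \in I} \bigl(H_i(\underline\rho) - H_i(\overline\rho)\bigr)^+ .
\]
By the assumption $H_i(\underline\rho) \le H_i(\overline\rho)$ for every $i$, the right-hand side vanishes, hence $(\underline\rho_i - \overline\rho_i)^+ = 0$ for each $i \in I$, which is exactly the desired pointwise inequality $\underline\rho_i \le \overline\rho_i$.

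There is no real obstacle here; the work was already done in proving \Cref{lem:scheme comparison}, where the clever step was to compare both $\underline\rho$ and $\overline\rho$ with the auxiliary vectors $\min\{\underline\rho,\overline\rho\}$ and $\max\{\underline\rho,\overline\rho\}$ using the off-diagonal monotonicity together with the nonnegativity of diagonal derivatives forced by mass conservation ($\sum_i \partial_{s_j} H_i = 1$ combined with $\partial_{s_j}H_i \le 0$ for $i\ne j$ gives $\partial_{s_j}H_j \ge 0$). The only minor point to double-check is that the box $Q$ from \Cref{lem:scheme comparison} is exactly the right domain of $H$ in the setting of \eqref{def:Numerical method step}, which follows because $\underline\rho,\overline\rho \in \mathcal A_{\Delta}$ and $H$ is defined (and $C^1$) on a neighbourhood of $Q \subset \mathcal A_\Delta$.
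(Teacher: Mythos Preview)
Your proof is correct and essentially identical to the paper's: define $\underline f_i = H_i(\underline\rho)$, $\overline f_i = H_i(\overline\rho)$, $\underline g_i = \overline g_i = 0$, apply the positive-part estimate from \Cref{lem:scheme comparison}, and conclude from $(\underline f_i - \overline f_i)^+ = 0$. The only caveat is your aside that $H$ is ``$C^1$'' on a neighbourhood of $Q$; in the concrete scheme \eqref{def:Numerical method step} $H$ is only Lipschitz (due to the positive/negative parts in the upwinding), but \Cref{lem:scheme comparison} already covers this via the weak-sense monotonicity assumption, so nothing is lost.
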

\begin{proof}
    Define $\underline f_i \coloneqq H_i(\underline \rho)$ and $\overline f_i \coloneqq H_i(\overline \rho)$. By assumption $\underline f_i \le \overline f_i$. Therefore, we deduce that $(\underline f_i-\overline f_i)^+ = 0$. Hence, we recover
    \[
        \sum_i (\underline \rho_i - \overline \rho_i)^+ \le 0,
    \]
    and we deduce $\overline \rho_i \ge \underline \rho_i$.
\end{proof}

In particular, the up-winding scheme that we have chosen is such that $H$ satisfies mass conservation, conservation of non-negativity, and it is monotone.

\begin{lemma}
    \label{lem:scheme is monotone}
    If we take $F$ from \eqref{def:Numerical method}, then the corresponding $H$ defined in
    \begin{equation}
        H_i(\lambda, \rho) = \rho_i + \lambda \Delta t  \frac{F_{i+\frac{1} 2 }(\rho) - F_{i-\frac{1} 2 }(\rho)}{\Delta x},
    \end{equation}
    satisfies the hypothesis of \Cref{lem:scheme comparison} for $\lambda \ge 0$.
\end{lemma}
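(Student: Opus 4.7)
The plan is to verify the two hypotheses of \Cref{lem:scheme comparison} for $H$ as defined, separately.

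First, for mass conservation, I would simply sum $H_i(\lambda,\rho)$ over $i \in I$: the flux differences telescope, and the boundary identities $F_{1/2}(\rho) = F_{N+1/2}(\rho) = 0$ built into \eqref{def:Numerical method} kill the endpoint contributions, so $\sum_i H_i(\lambda,\rho) = \sum_i \rho_i$ for every $\lambda \ge 0$. This step is essentially definitional.

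Second, for the off-diagonal monotonicity, note that the stencil of $F_{i\pm 1/2}$ is $\{\rho_{i-1}, \rho_i, \rho_{i+1}\}$, so $H_i$ depends only on those three variables and it suffices to check that $\partial H_i/\partial \rho_{i+1} \le 0$ and $\partial H_i/\partial \rho_{i-1} \le 0$. Since the kink set $\{v_{i\pm 1/2} = 0\}$ has measure zero, I would compute these derivatives almost everywhere, splitting into the cases $v_{i\pm 1/2} > 0$ and $v_{i\pm 1/2} < 0$; in each case exactly one of the two summands in the flux contributes locally, and the up-winding precisely dictates which mobility sits at the node being differentiated in. The two ingredients to combine are the convexity of $U$, giving $\partial v_{i+1/2}/\partial \rho_{i+1} = -U''(\rho_{i+1})/\Delta x \le 0$ and $\partial v_{i-1/2}/\partial \rho_{i-1} = U''(\rho_{i-1})/\Delta x \ge 0$, together with the signs in \eqref{hyp:mobility for numerics}: $\mob^{(1)} \ge 0$ is non-decreasing and $\mob^{(2)} \ge 0$ is non-increasing. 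For instance, when $v_{i+1/2} > 0$ one computes
\[
\partial F_{i+1/2}/\partial \rho_{i+1} = \mob^{(1)}(\rho_i)\bigl[(\mob^{(2)})'(\rho_{i+1})\, v_{i+1/2} + \mob^{(2)}(\rho_{i+1})\, \partial v_{i+1/2}/\partial \rho_{i+1}\bigr],
\]
and both summands are manifestly non-positive. The three remaining sign-combinations (the case $v_{i+1/2} < 0$ for $j=i+1$, and both cases for $j = i-1$) work symmetrically, each time leveraging the fact that the up-winding pairs $(\mob^{(1)})' \ge 0$ with $v \ge 0$ and $(\mob^{(2)})' \le 0$ with $v \ge 0$ (or the analogous matched pair for $v \le 0$).

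I do not anticipate any substantive obstacle. The only minor subtlety is the interpretation of the derivative at the measure-zero kink set $\{v_{i\pm 1/2} = 0\}$; this is absorbed by reading \ref{item:H monotonicity} in the weak (a.e.)~sense, or equivalently by observing that $H_i$ is Lipschitz in $\rho_{i\pm 1}$ with non-positive a.e.~derivative, hence non-increasing — which is precisely what is actually used in the proof of \Cref{lem:scheme comparison}.
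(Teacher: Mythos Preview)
Your proposal is correct and follows essentially the same approach as the paper: telescoping plus no-flux for mass conservation, then a case split on the sign of $v_{i\pm 1/2}$ to compute a.e.~derivatives of $F_{i\pm 1/2}$, using $U''\ge 0$, $(\mob^{(1)})'\ge 0$, and $(\mob^{(2)})'\le 0$ to get the signs. The paper organises the computation by first showing $\partial F_{i+1/2}/\partial \rho_i \ge 0$ and $\partial F_{i+1/2}/\partial \rho_{i+1} \le 0$ and then reading off $\partial H_i/\partial \rho_{i\pm 1}$, but this is the same calculation you outline.
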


This lemma applies also for \eqref{eq:problem regularised discrete}.

\begin{proof}
    The mass conservation follows almost immediately from the telescopic sum and the no-flux condition
    \begin{align*}
        \sum_{i \in I} H_i (\rho) = \sum_{i \in I} \rho_i + \lambda \frac{\Delta t}{\Delta x} \sum_{i \in I} \left( F_{i + \frac{1}{2}} (\rho) - F_{i - \frac{1}{2}} (\rho) \right) = \sum_{i \in I} \rho_i + \lambda \frac{\Delta t}{\Delta x} \Big(F_{N+\frac{1}{2}} (\rho) - F_{\frac{1}{2}} (\rho) \Big) = \sum_{i \in I} \rho_i.
    \end{align*}
    Let us now prove the monotonicity of $H$, by computing the derivative $\frac{\partial H_{i}}{\partial \rho_j}$.
    We proceed by taking derivatives in each term of the composition. First, we compute
    \begin{align*}
        \allowdisplaybreaks
        \frac{\partial \xi_i}{\partial \rho_j} & =
        \begin{dcases}
            U''(\rho_i) & \text{if } j = i, \\
            0           & \text{otherwise}
        \end{dcases}
        \qquad
        \frac{\partial v_{i+\frac 1 2}}{\partial \rho_j} =
        \begin{dcases}
            -\frac{ U''(\rho_{i+1})}{\Delta x} & \text{if } j = i+1, \\
            \frac{ U''(\rho_{i})}{\Delta x}    & \text{if } j = i,   \\
            0                                  & \text{otherwise}.
        \end{dcases}
    \end{align*}
    The function $F_{i+\frac 12}$ is Lipschitz (but not smoother due to the presence of the positive and negative part), but we can suitably differentiate it to obtain (using the Kronecker delta notation)
    \begin{align*}
        \frac{\partial F_{i+\frac 1 2}}{\partial \rho_j} & =
        (\mob^{(1)})' (\rho_i) \delta_{ij} \mob^{(2)}(\rho_{i+1}) ( v_{i+\frac 1 2} (\rho) )^+
        \\
                                                         & \quad + \mob^{(1)} (\rho_i)  (\mob^{(2)})'(\rho_{i+1}) \delta_{i+1,j} (v_{i+\frac 1 2} (\rho))^+
        \\
                                                         & \quad + \mob^{(1)} (\rho_i)  \mob^{(2)}(\rho_{i+1})  \sign^+(v_{i+\frac 1 2} (\rho)) \frac{\partial v_{i+\frac 1 2}}{\partial \rho_j}    \\
                                                         & \quad
        +(\mob^{(1)})' (\rho_{i+1}) \delta_{i+1,j} \mob^{(2)}(\rho_{i}) ( v_{i+\frac 1 2} (\rho) )_-
        \\
                                                         & \quad + \mob^{(1)} (\rho_{i+1})  (\mob^{(2)})'(\rho_{i}) \delta_{ij} (v_{i+\frac 1 2} (\rho))_-
        \\
                                                         & \quad + \mob^{(1)} (\rho_{i+1})  \mob^{(2)}(\rho_{i})  \sign_-(v_{i+\frac 1 2} (\rho)) \frac{\partial v_{i+\frac 1 2}}{\partial \rho_j}.
    \end{align*}
    Therefore, using $(\mob^{(1)})' \ge 0$ and $(\mob^{(2)})' \le 0$  we have that
    \[
        \frac{\partial F_{i+\frac 1 2}}{\partial \rho_i} =
        \begin{dcases}
            (\mob^{(1)})' (\rho_i) \mob^{(2)} (\rho_{i+1}) v_{i+\frac{1}{2}} (\rho) + \mob^{(1)} (\rho_i) \mob^{(2)} (\rho_{i+1}) \frac{\partial v_{i+\frac 1 2}}{\partial \rho_i} \ge 0     & \text{if } v_{i+\frac 1 2} \ge 0 , \\
            \mob^{(1)} (\rho_{i+1})  (\mob^{(2)})'(\rho_{i}) v_{i+\frac 1 2} (\rho) + \mob^{(1)} (\rho_{i+1})  \mob^{(2)}(\rho_{i})  \frac{\partial v_{i+\frac 1 2}}{\partial \rho_i} \geq 0 & \text{if } v_{i+\frac 1 2} < 0 .
        \end{dcases}
    \]
    Similarly, $\frac{\partial F_{i+\frac 1 2}}{\partial \rho_{i+1}} \le 0$. It is trivial to see that $\frac{\partial F_{i+\frac 1 2}}{\partial \rho_j}=0$ if $j \ne i, i+1$.
    Lastly, we can compute
    \[
        \frac{\partial H_i}{\partial \rho_j} =
        \begin{dcases}
            \lambda \frac{\Delta t}{\Delta x}\frac{\partial F_{i+\frac 1 2}}{\partial \rho_{i+1}} \le 0  & \text{if } j = i+1,           \\
            -\lambda \frac{\Delta t}{\Delta x}\frac{\partial F_{i-\frac 1 2}}{\partial \rho_{i-1}} \le 0 & \text{if } j = i-1,           \\
            0                                                                                            & \text{if } j \ne i-1, i, i+1.
        \end{dcases}
    \]
    This concludes the proof.
\end{proof}

\begin{lemma}[Constant sub and super solution]
    \label{lem:discrete positive sub solution}
    Assume $\ee \in [0,1]$.
    Let $F^\ee$ be given as in \eqref{eq:problem regularised discrete}. We define
    \begin{equation}
        \label{eq:H with lambda}
        H^\ee_i(\lambda, \rho) = \rho_i + \lambda \Delta t  \frac{F^\ee_{i+\frac{1} 2 }(\rho) - F^\ee_{i-\frac{1} 2 }(\rho)}{\Delta x},
    \end{equation}
    and we take $c^n \in (0,\alpha)$.
    Then, there exist $\underline c^{n+1}, \overline c^{n+1} \in (0,\alpha) $ such that for $\underline\rho = (\underline c^{n+1}, \cdots, \underline c^{n+1})$ and $\overline \rho = (\overline c^{n+1}, \cdots, \overline c^{n+1}) \in \mathbb{R}^{|I|}$ satisfy
    \[
        H_i^\ee(\lambda , \underline\rho) \le c^n\le H_i^\ee(\lambda , \overline \rho)
    \]
    for any $i \in I$, $\lambda \in [0,1]$.
\end{lemma}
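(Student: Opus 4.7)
The plan is to exploit the fact that \eqref{hyp:mobility for numerics} and \eqref{hyp:Sublinear saturation} force $\mobee(0) = \mobee(\alpha) = 0$, so that constant vectors very close to the endpoints $0$ and $\alpha$ barely move under one step of the scheme. The first step is to observe that for a constant input $\rho = (c, \ldots, c)$ the scheme simplifies considerably: since $\rho_i = \rho_{i+1} = c$ for all $i$, one has $\xi^\ee_i(\rho) - \xi^\ee_{i-1}(\rho) = V(x_i) - V(x_{i-1})$, so $v^\ee_{i+\frac12}(\rho) = -(V(x_{i+1}) - V(x_i))/\Delta x$ is independent of $c$ and bounded by $\|V'\|_{L^\infty}$. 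Moreover, because $\rho_i = \rho_{i+1}$, the upwind flux collapses to
\[
F^\ee_{i+\frac12}(\rho) = \mobeend(c)\mobeeni(c)\bigl( (v^\ee_{i+\frac12})^+ + (v^\ee_{i+\frac12})^-\bigr) = \mobee(c)\, v^\ee_{i+\frac12},
\]
while $F^\ee_{\frac12}(\rho) = F^\ee_{N+\frac12}(\rho) = 0$ by the no-flux condition.

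From here, the next step is a uniform pointwise bound. Setting $K \coloneqq 2\|V'\|_{L^\infty}/\Delta x$, for every $i \in I$ and every $\lambda \in [0,1]$ the identity $H^\ee_i(\lambda,\rho) - c = \lambda \frac{\Delta t}{\Delta x}\bigl(F^\ee_{i+\frac12}(\rho) - F^\ee_{i-\frac12}(\rho)\bigr)$ together with the previous computation yields
\[
\bigl| H^\ee_i(\lambda,\rho) - c \bigr| \;\le\; \Delta t\cdot \mobee(c)\cdot K.
\]
Notice that $K$ depends only on $V$ and $\Delta x$, and in particular is independent of $\ee$, $\lambda$, $c$ and $i$.

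Finally, we choose the two constants using the degeneracy of $\mobee$ at the endpoints. Since $\mobee \in C([0,\alpha])$ with $\mobee(0) = 0$, the continuous map $s \mapsto s + \Delta t\, \mobee(s)\, K$ vanishes at $s = 0$, so we can pick $\underline c^{n+1} \in (0, c^n)$ sufficiently small so that $\underline c^{n+1} + \Delta t \, \mobee(\underline c^{n+1})\, K \le c^n$; combined with the uniform bound above this gives $H^\ee_i(\lambda, \underline\rho) \le c^n$ for every $i$ and every $\lambda \in [0,1]$. Symmetrically, since $\mobee(\alpha) = 0$, the map $s \mapsto s - \Delta t\, \mobee(s)\, K$ tends to $\alpha$ as $s \to \alpha^-$, so we can choose $\overline c^{n+1} \in (c^n, \alpha)$ close enough to $\alpha$ so that $\overline c^{n+1} - \Delta t\, \mobee(\overline c^{n+1})\, K \ge c^n$, yielding $H^\ee_i(\lambda, \overline\rho) \ge c^n$ for every $i$ and $\lambda$.

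There is no real obstacle here: the lemma is essentially a transparent consequence of the saturation hypothesis $\mobee(0) = \mobee(\alpha) = 0$, which degenerates the transport at both endpoints and so makes near-constant states close to $0$ or $\alpha$ act as sub/supersolutions. The only point that requires a bit of care is to make the estimate uniform in $\lambda \in [0,1]$, but this is automatic because the right-hand side of the pointwise bound is monotone in $\lambda$.
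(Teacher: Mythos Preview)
Your proof is correct and follows essentially the same route as the paper: observe that for a constant vector the upwind flux collapses to $\mobee(c)\,v^\ee_{i+\frac12}$, bound the resulting drift by $\Delta t\,\mobee(c)\cdot 2\|V'\|_{L^\infty}/\Delta x$, and then use $\mobee(0)=\mobee(\alpha)=0$ together with continuity (Bolzano) to select $\underline c^{n+1}$ and $\overline c^{n+1}$.

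One small point worth noting: the paper's proof replaces $\mobee(c)$ by $\sup_{\ee\in(0,1]}\mobee(c)$ in the scalar inequality, so that the constants $\underline c^{n+1},\overline c^{n+1}$ it produces are \emph{uniform in $\ee$}. This is not required by the statement of the lemma as written, but the uniformity is used later (in the proof of \Cref{thm:scheme well-posedness}) to pass to the limit $\ee\to 0$ for $\rho^0\in\mathcal A_{\Delta,+}$. Your argument yields $\underline c^{n+1},\overline c^{n+1}$ that may a priori depend on $\ee$; the fix is immediate---replace $\mobee(c)$ by $\sup_{\ee\in[0,1]}\mobee(c)$ in your pointwise bound, which is still continuous in $c$ and vanishes at $0$ and $\alpha$ since $(\ee,s)\mapsto\mobee(s)\in C([0,1]\times[0,\alpha])$.
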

\begin{proof}
    Consider $\ee \in (0,1]$.
    Notice that if $\rho_i = \underline c^{n+1}$ for all $i \in I$ then $v_{i + \frac 1 2}^\ee = - \frac{V(x_{i+1}) - V(x_i)}{\Delta x}$.
    Therefore, must only satisfy
    \begin{align*}
         & \underline c^{n+1} + \lambda (\Delta t) \mobee(\underline c^{n+1}) \frac{v_{i+\frac 1 2}^\ee - v_{i-\frac 1 2}^\ee}{\Delta x} \le c^n ,
        \quad  \text{ for } 2 \leq i \leq N-1 ,                                                                                                    \\
         & \underline c^{n+1} + \lambda (\Delta t) \mobee(\underline c^{n+1}) \frac{ v_{1+\frac 1 2}^\ee}{\Delta x} \le c^n , \qquad
        \underline c^{n+1} -
        \lambda (\Delta t) \mobee(\underline c^{n+1}) \frac{v_{N - \frac 1 2}^\ee}{\Delta x} \le c^n.
    \end{align*}
    Since we are in the discrete setting, we can simply set the equation
    \[
        \underline c^{n+1} + 2 (\Delta t) \frac{\|\nabla V\|_{L^\infty}}{\Delta x} \sup_{\ee \in (0,1]}\mobee(\underline c^{n+1})  \le c^n.
    \]
    We are trying to solve a problem of the form $F(\underline c^{n+1}) \le c^n$.
    Since the family $(\ee, s) \mapsto \mobee (s)$ is $C([0,1] \times [0,\alpha])$, $F$ is a continuous function.
    We have $F(0) = 0$ and $F(s) \ge 0$ for $s \in [0,\alpha]$.
    By Bolzano's Theorem, there exists $\underline c^{n+1} > 0$ such that $F(\underline{c}^{n+1}) \le c^n$. Thus, the result also holds for $\ee = 0$.

    For the super-solution we apply the same argument to
    \[
        \alpha - \overline c^{n+1} + \Delta t \, 2 \frac{\|\nabla V\|_{L^\infty}}{\Delta x} \sup_{\ee \in (0,1]}\mobee( \overline c^{n+1}) \le \alpha - c^n. \qedhere
    \]
\end{proof}

\begin{remark}
    Notice that this sub and super solutions do not pass to the limit when $\Delta \to 0$. The key problem is the behaviour at the endpoints.
    If we assume that $V \in  C^3(\overline \Omega)$ and $\nabla V \cdot {n} = 0$ then one can write a better equation for the sub and super solutions, depending on $\Delta V$ and $D^3 \rho$, that will, in fact, converge to the constant sub and super solutions
    \[
        \frac{d\underline c}{dt} = - \mob(\underline c) \|\Delta V \|_{L^\infty}, \qquad  \frac{d\overline c}{dt} = \mob(\overline c) \|\Delta V \|_{L^\infty}.
    \]
    Notice that if $\mob$ is not Lipschitz at $0$ or $\alpha$, there can be finite-time extinction to $0$ or $\alpha$.
\end{remark}

We can also show that the problem has energy dissipation, a simplified version of \cite[Theorem 2.4]{BCH23} for the case $W = 0$ which we include for the convenience of the reader.

\begin{lemma}[Energy dissipation]\label{lem:disc energy dissipation}
    Let $\ee \geq 0$ and $\lambda \ge 0$. Assume $\rho \in \R^{|I|}$ with $\Ediscee[\rho] < \infty$ satisfies
    $
        H^\ee(\lambda, \rho) = \rho^{\ee, n}$,
    then $\Ediscee[\rho] \leq \Ediscee [\rho^{\ee, n}]$.
\end{lemma}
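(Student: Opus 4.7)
The plan is to exploit convexity of $U_\ee$, linearity of the $V$-contribution in $\rho$, and the sign structure engineered into the upwinded flux. The proof splits into three short steps.

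First, the tangent-line inequality for the convex function $U_\ee$ yields $U_\ee(\rho_i) - U_\ee(\rho_i^{\ee,n}) \le U_\ee'(\rho_i)(\rho_i - \rho_i^{\ee,n})$ whenever $\rho_i, \rho_i^{\ee,n} \in [0,\alpha]$ and $U_\ee$ is differentiable at $\rho_i$. Adding the exact identity $V(x_i)(\rho_i-\rho_i^{\ee,n})$ coming from the confinement part and summing $\Delta x \sum_i$ gives
\begin{equation*}
\Ediscee[\rho] - \Ediscee[\rho^{\ee,n}] \le \Delta x \sum_{i \in I} \xi^\ee_i(\rho)\,(\rho_i - \rho_i^{\ee,n}), \qquad \xi^\ee_i(\rho) = U_\ee'(\rho_i) + V(x_i).
\end{equation*}
Next, substitute $\rho_i - \rho_i^{\ee,n} = -\lambda\,\Delta t\,(F^\ee_{i+1/2}(\rho) - F^\ee_{i-1/2}(\rho))/\Delta x$ from the scheme $H^\ee(\lambda,\rho) = \rho^{\ee,n}$ and perform a discrete integration by parts, using the no-flux boundary conditions $F^\ee_{1/2} = F^\ee_{N+1/2} = 0$ and the identity $\xi^\ee_{i+1} - \xi^\ee_i = -\Delta x \,v^\ee_{i+1/2}(\rho)$. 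This rearranges the right-hand side to $-\lambda\,\Delta t\,\Delta x \sum_{i=1}^{N-1} v^\ee_{i+1/2}(\rho)\,F^\ee_{i+1/2}(\rho)$.

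Third, check that each product $v^\ee_{i+1/2} F^\ee_{i+1/2}$ is non-negative. Decomposing $v = v^+ + v^-$ with $v^+\ge 0$, $v^- \le 0$, and using $v\cdot v^{\pm} = (v^{\pm})^2$, the upwind flux formula yields
\begin{equation*}
v^\ee_{i+1/2}\, F^\ee_{i+1/2} = \mobee^{(1)}(\rho_i)\mobee^{(2)}(\rho_{i+1}) \bigl((v^\ee_{i+1/2})^+\bigr)^2 + \mobee^{(1)}(\rho_{i+1})\mobee^{(2)}(\rho_i) \bigl((v^\ee_{i+1/2})^-\bigr)^2 \ge 0,
\end{equation*}
because $\mobee^{(1)}, \mobee^{(2)} \ge 0$ on $[0,\alpha]$. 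This is precisely the design purpose of the upwinding, and combined with the previous step it delivers $\Ediscee[\rho] \le \Ediscee[\rho^{\ee,n}]$.

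The only delicate point is the use of $U_\ee'(\rho_i)$ in the first step, which requires $\rho_i \in (0,\alpha)$: for $\ee > 0$ this is automatic since the scheme keeps iterates strictly inside via \Cref{lem:discrete positive sub solution} and the comparison principle in \Cref{lem:discrete monotonicity}; in the $\ee = 0$ case under the additional hypothesis $U \in C^1([0,\alpha])$, the tangent-line inequality extends to the closed interval and the argument goes through unchanged. Every other ingredient is routine bookkeeping.
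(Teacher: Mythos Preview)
Your proof is correct and follows essentially the same approach as the paper: convexity of $U_\ee$ for the tangent-line bound, substitution of the scheme relation, summation by parts using the no-flux conditions, and the sign identity $v^\ee_{i+1/2}\,F^\ee_{i+1/2}\ge 0$ coming from the upwind structure. Your added remark on where $U_\ee'(\rho_i)$ is well defined (strict interior for $\ee>0$, the $C^1([0,\alpha])$ hypothesis for $\ee=0$) is a welcome clarification that the paper leaves implicit.
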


\begin{proof}
    Since $U_\ee$ is convex
    \begin{align*}
        \Ediscee[\rho] - \Ediscee[\rho^{\ee, n}]
         & = \Delta x\sum_i (U_\varepsilon(\rho_i) - U_\varepsilon(\rho_i^{\ee , n})) + \sum_i V(x_i) (\rho_i - \rho_i^{\ee , n})
        \\
         & \le \Delta x\sum_i (U'_\ee(\rho_i) + V(x_i)) \frac{\rho_i - \rho^{\ee , n}_i}{\Delta t}
        = - \lambda \Delta x \sum_i \xi_i^\ee (\rho)\frac{F_{i+\frac 1 2}^\ee (\rho) - F_{i-\frac 1 2}^\ee (\rho)}{\Delta x}      \\
         & = \lambda \Delta x \sum_i \frac{\xi_{i+1}^\ee (\rho) - \xi_i^\ee (\rho)}{\Delta x} F_{i+\frac 1 2}^\ee (\rho)          \\
         & 
        = -\lambda \Delta x \sum_i \left(
        \mobee^{(1)} (\rho_i) \mobee^{(2)} (\rho_{i+1} )  (( v_{i+ \frac{1}{2}} (\rho))^+)^2 +  \mobee^{(1)} (\rho_{i+1}) \mobee^{(2)} (\rho_{i} )  (( v_{i+ \frac{1}{2}} (\rho))^-)^2 \right)
        \le 0
        . \qedhere
    \end{align*}
\end{proof}

In the next Lemma  we discuss the existence of a solution of the numerical scheme.
We recall the notion of topological degree in $\mathbb R^{|I|}$, see e.g.,  \cite{deimling1985NonlinearFunctionalAnalysis}.
The topological degree is a function $\degree :X \to \mathbb Z$ where
\[
    X = \{ (f, D, y) : D \subset \mathbb R^d \text{ is open and bounded, } f: \overline D \to \mathbb R^d \text{ continuous } , y \in \mathbb R^d \setminus f(\partial D) \}.
\]
We use three key properties. The first one is that $\degree(\mathrm{id},D, y) = 1$.
The second one is that if $h: [0,1] \times \overline D \to \mathbb R^d$ is continuous and $y \notin h(\lambda, \partial D)$ then
\[
    \degree( h(\lambda, \cdot), D, y ) \text{ is constant in } \lambda.
\]
Lastly, we use the fact that
\begin{theorem}[{\cite[Theorem 3.1]{deimling1985NonlinearFunctionalAnalysis}}]
    If $\degree(f, D, y) \ne 0$, then there exists $x \in D$  such that $f(x) = y$.
\end{theorem}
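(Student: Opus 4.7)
The plan is to prove this by contradiction, combining the homotopy-invariance property stated above with an approximation argument to reduce to a $C^1$ map with no preimage of $y$. Assume for contradiction that $f(x) \neq y$ for every $x \in D$. Together with the hypothesis $y \notin f(\partial D)$, this gives $y \notin f(\overline D)$. Since $f$ is continuous and $\overline D$ is compact, $f(\overline D)$ is a compact subset of $\mathbb R^d \setminus \{y\}$, and therefore $\delta \coloneqq \operatorname{dist}(y, f(\overline D)) > 0$.

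The next step would be to approximate $f$ uniformly on $\overline D$ by a $C^1$ map $g$ with $\|f-g\|_{L^\infty(\overline D)} < \delta/2$ (e.g.~by mollification after a Tietze extension). Then $|g(x)-y| \geq \delta/2$ for all $x \in \overline D$, so $g^{-1}(\{y\}) \cap \overline D = \varnothing$; in particular, $y$ is a regular value of $g$ by an empty argument. Using the analytic representation of the degree for $C^1$ maps at regular values one obtains
\[
\deg(g, D, y) = \sum_{x \in g^{-1}(\{y\}) \cap D} \operatorname{sign}\det J_g(x) = 0,
\]
since the sum is empty. Next, consider the linear homotopy $h(\lambda, x) \coloneqq (1-\lambda)f(x) + \lambda g(x)$ for $(\lambda, x) \in [0,1]\times \overline D$. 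Then $|h(\lambda, x) - y| \geq |f(x)-y| - \|f-g\|_{L^\infty} > \delta - \delta/2 = \delta/2$, so $y \notin h([0,1]\times \partial D)$, and the homotopy invariance stated above yields $\deg(f, D, y) = \deg(g, D, y) = 0$, contradicting the hypothesis.

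The main obstacle is that the argument invokes the analytic (Sard-type) formula for the degree at regular values, which is not among the three properties listed in the excerpt; one must either accept it as part of the definition or reprove it. A purely axiomatic alternative would be to exploit that $\mathbb R^d\setminus\{y\}$ is path-connected for $d\geq 2$ to homotope $f$ to a constant map $c \neq y$ while avoiding $y$, and then compute $\deg(\text{const}_c, D, y) = 0$ by a further homotopy which shifts $D$ far from $y$ and uses the normalisation $\deg(\mathrm{id}, D, y) = 1$ together with translation invariance; the one-dimensional case $d=1$ is handled directly by inspection. Either way, the essential point is that the degree at a value not attained by the map must vanish.
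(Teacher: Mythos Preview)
The paper does not prove this statement at all: it is quoted verbatim as \cite[Theorem 3.1]{deimling1985NonlinearFunctionalAnalysis} and used as a black-box tool alongside the normalisation $\deg(\mathrm{id},D,y)=1$ and homotopy invariance. There is therefore no ``paper's own proof'' to compare against; you have supplied a proof where the authors simply cite one.

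That said, your argument is a correct outline of the standard proof, and you have diagnosed its own weak point accurately: the computation $\deg(g,D,y)=0$ via the empty Jacobian sum relies on the analytic definition of the degree, which is not among the three properties the paper isolates. Your alternative route---homotoping $f$ within $\mathbb R^d\setminus\{y\}$ to a constant map and then arguing that the degree of a constant map at a non-attained value is zero---is also standard, though completing it cleanly from just normalisation and homotopy invariance still requires an additional ingredient (typically the excision/additivity axiom, or the fact that the degree depends only on the connected component of $\mathbb R^d\setminus f(\partial D)$ containing $y$). In short: your sketch is sound as a proof of the classical result, but it goes beyond the minimal toolkit the paper lists, which is consistent with the paper's decision to cite rather than prove.
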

Using this tool we prove the following
\begin{lemma}[Existence for \eqref{def:Numerical method}]\label{lem:disc Existence}
    Assume that $U \in C^1([0,\alpha])$,
    and that $\rho^n \in \mathcal A_{\Delta }$.
    Then, there exists $\rho \in \mathcal A_{\Delta }$ that solves \eqref{def:Numerical method}.
\end{lemma}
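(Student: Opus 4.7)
The strategy is finite-dimensional topological degree, following \cite{deimling1985NonlinearFunctionalAnalysis}, via a homotopy from $\mathrm{id}$ to $H$. I would first treat the strict case $\rho^n \in \mathcal A_{\Delta,+}$ and then extend by approximation; continuity of $H$ up to the boundary of $\mathcal A_\Delta$ is precisely what the hypothesis $U \in C^1([0,\alpha])$ provides. In the strict case, set $c_* := \min_i \rho^n_i > 0$ and $c^* := \max_i \rho^n_i < \alpha$; applying \Cref{lem:discrete positive sub solution} with target value $c_*/2$ produces $\underline c \in (0,c_*)$ with $H^0_i(\lambda, \underline c\,\mathbf 1) \le c_*/2$ uniformly in $i$ and $\lambda\in[0,1]$, and analogously an $\overline c \in (c^*,\alpha)$ with $H^0_i(\lambda, \overline c\,\mathbf 1) \ge (c^*+\alpha)/2$. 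On the open box $D := (\underline c,\overline c)^{|I|}$, which contains $\rho^n$, consider the homotopy $h(\lambda,\rho) := H^0(\lambda,\rho)$ from \eqref{eq:H with lambda}. To apply homotopy invariance I need $\rho^n \notin h([0,1]\times\partial D)$. If $\rho \in \partial D$ satisfied $h(\lambda,\rho) = \rho^n$, some coordinate $\rho_{i_0}$ would equal $\underline c$ or $\overline c$; in the first case, the off-diagonal monotonicity $\partial H_i / \partial \rho_j \le 0$ for $j \ne i$ from \Cref{lem:scheme is monotone} applied to $\rho \ge \underline c\,\mathbf 1$ yields
\[
\rho^n_{i_0} \;=\; H^0_{i_0}(\lambda,\rho) \;\le\; H^0_{i_0}(\lambda,\underline c\,\mathbf 1) \;\le\; c_*/2 \;<\; \rho^n_{i_0},
\]
a contradiction, and the upper case is symmetric. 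Hence $\degree(H,D,\rho^n) = \degree(\mathrm{id},D,\rho^n) = 1$, producing $\rho^{n+1} \in D \subset \mathcal A_\Delta$ with $H(\rho^{n+1}) = \rho^n$.

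For general $\rho^n \in \mathcal A_\Delta$ I would approximate by $\rho^{n,\delta} := (1-\delta)\rho^n + \delta(\alpha/2)\mathbf 1 \in \mathcal A_{\Delta,+}$, invoke the strict case to obtain $\rho^{n+1,\delta} \in \mathcal A_\Delta$ with $H(\rho^{n+1,\delta}) = \rho^{n,\delta}$, extract a subsequence converging in the compact set $[0,\alpha]^{|I|}$ to some $\rho^{n+1} \in \mathcal A_\Delta$, and pass to the limit using continuity of $H$ on $\mathcal A_\Delta$.

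The main obstacle is verifying $\rho^n \notin h(\lambda,\partial D)$ uniformly in $\lambda$: this forces the homotopy and the barriers to be handled simultaneously. It rests on two scheme-specific ingredients in concert, the off-diagonal monotonicity of $H$ (\Cref{lem:scheme is monotone}) and the existence of \emph{strict} constant barriers (\Cref{lem:discrete positive sub solution}), both uniform in $\lambda \in [0,1]$. Without strict separation one only obtains weak comparison $\underline c \le \rho_i \le \overline c$, which is insufficient to exclude $\rho \in \partial D$, so the upgrade to a strict gap between $H^0(\lambda,\underline c\,\mathbf 1)$ and $\min_j \rho^n_j$ is essential.
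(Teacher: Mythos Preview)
Your proof is correct, but the paper's route is more direct and avoids the two-case split. Instead of restricting to a box inside $(0,\alpha)^{|I|}$ and later approximating, the paper extends $U'$ and $\mob^{(j)}$ continuously to all of $\mathbb R$ (constant extension of $U'$ outside $[0,\alpha]$, zero extension of the mobilities), defines the extended map $\widetilde H(\lambda,\cdot)$ on $\mathbb R^{|I|}$, and runs the homotopy on a large $\ell^1$-ball $D_R$. The boundary exclusion is then immediate: if $\widetilde H(\lambda,\rho)=\rho^n$ with $\rho\in\partial D_R$, the comparison principle against the constant solutions $0$ and $\alpha$ forces $\rho\in\mathcal A_\Delta$, and mass conservation gives $\sum_i|\rho_i|=\sum_i\rho_i=\sum_i\rho^n_i<R$, a contradiction. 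So $\degree(H,D_R,\rho^n)=\degree(\mathrm{id},D_R,\rho^n)=1$, and the solution found automatically lies in $\mathcal A_\Delta$ by comparison. No strict barriers from \Cref{lem:discrete positive sub solution} and no limiting argument are needed.

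Your strict-case argument is in fact exactly the paper's proof of the \emph{next} lemma, \Cref{lem:disc regularised Existence}, which handles the harder situation $U'\notin C([0,\alpha])$ where one genuinely must stay in the interior. You then bolt on an approximation step to reach the boundary, which works precisely because $U\in C^1([0,\alpha])$ makes $H$ continuous on $\overline{\mathcal A_\Delta}$. So both arguments exploit the hypothesis $U\in C^1([0,\alpha])$, but the paper uses it to extend outward and argue globally, while you use it only at the very end to pass to the limit. The paper's approach is shorter; yours has the minor virtue of never leaving $\mathcal A_\Delta$.
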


\begin{proof}
    First, we consider the extension
    \[
        \widetilde {U'} (\rho) = \begin{dcases}
            U'(0)      & \text{if } s < 0            \\
            U'(s)      & \text{if } s \in [0,\alpha] \\
            U'(\alpha) & \text{if } s > \alpha
        \end{dcases}
        \qquad
        \widetilde {\mob}^{(i)} (\rho) = \begin{dcases}
            0          & \text{if } s < 0            \\
            \mob^{(i)} & \text{if } s \in [0,\alpha] \\
            0          & \text{if } s > \alpha
        \end{dcases}
    \]
    We can extend $\widetilde F_{i+\frac 1 2}$ with these definitions.
    We define
    \[
        \widetilde H_i(\lambda, \rho) = \rho_i + \lambda \Delta t \frac{\widetilde F_{i+\frac{1} 2 }(\rho) - \widetilde F_{i-\frac{1} 2 }(\rho)}{\Delta x}  \qquad \text{ for } \lambda \in [0,1] \text{ and } \rho \in \mathbb R^{|I|}.
    \]
    Now we need to pick $D$ such that $\widetilde H(\lambda, \partial D) \not \ni y$ for any $\lambda \in [0,1]$. We look at the one-parameter family of open sets
    \[
        D_R =
        \left\{ \rho \in \mathbb R^{|I|} : \sum_{i\in I}|\rho_i| < R \right\}.
    \]
    Let $\rho^{n} = \widetilde H(\lambda, \rho)$ with $\rho \in \partial D_R$.
    Because of how we have constructed the extension, we can apply \Cref{lem:scheme comparison} to $\widetilde H(\lambda, \cdot)$.
    Since $ \underline \rho= (0,\cdots, 0)$ and $ \overline \rho = (\alpha, \cdots, \alpha)$ satisfy $\rho = \widetilde H(\lambda, \rho)$ and $0 \le \rho_i^n \le \alpha$
    we conclude that $0 \le \rho_i \le \alpha$ for all $i \in I$.
    We observe
    \[
        \sum_i |\rho_i| = \sum_i \rho_i = \sum_i \widetilde H_i(\lambda, \rho) = \sum_i \rho^n_i.
    \]
    Hence, for $R > \sum_{i \in I} \rho_i^n$ it is clear that $\rho_i^n \notin \widetilde H(\lambda, \partial D_R)$ for any $\lambda > 0$.
    Therefore, we can state that
    $$
        \degree ( \widetilde H(\lambda, \cdot) , D_R, \rho^n ) = \degree ( \widetilde H(0,\cdot), D_R, \rho^n ) = \degree(\mathrm{id}, D_R, \rho^n ) =  1 \ne 0.
    $$
    Hence, there exists at least one solution $\rho \in \mathbb R^{|I|}$ of the extended problem.
    By the comparison principle $\rho \in \mathcal A_{\Delta }$.
    Due to the construction of the extension, $\rho$ solves \eqref{def:Numerical method}.
\end{proof}

\begin{lemma}[Existence under general assumptions]
    \label{lem:disc regularised Existence}
    Let $\rho^{n} \in \mathcal A_{\Delta,+}$.
    Then, there exists $\rho \in \mathcal A_{\Delta,+}$ that solves \eqref{def:Numerical method}.
\end{lemma}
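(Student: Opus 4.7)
The plan is to mimic the topological-degree argument of \Cref{lem:disc Existence}, but without extending $U'$ or the factors $\mob^{(j)}$ outside $[0,\alpha]$, since under the general hypotheses these functions may not admit continuous extensions to the endpoints. To keep the scheme well-defined I will therefore run the whole degree argument on an open bounded set $D$ whose closure lies in the strictly admissible strip $(0,\alpha)^{|I|}$.

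First, using $\rho^n \in \mathcal A_{\Delta,+}$ I set $c_{\min} := \min_i \rho^n_i > 0$ and $c_{\max} := \max_i \rho^n_i < \alpha$, and apply \Cref{lem:discrete positive sub solution} twice (with $\ee = 0$): with $c^n = c_{\min}$ it yields $\underline c \in (0,\alpha)$ with $\underline c \le c_{\min}$ such that the constant vector $\underline \rho = (\underline c, \dots, \underline c)$ satisfies $H_i(\lambda,\underline\rho) \le c_{\min} \le \rho_i^n$ for all $i,\lambda$; with $c^n = c_{\max}$ it gives $\overline c \in (0,\alpha)$, $\overline c \ge c_{\max}$, with $H_i(\lambda, \overline \rho) \ge \rho_i^n$. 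Then I fix $\epsilon > 0$ with $\underline c - \epsilon > 0$ and $\overline c + \epsilon < \alpha$ and set $D := (\underline c - \epsilon, \overline c + \epsilon)^{|I|}$. By construction $\overline D \subset (0,\alpha)^{|I|}$, so $H(\lambda,\cdot)$ from \eqref{eq:H with lambda} is jointly continuous on $[0,1] \times \overline D$, and $\underline \rho, \overline \rho, \rho^n \in D$.

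Next, I would verify that $\rho^n \notin H(\lambda, \partial D)$ for every $\lambda \in [0,1]$. This is where the sub/super solutions pay off: if $H(\lambda,\rho) = \rho^n$ for some $\rho \in \overline D$, then, since $H$ satisfies the monotonicity hypothesis by \Cref{lem:scheme is monotone}, \Cref{lem:discrete monotonicity} applied to the pairs $(\underline\rho,\rho)$ and $(\rho,\overline\rho)$ gives $\underline c \le \rho_i \le \overline c$, so $\rho$ lies in the interior of $D$. Consequently the degree $\deg(H(\lambda,\cdot), D, \rho^n)$ is well-defined and, by homotopy invariance, constant on $[0,1]$; at $\lambda = 0$ it equals $\deg(\mathrm{id}, D, \rho^n) = 1$ since $\rho^n \in D$. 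Hence $\deg(H(1,\cdot), D, \rho^n) = 1 \ne 0$, and \cite[Theorem 3.1]{deimling1985NonlinearFunctionalAnalysis} produces $\rho \in D$ with $H(1,\rho) = \rho^n$. The same comparison-principle sandwich shows $\rho_i \in [\underline c, \overline c] \subset (0,\alpha)$, i.e.\ $\rho \in \mathcal A_{\Delta,+}$.

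The main obstacle is purely technical: ensuring that the homotopy and the target point stay away from the singular boundary $\{0,\alpha\}^{|I|}$ uniformly in $\lambda \in [0,1]$. \Cref{lem:discrete positive sub solution} is precisely the tool that provides this, because its constants $\underline c, \overline c$ are valid for all $\lambda \in [0,1]$ simultaneously, which is exactly what the homotopy argument requires. Once $D$ is chosen as above, every other ingredient (continuity of $H$, monotonicity, and the normalisation $H(0,\cdot) = \mathrm{id}$) is inherited from the work already done in \Cref{lem:disc Existence,lem:scheme is monotone,lem:discrete monotonicity}.
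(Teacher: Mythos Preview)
Your proposal is correct and follows essentially the same approach as the paper: use \Cref{lem:discrete positive sub solution} to manufacture constant sub- and super-solutions valid for all $\lambda\in[0,1]$, confine the homotopy to an open box $D$ with $\overline D\subset(0,\alpha)^{|I|}$, invoke \Cref{lem:discrete monotonicity} to exclude the boundary, and then run the degree argument of \Cref{lem:disc Existence}. The only cosmetic difference is that the paper picks a single $\delta$ with $\delta\le\rho_i^n\le\alpha-\delta$ and writes $D=(\tfrac{\delta_1}{2},\alpha-\tfrac{\delta_2}{2})^{|I|}$, whereas you work with $c_{\min},c_{\max}$ and an $\epsilon$-enlargement; the content is identical.
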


Notice that this lemma applies also for \eqref{eq:problem regularised discrete}.

\begin{proof}
    We make some adaptations on the proof of \Cref{lem:disc Existence} to be able to work in the more general case $U \in C^1((0,\alpha))$.
    Consider $\delta > 0$ such that $\delta \le \rho_i^{n} \le \alpha - \delta$.
    We use \Cref{lem:discrete positive sub solution} to show the existence of $\delta_1, \delta_2 > 0$ such that
    \begin{equation}
        H_i(\lambda, (\delta_1, \cdots, \delta_1) ) \le \rho_i^n \le H_i(\lambda, (\alpha - \delta_2, \cdots, \alpha - \delta_2)) \text{ for all } \lambda \in [0,1].
    \end{equation}
    We can now apply the same reasoning as in \Cref{lem:disc Existence} using the open set
    \[
        D = \{ \rho \in \mathbb R^{|I|}: \rho_i \in (\tfrac {\delta_1} 2, \alpha - \tfrac {\delta_2} 2)   \}.
    \]
    If $\rho^{n} = H(\lambda, \rho)$ because of the sub and super solution and \Cref{lem:discrete monotonicity} we know $\rho \notin \partial D$.
    We recover a solution $\rho \in D$.
\end{proof}

We now use the combination of all these Lemmas to prove the main result.

\begin{proof}[Proof of \Cref{thm:scheme well-posedness}]
    If $\rho^0 \in \mathcal A_{\Delta , +}$ existence for \eqref{def:Numerical method} and \eqref{eq:problem regularised discrete} follows as in \Cref{lem:disc regularised Existence} and uniqueness by \eqref{lem:discrete monotonicity}.
    We due to \Cref{lem:discrete positive sub solution} we can also build uniform sub and super solutions  that they are uniform in $\ee \in [0,1]$.
    Due to compactness, any $\ee_k \to 0$ has a subsequence so that $J_{\ee_k}^\Delta \rho^0 \to u$.
    We now we use the uniform convergence over compacts set of $U_\ee' \to U'$ (\Cref{lem:Uee converges to U}) to show it is the unique solution to \eqref{def:Numerical method}, so $u = \numericStep \rho^0 $.
    Since every sequence has a convergent sub-sequence converging, and they all share the same limit, then the whole sequence $J_{\ee}^\Delta \rho^0$ converges to $\numericStep \rho^0$ as $\ee \to 0$.
    If $\rho_0 \in \mathcal A_{\Delta }$ and $U'\in C^1([0,1])$ existence follows by \Cref{lem:disc Existence} and uniqueness by \Cref{lem:discrete monotonicity}.
    In both cases we can use \Cref{lem:scheme comparison} to prove $L^1_{\Delta }$ contraction and \Cref{lem:disc energy dissipation} to show that there is free-energy dissipation.
\end{proof}

\begin{remark}\label{rem:Bailo's paper Remark}
    In \cite{BCH23} the authors have a more general setting where $\xi_i = U'(\rho_i) + V(x_i) + \sum_j W(x_i - x_j) \rho_j$.
    For bounded domains, however, it is more natural to write
    $\xi_i = U'(\rho_i) + V(x_i) + \sum_j K(x_i, x_j) \rho_j$ where $K(x_i, x_j) = K(x_j,x_i)$.
    The proof of existence is still valid.
    Our proof of uniqueness uses strongly the monotonicity of the problem (i.e., the existence of a comparison principle), which does not hold for general $W$.
    Hence, when $K \ne 0$ but it is smooth, existence and uniqueness
    can be obtained using \eqref{eq:scheme L1 continuous dependence} to prove continuous dependence with respect to $\nabla V$, and arguing by fixed-point for $\Delta t$ small.
\end{remark}

We now discuss convergence of discrete solutions of the scheme \eqref{def:Numerical method} to solutions of the continuous problem \eqref{eq:the problem Omega} (including both, $\ee > 0$ and $\ee=0$) under high regularity of the solution. In particular, this completes  the analysis of the left face of the diagram \eqref{Numeric Diagram}.

\begin{proof}[Proof of \Cref{thm:Disc to continuous}]
    We will simply check that $u_i^{ n} = \rho(t_n, x_i)$ has the correct consistency rate.
    To avoid confusion, we denote the velocity and the flux of the exact solutions by
    \[
        \mathfrak v(t,x) = - \nabla ( U(\rho) + V ), \qquad \mathfrak F = \mobnd(\rho) \mobni(\rho)  \mathfrak v.
    \]
    It is easy to see that
    \begin{equation}\label{eq:velocity discrete to continuous}
        | v_{i + \frac 1 2}^{n} - \mathfrak v(t_n, x_{i+\frac 1 2}) | \le C( [U'']_{C^\gamma} , \| \nabla \rho \|_{L^\infty} , [\nabla V]_{C^{\gamma}}) (\Delta x)^\gamma.
    \end{equation}
    Similarly, we can show that
    \begin{equation}\label{eq:acceleration discrete to continuous}
        \left| \frac{v_{i+\frac 1 2}^{n} - v_{i-\frac 1 2}^{n}}{\Delta x} - \diver \mathfrak v(t_n, x_i) \right| \le C( [U''']_{C^\gamma} , [\nabla \rho ]_{C^{1,\gamma}} , [\nabla V]_{C^{\gamma}}) (\Delta x)^\gamma .
    \end{equation}
    We write the decomposition
    \[
        \diver \mathfrak F = (\mob^{(1)})' (\rho) \mob^{(2)} (\rho) \nabla \rho \cdot \mathfrak v  + \mob^{(1)} (\rho) (\mob^{(2)})' (\rho) \nabla \rho \cdot \mathfrak v + \mob^{(1)} (\rho) \mob^{(2)} (\rho) \diver \mathfrak v.
    \]
    We separate four cases.
    First, let us consider $v_{i+\frac 1 2}^{ n}  , v_{i-\frac 1 2}^{n}  \ge 0$. Then, we have
    \begin{align*}
        \frac{F_{i+\frac 1 2}^{n} - F_{i-\frac 1 2}^{ n}}{\Delta x}
         & = \frac{1}{\Delta x} \left(  \mob^{(1)}(u^{n}_i) \mob^{(2)}(u^{n}_{i+1}) v_{i+\frac 1 2}^{ n} - \mob^{(1)}(u^{n}_{i-1}) \mob^{(2)}(u^{n}_{i}) v_{i-\frac 1 2}^{ n}) \right) \\
         & = \frac{ \mob^{(1)}(u^{n}_i) - \mob^{(1)}(u^{ n}_{i-1}) }{\Delta x} \mob^{(2)}(u^{n}_{i+1}) v_{i+\frac 1 2}^{n}
        + \mob^{(1)}(u^{n}_{i-1}) \frac{\mob^{(2)}(u^{n}_{i+1}) - \mob^{(2)}(u^{n}_{i}) }{\Delta x } v_{i+\frac 1 2}^{\ee ,n}
        \\
         & \quad + \mob^{(1)}(u^{n}_{i-1}) \mob^{(2)}(u^{n}_{i}) \frac{v_{i+\frac 1 2}^{n} - v_{i-\frac 1 2}^{n}}{\Delta x}.
    \end{align*}
    Therefore, using the regularity and the previous estimates \eqref{eq:velocity discrete to continuous} and \eqref{eq:acceleration discrete to continuous} we obtain that
    \begin{equation}
        \label{eq:consistency error}
        \left| \frac{F_{i+\frac 1 2}^{ n} - F_{i-\frac 1 2}^{ n}}{\Delta x} - \diver \mathfrak F (t_n, x_i) \right| \le C (\Delta x)^\gamma.
    \end{equation}
    Similarly, if $v_{i+\frac 1 2}^{ n} , v_{i-\frac 1 2}^{n} \le 0$.
    The third case is $v_{i + \frac 1 2}^{n} \ge 0 \ge v_{i - \frac 1 2}^{ n}$. In this setting,
    \begin{align*}
        \frac{F_{i+\frac 1 2}^{n} - F_{i-\frac 1 2}^{n}}{\Delta x} & = \frac{1}{\Delta x} \left(  \mob^{(1)}(u^{ n}_i) \mob^{(2)}(u^{n}_{i+1}) v_{i+\frac 1 2}^{ n} - \mob^{(1)}(u^{ n}_{i}) \mob^{(2)}(u^{ n}_{i-1}) v_{i-\frac 1 2}^{n} \right)                                                        \\
                                                                   & = \mob^{(1)}(u^{ n}_{i}) \frac{\mob^{(2)}(u^{ n}_{i+1}) - \mob^{(2)}(u^{ n}_{i-1}) }{\Delta x } v_{i+\frac 1 2}^{ n} + \mob^{(1)}(u^{n}_{i}) \mob^{(2)}(u^{ n}_{i-1}) \frac{v_{i+\frac 1 2}^{ n} - v_{i-\frac 1 2}^{ n}}{\Delta x}.
    \end{align*}
    Due to the change of sign and the similarity to $\mathfrak v(t_n, x_{i+\frac 1 2})$ we deduce
    $
        |\mathfrak v (t_n, x_{i+\frac 1 2})| , |v_{i+\frac 1 2}^{ n}| \le C (\Delta x)^\gamma.
    $
    Using this estimate,
    \[
        \left| \mob^{(1)}(u^{n}_{i}) \frac{\mob^{(2)}(u^{ n}_{i+1}) - \mob^{(2)}(u^{ n}_{i-1}) }{\Delta x } v_{i+\frac 1 2}^{ n}  \right| \le \mob^{(1)}(\alpha) \|(\mob^{(2)})'\|_{L^\infty} \|\nabla \rho\|_{L^\infty} C (\Delta x)^\gamma .
    \]
    The same happens in the corresponding terms of $\diver \mathfrak F$. Hence, we have \eqref{eq:consistency error}.
    The final case is $v_{i + \frac 1 2}^{ n} \le 0 \le v_{i - \frac 1 2}^{ n}$, which is analogous to the third.
    Thus, we have that
    \begin{align*}
        \left|\frac{u_i^{ n+1} - u_i^{n}}{\Delta t}
        + \frac{F_{i+\frac 1 2}^{ n} - F_{i-\frac 1 2}^{ n} }{\Delta x} \right|
         & \le \left|\frac{u_i^{ n+1} - u_i^{ n}}{\Delta t}
        - \frac{\partial \rho}{\partial t} \right|
        + \left|\frac{\partial \rho}{\partial t}
        - \diver \mathfrak F \right| + \left| \frac{F_{i+\frac 1 2}^{ n} - F_{i-\frac 1 2}^{ n}}{\Delta x} - \diver \mathfrak F^{(\ee)} (t_n, x_i) \right|
        \\
         &
        \le C( (\Delta t)^\beta + (\Delta x)^\gamma ).
    \end{align*}
    We deduce the estimate in the statement from the stability of the numerical scheme.
\end{proof}

\subsection{Long-time behaviour for the numerical method}
\label{sec:Discrete regalurised stationary state}

In this subsection we study the long-time behaviour of the numerical method. First we focus on the regularised problem \eqref{eq:problem regularised discrete} for $\ee > 0$. 

The  long-time behaviour analysis we study here is based on the gradient flow structure of the problem. We explain this in more detail in the following remark.

\begin{remark}
    Notice that we can write
    \begin{align*}
        F_{i+\frac 1 2}^{\ee , n+1}   &=\Theta_{i + \frac 1 2}^{\ee , n+1} v_{i+\frac 1 2}^{\ee , n+1} , \\
        \Theta_{i + \frac 1 2}^{\ee , n+1} &=  
        \mobee^{(1)}(\rho_i^{\ee , n+1})  \mobee^{(2)} (\rho_{i+1}^{\ee, n+1}) \sign^+(v_{i+\frac 1 2}^{\ee , n+1}) -   \mobee^{(1)} (\rho_{i+1}^{\ee , n+1} )  \mobee^{(2)} (\rho_{i}^{\ee , n+1}) \sign^- (v_{i+\frac 1 2}^{\ee , n+1}).
    \end{align*}
    From the result on energy dissipation obtained at \Cref{lem:disc energy dissipation} and the notation that we use for the problem \eqref{eq:problem regularised discrete}, it follows that 
    \begin{equation}\label{eq:discrete entropy dissipation}
        0 \leq \Delta t \Delta x \sum_{m=n}^{n+k-1} \sum_i \Theta_{i + \frac{1}{2}}^{\ee, m+1} \left| v_{i + \frac{1}{2}}^{\ee, m+1} \right|^2 \leq \Ediscee [ \rho^{\ee, n}] - \Ediscee [\rho^{\ee, n+k}].
    \end{equation}
\end{remark}

\begin{proof}[Proof of \ref{it:discrete approx unique steady state}] 
    If $H^\ee(\rho) = \rho$, the free energy dissipation states that $F^\ee_{i+\frac 1 2} (\rho) = 0$. Since $0 < \rho_i <\alpha$ for all $i\in I$, we get $v^\ee_{i+\frac 1 2} (\rho) = 0$ for all $i \in I$, and thus $\xi_i (\rho)$ is constant.  
    This concludes the proof.
\end{proof}

\begin{lemma}\label{lem:W-11 norm discrete}
    For $\rho \in \R^{|I|}$, the discrete version of the $W^{-1,1}$ norm, i.e.,
    \begin{equation}\label{def:W-11 norm discrete}
        \| \rho  \|_{ W^{-1,1} _{\Delta} (0,1) } = \inf \left\{ \Delta x \sum_{i=0}^{N} |F_{i+\frac 1 2}|  : \text{for each } i \in I \text{ we have that } \rho_i = \frac{F_{i+\frac 1 2} - F_{i-\frac 1 2}   }{\Delta x} \right\},
    \end{equation}
    defines a norm.
\end{lemma}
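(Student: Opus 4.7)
The proof is a routine verification that the four norm axioms hold, with the only subtlety being that one must first confirm the infimum in \eqref{def:W-11 norm discrete} is taken over a non-empty set. My plan is the following.

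First I would check that for any $\rho \in \mathbb{R}^{|I|}$ the admissible set of fluxes $F = (F_{1/2}, F_{3/2}, \dots, F_{N+1/2})$ satisfying $\rho_i = (F_{i+1/2} - F_{i-1/2})/\Delta x$ is non-empty. The crucial observation is that no boundary condition on $F_{1/2}$ or $F_{N+1/2}$ is imposed in the definition; hence one can set $F_{1/2} = 0$ and solve the linear system sequentially, obtaining $F_{i+1/2} = \Delta x \sum_{j=1}^{i} \rho_j$. This gives a concrete admissible flux with $\Delta x \sum_{i=0}^N |F_{i+1/2}| \le N(\Delta x)^2 \|\rho\|_{L^1_\Delta}$, showing the infimum is finite.

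Next I would verify the three standard axioms. Non-negativity is immediate from the definition. For positive homogeneity, if $F$ is admissible for $\rho$ then $\lambda F$ is admissible for $\lambda \rho$, and $\Delta x \sum_i |\lambda F_{i+1/2}| = |\lambda| \Delta x \sum_i |F_{i+1/2}|$; taking the infimum yields $\|\lambda \rho\|_{W^{-1,1}_\Delta} = |\lambda| \|\rho\|_{W^{-1,1}_\Delta}$. For the triangle inequality, if $F$ is admissible for $\rho$ and $G$ is admissible for $\eta$, then $F+G$ is admissible for $\rho + \eta$, so
\[
\|\rho + \eta\|_{W^{-1,1}_\Delta} \le \Delta x \sum_i |F_{i+1/2} + G_{i+1/2}| \le \Delta x \sum_i |F_{i+1/2}| + \Delta x \sum_i |G_{i+1/2}|,
\]
and taking infima on the right yields the desired bound.

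Finally, for definiteness, $\rho = 0$ clearly gives $\|\rho\|_{W^{-1,1}_\Delta} = 0$ via $F \equiv 0$. Conversely, suppose $\|\rho\|_{W^{-1,1}_\Delta} = 0$; then there exists a sequence of admissible fluxes $F^{(k)}$ with $\Delta x \sum_i |F^{(k)}_{i+1/2}| \to 0$, hence $|F^{(k)}_{i\pm 1/2}| \to 0$ for every $i$. Since $\Delta x |\rho_i| = |F^{(k)}_{i+1/2} - F^{(k)}_{i-1/2}|$ is independent of $k$, passing to the limit forces $\rho_i = 0$. The main (and only mildly delicate) point is the non-emptiness check in the first paragraph; everything else is elementary and the whole proof fits in a few lines.
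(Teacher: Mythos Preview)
Your proof is correct and follows essentially the same route as the paper's, just with more care. The paper dispatches homogeneity and the triangle inequality as ``obvious'' and handles definiteness in one line (``if $\|u\|_{W^{-1,1}_\Delta}=0$ then $F=0$ so $u=0$''); you spell out non-emptiness explicitly and use a minimising-sequence argument for definiteness rather than implicitly relying on the infimum being attained, which is arguably cleaner.
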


\begin{proof}
    The homogeneity with constants and the triangle inequality are obvious. Lastly, if $\| u  \|_{W^{-1,1} _{\Delta} } = 0$ then $F = 0$ so $u=0$. Therefore, $\| \cdot \|_{W^{-1,1} _{\Delta}}$ is a norm.
\end{proof}

All norms in $\R^{|I|}$ are equivalent, which is a great advantage of the discrete setting. Notice that, in the definition \eqref{def:W-11 norm discrete}, we have not specified that $F_{\frac 1 2}$ or $F_{N +\frac 1 2}$ vanish, but this is allowed. \Cref{lem:W-11 norm discrete} is a key step in the proof of the next result.

\begin{proof}[Proof of \ref{it:discrete approx global attractor}] 
\let\thelimit\relax 
\newcommand{\thelimit}{u}
We divide the proof in several steps.
    \begin{enumeratesteps}
    \step[The constant is determined uniquely by $\sum_i \rho^{\ee,\infty}$]
    Due to the no-flux boundary condition it follows immediately that the mass is preserved, i.e.,
    \begin{equation*}
        \Delta x \sum_{i=1}^{|I|} \rho_i^{\ee, n+1} = \Delta x \sum_{i=1}^{|I|} \rho_i^{\ee, n} = \int_0^1 \rho_0 \quad \text{for every } n.
    \end{equation*}
    Due to the convergence in $\mathbb R^{|I|}$, $\rho^{\ee,\infty}$ also satisfies the mass condition.
    Define the function
    \begin{equation*}
        P(C) = \Delta x \sum_{i=1}^{|I|} (U_\ee')^{-1} \left( C - V(x_i) \right) .
    \end{equation*}
    It is easy to see that it is  the strictly monotone, continuous, with $P(- \infty ) = - \infty$, and $P(\infty ) = \infty$. Therefore, the mass condition
    \begin{equation*}
        \Delta x \sum_{i=1}^{|I|} (U_\ee')^{-1} \left( C_\ee^{\Delta } - V(x_i) \right) = \int_{0}^1 \rho_0 \dx
    \end{equation*}
    determines $C_\ee^{\Delta }$ uniquely.

    \step[Convergence in time]
    Due to the no-flux assumption of our problem 
    \begin{equation*}
        \sum_i | \rho^{\ee , n}_i | = \sum_i | \rho^{0}_i | \quad \text{for all } n.
    \end{equation*}
    Therefore, up to a subsequence $n_j$, there exists $u \in \R^{|I|}$ such that 
    $
        \rho^{\ee, n_j} \rightarrow \thelimit \quad \text{in } \R^{|I|}.
    $
    From the construction of the norm $W^{-1,1} _{\Delta}$, the sequence $\rho^{\ee , n}$ satisfies,
    \begin{align*}
        \left\| \frac{\rho^{\ee , n+1} - \rho^{\ee ,n}}{\Delta t} \right\|_{W^{-1,1}_{\Delta} (0,1) }^2 & \leq \left( \Delta x \sum_{i\in I} |F_{i+\frac{1}{2}}^{\ee, n+1}| \right)^2 \leq (\Delta x) |I|  (\Delta x) \sum_{i\in I} |F_{i+\frac{1}{2}}^{\ee, n+1}|^2 \\
        & =  \Delta x \sum_{i\in I} (\Theta_{i+\frac{1}{2}}^{\ee, n+1})^2 |v_{i+\frac{1}{2}}^{\ee, n+1}|^2.
    \end{align*}
    Hence, using \eqref{eq:discrete entropy dissipation} and the fact that
    $
        | \Theta_{i + \frac 1 2}^{\ee , n} | \leq \mobeend(\alpha)   \mobeeni(0)
    $,
    we recover
    \begin{equation*}
        \| \rho^{\ee , n+1} - \rho^{\ee ,n} \|_{W^{-1,1}_\Delta(0,L)} \leq  C (\Delta t )^{\frac{1}{2}} \left( \Delta t \Delta x \sum_{i\in I} \Theta_{i+\frac{1}{2}}^{\ee, n+1} |v_{i+\frac{1}{2}}^{\ee, n+1}|^2 \right)^{\frac{1}{2}} \leq C (\Delta t )^{\frac{1}{2}} (\Ediscee[\rho^{\ee, n}] - \Ediscee[\rho^{\ee , n+1}])^{\frac 1 2}.
    \end{equation*} 
    We recall that $\Ediscee[\rho^{\ee, n}]$ is a non-increasing sequence due to \Cref{lem:disc energy dissipation}. Furthermore, it is also bounded from below by $\min_{s \in [0, \alpha]} U_\ee (s) > - \infty$. Hence, there exists $\Edisceeinfty  \in \R$ such that as $n \rightarrow \infty$
    \begin{equation*}
        \Ediscee[\rho^{\ee , n}] \searrow \Edisceeinfty. 
    \end{equation*}
    Hence, it follows that
    \begin{equation*}
        \lim_{k \rightarrow \infty} \| \rho^{\ee , n_k} - \rho^{\ee, n_k +1} \|_{W^{-1,1} _{\Delta} (0,L)} \le  C (\Delta t)^{\frac{1}{2}} \lim_{k \rightarrow \infty} (\Ediscee[\rho^{\ee, n_k}] - \Ediscee[\rho^{\ee, n_k+1}])^{\frac 1 2} = 0.
    \end{equation*}
    Therefore, we obtain that
   $\rho^{\ee, n_k +1} \to u$ in $\R^{|I|}$.

    \step[Stationary sub and super solution]
    Since $\rho^0 \in \mathcal A_{\Delta , +}$ we can pick $C_1, C_2 \in \mathbb R$ such that
    \[ 
        (U_\ee')^{-1} (C_1 - V(x_i)) \le \rho^{0}_i \le (U_\ee')^{-1} (C_2 - V(x_i)), \qquad \forall i \in I.
    \]
    These upper and lower bounds are stationary solutions, and we have a comparison principle, they preserved for all time.
    Thus, we have that $\rho_i^{\ee,n} \in [\delta , \alpha - \delta]$ for some $\delta > 0$ and all $n, i$.

    \step[Solution of the scheme \eqref{eq:problem regularised discrete}]
    Due the convergence and the continuity of the mobility $\mobee^{(j)}$ (with $j=1, 2$) and $U_\ee'$ in $[\delta, \alpha - \delta]$, it follows that
    \begin{equation*}
        \mobee^{(j)} (\rho^{\ee, n_k+1}) \rightarrow \mobee^{(j)} (\thelimit) , \quad
        U_\ee'(\rho^{\ee, n_k+1}) \rightarrow U_\ee' (\thelimit) \quad \text{in } \R^{|I|}.
    \end{equation*}
    Since $H^\ee(\rho^{\ee, n_k+1}) = \rho^{\ee , n_k}$, we have that $\thelimit$ satisfies $H^\ee(u) = u$. 
    Since $0 < u_i < \alpha$ for all $i\in I$ we conclude that for all $i\in I$ we have $u_i = (U_\ee')^{-1} (C - V(x_i))$ for some $C$.
    By conservation of mass we precisely characterise $u = \rho^{\ee, \infty}$.

    \step[The whole sequence converges]
    Since every sequence has a convergent subsequence, and they all share the same limit.
    \qedhere
    \end{enumeratesteps}
\end{proof}

We connect the discrete and continuous stationary states, and we provide a rate of convergence on $\Delta x$. We start with the case $\ee > 0$ fixed.

\begin{proof}[Proof of \ref{thm:Asymptotic convergence discrete to continuous ee}]
    Let us remark that,
    \begin{equation}\label{eq:Discrete and continuous mass}
        \Delta x \sum_i  \left( U_{\ee}' \right)^{-1} \left( C_{\ee}^{\Delta } - V(x_i) \right) = M = \int_0^1 \left( U_{\ee}' \right)^{-1} \left(C_\ee - V(x) \right) \dx.
    \end{equation}
    Since we know that $0 < \widehat\rho^{(\ee)} < \alpha$, in particular 
    $$
        \left| \left(\left( U_{\ee}' \right)^{-1} \right)' \left( C_{\ee} - V(x_i) \right) \right| \leq C \quad \forall \, i\in I.
    $$
    Since $\widehat \rho^{(\ee)} \in \mathcal A_+$, we can perform a first order Taylor expansion to obtain that for $h \in [0, \Delta x )$,
    \begin{equation*}
        \widehat\rho^{(\ee )} (x_i + h) = \left( U_{\ee}' \right)^{-1} \left( C_{\ee} - V(x_i) \right) + O(h).
    \end{equation*}
    Then, in particular,
    \begin{equation}\label{eq:Taylor expansion for the mass}
        \int_0^1 \left( U_{\ee}' \right)^{-1} \left(C_\ee - V(x) \right) \dx = \Delta x \sum_i \left( \left( U_{\ee}' \right)^{-1} \left( C_{\ee} - V(x_i) \right) + O(\Delta x ) \right).
    \end{equation}
    Combining \eqref{eq:Discrete and continuous mass} and \eqref{eq:Taylor expansion for the mass} it follows that,
    \begin{align*}
        O ( \Delta x ) & = \Delta x \sum_i \left( \left( U_{\ee}' \right)^{-1} \left( C_{\ee}^{\Delta } - V(x_i) \right) - \left( U_{\ee}' \right)^{-1} \left( C_{\ee} - V(x_i) \right) \right) \\
        & = \Delta x \sum_i \left( \left( U_{\ee}' \right)^{-1} \right)' (\zeta_i) (C_\ee^{\Delta } - C_{\ee}) ,
    \end{align*}
    with $\zeta_i \in ( C_{\ee}^{\Delta } - V(x_i) , C_{\ee} - V(x_i))$. It satisfies, 
    \begin{equation*}
        0 < \underline C \leq \left( \left( U_{\ee}' \right)^{-1} \right)' (\zeta_i) \leq \overline C < \infty, \quad \forall \, i \in I.
    \end{equation*}
    From here, it follows,
    \begin{equation*}
        \left| C_\ee^{\Delta } - C_\ee \right| = \left| \frac{O( \Delta x )}{\Delta x \sum_i \left( \left( U_{\ee}' \right)^{-1} \right)' (\zeta_i)} \right| \leq \left| \frac{O( \Delta x )}{\Delta x \sum_i \underline C} \right| = O( \Delta x ) . \qedhere
    \end{equation*}
\end{proof}

Once we have understood the long-time behaviour of the regularised numerical method \eqref{eq:problem regularised discrete}, we now proceed to study \eqref{def:Numerical method} and the properties of its long-time behaviour.

\begin{proof}[Proof of \Cref{thm:Asymptotics discrete P0}]
 
    \ref{it:discrete P0 exists asymptotic limit} follows analogously to \ref{it:discrete approx global attractor}. 
    Since $\numericStep$ is an $L^1_{\Delta }$ contraction, so is $(J^{\Delta})^n$, and hence $J^{\Delta,\infty}$.
    \ref{it:discrete P0 asymptotic limit is L1 contraction} follows as $n\to \infty$.

    \textit{\ref{it:discrete P0 discrete global minimiser}. Constant-in-time solution to \eqref{def:Numerical method}.} 
    The first part of the proof is analogous to the one in the continuous case done for \Cref{thm:Asymptotic (P0)}. In order to finish the proof, we show that $\rho^{0, \infty}_i = \inversedU (C_0^{\Delta } - V(x_{i}))$ is a stationary state at every point $x_i$ for $i \in I$. The argument that we consider is the following: $\rho^{0, \infty}$ is a stationary state if and only if $F_{i\pm\frac 1 2} = 0$ for all $i \in I$.

    Let us assume $\rho^{0, \infty}_i = 0$ and $0 < \rho^{0, \infty}_{i+1} < \alpha$. 
    This means that $C_0^{\Delta } - V(x_i) \le \underline{\zeta} = U'(0^+) < C_0^{\Delta } - V(x_{i+1}) < \overline{\zeta} = U'(\alpha^-)$.
    Hence, we can compute that
    \begin{align*}
        v_{i+\frac{1}{2}} (\rho^{0, \infty}) & = - \frac{U' (\rho^{0, \infty}_{i+1}) +V(x_{i+1}) - ( U' (\rho^{0, \infty}_{i}) +V(x_{i}))  }{ \Delta x} = \frac{ U' (0) +V(x_{i}) - C_0^{\Delta } }{\Delta x} \geq 0,
    \end{align*}
    where, in the last inequality, we are using that $U'$ is increasing. 
    Since $\mobeend (0) = 0$, we get that
    \begin{equation*}
        F_{i+ \frac{1}{2}}( \rho^{0, \infty} ) = \mobeend (\rho^{0,\infty}_i) \mobeeni (\rho^{0,\infty}_{i+1}) (v_{i+ \frac{1}{2}} (\rho^{0, \infty}))^+ =  0 ,
    \end{equation*}
    and $\rho^{0, \infty}$ is a stationary state at the point $x_i$. Arguing analogously we can prove the same result for all the different combinations  
    of $\rho_i^{0,\infty}$ and $\rho_{i+1}^{0,\infty}$ taking values $0$, $\alpha$, or in $(0,\alpha)$.

    \textit{\ref{it:discrete asymptotic state limit as ee to 0}. Convergence of numerical steady states as $\ee \to 0$.}
    We argue analogously to \Cref{lem:Convergence Asymptotics of (Pee)} to show that there exists $\ee_k$ such that $C_{\ee_k}^\Delta \rightarrow \widetilde{C}$ as $k \rightarrow \infty$ and
    \begin{equation}
    \label{eq:discrete asymptotic steady-state convergence point-wise}
        (U_{\ee_k}')^{-1} (C_{\ee_k}^{\Delta } - V(x) ) \rightarrow \inversedU (\widetilde{C} -V(x)) \quad \text{pointwise in } [0,\alpha].
    \end{equation}
    Due to the mass condition $\widetilde C = C_0^{\Delta }$.
    As usual, we realise that every sequence $\ee_k$ has a subsequence where \eqref{eq:discrete asymptotic steady-state convergence point-wise} holds, and the limit is shared amongst convergent sequences. This proves the convergence as $\ee \rightarrow 0$.

    \textit{\ref{cor:Convergence disc to cont stat state ee=0}. Convergence of the numerical solution steady state as $\Delta \to 0$.}
    Convergence without rates follows as in the previous step. 
    Let us now prove rates of convergence.
    Since $(U')^{-1}$ is Hölder, then $\inversedU$ is also Hölder. 
    Let us also assume we are in the case $0 < M < \alpha$. This second step of the proof works analogously to the proof of \ref{thm:Asymptotic convergence discrete to continuous ee}. We just need to adapt the Taylor expansion to its Hölder version. If we do that we arrive at 
    \begin{align}\label{eq:Disc to continuous ee=0 order gamma}
    \begin{split}
        O((\Delta x)^\gamma) & = \int_0^1 ( \inversedU (C_0^{\Delta } - V(x)) - \inversedU(C_0 - V(x)) ) \\\underline{\zeta}
    & = (C_0^{\Delta } - C_0) \int_0^1 (\inversedU)'(\widehat \zeta (x)) dx ,
    \end{split}
    \end{align}
    where $\widehat \zeta (x)$ lies between $C_0^{\Delta } - V(x) , C_0 - V(x)$. Since $0 < M < \alpha$, there exists $z \in (0,1)$ such that $U'(0^+) \eqqcolon \underline{\zeta} < C_0 - V(z) < \overline{\zeta} \coloneqq U'(\alpha^-)$. Let us define
    $$
        \ell := \min \left\lbrace (C_0 - V(z))- \underline{\zeta} , \overline{\zeta} - (C_0 -V(z)) \right\rbrace.
    $$
    From the previous step, we know that there exists $\delta > 0$ small enough such that for every $\Delta x < \delta$ we have that 
    $|C_0^{\Delta } - C_0 | \leq \frac{\ell}{4}$. From the continuity of $V$, there exists an interval $K \subseteq (0,1)$ small enough such that for every $x \in K$ we have that $|V(z)-V(x)| \leq \frac{\ell}{4}$. Therefore, it follows that
    \begin{equation*}
        |\widehat \zeta (x)-\underline{\zeta} |, | \overline{\zeta} - \widehat \zeta (x) | \geq \frac{\ell}{2} > 0 \quad \text{for all } x \in K.
    \end{equation*}
    In particular, $(U')^{-1} \left( \left\lbrace \widehat \zeta (x) : x \in K \right\rbrace\right) \subseteq [s_1,s_2] $, a compact subset of $(0, \alpha)$. Then, we obtain that 
    \begin{align*}
        \int_0^1 (\inversedU)'(\widehat \zeta (x)) dx & \geq \int_K ((U')^{-1})' (\widehat \zeta (x)) dx = \int_K \frac{1}{U''((U')^{-1} (\widehat \zeta (x)))} dx \\
        & \geq \frac{|K|}{\max_{s \in [s_1,s_2]} U''(s)} > 0.
    \end{align*}
    Thus, combining this bound with \eqref{eq:Disc to continuous ee=0 order gamma} we have that $
        |C_0^{\Delta } - C_0 | = O((\Delta x)^\gamma).$ \qedhere
\end{proof}

\subsection{Time asymptotics for \texorpdfstring{$\ee=0$}{eps 0}. The minimiser might not be an attractor}\label{sec:Num counterexample}

We can reproduce the example at \cref{sec:malicious counterexamples} to prove that the back face of the diagram \eqref{Numeric Diagram} is not commutative for the numerical scheme \eqref{eq:problem regularised discrete} either. Analogously to the continuous case we construct a counterexample using $U(s) = \frac{1}{m-1}s^m$ and $V$ a double-well potential.

For $m> 1$ we consider the discrete Barenblatt
\begin{equation*}
    \mathcal{B}_i = \left( \frac{m-1}{m} \left( C_0^\Delta - \frac{|x_i|^2}{2} \right) \right)^{\frac{1}{m-1}}_+. 
\end{equation*} 
We select $C_0^\Delta$ such that $\Delta x \sum_{i=1}^M \mathcal{B}_i > 1$. Working analogously to the proof of \Cref{thm:Asymptotics discrete P0} we can show that $\mathcal{B}_i$ is a discrete stationary state. Let us remind the reader that the method \eqref{eq:problem regularised discrete} applies only to dimension $1$. Since $m>1$, $\supp \mathcal B \subset \{ j : | x_j | < \widehat R \} $. Take $L, K \in I$ with $|x_L-x_K| > 2 \widehat{R}$, and consider
$
    \overline{\rho}_i = \mathcal{B}_{i-L} + \mathcal{B}_{i-K} .
$
We choose again the potential, 
$$
    V(x) = \begin{cases}
        \frac{|x - x_L|^2}{2} & \text{if } |x - x_L|  \le {\widehat R} \\
        \frac{|x - x_K|^2}{2} & \text{if } |x - x_K|  \le {\widehat R} \\
        \frac{|x|^2}{2} & \text{if } |x| \gg 1 \\
        \text{smooth} & \text{in the intermediate regions}
    \end{cases}
$$
For each $0 \leq \rho_i^0 \leq \overline{\rho}_i$ we solve \eqref{eq:problem regularised discrete} with linear mobility $\mob(s) = s$. We are solving \eqref{eq:problem regularised discrete} in each interval $(x_j - \widehat{R}, x_j + \widehat{R})$, $j = L,K$. Due to the Comparison Principle from \Cref{thm:scheme well-posedness}, we get $0 \leq \rho_i^n \leq \overline{\rho}_i$. Consider
\begin{equation*}
    \widehat{\rho}_i = \left( \frac{m-1}{m} \left( C_L^\Delta - \frac{|x_i-x_L|^2}{2} \right) \right)^{\frac{1}{m-1}}_+ + \left( \frac{m-1}{m} \left( C_K^\Delta - \frac{|x_i-x_K|^2}{2} \right) \right)^{\frac{1}{m-1}}_+,
\end{equation*}
and select $C_l^\Delta, C_k^\Delta \leq C_0^\Delta$ such that
\begin{equation*}
    \Delta x \sum_{|x_i-x_K| \leq \widehat{R} } \widehat{\rho}_i = \Delta x \sum_{|x_i-x_K| \leq \widehat{R} } \rho_i^0 , 
    \qquad 
    \Delta x \sum_{|x_i-x_L| \leq \widehat{R} } \widehat{\rho}_i = \Delta x \sum_{|x_i-x_L| \leq \widehat{R} } \rho_i^0 .
\end{equation*}
In particular, $\widehat{\rho}_i$ is such that, $0 \leq \widehat{\rho}_i \leq \overline{\rho}_i$. 

Due to compactness, the scheme $\rho_i^n$ is such that, up to a subsequence,
$
    \rho_i^n \rightarrow \xi_i \quad \text{in } \R^{|I|}.
$
From the Comparison Principle, the only candidate for $\xi_i$ is $\widehat{\rho}_i$ and therefore, up to a subsequence,
$
    \rho_i^n \rightarrow \widehat{\rho}_i \quad \text{in } \R^{|I|}.
$
When 
$$
    \Delta x \sum_{|x_i-x_L| \leq \widehat{R} } \rho_i^0 \neq \Delta x \sum_{|x_i-x_K| \leq \widehat{R} } \rho_i^0  ,
$$
then $C_L^\Delta \neq C_K^\Delta$.

\begin{remark}
    Following \cref{sec:Malicious example saturation} we can extend this case to further examples with saturation.
\end{remark}

\subsection{Numerical experiments}\label{sec:Numerical experiments}

We implement the scheme \eqref{def:Numerical method} using the \texttt{julia} language \cite{Bezanson_Edelman_Karpinski_Shah17}. The fixed point problem for the implicit time-stepping is solved using a Newton method through the \texttt{NLSolve.jl} package (see \cite{NLSolver}) using automatic differentiation via the \texttt{ForwardDiff.jl} package.

\paragraph{Convex potential}

We exemplify the behaviour for the nonlinear diffusion $\rho^2$ with a convex potential and free boundaries at levels $\rho = 0,\alpha$ in \Cref{fig:numerical experiment convex potential}. 

In the left plot we show the profiles at different times illustrating the formation of the free boundary and the kinks in the upper constraint. In the right plot we display the speed of convergence to the steady state. Numerical experiments indicate that it is exponential as suggested by the linear character of the log plot of the error towards the exact steady solution. For $t$ large the error becomes so small that its $\log$ is computationally $-\inf$.
\begin{figure}[ht!]
    \centering 
\includegraphics[width=.9\textwidth]{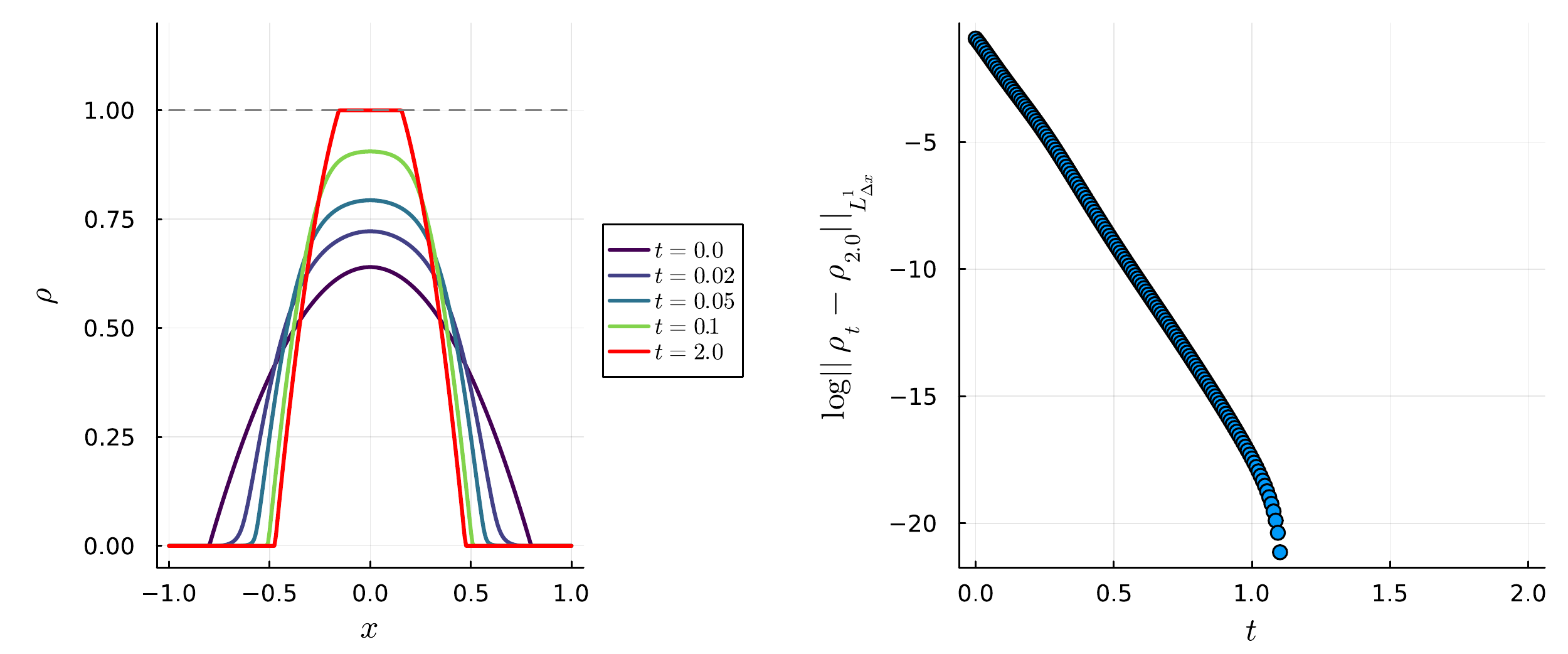}
    \caption{
    $\mob(\rho) = \rho(1-\rho)$, $U(\rho) = \rho^2$ and $V(x) = 10x^2$.
    $\Delta t = \Delta x = 2^{-7}$.
    Left: profiles at different times.
    Right: distance from $\rho_t$ to $\rho_2$.
    }
        \label{fig:numerical experiment convex potential}
\end{figure}

\paragraph{Double well potential. Formation of Barenblatt from above}

We exemplify the behaviour for a double well potential where a Barenblatt profile appears from level $\rho = \alpha$ in \Cref{fig:numerical experiment Barenblatt from above}. This shows the formation of a gap in the upper free boundary of the problem.

\begin{figure}[ht!]
    \centering 
    \includegraphics[width=.58\textwidth]{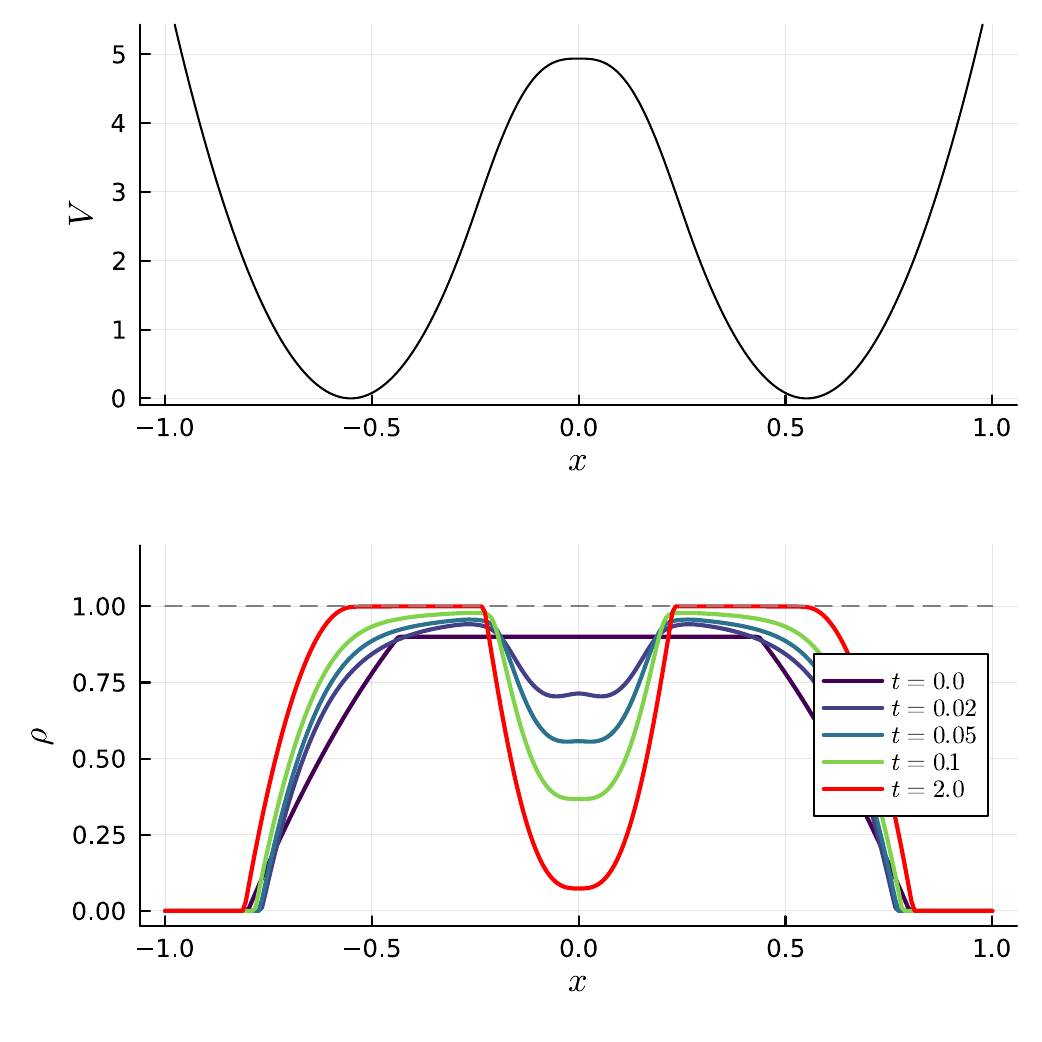}
    \caption{$\mob(\rho) = \rho(1-\rho)$, $U(\rho) = \rho^2$. $\Delta t = \Delta x = 2^{-7}$}
    \label{fig:numerical experiment Barenblatt from above}
\end{figure}

\paragraph{Non-minimising steady states}

Finally, we show in \Cref{fig:numerical experiment non-minimising} a numerical experiment in which the asymptotic state is a non-minimising linear combination of two Barenblatt profiles with disjoint supports. This illustrates our theoretical findings in Section \ref{sec:Local minimiser} and \Cref{thm:Euler-Lagrange}, observing that the global minimizer of the free energy does not always attract all initial data and that these non-minimising steady states have a large basin of attraction.
\begin{figure}[ht!]
    \centering 
    \includegraphics[width=.58\textwidth]{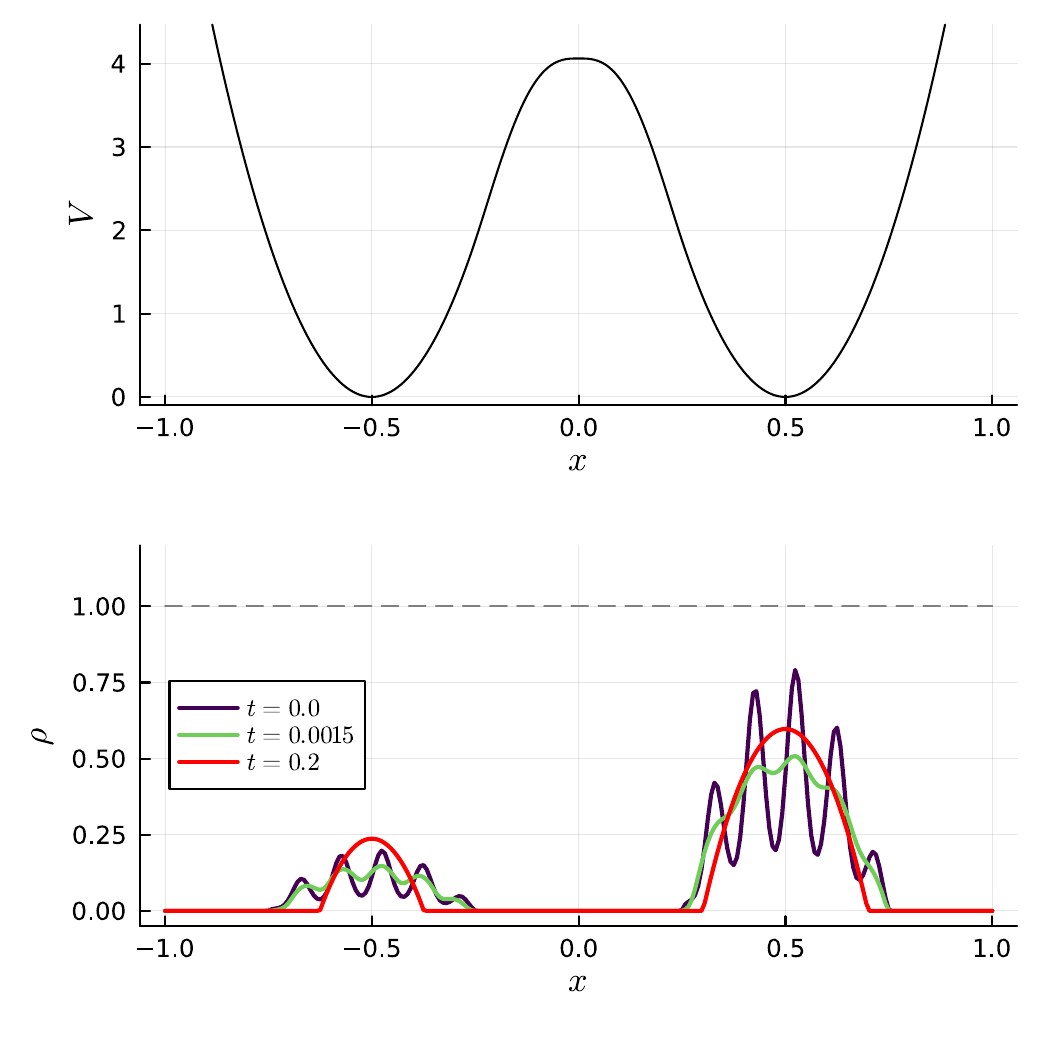}
    \caption{$\mob(\rho) = \rho(1-\rho)$, $U(\rho) = \rho^2$. $\Delta t = 2^{-12}, \Delta x = 2^{-7}$.}
    \label{fig:numerical experiment non-minimising}
\end{figure}

\subsection*{Acknowledgements}
JAC and AFJ were supported by the Advanced Grant Nonlocal-CPD (Nonlocal PDEs for Complex Particle Dynamics: Phase Transitions, Patterns and Synchronization) of the European Research Council Executive Agency (ERC) under the European Union’s Horizon 2020 research and innovation programme (grant agreement No. 883363).
JAC was also partially supported by the EPSRC grant number  EP/V051121/1.
DGC was supported by RYC2022-037317-I
and partially supported by PID2023-151120NA-I00 from the Spanish Government MCIN/AEI/10.13039/501100011033/
FEDER, UE.
The authors are thankful to Yao Yao (National University Singapore) for useful suggestions on an early stage of the manuscript. 
We would like to thank the reviewers for the careful reading and useful suggestions.

\printbibliography

\end{document}